\titleformat{\subsubsection}[runin]{\bfseries} {\thesubsubsection.} {5pt} {#1} [\hspace*{.2cm}]%
\declaretheoremstyle[%
  spaceabove=.8\baselineskip,%
  spacebelow=.8\baselineskip,%
  headfont=\bfseries,%
  notefont=\normalfont,%
  bodyfont=\itshape,%
  postheadspace=.5em%
]{thms}
\declaretheoremstyle[%
  spaceabove=.8\baselineskip,%
  spacebelow=.8\baselineskip,%
  headfont=\bfseries,%
  notefont=\normalfont,%
  bodyfont=\normalfont,%
  postheadspace=.5em%
]{defn}
\numberwithin{equation}{section}%
\theoremstyle{thms}
\newtheorem{thrm}[equation]{Theorem}
\newtheorem{cor}[equation]{Corollary}
\newtheorem{lem}[equation]{Lemma}
\newtheorem{prop}[equation]{Proposition}
\theoremstyle{defn}
\newtheorem{cns}[equation]{Construction}
\newtheorem{defn}[equation]{Definition}
\newtheorem*{defn*}{Definition}
\newtheorem{exa}[equation]{Example}
\newtheorem{covn}[equation]{Convention}
\newtheorem{notn}[equation]{Notation}
\newtheorem{rmk}[equation]{Remark}
  \crefname{prop}{Proposition}{Propositions}
  \crefname{cor}{Corollary}{Corollaries}
  \crefname{lem}{Lemma}{Lemmata}
  \crefname{cns}{Construction}{Constructions}
  \crefname{defn}{Definition}{Definitions}
  \crefname{rmk}{Remark}{Remarks}
  \crefname{covn}{Convention}{Conventions}
  \crefname{notn}{Notation}{Notations}
  \crefname{exa}{Example}{Examples}
  \crefname{thrm}{Theorem}{Theorems}
  \crefname{thrmprime}{Theorem'}{Theorem'}
  \newlist{axenum}{enumerate}{1} %
  \setlist[axenum]{label=(\Alph*)}
  \crefname{axenumi}{Axiom}{Axioms}
\setlist{itemsep=.1em}
\tikzset{-,>=stealth',shorten >=2pt,shorten <=2pt,
  main node/.style={circle,fill=blue!20,inner sep=2pt,font=\sffamily\tiny\bfseries},
  desc/.style={font=\sffamily\footnotesize, align=center}
}
\newcommand\footnotewomarker[1]{%
  \begingroup
  \renewcommand\thefootnote{}\footnote{#1}%
  \addtocounter{footnote}{-1}%
  \endgroup
}
\newcommand{\icat}{$\infty$\nobreakdash-category\xspace}
\newcommand{\icats}{$\infty$\nobreakdash-categories\xspace}
\newcommand{\icategorical}{$\infty$\nobreakdash-catego\-ri\-cal\xspace}
\newcommand{\ioperad}{$\infty$-operad\xspace}
\newcommand{\ioperads}{$\infty$-operads\xspace}
\newcommand{\subicat}{sub-$\infty$-category\xspace}
\newcommand{\iCat}{\textup{Cat}_{\infty}}
\newcommand{\Prst}{\textup{Pr}_{\textup{st}}}
\newcommand{\Prstomega}{\textup{Pr}_{\textup{st}}^{\omega}}
\DeclareMathOperator{\Hm}{H}
\DeclareMathOperator{\Hom}{Hom}
\DeclareMathOperator{\mapsp}{map}
\DeclareMathOperator{\Pic}{Pic}
\DeclareMathOperator{\M}{M}
\DeclareMathOperator{\Spec}{Spec}
\newcommand{\one}{\mathbb{1}}
\newcommand{\id}{\on{id}}
\newcommand{\into}{\hookrightarrow}
\newcommand{\calg}[1]{%
  \operatorname{CAlg}(#1)}
\newcommand{\calgnu}[1]{%
  \operatorname{CAlg_{nu}}(#1)}
\newcommand{\calgqu}[1]{%
  \operatorname{CAlg_{qu}}(#1)}
\newcommand{\fun}[3][]{%
  \ifblank{#1}{
    \operatorname{Fun}(#2,#3)}
  {
    \operatorname{Fun}^{#1}(#2,#3)}}
\newcommand{\Mod}[2][]{%
  \ifblank{#1}{
    \operatorname{Mod}(#2)}
  {
    \operatorname{Mod}_{#1}(#2)}}
\providecommand{\aone}{\ensuremath{\mathbb{A}^{1}}}
\DeclareMathOperator{\opname}{op}
\DeclareMathOperator{\Ind}{Ind}
\newcommand{\FF}{\mathbb{F}}
\newcommand{\Q}{\mathbb{Q}}
\newcommand{\Qellbar}{\overline{\mathbb{Q}}_{\ell}}
\newcommand{\Qbar}{\overline{\mathbb{Q}}}
\newcommand{\CC}{\mathbb{C}}
\newcommand{\RR}{\mathbb{R}}
\newcommand{\Z}{\mathbb{Z}}
\newcommand{\E}[1]{\mathbb{E}_{#1}}
\newcommand{\affine}{\mathbb{A}}
\newcommand{\lcl}{\mathcal{L}}
\newcommand{\cD}{\mathcal{D}}
\newcommand{\et}{\mathrm{\acute{e}t}}
\newcommand{\Nis}{\mathrm{Nis}}
\newcommand{\an}{\mathrm{an}}
\newcommand{\mot}{\mathrm{mot}}
\newcommand{\ct}{\mathrm{ct}}
\newcommand{\on}[1]{\operatorname{#1}}
\newcommand{\op}{^{\opname}}
\newcommand{\lax}{\mathrm{lax}}
\newcommand{\coh}{\mathrm{coh}}
\newcommand{\BM}{\mathrm{BM}}
\providecommand{\Fun}{\ensuremath{\mathrm{Fun}}}
\DeclareMathOperator{\Homint}{\underline{Hom}}
\providecommand{\Fin}{\ensuremath{\mathrm{Fin}}}
\providecommand{\cofib}{\ensuremath{\mathrm{Cofib}}}
\providecommand{\Cat}[1]{\mathrm{Cat}_{\mathrm{#1}}}
\providecommand{\mCat}[1]{\mathrm{Cat}^{\otimes}_{\mathrm{#1}}}
\providecommand{\Tri}{\mathrm{Tri}}
\providecommand{\mTri}{\mathrm{Tri}^{\otimes}}
\newcommand{\CoSy}[1]{
  \operatorname{CoSy}_{#1}}
\providecommand{\COSY}[1]{\mathbf{CoSy}^{\textup{Pr}}_{\mathrm{#1}}}
\providecommand{\cCOSY}[1]{\mathbf{CoSy}^{\omega}_{\mathrm{#1}}}
\newcommand{\propsub}{\mathrm{prop}}
\DeclareMathOperator{\SH}{SH}
\DeclareMathOperator{\DA}{DA}
\DeclareMathOperator{\DM}{DM}
\DeclareMathOperator{\DTM}{DTM}
\DeclareMathOperator{\Shv}{Shv}
\DeclareMathOperator{\thom}{Th}
\DeclareMathOperator{\mthom}{MTh}
\DeclareMathOperator{\thomexp}{\ul{Th}}
\DeclareMathOperator{\mthomexp}{\ul{MTh}}
\providecommand{\Ga}[1]{\ifblank{#1}{\ensuremath{\mathbb{G}_{\mathrm{a}}}}{\ensuremath{\mathbb{G}_{\mathrm{a},#1}}}}
\providecommand{\FT}{\mathrm{FT}}
\newcommand{\Sch}[2][]{  \ifblank{#2}{\operatorname{Sch}^{#1}}{\operatorname{Sch}_{#1}^{\text{#2}}}}
\newcommand{\sch}[1][]{
  \ifblank{#1}{\operatorname{Sch}}{\operatorname{Sch}_{#1}}}
\newcommand{\schop}[1][]{
  \ifblank{#1}{\operatorname{Sch}^{\textup{op}}}{\operatorname{Sch}_{#1}^{\textup{op}}}}
\newcommand{\Sm}[1]{
    \operatorname{Sm}_{#1}}
\newcommand{\sm}[1]{
  \operatorname{Sm}^{\grp}_{#1}}
\newcommand{\smop}[1]{
  (\operatorname{Sm}^{\grp}_{#1})^{\opname}}
\newcommand{\Sp}{\mathrm{Sp}}
\newcommand{\fin}{\operatorname{Fin}_*}
\newcommand{\Comm}{\operatorname{Comm}^\otimes}
\newcommand{\Commnu}{\operatorname{Comm_{nu}}^\otimes}
\newcommand{\tnu}{\textup{nu}}
\newcommand{\isoto}{\overset{\sim}{\,\to\,}}
\newcommand{\isofrom}{\overset{\sim}{\,\leftarrow\,}}
\let\xfrom\xleftarrow
\let\xto\xrightarrow
\newcommand{\mc}[1]{\mathcal{#1}}
\newcommand{\cC}{\mc{C}}
\newcommand{\cE}{\mc{E}}
\newcommand{\cO}{\mc{O}}
\newcommand{\resp}[1]{%
  \textup{(}resp.\ #1\textup{)}\xspace}
\providecommand{\ep}[2][]{\ifblank{#1}{{#2}_{\mathrm{exp}}}{{#2}_{\mathrm{exp},#1}}}
\providecommand{\hep}[2][]{\ifblank{#1}{{\mathrm{h}#2}_{\mathrm{exp}}}{\mathrm{h}{#2}_{\mathrm{exp}}_{#1}}}
\providecommand{\h}[1]{\mathrm{h}#1}
\providecommand{\grp}{\mathbb{G}}
\providecommand{\plus}{\ensuremath{\mathrm{\mu}}}
\providecommand{\unit}{0}
\providecommand{\base}{B}
\providecommand{\conv}{\mathop{\scalebox{1.3}{\raisebox{-0.2ex}{$\ast$}}}}%
\providecommand{\convbox}{\circledast\boxtimes}%
\providecommand{\convsharp}{\conv_{\sharp}}%
\DeclareMathOperator{\Corr}{Corr}
\DeclareMathOperator{\corr}{corr}
\providecommand{\pot}{a}
\providecommand{\ul}{\underline}
\DeclareFontFamily{U}{mathb}{\hyphenchar\font45}
\DeclareFontShape{U}{mathb}{m}{n}{
      <5> <6> <7> <8> <9> <10> gen * mathb
      <10.95> mathb10 <12> <14.4> <17.28> <20.74> <24.88> mathb12
      }{}
\DeclareSymbolFont{mathb}{U}{mathb}{m}{n}
\DeclareMathSymbol{\boxasterisk}  {2}{mathb}{"66}
\providecommand{\convboxsharp}{\boxasterisk_{\sharp}}%
\def\namedlabel#1#2{\begingroup
  \def\@currentlabel{#2}%
  \textbf{#2}
   \label{#1}\endgroup
}
 \newcommand{\kw}{motivic homotopy theory, coefficient system, six-functor formalism, exponential motives, varieties with potential, Landau-Ginzburg models}
\newcommand{\ttl}{Exponentiation of coefficient systems and exponential motives}
\title{\sffamily\sc \ttl}
\begin{document}
\author{Martin Gallauer and Simon Pepin Lehalleur\footnotewomarker{\textit{Keywords:} \kw}}
\maketitle
\begin{abstract}
\noindent We construct new six-functor formalisms capturing cohomological invariants of varieties with potentials. Starting from any six-functor formalism~$C$, encoded as a coefficient system, we associate a new six-functor formalism~$\ep{C}$. This requires in particular constructing the convolution product symmetric monoidal structure at the \icategorical level. We study~$\ep{C}$ and how it relates to $C$. We also define motives in $\ep{C}$ attached to varieties with potential and study their properties.
\setcounter{tocdepth}{2}
\end{abstract}
\medskip
\tableofcontents

\section{Introduction}
\label{sec:intro}

\subsection*{Exponential cohomological invariants}
\label{sec:expon-cohom-invar}

A scheme with potential (or Landau-Ginzburg model) is a pair $(X,\pot)$ where $X$ is a scheme and $\pot\in \cO(X)$ is a regular function, which we think of as a morphism $\pot:X\to\aone\simeq \Ga{}$ to the additive group. Schemes with potential occur naturally in several areas of algebraic geometry: nearby and vanishing cycles, mirror symmetry for Fano varieties~\cite{mirror-fano}, Landau-Ginzburg/Calabi-Yau correspondence for quasihomogeneous potentials~\cite{fjr:LG-CY}, Donaldson-Thomas theory~\cite{ks-coha}, etc.

Schemes with potential have ``exponential'' cohomological invariants which generalize classical Weil cohomologies for ordinary schemes. For instance, if $X$ is smooth over a field~$k$ of characteristic $0$, one can consider the \emph{twisted de\,Rham cohomology} $\Hm^{*}_{\mathrm{dR}}(X,\cE^{\pot})$ where $\cE^{\pot}:=(\cO_{X},d-d\pot\wedge (-))$ is the exponential line bundle with connection associated to the function~$\pot$; this example gives its name to the general theory. If $X$ is locally of finite type over $\CC$, there is the \emph{rapid decay cohomology} $\Hm^{*}_{\mathrm{rd}}(X^{\an},\Z)$ which is roughly speaking the singular cohomology of a generic fiber of $\pot^{\an}:X^{\an}:=X(\CC)\to\CC$. Over a field $k$ of characteristic $p>0$, we have the \emph{twisted $\ell$-adic cohomology} $\Hm^{*}_{\ell}(X_{\bar{k}},\pot^{*}\lcl_{\psi})$ where $\lcl_{\psi}\in \mathrm{Sh}_{\et}(\Ga{\bar{k}},\overline{\Q}_{\ell})$ is the Artin--Schreier sheaf attached to an additive character $\psi:\FF_{p}\to\overline{\Q}_{\ell}^{\times}$.

When $\pot$ is the zero function, these cohomology theories recover the standard de\,Rham, Betti and $\ell$-adic cohomology groups of $X$, respectively. In general, however, they are genuinely new invariants which cannot be the realization of classical motives. For instance, twisted de\,Rham cohomology groups carry an \emph{irregular Hodge filtration} with rational breaks which does not fit into a Hodge structure~\cite{yu:hodge-filtration,esnault-sabbah-yu:irregular-hodge-degeneration}. And the comparison between twisted de\,Rham cohomology and rapid decay cohomology for varieties over number fields leads to \emph{exponential periods}, which form a $\Qbar$-subalgebra of $\CC$ containing~$e$ and the Euler-Mascheroni constant $\gamma$ and is (conjecturally) strictly larger than the $\Qbar$-algebra of classical periods~[\citealp[\S\,4.3]{kontsevich-zagier:periods}, \citealp{fresan-jossen:expmot}]. Finally, twisted $\ell$-adic cohomology over finite fields gives rise, via taking traces of Frobenius, to many interesting $\overline{\Q}_{\ell}$-exponential sums such as Kloosterman sums~\cite[Exposé~6]{deligne:sga4.5} \cite{fresan-sabbah-yu:hodge-kloosterman}.

Each of these cohomology theories is related to a sheaf theory: holonomic $\cD$-modules in the de\,Rham case, $\ell$-adic sheaves in the $\ell$-adic case and enhanced ind-sheaves (in the sense of~\cite[\S\,4]{dagnolo-kashiwara:riemann-hilbert}) in the Betti case. Exponential cohomology is obtained as the cohomology of an ``exponential local system'': $\cE^{\pot}$ in the de\,Rham case, $\pot^{*}\lcl_{\psi}$ in the $\ell$-adic case and the exponential enhanced (ind-)sheaves of~\cite[\S 3]{dagnolo-kashiwara:fourier-sato} in the Betti case\footnote{We could not find a precise statement of the comparison between the framework of d'Agnolo-Kashiwara and rapid decay cohomology, but apparently this comparison is known to experts.}. When $\pot$ is not constant, the exponential local system $\cE^{\pot}$ has \emph{irregular singularities} at infinity, and this is another reason why its cohomology is not of motivic origin in the classical sense, because of the regularity of Gauss--Manin connections.

The emerging theory of exponential motives aims to generalize results and conjectures about classical motives to varieties with potential. The idea of such a theory was first suggested by Kontsevich and Zagier in~\cite[\S 4.3]{kontsevich-zagier:periods}, developed in the context of \emph{exponential mixed Hodge structures} by Kontsevich and Soibelman in~\cite[\S 4]{ks-coha} and explored in the context of \emph{exponential Nori motives} by Fres\'{a}n and Jossen~\cite{fresan-jossen:expmot}. Fres\'{a}n and Jossen define, for $k\subset \CC$, a neutral $\Q$-linear Tannakian category $\mathbf{M}^{\exp}(k)$ with two fiber functors corresponding to twisted de\,Rham and rapid decay cohomology. The category $\mathbf{M}^{\exp}(k)$ contains as a full subcategory the abelian category of classical Nori motives~\cite[Theorem 5.1.1]{fresan-jossen:expmot}. This construction allows Fres\'{a}n and Jossen, among other things, to define a period torsor and to state and study a generalization of the Grothendieck-Kontsevich-Zagier period conjecture for exponential periods~\cite[Conjecture 1.3.2]{fresan-jossen:expmot}.

\subsection*{Outline}
\label{sec:outline--content}

In this paper, we define categories of exponential motivic sheaves in the context of motivic homotopy theory after Morel and Voevodsky. Our approach is centered on the six-functor formalism, which we approach through the notion of \emph{coefficient systems}. A coefficient system is an axiomatization of the common properties satisfied both by classical sheaf theories and by the stable motivic homotopy theory~$\SH$ (see \cref{defn:cosy}). As we recall in \cref{sec:cs}, a coefficient system can be extended to a six-functor formalism satisfying the usual properties.

We work over a base scheme~$\base$. Given a coefficient system~$C$ on $\sch[\base]$, we construct an associated \emph{exponential coefficient system}~$\ep{C}$ (\Cref{sta:Cexp-cosy}).  The definition, inspired by~\cite{ks-coha}, is very simple. For a $\base$-scheme~$X$, writing $\pi:\Ga{X}\to X$ for the structure morphism of the additive group over~$X$, we set
\[
\ep{C}(X)\ :=\ \{\, M\in C(\Ga{X})\ \mid\ \pi_{!}M=0\,\}
\]
(or equivalently the Verdier quotient of~$C(\Ga{X})$ by the constant objects~$\pi^{*}C(X)$, see \cref{sec:exp-subcats}) which we equip with the convolution product
\[
M\conv N\ :=\ \plus_{!}\left(M\boxtimes_{X} N\right)
\]
where $\plus:\Ga{X}\times\Ga{X}\to \Ga{X}$ is the addition map. The inclusion functor $\ep{C}(X)\to C(\Ga{X})$ admits a right adjoint
\[
\Pi:C(\Ga{X})\to \ep{C}(X),\qquad M\mapsto M\conv \E{0},
\]
where $\E{0}\in\ep{C}(X)$ is the unit for the convolution product.\footnote{All of this actually works for some groups other than~$\Ga{}$ as well; see \Cref{conv:group-scheme}.}

Once we establish that~$\ep{C}$ forms a coefficient system, the general theory provides us with a six-functor formalism. In particular, for a morphism $f:Y\to X$, we have adjunctions
\[
\ul{f}^{*}:\ep{C}(X)\rightleftarrows \ep{C}(Y):\ul{f}_{*}\quad\text{and}\quad
\ul{f}_{!}:\ep{C}(Y)\rightleftarrows \ep{C}(X):\ul{f}^{!}
\]
under (the familiar) mild assumptions on~$f$.
(Here, we underline these functors to distinguish them from those in~$C$.). We describe these functors as well as the internal Homs of~$\ep{C}$ explicitly in terms of the operations in~$C$ and the functor~$\Pi$ (\cref{sta:identify-four-functors-hexp,sta:int-homs}).
Consequently, the exponential theory~$\ep{C}$ remains as computable as~$C$ itself.

We can then apply this general construction. From the coefficient system~$\SH$ of \emph{stable motivic homotopy types} we obtain a six-functor formalism~$\ep{\SH}$ of \emph{exponential stable motivic homotopy types}. Similarly, from the coefficient system~$\DM$ of \emph{motivic sheaves}, we obtain a six-functor formalism~$\ep{\DM}$ of \emph{exponential motivic sheaves}.
We refer the reader to \cref{sec:cs-examples} for other examples.

The simple-looking definition above hides a difficulty. We want our six-functor formalisms to be valued in symmetric monoidal stable \icats rather than bare tensor triangulated categories. Such an enhancement exists in most examples of interest and is extremely useful to treat questions of descent and equivariance, for example. The difficulty, then, is to make sense of the convolution product on~$\ep{C}(X)$ as a symmetric monoidal \icat. We propose two approaches in this paper (\cref{sec:cs-icats}). The first one, which we only develop informally, is to use the idea, originally due to Lurie, of encoding six-functor formalisms with correspondences. This idea has been developed in~\cite{gaitsgory-rozenblyum:vol1}, with an extension to the lax-monoidal setting in~\cite[Appendix~A]{richarz-scholbach:motivic-satake}. This conceptually satisfying approach, however, involves $(\infty,2)$-categories whose foundations aren't as rigorously established. For this reason, we give another definition of the convolution product via a more cumbersome but strictly $(\infty,1)$-categorical construction, relying on symmetric monoidal coCartesian fibrations (\cref{sec:cs-icats}).\footnote{Note that recent work of Mann \cite[Appendix~A]{mann:padic} could provide a new approach to six-functor formalisms via correspondences in our setting without $(\infty,2)$-categories and make the first construction of convolution unconditional.}

The definition of~$\ep{C}(\base)$ does not involve at face value any $\base$-schemes with potential. If $C=\SH$ is Morel-Voevodsky's stable motivic homotopy theory (the initial coefficient system by~\cite{drew-gallauer:usf}), then $\ep{\SH}(\base)$ is a subcategory of~$\SH(\Ga{\base})$: By definition of $\SH$, this means that $\ep{\SH}(\base)$ is built out of smooth $\Ga{\base}$-schemes, or in other words, $\base$-schemes $f:X\to \base$ with a \emph{smooth} potential $\pot:X\to \Ga{}$. However, with the six-functor formalism at hand, we can actually construct exponential motives for arbitrary $\base$-schemes with potential. We first construct in \cref{sec:exponential-twists}, for any coefficient system $C$ and any potential $\pot:X\to \Ga{}$, an \emph{exponential twist} $\E{\pot}\in \ep{C}(X)$ which plays the role of the exponential $\cD$-module~$\cE^{\pot}$. As mentioned above, the object~$\E{0}$ associated to the zero potential is the unit for the convolution product. We then define the corresponding \emph{exponential (homological) motive}~$M_{\base}(X,\pot)\in\ep{C}(\base)$ as
\[
M_{\base}(X,a)\ :=\ \underline{f}_{!}\left(\E{-\pot}\conv\underline{f}^{!}\E{0}\right)
\]
and study basic properties of this construction in \cref{sec:motives}. This procedure allows us, under some assumptions, to construct generators of the categories~$\ep{C}(\base)$ (\cref{sec:generators}). In particular, we study generators of the categories~$\ep{\DM}(\base)$ and~$\ep{\SH}(\base)$. This also leads us to define \emph{exponential (motivic) cohomology groups} as certain Hom groups in~$\ep{C}(X)$, which we compute in terms of morphism groups in the original coefficient system~$C$ (\cref{prop:exp-mot-coh-comp}).

\subsection*{Related and Future work}\label{sec:rel-future}

With this basic formalism in place, we plan to explore other constructions specific to exponential motives. One of them is the Fourier transform. Sheaf-theoretic Fourier transforms exist in various contexts: Fourier-Laplace transform of holonomic $\cD$-modules in characteristic~$0$, Fourier-Deligne transform for $\ell$-adic sheaves in positive characteristic~\cite{Laumon-epsilon}, enhanced Fourier-Sato transform for enhanced ind-sheaves~\cite{dagnolo-kashiwara:fourier-sato}, etc. On the other hand, it is well-known that no motivic lift of these operations exists classically, so one motivation for exponential motives is that they admit such a Fourier transform, satisfying all the standard properties of the Fourier-Deligne transform. At the triangulated level, it is simply defined, for $M\in C_{\exp}(\Ga{})$, as:
\[
\FT(M)\ :=\ \ul{p_{1}}_{!}\left(\ul{p_{2}}^{*}M\conv \ul{\plus}^{*}\E{\id}\right)[+1]
\]
Following ideas of Beraldo~\cite[\S 5.1]{beraldo:loop-groups-acts} involving correspondences decorated by a potential, we will construct an \icategorical enhancement of this functor.

We will construct realization functors which connect~$\ep{\SH}$ and~$\ep{\DM}$ to classical sheaf theories and recover the exponential cohomology theories (rapid decay, exponential de\,Rham, twisted $\ell$-adic) that motivated the theory in the first place. For instance, if $X$ is a finite type scheme over a field~$k$ of positive characteristic~$p$ and~$\psi$ an additive character of $\FF_{p}$ as above, we will define a twisted $\ell$-adic symmetric monoidal realization functor (depending on~$\psi$) as
\[
\mathrm{R}_{\exp,\ell}^{\psi}:\ep{\DM}(X,\Q)\to \mathrm{D}(X_{\et},\Qellbar),\qquad M\mapsto i_{1}^{*}\FT_{\ell,\psi}(\mathrm{R}_{\ell}(M))[+1],
\]
where the functor $\mathrm{R}_{\ell}$ is the $\ell$-adic realization for classical motives on $\Ga{X}$, the functor $\FT_{\ell,\psi}$ is the $\ell$-adic Fourier--Deligne transform for $\psi$ and $i_{1}:\Spec(k)\to\Ga{k}$. This definition may seem complicated but it is a convenient way to construct $\mathrm{R}_{\exp,\ell}^{\psi}$ as a symmetric monoidal functor, because of how the Fourier transform interacts with convolution products~\cite[Proposition~1.2.2.7]{Laumon-epsilon}.

We will also connect our construction to the category~$\mathbf{M}^{\exp}(k)$ of exponential Nori motives~\cite{fresan-jossen:expmot} (where $k$ is a subfield of~$\CC$) by constructing a symmetric monoidal realization functor
\[
R_{\exp,\mathrm{Nori}}:\DM_{\mathrm{exp},\textup{c}}(k,\Q)\to \mathrm{D^{b}}(\mathbf{M}^{\exp}(k))
\]
which in the classical setting is due to Nori~\cite{nori-lectures,harrer:phd}.
We expect that $R_{\exp,\mathrm{Nori}}$ can be used to establish an isomorphism between the exponential motivic Galois group~$G^{\exp}(k)$ of~\cite{fresan-jossen:expmot} and a truncation of the derived Tannakian dual of~$\ep{\DM}(k,\Q)$ for the Betti realization, just as in the classical setting~\cite{choudhury-gallauer:ayoub-nori}.

As we point out in \cref{rmk:tate-mot}, the category of \emph{exponential mixed Tate motives} $\ep{\DTM}(X,\Q)$ on a scheme $X$ satisfying the Beilinson-Soulé conjecture admits a t-structure by \cite{levine:tate}. We plan to study this category and in particular the associated exponential periods when $X$ is the spectrum of a number field.

These and similar advances will rely on a good understanding of the exponential motivic cohomology whose study we initiate here.
One property that we presently do not discuss is Poincar\'e duality.
That property seems to be much more subtle in the exponential setting and we will come back to it in future work.

The idea of adding a coordinate to encode ``exponential'' phenomena and irregular singularities has been used in several guises in recent advances on the irregular Riemann-Hilbert correspondence and irregular Hodge theory, through the aforementioned enhanced sheaves of Tamarkin~\cite{tamarkin:microlocal} and d'Agnolo-Kashiwara~\cite{dagnolo-kashiwara:riemann-hilbert} (using sheaves on $X\times \RR$ where $X$ is a real or complex manifold), the mixed twistor $\cD$-modules of Mochizuki~\cite{mochizuki:mtm} and the irregular mixed Hodge modules of Sabbah-Yu~\cite{sabbah:irregular-hodge} (using sheaves on $X\times \CC$).

Another approach to exponential motivic homotopy theory, which was suggested by Marco Robalo, would be to take schemes with potential as the primary object of study and to directly construct an exponential stable motivic homotopy category ``à la Morel--Voevodsky''. More precisely, starting with Nisnevich sheaves of spaces on the slice category $\sm{X}/\Ga{S}$ of $X$-schemes with potential,  imposing $\aone$-homotopy invariance, and $\otimes$-inverting suitable objects, one obtains a theory that may be closely related to our $\ep{SH}(-)$.
We intend to explore this relationship in the future. A third approach to exponential motives, based on foliations, is currently being studied by Hennion-Holstein-Robalo in the context of gluing categories of matrix factorizations in Donaldson-Thomas theory \cite{robalo:slides}.

\subsection*{Acknowledgments}
\label{sec:acknowledgments}
We thank Javier Fres\'{a}n for many helpful discussions and his constant support, and Marco Robalo for informing us about his approach to the subject.

MG was supported by a Titchmarsh Research Fellowship of the University of Oxford. He also thanks the Max Planck Institute and the Hausdorff Research Institute for Mathematics in Bonn for their hospitality. SPL was supported by the DFG via the SPP 1786 and by the Netherlands Organisation for Scientific Research (NWO), under the TOP grant of Ben Moonen and Lenny Taelman (project number 613.001.752). The SPP 1786 (resp. the TOP grant) also financed a research visit of MG and Javier Fres\'{a}n to Berlin (resp. of MG to Nijmegen). 
\section{Conventions}
\label{sec:conventions}

\subsection{Algebraic geometry}

Throughout we fix a Noetherian scheme~$\base$ which serves as our base scheme. All our schemes are of finite type over~$\base$, and we denote the category of $\base$-schemes of finite type by $\sch[\base]$.
\subsection{Category theory}

Generally, we follow the notations of~\cite{HTT,HA} for \icats and symmetric monoidal \icats.
Ordinary $1$-categories are identified with their nerves viewed as \icats.
We denote symmetric monoidal \icats by symbols like $C^{\otimes}$ when we want to emphasize the monoidal structure~$C^\otimes\to\Fin_*$, but we sometimes identify~$C^\otimes$ with its underlying \icat~$C$ when the monoidal structure is clear from the context.
Given objects~$X,Y$ in a stable \icat~$C$, their mapping spectrum is denoted by $\mapsp(X,Y)\in\Sp$.

We denote by $\Cat{st}$ the \icat of stable \icats and exact functors. It admits a symmetric monoidal structure~\cite[5.1.1]{hermitian1} for which functors $C\otimes D\to E$ are the same as biexact functors $C\times D\to E$.
The unit is given by the \icat of finite spectra $\Sp^{\omega}$.
We thereby identify stably symmetric monoidal \icats with $\calg{\Cat{st}}$.
A similar discussion applies to the \icat $\Prst$ (resp. $\Prstomega$) of stable presentable (resp. stable compactly generated) categories, see e.g. \cite[\S 4.8.2]{HA} for the stable presentable case.

Given a symmetric monoidal $\infty$-category $C^{\otimes}$, we denote by $\Mod{C}$ the generalised \ioperad of module objects in $C^{\otimes}$. Informally, the objects of $\Mod{C}$ are pairs $(A,M)$ where $A$ is a commutative algebra object in $C^{\otimes}$ and $M$ is an $A$-module, and morphisms from $(A,M)$ to $(A',M')$ are pairs $(\phi:A\to A',f:M\to M')$ where $\phi$ is a morphism of commutative algebras and $f$ is a morphism of $A$-modules, with $M'$ considered as an $A$-module via $\phi$. In particular, we have $\Mod{\Cat{st}}$, which admits a forgetful functor (a map of generalised \ioperads) to $\calg{\Cat{st}}$.

Given two \icats $C$ and $D$, we denote by $\Fun(C,D)$ the corresponding functor \icat.
If $D$ is the underlying \icat of a symmetric monoidal \icat $D^{\otimes}$, then the functor \icat $\Fun(C,D)$ has a corresponding symmetric monoidal structure $\Fun(C,D)^{\otimes}$ given by the pointwise symmetric monoidal structure (this is the special case of~\cite[Remark 2.1.3.4]{HA} with $\mathcal{O}^{\otimes}=\Fin_*$ the commutative \ioperad).

\section{Coefficient systems}
\label{sec:cs}

The notion of coefficient system provides a convenient framework for discussing theories of (derived) sheaves on schemes in algebraic geometry, with a view towards the formalism of Grothendieck's six functors.
It is due to Voevodsky and Ayoub~\cite{deligne:cross-functors,ayoub:thesis-1} (in a 2-categorical setting) and further studied in Cisinski-Déglise~\cite{cisinski-deglise:trig-cats}, Drew~\cite{drew:motivic-hodge-modules} (in our current infinity-categorical setting) (see also~\cite{khan-phd,robalo:thesis, hoyois:equivariant,ayoub:anabel}).
For a leisurely introduction we refer to~\cite{gallauer:six-functor-survey}.
The term ``coefficient system'', suggested by~\cite{drew:motivic-hodge-modules} after Grothendieck~\cite{grothendieck:rets}, is meant to suggest that we are parametrizing the possible coefficients of various sheaf cohomology theories.

As we discuss in \Cref{sec:cs-examples}, classical sheaf theories (sheaves for the analytic topology, $\ell$-adic sheaves, $\mathcal{D}$-modules) provide important examples of coefficient systems. The definition is nevertheless particularly suitable for theories coming from motivic homotopy theory, since it axiomatises the operations and properties which are formally available from the Morel-Voevodsky construction.

\subsection{Definitions}

\begin{defn}
\label{defn:cosy}%
A \emph{coefficient system (over $\base$)} is a functor
$C:\schop[\base]\to\calg{\Cat{st}}$ satisfying the following axioms (where we write $f^{*}=C(f)$ for a morphism $f$ of $\base$-schemes).
\begin{enumerate}[(1)]
\item
\namedlabel{ax:left}{(Left)}
\label{Ax:left}%
For each smooth morphism $p:Y\to X\in\sch[\base]{}$, the functor $p^*:C(X)\to C(Y)$ admits a left adjoint $p_\sharp$, and we have the following equivalences.
\begin{description}
\item[\namedlabel{ax:smooth-bc}{(Smooth base change)}]
For each cartesian square
\[
\begin{tikzcd}
Y'
\ar[r, "p'" above]
\ar[d, "f'" left]
&
X'
\ar[d, "f" right]
\\
Y
\ar[r, "p" above]
&
X
\end{tikzcd}
\]
in $\sch[\base]{}$, the Beck-Chevalley transformation $p'_\sharp (f')^*\to f^*p_\sharp$ is an equivalence.
\item[\namedlabel{ax:projection}{(Smooth projection formula)}] The canonical transformation
\[
p_\sharp(p^*(-)\otimes -)\to -\otimes p_\sharp(-)
\]
is an equivalence.
\end{description}
\item
\namedlabel{ax:right}{(Right)}
For every $X\in\sch[\base]{}$ and every $f:Y\to X$, the following properties hold.
\label{Ax:right}%
\begin{description}
\item[\namedlabel{ax:closed}{(Internal hom)}]
The symmetric monoidal structure on $C(X)$ is closed.
\item[\namedlabel{ax:pushforwards}{(Push-forward)}]
The pull-back functor $f^*$ admits a right adjoint $f_*:C(Y)\to C(X)$.
\end{description}

\item
\namedlabel{ax:loc}{(Localization)}
The \icat $C(\emptyset)\simeq 0$ is trivial.
And for each closed immersion $i:Z\into X$ in $\sch[\base]{}$ with complementary open immersion $j:U\into X$, the square
\begin{equation}
\label{eq:def-cosy-loc}
\begin{tikzcd}
C(Z)
\ar[r, "{i_*}"]
\ar[d]
&
C(X)
\ar[d, "j^*"]
\\
C(\emptyset)\simeq 0
\ar[r]
&
C(U)
\end{tikzcd}
\end{equation}
is Cartesian in $\Cat{st}$.
\item
\label{Ax:hty-stable}%
For each $X\in\sch[\base]{}$, if $p:\affine^1_X\to X$ denotes the canonical projection with zero section $s:X\to\affine^1_X$, then:
\begin{description}
\item[\namedlabel{ax:hty}{($\affine^1$-homotopy)}]
The functor $p^*:C(X)\to C(\affine^1_X)$ is fully faithful.
\item[\namedlabel{ax:stable}{(Tate stability)}]
The composite $p_{\sharp} s_*:C(X)\to C(X)$ is an equivalence.
\end{description}

\end{enumerate}
\end{defn}

\begin{rmk}\label{rmk:cs-non-mon}
It is also sometimes convenient to work with the variant of this definition without symmetric monoidal structures, i.e. a functor  $C:\schop[\base]\to\Cat{st}$ satisfying the same axioms except the smooth projection formula and the internal homs. 
\end{rmk}

\begin{rmk}
\label{rmk:smooth-projection-formula}%
Let us expand a little on condition \ref{ax:projection}. The functor~$p^{*}$, like any symmetric monoidal functor in $\calg{\mCat{st}}$, has a canonical upgrade to a morphism $(C(Y),C(Y))\to(C(Y),C(X))$ in $\Mod[]{\Cat{st}}$, where $C(Y)$ acts on $C(X)$ via $p^{*}$ itself. In particular, if we write $m:C(Y)\times C(X)\to C(X)$ for the action map, we have a commutative square
  \[
  \begin{tikzcd}
C(Y)\times C(Y)
\ar[r,"\id\times p^{*}"]
\ar[d,"\otimes_{C(Y)}"]
&
C(Y)\times C(X)
\ar[d,"m"]
\\
C(Y)
\ar[r,"p^{*}"]
&
C(X)
\end{tikzcd}
\]
and the canonical map in \ref{ax:projection} is the mate of this square when passing to left adjoints. The condition \ref{ax:projection} is that this mate is an isomorphism. By~\cite[Proposition 9.4.15]{robalo:thesis}, this is equivalent to $p_{\sharp}$ lifting to a map $(C(Y),C(X))\to (C(Y),C(Y))$ in $\Mod{\Cat{st}}$. (More precisely, \cite[Proposition 9.4.15]{robalo:thesis} works in~$\Mod{\Prst}$, but the proof works in the non-presentable setting as well.)
\end{rmk}  

\begin{defn}
  A coefficient system $C$ over $\base$ is \emph{presentable} if it factors through the subcategory $\calg{\Prst}$ of stable presentably symmetric monoidal \icats.
  In other words, for every $X\in \sch[\base]$, the symmetric monoidal \icat $C(X)$ is presentably symmetric monoidal, and for every morphism $f:Y\to X$, the functor $f^{*}:C(X)\to C(Y)$ is cocontinuous (or equivalently is a left adjoint).

Similarly, we say that $C$ is \emph{compactly generated} if it factors through the subcategory $\calg{\Prstomega}$ of compactly generated symmetric monoidal \icats and compact-preserving cocontinuous functors. 
\end{defn}

\begin{rmk}
For a functor $C:\sch[\base]\op\to\calg{\Prst}$ taking values in stable presentable \icats, the adjoint functor theorem \cite[Corollary 5.5.2.9]{HTT} allows to reformulate part of the axioms.
\begin{enumerate}
\item The existence of $f_{*}$ is automatic
(since $f^{*}$ is cocontinuous), and so is the existence of internal homs (since the symmetric monoidal product is cocontinuous in each variable).
In other words, the axiom \ref{ax:right} is automatic.
\item  The existence of $p_{\sharp}$ for
$p$ smooth is equivalent to $p^{*}$ being continuous and accessible.
And the axiom \ref{ax:projection} can also be expressed as $p_{\sharp}$ lifting to a
map in $\Mod{\Prst}$.
\end{enumerate}

\end{rmk}

\begin{defn}
\label{defn:cs-morphism}%
  \begin{enumerate}[label=(\roman{*})]
  \item   Let $C_{1},C_{2}$ be coefficient systems over $\base$. A \emph{morphism of coefficient systems} is a morphism $\phi:C_{1}\to C_{2}$ in the functor $\infty$-category $\Fun(\schop[\base],\calg{\Cat{st}})$ with the following property: for every morphism $f:Y\to X$ in $\sch[\base]$, the Beck-Chevalley transformation
    \[
f_{\sharp}\phi_{Y}\to \phi_{X}f_{\sharp}:C_{1}(Y)\to C_{2}(X)
\]
is invertible.
\item The $\infty$-category $\CoSy{\base}$ of coefficient systems over $\base$ is defined as the  subcategory of $\Fun(\schop[\base],\calg{\Cat{st}})$ whose objects are coefficient systems and $1$-morphisms are defined as above.
We also write $\COSY{\base}$ (resp. $\cCOSY{\base}$) for the subcategory of $\CoSy{\base}$ spanned by presentable \resp{compactly generated} coefficient systems and cocontinuous \resp{and compact-preserving} morphisms.
  \end{enumerate}
\end{defn}

In practice, \ref{ax:loc} is often the most difficult to verify.
In order to establish that it is preserved by certain constructions, we will find it useful to reformulate the axiom as follows.

\begin{lem}
\label{sta:localization-axiom}%
Let $C\in\Fun(\schop[\base],\calg{\Cat{st}})$ be a functor satisfying \ref{ax:pushforwards} and \ref{ax:smooth-bc}.
Then the following are equivalent:
\begin{enumerate}[(i)]
\item
\label{it:localization-axiom.1}%
$C$ satisfies \ref{ax:loc}.
\item
\label{it:localization-axiom.2}%
For each closed immersion $i$ with complementary open immersion $j$,
the functor $C$ satisfies all of:
\begin{enumerate}[(1)]
\item $C(\emptyset)\simeq 0$;
\item $i_*$ is fully faithful;
\item the pair $(i^*,j^*)$ is jointly conservative.
\end{enumerate}
\end{enumerate}
\end{lem}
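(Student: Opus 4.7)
The plan is to first establish the vanishing $j^{*}i_{*}\simeq 0$ from the ambient hypotheses, and then use it to prove both implications.

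Under \ref{ax:pushforwards} and \ref{ax:smooth-bc}, and granted $C(\emptyset)\simeq 0$, smooth base change applied to the Cartesian square
\[
\begin{tikzcd}
\emptyset \ar[r, "{j''}"] \ar[d, "{i'}" left] & Z \ar[d, "i" right] \\
U \ar[r, "j" below] & X
\end{tikzcd}
\]
yields an equivalence $j''_{\sharp}(i')^{*}\isoto i^{*}j_{\sharp}$, whose mate obtained by passing to right adjoints is an equivalence $j^{*}i_{*}\isoto i'_{*}(j'')^{*}$. Since $C(\emptyset)\simeq 0$, the right-hand side is the zero functor.

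For \ref{it:localization-axiom.1}$\Rightarrow$\ref{it:localization-axiom.2}, the Cartesian condition in \ref{ax:loc} identifies $i_{*}$ with the inclusion of the fiber of $j^{*}$, computed in $\Cat{st}$. For any $M,M'\in C(X)$ with $j^{*}M\simeq 0\simeq j^{*}M'$, the mapping space $\map[C(U)]{j^{*}M}{j^{*}M'}\simeq\map[C(U)]{0}{0}$ is contractible by stability of $C(U)$, so this fiber is a full subcategory of $C(X)$ and $i_{*}$ is fully faithful. For joint conservativity of $(i^{*},j^{*})$, any $M\in C(X)$ with $j^{*}M\simeq 0$ lies in the fiber, whence $M\simeq i_{*}N$ for some $N$; if additionally $i^{*}M\simeq 0$, then $N\simeq i^{*}i_{*}N\simeq 0$ by the counit equivalence (a consequence of $i_{*}$ being fully faithful), so $M\simeq 0$.

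For \ref{it:localization-axiom.2}$\Rightarrow$\ref{it:localization-axiom.1}, I would define a functor $\Phi\colon C(Z)\to C(X)\times_{C(U)}0$ by $\Phi(N)=(i_{*}N,\, j^{*}i_{*}N\simeq 0)$, the equivalence coming from the vanishing above. Full faithfulness of $\Phi$ follows from that of $i_{*}$, combined with the contractibility of the relevant mapping spaces in $C(U)$. For essential surjectivity, let $M\in C(X)$ with $j^{*}M\simeq 0$ and consider the unit $\eta_{M}\colon M\to i_{*}i^{*}M$. The triangle identity together with the counit equivalence shows that $i^{*}(\eta_{M})$ is an equivalence, while $j^{*}(\eta_{M})$ is a map between zero objects (by hypothesis on $M$ and by $j^{*}i_{*}\simeq 0$) and hence also an equivalence. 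Joint conservativity then forces $\eta_{M}$ itself to be an equivalence, so $M\simeq i_{*}(i^{*}M)$ lies in the essential image of $\Phi$. The main subtlety is the identification of the fiber of $j^{*}$ with a full subcategory of $C(X)$, which crucially uses the stability of $C(U)$ to ensure that $\map[C(U)]{0}{0}\simeq\pt$.
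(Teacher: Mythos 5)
Your proof is correct. The three ingredients — the vanishing $j^{*}i_{*}\simeq 0$ obtained by passing to right adjoints in the smooth base change equivalence over the empty intersection $Z\times_{X}U$, the identification of the fiber product $C(X)\times_{C(U)}0$ with the full subcategory of objects killed by $j^{*}$, and the unit-map argument $M\to i_{*}i^{*}M$ combined with joint conservativity — give both implications, and this is essentially the standard recollement argument that the paper itself does not write out but delegates to the cited references (Drew; Cisinski--D\'eglise), so there is no real divergence of route. Two points you use implicitly and could make explicit: the pullback in $\Cat{st}$ is computed in $\iCat$ (the inclusion preserves limits), which is what licenses describing it as the full subcategory $\ker(j^{*})\subseteq C(X)$ with mapping spaces computed in $C(X)$; and in the direction (i)$\Rightarrow$(ii) you verify joint conservativity on objects rather than morphisms, which suffices here because $i^{*},j^{*}$ are exact and in a stable \icat a morphism is an equivalence precisely when its cofiber vanishes.
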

\begin{proof}
This is well-known, see for example~\cite[Remark~5.9.(3)]{drew:motivic-hodge-modules} or~\cite[\S\,I.2.3]{cisinski-deglise:trig-cats}.
\end{proof}

\begin{rmk}
\label{rmk:localization}%
If $C$ is a coefficient system and $i,j$ as in \Cref{sta:localization-axiom} then one automatically has the following additional properties:
  \begin{enumerate}[(1)]
  \item\label{it:localization.i-upper-shriek} $i_*$ admits a right adjoint $i^!$.
  \item $j^*i_*\simeq 0$.
  \item $j_*, j_\sharp$ are fully faithful.
  \item There are cofiber sequences for all $M$:
    \[
      j_\sharp j^*M\to M\to i_*i^*M,\qquad i_*i^!M\to M\to j_*j^*M
    \]
  \end{enumerate}
The functors $i^{*}$ and $j^{*}$ thus exhibit $C(X)$ as a \emph{recollement} of $C(Z)$ and $C(U)$ in the sense of~\cite[Appendix~A.8]{HA}
\end{rmk}

\subsection{Examples}
\label{sec:cs-examples}

  Many existing ``sheaf theories'' form coefficient systems. All those examples of coefficient systems are either small or presentable, with small coefficient systems occuring as subcategories of compact or constructible objects in presentable coefficient systems. For some of these examples, the necessary constructions and the verification of the axioms of coefficient systems are not available in a convenient form in the literature. See \cite[\S 6]{abe:bivariant} for another discussion of examples.

\subsubsection{Motivic homotopy theory} Stable motivic homotopy theory $\SH(-)$, in its \icategorical incarnation constructed in~\cite[\S 9.1]{robalo:thesis} forms a presentable coefficient system over any base~$\base$~\cite[Theorem 9.4.36]{robalo:thesis}. Even though the definition of coefficient system does not directly refer to motivic homotopy theory, the coefficient system $\SH(-)$ plays a distinguished role because it is the initial object in $\COSY{\base}$ by~\cite[Theorem~7.14, Remark~7.15]{drew-gallauer:usf}.

  The arguments of~\cite{robalo:thesis} also show that $\DA^{\Nis}(-)$ (the $\aone$-derived category of Morel), $\SH^{\et}(-)$ (the étale local version of stable motivic homotopy theory) and $\DA^{\et}(-)$ (the étale motives of Morel) form coefficient systems over any base~$\base$. 
\subsubsection{Motives}\label{ex:spitzweck} Spitzweck~\cite{MR3865569} constructs a motivic ring spectrum $\cal{M}\in\SH(\Spec(\Z))$ that represents Bloch-Levine motivic cohomology and then defines
\[
\DM(-):=\SH(-;\cal{M}):\schop\to\calg{\Prst}
\]
as the functor which associates with the scheme~$X$ the \icat of modules in $\SH(X)$ over the pullback of $\cal{M}$ to~$X$.
See~\cite[Theorem~8.10]{drew:motivic-hodge-modules} for more details on the general construction of new coefficient systems as modules. 

Spitzweck proves that over a field~$k$ its compact part is equivalent to Voevodsky's category of geometric motives~\cite{voevodsky00-mm},
while with rational coefficients (and for any scheme) one recovers Beilinson motives in the sense of~\cite{cisinski-deglise:trig-cats} (which are themselves equivalent to $\DA^{\et}(-,\Q)$ by~\cite[Theorem 16.2.18]{cisinski-deglise:trig-cats}). Consequently, $\DM(-)$ can be seen as a coefficient system of integral motivic sheaves.

\subsubsection{Analytic sheaves} \label{ex:cs-analytic} Let $k$ be a field of characteristic $0$ equipped with a complex embedding $\sigma:k\to \CC$ and let $\Lambda\in\calg{\Sp}$ be a commutative ring spectrum. There is a presentable coefficient system $\Shv((-)^{\an},\Lambda)$ over the base $\Spec(k)$, whose objects are sheaves of $\Lambda$-modules for the analytic topology (see for instance~\cite[Proposition 1.26]{ayoub:anabel}, where coefficient systems are called ``Voevodsky pullback formalisms'').
When $\Lambda$ is a discrete ring, this is equivalent to the unbounded derived category of sheaves of $\Lambda$-modules. The subfunctor of algebraically constructible sheaves forms a small coefficient system $\Shv_{\ct}((-)^{\an},\Lambda)$, and one can take the Ind-completion~\cite[\S\,2.4.2]{gallauer:six-functor-survey} to get a compactly generated coefficient system of ind-constructible sheaves $\Ind(\Shv_{\ct}((-)^{\an},\Lambda))$ (this is~\cite[Corollary 1.27]{ayoub:anabel}, with slightly different notations). The Betti realization functor can then be interpreted as the morphism of coefficient systems  $\SH(-)\to \Shv((-)^{\an},\Lambda)$ or $\SH(-)\to\Ind(\Shv_{\ct}((-)^{\an},\Lambda))$~\cite[Theorem 1.28]{ayoub:anabel}.
\subsubsection{Nori motivic sheaves}  Let $k$ be as in \cref{ex:cs-analytic}. In~\cite[\S 2.3]{ayoub:anabel}, Ayoub defines a presentable coefficient system  $\Shv_{\mathrm{geo}}^{\mathcal{G}_{\mot}^{\mathrm{cl}}}(-,\Q)$ of ``(triangulated) Nori motivic sheaves''. For $X\in \sch[k]$, the stable $\infty$-category $\Shv_{\mathrm{geo}}^{\mathcal{G}_{\mot}^{\mathrm{cl}}}(X,\Q)$ is equipped with a t-structure whose heart is equivalent to the indization of the abelian category of finite-dimensional representations of Ayoub's motivic Galois group $\mathcal{G}_{\mot}^{\mathrm{cl}}(k,\Q)$ of $k$, or equivalently by~\cite{choudhury-gallauer:ayoub-nori}, to the indization of the abelian category of Nori motives over $k$. This construction thus provides a reasonable candidate for ``Nori motivic sheaves'' with an associated six-functor formalism. Ayoub suggests as a precise relative version of the motivic t-structure conjecture that the natural morphism of coefficient systems $\DA^{\et}(-,\Q)\to \Shv_{\mathrm{geo}}^{\mathcal{G}_{\mot}^{\mathrm{cl}}}(-,\Q)$ induces an equivalence on constructible objects.
\subsubsection{\'{E}tale and \texorpdfstring{$\ell$}{l}-adic sheaves} \label{ex:cs-l-adic}A general construction of an \icat $\mathrm{D_{cons}}(X;\Lambda)$ of constructible \'etale sheaves on a scheme~$X$ with coefficients in a condensed ring~$\Lambda$ is performed in~\cite{hemo-richarz-scholbach:constr-sheaves}.
When applied to $\Lambda\in\{\Z_{\ell},\Q_{\ell},\overline{\Q}_{\ell}\}$, one recovers the classical theory of $\ell$-adic constructible sheaves.
Thus for example, the functor $\mathrm{D_{cons}}(-;\Q_\ell):\schop[k]\to\calg{\Cat{st}}$ defined on $k$-schemes for an algebraically closed field~$k$ is a coefficient system, by \Cref{sta:cosy-vs-triangulated} together with the classical results on $\ell$-adic sheaves in~\cite{deligne:sga4.5}. 
\subsubsection{Holonomic \texorpdfstring{$\cD$}{D}-modules} There should be a coefficient system $\mathrm{D_h^b}(\cD_{-})$ of holonomic $\cD$-modules on schemes over a field~$k$ of characteristic zero although we don't have a convenient reference for this claim.

\subsubsection{Mixed Hodge modules} Similarly, there should be an \icategorical lift of Saito's bounded derived category of mixed Hodge modules~\cite{MR1047415} although we do not know of a reference.
An alternative by Drew~\cite{drew:motivic-hodge-modules} provides a coefficient system of \emph{motivic} Hodge modules $\mathrm{DH}(-)$ on $\Spec(\CC)$.
It comes with well-behaved realization functors, shares many desirable properties with Saito's theory, and is conjectured to embed fully faithfully into the latter.
\subsubsection{Non-examples} There are important examples of ``sheaf theories'' with a rich functoriality but which do not form coefficient systems. Given a base scheme $S$, we have functors
\[
\mathrm{QCoh}(-),\ \mathrm{IndCoh}(-),\ \mathrm{Dmod}(-):\schop[S]\to\calg{\Cat{st}}
\]
(of quasi-coherent sheaves, ind-coherent sheaves, and general left D-modules) constructed in~\cite{GR1,GR2}. None of them are coefficient systems, for several reasons. For $\mathrm{QCoh}(-)$ and $\mathrm{IndCoh}(-)$, equipped with the natural pullback operation (which makes sense for morphisms of schemes), one issue is that $j^{*}$ does not have a left adjoint when $j$ is an open immersion, contradicting \ref{Ax:left}. For $\mathrm{Dmod}(-)$, the situation is even worse as only three out of six functors (cf.\ \Cref{sec:cs-properties}) are defined.
The work of Clausen-Scholze using condensed mathematics~\cite{clausen-scholze:condensed} leads to a six-functor formalism for a variant of $\mathrm{QCoh}(-)$ (``solid quasicoherent sheaves'') but even then \cref{Ax:hty-stable} fails and we do not get a coefficient system. 

\subsection{Six-functor formalism}
\label{sec:cs-properties}

In~\cite{ayoub:thesis-1} it is shown that the triangulated version of the axioms of coefficient systems affords the formalism of the six functors on quasi-projective schemes, and with slightly stronger assumptions, \cite{cisinski-deglise:trig-cats} extends this to all (separated finite type) schemes.
Most of this formalism has now been lifted to stable \icats, see for example~\cite{liu-zheng:6ff-artin-stacks,robalo:thesis,khan-phd,drew:motivic-hodge-modules,AGV}.
We recall some of the properties we will need in the sequel.
A more complete survey can be found in~\cite{gallauer:six-functor-survey}.
Fix a coefficient system $C\in\CoSy{\base}$.

\subsubsection{Exceptional functoriality} \footnote{The existence of the exceptional functors is typically proved for presentable coefficient systems and can be deduced for those taking values in small \icats by passing to Ind-objects~\cite[\S\,2.4.2]{gallauer:six-functor-survey}. A given coefficient system can always be viewed as taking values in small \icats in a suitable universe (if one's set-theoretic tenets allow for such) so that the following discussion applies to every coefficient system.}
An important fact is that there are exceptional functors associated with any morphism $f:Y\to X$ of $\base$-schemes:
\begin{align*}
  f_!:C(Y)\to C(X),&&f^!:C(X)\to C(Y),
\end{align*}
the first being left adjoint to the second.
These can be promoted to functors $C_!:\sch[\base]\to\Cat{st}$ and $C^!:\schop[\base]\to\Cat{st}$.
The latter is moreover a sheaf for the cdh-topology (as is $C=C^*$ itself).
When $f$ is proper, there is an equivalence $f_!\simeq f_*$, and when $f$ is an open immersion, there is an equivalence $f_!\simeq f_{\sharp}$.

\subsubsection{Linearity}
\label{cs-linearity}
If $f$ is a $\base$-morphism then the functor $f_!$ is `linear'/satisfies the projection formula in the sense that the canonical morphism
\[
f_!(M\otimes f^*N)\isoto f_!M\otimes N
\]
is an equivalence for any $M\in C(Y)$, $N\in C(X)$.
And equally, for a Cartesian square in $\sch[\base]$
\begin{equation}
\label{eq:linearity-square}
\begin{tikzcd}
Y'
\ar[r, "f'" above]
\ar[d, "g'" left]
&
X'
\ar[d, "g" right]
\\
Y
\ar[r, "f" above]
&
X
\end{tikzcd}
\end{equation}
the canonical morphism
\[
f'_!(g')^*\isoto g^*f_!
\]
is an equivalence.
By adjunction, there is also an equivalence
\[
f^!g_*\isoto g'_*(f')^!.
\]
\subsubsection{Smooth and proper base change}
\label{cs-basechange}

An important consequence of the previous two properties is \emph{proper base change}:
If in the Cartesian square above $f$ (and therefore $f'$) is proper then the canonical morphism
\[
g^*f_*\isoto f'_*(g')^*
\]
is an equivalence.
The same conclusion holds if instead $g$ (and therefore $g'$) is smooth.
This is essentially equivalent to the axiom \ref{ax:smooth-bc}.

\subsubsection{Relative purity}
\label{cs-relative-purity}
Given a locally free $\cO_X$-module $\mathcal{V}$ viewed as a vector bundle $p:V\to X$ with zero section $s:X\into V$, one denotes by
\[
\thom(\mathcal{V})=\thom(V):=p_\sharp s_*:C(X)\isoto C(X)
\]
the associated Thom equivalence.
Thom equivalences behave well under base change in~$X$ and direct sums of vector bundles.
The inverse is denoted by $\thom^{-1}(\mathcal{V})=s^!p^*$. We have in fact a canonical isomorphism $\thom(\mathcal{V})(M)\simeq \thom(\mathcal{V})(\one)\otimes M$, and we denote by
\[
\mthom{\mathcal{V}}:=\thom(\mathcal{V})(\one)
\]
the corresponding $\otimes$-invertible object, the ``Thom space'' of $\mc{V}$. With this notation, we have a group homomorphism $\mthom:K_{0}(X)\to \Pic(C(X))$ where $K_{0}(X)$ is the Grothendieck group of vector bundles on $X$.

If $V=\aone_X$ is the free rank~1 bundle then this equivalence is the subject of the axiom \ref{ax:stable} and we write $(-)(1):=\thom(\aone_{X})[-2]$.
More generally, one has `Tate twists' $(n)$ for arbitrary $n\in\Z$.

For $f:Y\to X$ \emph{smooth}, with locally free sheaf of relative differentials $\Omega_{f}$, there are equivalences of functors
\begin{equation}
\label{eq:relative-purity}
f^!\simeq \thom(\Omega_f)f^*
\end{equation}
and
\[
f_{!}\simeq f_{\sharp}\thom^{-1}(\Omega_{f}).
\]

Note that every morphism of $\base$-schemes is, locally on the domain, the composition of a closed immersion and a smooth morphism.
As $C^!$ is a Zariski-sheaf, the properties~\eqref{eq:relative-purity} and \Cref{rmk:localization}.\ref{it:localization.i-upper-shriek} therefore determine the functor~$C^!$ essentially uniquely.
\subsubsection{Exterior products and K\"unneth formula}\label{sec:ext-prod}

Given two schemes $X_i$, $i=1,2$, denote by $p_i:X_1\times_{\base} X_2\to X_i$ the canonical projection.
We denote the external product by
\[
M_1\boxtimes M_2 = p_1^*M_1\otimes p_2^*M_2
\]
for $M_i\in C(X_i)$.
If $f_i:X_i\to Y_i$ are $\base$-morphisms, $i=1,2$, then we have an equivalence in $C(Y_1\times_{\base} Y_2)$:
\[
(f_1\times f_2)_!(M_1\boxtimes M_2)\isoto (f_1)_!M_1\boxtimes (f_2)_!M_2.
\]
In fact, the exterior products also provide an alternative way to encode the symmetric monoidal structure on the categories $C(X)$ for varying $X$. To see this, we use the following lemma:

\begin{lem}{\cite[Theorem 2.4.3.18]{HA}}\label{lem:ext-prod}
  Let $C$ be an \icat with finite coproducts and $D^{\otimes}$ be a symmetric monoidal \icat. There is a canonical equivalence of \icats
  \[
\Fun^{\lax}(C^{\coprod},D^{\otimes})\simeq \Fun(C,\calg{D^{\otimes}})
\]
where $\Fun^{\lax}(-,-)$ denotes the category of symmetric lax-monoidal functors between two symmetric monoidal categories.
\end{lem}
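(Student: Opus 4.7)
The plan is to combine two observations. First, for any \icat $C$ with finite coproducts, every object of $C^{\coprod}$ carries a canonical commutative algebra structure with multiplication the fold map $X \coprod X \to X$, and this identifies $C \simeq \calg{C^{\coprod}}$. Second, passage to commutative algebras $D^{\otimes} \mapsto \calg{D^{\otimes}}$ is functorial for lax symmetric monoidal morphisms. Combining these, any lax monoidal $F: C^{\coprod} \to D^{\otimes}$ yields $\calg{F}: C \simeq \calg{C^{\coprod}} \to \calg{D^{\otimes}}$, which would give the forward direction of the equivalence.

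For the inverse, given $G: C \to \calg{D^{\otimes}}$, I would construct $\tilde{G}: C^{\coprod} \to D^{\otimes}$ as a morphism of cocartesian fibrations over $\Fin_*$ preserving inert cocartesian edges (i.e., a lax symmetric monoidal functor). An object of $C^{\coprod}$ lying over $\langle n \rangle \in \Fin_*$ is a tuple $(X_1, \ldots, X_n)$, and $\tilde{G}$ would send it to the tuple of underlying objects of the commutative algebras $G(X_i)$. An active morphism $(X_1, \ldots, X_n) \to (Y)$ in $C^{\coprod}$ over the unique active map $\langle n \rangle \to \langle 1 \rangle$ corresponds to a family of morphisms $\{X_i \to Y\}_{i=1}^{n}$ in $C$; the associated morphism in $D^{\otimes}$ would be the composite
\[
G(X_1) \otimes \cdots \otimes G(X_n) \,\to\, G(Y)^{\otimes n} \,\to\, G(Y),
\]
where the first map applies the underlying functor $C \to D$ of $G$ componentwise and the second is the $n$-ary multiplication of the commutative algebra structure on $G(Y)$.

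The two constructions should be mutually inverse by direct inspection: the algebra structure induced on $\tilde{G}(X)$ by the fold map is tautologically the one coming from $G(X)$; conversely, a lax monoidal functor is determined by its values on $\langle 1 \rangle$-objects together with its action on active morphisms into $\langle 1 \rangle$, and both are recovered from the induced functor to $\calg{D^{\otimes}}$. Naturality in $C$ and $D^{\otimes}$ is then automatic from the construction.

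The main obstacle is ensuring that these assignments assemble into actual morphisms of \icats with the correct higher coherences, rather than just a map on objects and low-dimensional morphisms. Carrying this out rigorously requires working carefully with the explicit construction of $C^{\coprod}$ as a cocartesian fibration over $\Fin_*$ in the sense of~\cite[Construction 2.4.3.1]{HA}, and the cleanest route is to reduce the statement to~\cite[Theorem 2.4.3.18]{HA}, which we would invoke directly.
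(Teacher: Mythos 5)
Your proposal ultimately rests on invoking \cite[Theorem 2.4.3.18]{HA}, which is exactly what the paper does: the lemma is stated with that citation and no further proof is given. Your preliminary sketch (using that $C\simeq\calg{C^{\coprod}}$ for the cocartesian structure and the functoriality of $\calg{-}$ under lax symmetric monoidal functors, plus the explicit description on active morphisms) is a reasonable outline of why the equivalence holds, but since both you and the paper defer the coherence issues to Lurie's theorem, the approaches are essentially the same.
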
  

\cref{lem:ext-prod} and the interplay between Cartesian and coCartesian monoidal structures on opposite \icats imply that a coefficient system $C$ can be equivalently described as a symmetric lax-monoidal functor
\[C^{\boxtimes}:\schop[\base]\to \Cat{st} \]
satisfying certain properties. Informally, this lax-monoidal structure can be described as follows.
For any two $\base$-schemes~$X_1$ and~$X_2$, the external product is exact in both arguments separately and therefore induces an exact functor on the tensor product in~$\Cat{st}$,
\begin{equation}
\label{eq:boxtimes}
\boxtimes:C(X_1)\otimes C(X_2)\longrightarrow C(X_1\times_{\base} X_2),
\end{equation}
and the unit is encoded by an exact functor on finite spectra (the unit of~$\Cat{st}$) $\Sp^{\omega}\to C(\base)$ which sends the sphere spectrum to the object $\one_{C(\base)}$.
Conversely, given coherent exterior products as in~\eqref{eq:boxtimes}, the `internal' tensor product on~$C(X)$ can be recovered as the composite
\[
C(X)\times C(X)\xto{\boxtimes}C(X\times_{\base}X)\xto{\Delta^*}C(X)
\]
where $\Delta:X\to X\times_{\base}X$ is the diagonal embedding.

\subsection{Miscellanea}
\label{sec:more-axioms}

\subsubsection{Triangulated coefficient systems}

The notion of coefficient system is an \icategorical version of the closed symmetric monoidal stable homotopy $2$-functors of~\cite[Definitions 1.4.1, 2.3.1, and 2.3.50]{ayoub:thesis-1}.
The enhancement here is rather powerful as it allows one to obtain a more complete six-functor formalism (\cref{sec:cs-properties}) and to extend in some cases the coefficient system to equivariant or stacky settings. For example, the exceptional functoriality exists for all morphisms of $\base$-schemes, whereas~\cite{ayoub:thesis-1} could establish it only for quasi-projective morphisms.
In order to have this functoriality at least for \emph{separated} morphisms as well, \cite{cisinski-deglise:trig-cats} introduced the notion of motivic triangulated categories which includes one additional axiom. We will use this latter notion, with a slightly different terminology to emphasize the parallel with coefficient systems.

Let us write $\mTri$ for the $(2,1)$-category whose objects are symmetric monoidal triangulated categories, morphisms are monoidal exact functors and $2$-morphisms are invertible monoidal exact natural transformations.

\begin{defn}
\label{defn:triang-cs}
A \emph{triangulated coefficient system (over $\base$)} is a $2$-functor
$D:\schop[\base]\to \mTri$ satisfying the following two conditions:
\begin{enumerate}[(1)]
\item The triangulated analogues of the properties in \Cref{defn:cosy}. To be precise, these are \ref{Ax:left}, \ref{Ax:right}, \ref{Ax:hty-stable} of \Cref{defn:cosy}, together with \ref{it:localization-axiom.2} of \Cref{sta:localization-axiom}.
\item \namedlabel{adj}{(Adjoint)} If $f:Y\to X\in\sch[\base]$ is proper then $f_*$ admits a right adjoint.
\end{enumerate}
A \emph{morphism of triangulated coefficient systems} is a natural transformation between triangulated coefficient systems which satisfies the triangulated analogue of \Cref{defn:cs-morphism}.
\end{defn}  

Triangulated coefficient systems have the advantage that they can be manipulated without the heavy machinery of $\infty$-category theory. Moreover, as a rule of thumb, statements about (symmetric monoidal) stable $\infty$-categories which involve checking properties rather than constructing new structure often reduce immediately to statements about their (symmetric monoidal) triangulated homotopy categories. Here are some instances of this rule of thumb which we will use.

\begin{lem}
\label{sta:cosy-vs-triangulated}%
  Let $C:\schop[\base]\to\calg{\Cat{st}}$ be a functor.
\begin{enumerate}
\item \label{it:cosy-vs-triangulated.1} By passing to the homotopy categories, one gets a functor $\h{C}:\schop[\base]\to\mTri$.
\item \label{it:cosy-vs-triangulated.2} The functor $C$ is a coefficient system if and only if $\h{C}$ is a triangulated coefficient system.
\item \label{it:cosy-vs-triangulated.3} Let $C\to C'$ be a natural transformation of functors $\schop[\base]\to\calg{\Cat{st}}$. Then it is a morphism of coefficient systems if and only if the induced natural transformation $\h{C}\to \h{C}'$ is a morphism of triangulated coefficient systems.
\end{enumerate}  
\end{lem}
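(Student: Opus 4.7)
The plan is to verify each of the three claims by reducing them to standard facts about the relationship between stable $\infty$-categories and their triangulated homotopy categories.

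For (1), I observe that the passage to homotopy categories assembles into a $2$-functor $\h{-}:\calg{\Cat{st}}\to\mTri$: every symmetric monoidal stable $\infty$-category has an underlying symmetric monoidal triangulated homotopy category; every symmetric monoidal exact functor descends to a monoidal exact functor; every natural equivalence of such functors descends to an invertible monoidal natural transformation. Composing with $C$ (viewed from the $1$-category $\schop[\base]$) yields $\h{C}$.

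For (2), I would verify that each axiom of \Cref{defn:cosy} translates to its triangulated counterpart using two standard principles about exact functors of stable $\infty$-categories (cf.\ \cite[\S\,1.1]{HA}): (a) a natural transformation between exact functors of stable $\infty$-categories is invertible if and only if its image on homotopy categories is; (b) an exact functor $F:\cC\to\cD$ admits a left (resp.\ right) adjoint if and only if $\h F$ admits one as a triangulated functor. Principle (a) handles the Beck-Chevalley equivalences in \ref{ax:smooth-bc} and \ref{ax:projection} as well as the equivalence in \ref{ax:stable}; principle (b) handles the existence of $p_\sharp$, $f_*$, and internal Homs in \ref{ax:left} and \ref{ax:right}. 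The fully faithfulness of $p^*$ in \ref{ax:hty} is equivalent to its triangulated version by the same mapping-spectrum considerations underlying (a) and (b). For \ref{ax:loc}, I would apply \Cref{sta:localization-axiom} to replace the Cartesian square condition by three concrete properties (triviality of $C(\emptyset)$, fully faithfulness of $i_*$, joint conservativity of $(i^*,j^*)$), each of which is equivalent to its triangulated analogue.

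For (3), principle (a) applies directly: the Beck-Chevalley transformation $f_\sharp\phi_Y\to\phi_X f_\sharp$ is the mate of a commuting square, a construction that commutes with passage to homotopy categories, so its invertibility at the $\infty$-level matches its invertibility at the triangulated level.

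The main obstacle is principle (b), which rests on the fact that mapping spectra in a stable $\infty$-category $\cC$ are determined by shifted Homs in $\h\cC$ via $\pi_n\mapsp_\cC(X,Y)\cong\h\cC(X,Y[-n])$; consequently a representability statement at the homotopy category level automatically promotes to representability at the $\infty$-level, producing the required $\infty$-adjoint. Everything else is bookkeeping against these two principles.
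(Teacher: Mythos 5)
Your two principles (checking invertibility of natural transformations and existence of adjoints on homotopy categories) are correct and are indeed the substance of the translation; they are exactly what the references cited in the paper's proof (Drew, Drew--Gallauer) carry out, and they handle \ref{ax:left}, \ref{ax:right}, \ref{ax:hty}, \ref{ax:stable}, the reformulation of \ref{ax:loc} via \Cref{sta:localization-axiom}, and part (3). But there is a genuine gap in part (2): you treat the axioms of a triangulated coefficient system as if they were precisely the triangulated analogues of \Cref{defn:cosy}, whereas \Cref{defn:triang-cs} contains the additional axiom \ref{adj} --- for $f$ proper, $f_*$ must admit a right adjoint --- which has no counterpart among the \icategorical axioms. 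So in the ``only if'' direction of (2) your argument produces an $\h{C}$ satisfying only the translated axioms, and neither principle (a) nor (b) gives \ref{adj}: there is nothing on the $\infty$-side whose adjoint you could transport. This axiom is not formally implied by the others at the triangulated level (Ayoub's construction yields $f_!$ only for quasi-projective morphisms, which is exactly why Cisinski--D\'eglise added it to their definition), so it genuinely requires a separate input.

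The paper closes this gap by invoking the six-functor formalism available for any (\icategorical) coefficient system (\Cref{sec:cs-properties}): for $f$ proper one has $f_*\simeq f_!$, and $f_!$ admits the right adjoint $f^!$, so $\h{C}$ satisfies \ref{adj}. Your proof would be complete once you add this observation (or some equivalent appeal to the exceptional functoriality); as written, the forward implication of (2) fails to establish one of the defining axioms of the target notion.
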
  
\begin{proof}
See~\cite[Remark 5.10, Proposition 5.11, Corollary 5.13]{drew:motivic-hodge-modules} and~\cite[Remark~7.6]{drew-gallauer:usf} for~\ref{it:cosy-vs-triangulated.1}, \ref{it:cosy-vs-triangulated.3}, and \ref{it:cosy-vs-triangulated.2} without the additional axiom~\ref{adj}.
If $C$ is a coefficient system then $\h{C}$ automatically satisfies the latter since $f_*\simeq f_!$ has the right adjoint~$f^!$.
\end{proof}

\begin{rmk}
\label{rmk:triangulated-cosy-six-ops}
Given a triangulated coefficient system~$D$ over~$\base$, the properties listed in \Cref{sec:cs-properties} hold all true in~$D$---with the sole provision that the exceptional functoriality~$f_!$ and~$f^!$ exists only if $f$ is \emph{separated}.
This is shown in~\cite[Theorem~2.4.50]{cisinski-deglise:trig-cats}.
\end{rmk}

\subsubsection{Shifting}
\label{exa:shifted-cosy}
Fix a scheme~$X\in\sch[\base]$.
We denote by $C_X:\schop[\base]\to\Cat{st}$ the shifted coefficient system, that is informally defined by
\[
C_X(Y)=C(Y\times_\base X).
\]
Given a morphism $f:Y\to X$, evaluation at $f$ defines a morphism of coefficient systems $f^*:C_X\to C_Y$.
Assume now that $f$ is a proper morphism \resp{smooth morphism}.
Then the right \resp{left} adjoints define a natural transformation of functors $\schop[\base]\to\Cat{st}$
\[
f_*:C_Y\to C_X\qquad (\text{resp.\ }f_\sharp:C_Y\to C_X).
\]
This follows from proper \resp{smooth} base change (for which see \Cref{cs-basechange} below) and~\cite[Corollary~4.7.4.18.(3)]{HA}. 

In fact, when viewing $C_Y$ as a $C_X$-module, the latter being a commutative algebra object in $\fun{\schop[\base]}{\Cat{st}}$, \Cref{rmk:smooth-projection-formula} yields that $f_*$ \resp{$f_\sharp$} may be upgraded to a morphism of $C_X$-modules.

\subsubsection{Correspondences and the six-functor formalism}
\label{sec:corr-six}

An insight due to Lurie, and independently to Hörmann~\cite{hormann:six}, is that the six-functor formalism can be very compactly encoded in terms of categories of correspondences. Lurie's idea has been developed extensively by Gaitsgory and Rozenblyum in~\cite{GR1}, as well as by Liu-Zheng in~\cite{liu-zheng:6ff-artin-stacks}. We summarize the approach from~\cite{GR1}, completed by~\cite[Appendix~A]{richarz-scholbach:motivic-satake} for some monoidal aspects. This is to date the most powerful approach but unfortunately relies on unproven statements about $(\infty,2)$-categories (collected in~\cite[Chapter 10, 0.4.2]{gaitsgory-rozenblyum:vol1}).
For this reason, and as mentioned in the introduction, our `official' approach to exponentiation in this paper will not rely on correspondences.
Consequently, we allow ourselves to be brief and informal here (as well as in \cref{sec:conv-corr}), and present a picture which we cannot completely justify at the moment. The recent work of~\cite{mann:padic} may in the near future make the $(\infty,1)$-categorical aspects of the following completely rigourous.

The starting point is an $(\infty,2)$-category $\Corr(\sch[\base])$ of correspondences of $B$-schemes (see \cite[\S\,7.1.2]{GR1} for the construction). An object of $\Corr(\sch[\base])$ is a $\base$-scheme, a $1$-morphism from~$Y$ to~$X$ is a correspondence, i.e. two morphisms $Y\leftarrow Z \to X$, and a $2$-morphism $(Y\leftarrow Z \to X)\to (Y \leftarrow Z' \to X)$ is given by a commutative diagram:
\[
\begin{tikzcd}
    & Z \ar[ld] \ar[rd] \ar[dd,"p"'] & \\
    Y & & X \\
   & Z' \ar[lu] \ar[ru]  &
\end{tikzcd}
\]
The composition of correspondences is given by fiber product. The $(\infty,2)$-category $\Corr(\sch[\base])$ admits a symmetric monoidal structure~\cite[Chapter IX, \S 2.1.3]{GR1} which on objects is just given by fiber product over $B$, and there is a symmetric monoidal functor
\[
\iota^{*}:\schop[\base]\to \Corr(\sch[\base])
\]
(with the source equipped with the coCartesian symmetric monoidal structure) informally defined by $\iota^*(X)=X$ and $\iota^*(f:Y\to X)=(X\xleftarrow{f} Y \xto{\id}Y)$. We also write $\Corr(\sch[\base])^{\propsub}$ for the wide symmetric monoidal subcategory of $\Corr(\sch[\base])$ where we restrict to those $2$-morphisms with $p$ proper, and $\Corr(\sch[\base])^{\propsub,2\textup{-op}}$ for the same subcategory where we reverse the direction of the $2$-morphisms. 

As explained in \cref{sec:ext-prod}, the data of a presentable coefficient system~$C$ is equivalent to a symmetric lax-monoidal functor
\[C^{\boxtimes}:\schop[\base]\to \Cat{st} \]
satisfying certain properties. The outcome of the Lurie-Gaitsgory-Rozenblyum-$\ldots$ approach to six-functor formalisms is an extension of $C^{\boxtimes}$ along $\iota^*$ to a symmetric lax-monoidal $(\infty,2)$-functor\footnote{We are implictly using that $\Cat{st}$ has a natural $(\infty,2)$-category structure whose $2$-morphisms are possibly non-invertible natural transformations.}
  \[
C^{\boxtimes,*}_{!}:\Corr(\sch[\base])^{\propsub,2\textup{-op}}\to \Cat{st}
\]
sending a correspondence $\alpha:X\xleftarrow{f}Y\xto{\id} Y$ to~$f^*$ and $\beta:Y\xleftarrow{\id}Y\xto{f}X$ to~$f_!$ and such that $f_{!}$ has a right adjoint $f^{!}$ for all $f$. Since $C$ is a coefficient system, we already know that $f^{*}$ has a right adjoint $f_{*}$ and that $C(X)^{\otimes}$ has internal Homs, so that $C^{\boxtimes,*}_{!}$ encodes all six operations. The properties of these functors from \cref{sec:cs-properties} can then be recovered from the geometry of correspondences described in $\Corr(\sch[\base])^{\propsub,2\textup{-op}}$, as explained in \cite[Introduction of Part III]{GR1}.

For example, if $f$ is proper then the correspondence~$\alpha$ is left adjoint to~$\beta$ in the $(\infty,2)$-categorical sense and hence we obtain an equivalence $f_!\simeq f_*$. This, together with the requirement that $j_{!}$ is a left adjoint of $j^{*}$ for $j$ an open immersion and a few other properties which we do not make explicit here, should in fact essentially uniquely determine the $(\infty,2)$-functor $C^{\boxtimes,*}_{!}$ (fundamentally because these two cases determine $f_{!}$ for any $f$ by Zariski descent and Nagata's compactification theorem). Similarly, base change in the form of~$f'_!(g')^*\simeq g^*f_!$ associated to the Cartesian square~\eqref{eq:linearity-square} is a consequence of the fact that the correspondence
\[
Y\xleftarrow{g'}Y'\xto{f'}X'
\]
may be written as a composite in two different ways.

Let $\corr(\sch[\base])$ denote the $(\infty,1)$-category obtained from $\Corr(\sch[\base])$ by discarding all non-invertible $2$-morphisms. The extension result above implies in particular that there is an extension of $C^{\boxtimes}$ to a symmetric lax-monoidal $(\infty,1)$-functor which we will also denote by
$C^{\boxtimes,*}_{!}:\corr(\sch[\base])^{(\infty,1)}\to \Cat{st}$
and which satisfies similar properties. In many applications (such as the construction of the convolution product discussed in \cref{sec:conv-corr} below) this $(\infty,1)$-functor is enough; however, in the approach of \cite{GR1}, it seems that this $(\infty,1)$-categorical extension cannot be constructed without first passing through the $(\infty,2)$-categorical version.

\section{Exponentiation as a triangulated coefficient system}
\label{sec:exp-htpy}

The main construction of this paper associates to a coefficient system $C$ another coefficient system $\ep{C}$, the \emph{exponentiation} of $C$. In the present section we first perform this construction for triangulated coefficient systems, while in \Cref{sec:cs-icats} we will work at the level of \icats.
Many results about the latter construction will be formal consequences of the results in this section.
We proceed in this way in order to keep most of the discussion elementary and accessible to readers who don't want to wade into technicalities about symmetric monoidal \icats.

So, throughout this section we fix a triangulated coefficient system $D$ over $\base$.
The construction of $\ep{D}$ depends on a choice of smooth commutative $\base$-group scheme $\grp$ satisfying certain conditions. By far the most important case, which motivates the construction and in which those conditions are always satisfied, is the additive group $\grp=\Ga{\base}$. We encourage the reader to always keep this case in mind.

\begin{covn}\label{conv:group-scheme}
  We fix a commutative $\base$-group scheme $(\grp,\plus,\unit)$ where $\plus:\grp\times_\base\grp\to\grp$ and $\unit:\base\to\grp$ denote the multiplication and unit. We write $\pi:\grp\to \base$ for the structure map. For $X\in \sch[\base]$, we write $\pi_{X}:X_{\grp}:=X\times_{\base} \grp\to X$, and often also denote $\pi_{X}$ as $\pi$ when there is no confusion.
We make the following assumptions on $\grp$:
\begin{enumerate}
\item $\pi:\grp\to\base$ is separated\footnote{This extremely mild assumption is an artifact of \Cref{rmk:triangulated-cosy-six-ops} and can be removed if~$D$ underlies a coefficient sytem.} and smooth, and
\item the functor $\pi_{X}^*:D(X)\to D(X_{\grp})$ is fully faithful for all $X\in\sch$.
\end{enumerate}
\end{covn}

\begin{rmk}
These assumptions are satisfied for $\grp=\Ga{\base}$ by the axiom \ref{ax:hty} of coefficient systems.
Similarly they are satisfied for powers $\Ga{\base}^{n}$. The main other examples we have in mind are commutative unipotent algebraic groups over perfect fields of positive characteristic.
If one relaxes the assumption that $\grp$ is commutative, one obtains a similar theory except that the convolution product is only monoidal instead of symmetric monoidal.
\end{rmk}
\subsection{Definition through vanishing (co)homology}
\label{sec:exp-subcats}

As explained in the introduction, we would like to define the exponentiation as the Verdier quotient
\[
D(X_\grp)/\pi^*D(X).
\]
For technical reasons, it is very useful to realize the resulting category as a full subcategory of $D(X_\grp)$. We thus choose the following as our definition.
\begin{defn}\label{defn:hexp}
Let $X\in\sch[\base]$. We define $\ep{D}(X)\subseteq D(X_\grp)$ as the full subcategory given as the kernel of $\pi_{X!}:D(X_\grp)\to D(X)$.
Since $\pi_!$ is an exact functor, this is automatically a triangulated subcategory.
\end{defn}
In this subsection, we clarify the relationship between this definition and the Verdier quotient above, and provide several other equivalent characterizations of $\ep{D}(X)$. 
This is standard localization theory for triangulated categories.
We also introduce a coreflector~$\Pi$ which will play an important role in the sequel.
\begin{lem}
\label{sta:exp-recollement-*}%
There is a recollement of triangulated categories
\[
\begin{tikzcd}[column sep=large]
D(X)
\ar[r, "\pi^*"{description}]
&
D(X_\grp)
\ar[l, "{\pi_\sharp}" above, shift right=4]
\ar[l, "{\pi_*}" below, shift left=4]
\ar[r, "{Q}"{description}]
&
D(X_\grp)/\pi^*D(X)
\ar[l, "{Q_\lambda}" above, shift right=4]
\ar[l, "{Q_\rho}" below, shift left=4]
\end{tikzcd}
\]
in which
\begin{enumerate}
\item any functor is left adjoint to the one just beneath it,
\item $Q_\lambda$ is fully faithful and induces an equivalence
\[
D(X_\grp)/\pi^*D(X)\isoto \ker(\pi_\sharp) = {}^\bot\left(\pi^*D(X)\right)
\]
onto the left orthogonal complement, and
\item $Q_\rho$ is fully faithful and induces an equivalence
\[
D(X_\grp)/\pi^*D(X)\isoto\ker(\pi_*) = \left(\pi^*D(X)\right)^\bot
\]
onto the right orthogonal complement.
\end{enumerate}
\end{lem}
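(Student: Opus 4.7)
The plan is to reduce the claim to the standard theory of Bousfield localizations for triangulated categories. Since $D$ is a triangulated coefficient system, axioms \ref{ax:left} and \ref{ax:right} provide the adjunctions $\pi_{\sharp}\dashv\pi^{*}\dashv\pi_{*}$, while \Cref{conv:group-scheme} asserts that $\pi^{*}$ is fully faithful; equivalently, both the counit $\pi_{\sharp}\pi^{*}\to\id$ and the unit $\id\to\pi_{*}\pi^{*}$ are natural isomorphisms. In particular, $\pi^{*}D(X)$ is a thick triangulated subcategory of $D(X_{\grp})$, so the Verdier quotient $D(X_{\grp})/\pi^{*}D(X)$ is a well-defined triangulated category and the quotient functor $Q$ is exact.

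Next I would identify the two orthogonal complements with the kernels of the adjoints. Using the adjunction $\pi_{\sharp}\dashv\pi^{*}$, for any $M\in D(X_{\grp})$ one has
\[
M\in{}^{\bot}(\pi^{*}D(X))\iff \Hom(\pi_{\sharp}M, N[n])=0 \text{ for all } N\in D(X), n\in\Z \iff \pi_{\sharp}M=0,
\]
since a graded-$\Hom$-null object of a triangulated category is zero. The dual argument with $\pi^{*}\dashv\pi_{*}$ gives $(\pi^{*}D(X))^{\bot}=\ker(\pi_{*})$.

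It remains to produce $Q_{\lambda}$ and $Q_{\rho}$ and check fully faithfulness. For each $M\in D(X_{\grp})$, the counit of $\pi_{\sharp}\dashv\pi^{*}$ fits into a distinguished triangle
\[
\pi^{*}\pi_{\sharp}M\to M\to C_{M}\to \pi^{*}\pi_{\sharp}M[1]
\]
in which $\pi_{\sharp}C_{M}=0$ (apply $\pi_{\sharp}$ and use $\pi_{\sharp}\pi^{*}\cong\id$); hence $C_{M}\in \ker(\pi_{\sharp})$. This exhibits $(\pi^{*}D(X),\ker(\pi_{\sharp}))$ as a semi-orthogonal decomposition of $D(X_{\grp})$. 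By the universal property of the Verdier quotient, the composite $\ker(\pi_{\sharp})\hookrightarrow D(X_{\grp})\xto{Q} D(X_{\grp})/\pi^{*}D(X)$ is then an equivalence, and its quasi-inverse composed with the inclusion yields the required fully faithful left adjoint $Q_{\lambda}$. The construction of $Q_{\rho}$ is strictly dual, using the unit $M\to\pi^{*}\pi_{*}M$ and working with $\ker(\pi_{*})=(\pi^{*}D(X))^{\bot}$.

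The argument is essentially formal; there is no genuine obstacle beyond notational bookkeeping of the three adjunctions and the direction of each triangle. A more slick alternative would be to invoke directly a general result on recollements arising from a fully faithful functor with both adjoints, e.g., from Krause's or Neeman's treatment of Bousfield localization.
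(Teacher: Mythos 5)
Your route is, in substance, the same as the paper's: the paper disposes of the lemma in one line by citing Krause's localization theory for a fully faithful functor with left and right adjoints (exactly the ``slick alternative'' you mention at the end), and your write-up just unwinds that standard Bousfield (co)localization argument. The identification ${}^\bot(\pi^*D(X))=\ker(\pi_\sharp)$ and $(\pi^*D(X))^\bot=\ker(\pi_*)$ via the adjunctions is correct, as is the deduction of the equivalences with the Verdier quotient from the resulting decompositions. One thing to fix, though: you have the unit/counit directions backwards in the step producing the decompositions. For $\pi_\sharp\dashv\pi^*$ the canonical map on $M\in D(X_\grp)$ is the \emph{unit} $\eta_M:M\to\pi^*\pi_\sharp M$; there is no canonical map $\pi^*\pi_\sharp M\to M$, so your displayed triangle $\pi^*\pi_\sharp M\to M\to C_M$ does not parse as stated, and the claim $\pi_\sharp C_M=0$ cannot be checked from it. The correct triangle is $C_M\to M\to\pi^*\pi_\sharp M\to C_M[1]$ with $C_M$ the fiber of $\eta_M$; applying $\pi_\sharp$, the triangle identity shows $\pi_\sharp(\eta_M)$ is a split monomorphism whose retraction is the counit $\pi_\sharp\pi^*\pi_\sharp M\to\pi_\sharp M$, which is invertible by fully faithfulness of $\pi^*$, whence $\pi_\sharp C_M=0$ (this is the paper's triangle $\Pi\to\id\to\pi^*\pi_\sharp$). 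Dually, for $Q_\rho$ the canonical map is the \emph{counit} $\pi^*\pi_*M\to M$ of $\pi^*\dashv\pi_*$, not a map $M\to\pi^*\pi_*M$. With these directions corrected the argument is complete and agrees with the cited general results.
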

\begin{proof}
The functor $\pi^*:D(X)\to D(X_\grp)$ has a left adjoint $\pi_\sharp$ (since $\pi$ is smooth) and a right adjoint $\pi_*$.
By assumption, it is also fully faithful so applying~\cite[Proposition~4.9.1]{krause:localization-theory-tricat} and its dual gives the result.
(See also~\cite[Proposition~4.13.1]{krause:localization-theory-tricat}.)
\end{proof}
\begin{rmk}
\label{rmk:exp-recollement-!}%
The ordinary and exceptional pullback functors $\pi^*,\pi^!:D(X)\to D(X_\grp)$ are closely related.
Indeed, we have~(\Cref{cs-relative-purity})
\[
\pi^!\simeq \thom(\Omega_{\pi})\circ\pi^*
\]
where $\thom(\Omega_\pi)$ denotes the Thom equivalence with respect to the relative cotangent bundle $\Omega_\pi=\Omega_{X_\grp/X}$ of~$\pi_X$. 
Since $\grp$ is a smooth group scheme, it follows that this relative cotangent bundle is in fact the pullback of the conormal sheaf to the embedding $\unit:\base\to\grp$.
In particular we have
\begin{equation}
\label{eq:pi!=pi*}
\pi^!\simeq \thom(\Omega_{\pi})\circ\pi^*\simeq\pi^*\circ\thom(\left(\unit^*\Omega_{\grp/\base}\right)|_X)
\end{equation}
and it follows that the images of $\pi^!$ and $\pi^*$ coincide.
(In the special case of $\grp=\Ga{}$, this Thom equivalence can be identified with a Tate twist and shift: $\thom(\Omega_{\Ga{}})\simeq (-)(1)[2]$.)
We also deduce:
\end{rmk}

\begin{cor}
\label{sta:ker!=kersharp}
We have as triangulated subcategories of $D(X_{\grp})$:
\[
\ep{D}(X)=\ker(\pi_!)=\ker(\pi_\sharp)={}^\bot\left(\pi^*D(X)\right)
\]
\end{cor}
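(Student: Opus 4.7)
The first equality $\ep{D}(X)=\ker(\pi_!)$ is the definition (\Cref{defn:hexp}), and the third equality $\ker(\pi_\sharp)={}^\bot\!\left(\pi^*D(X)\right)$ is part~(2) of \Cref{sta:exp-recollement-*}. So the content of the corollary is the middle equality $\ker(\pi_!)=\ker(\pi_\sharp)$, which I will obtain by relating $\pi_!$ and $\pi_\sharp$ via the Thom equivalence.

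The plan is to apply relative purity (\Cref{cs-relative-purity}): since $\pi:X_\grp\to X$ is smooth, one has a natural equivalence $\pi_!\simeq \pi_\sharp\circ\thom^{-1}(\Omega_\pi)$. By \Cref{rmk:exp-recollement-!}, the relative cotangent sheaf $\Omega_\pi$ is the pullback along $\pi$ of the vector bundle $V:=(\unit^*\Omega_{\grp/\base})|_X$ on $X$. Since Thom equivalences commute with base change, this yields a canonical isomorphism of $\otimes$-invertible objects
\[
\mthom(\Omega_\pi)\ \simeq\ \pi^*\mthom(V)
\]
in $D(X_\grp)$. Set $T:=\mthom(V)\in D(X)$, an invertible object.

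Now I would use the smooth projection formula \ref{ax:projection} to compute, for any $M\in D(X_\grp)$,
\[
\pi_!M\ \simeq\ \pi_\sharp\bigl(\thom^{-1}(\Omega_\pi)(M)\bigr)\ \simeq\ \pi_\sharp\bigl(\pi^*T^{-1}\otimes M\bigr)\ \simeq\ T^{-1}\otimes\pi_\sharp M.
\]
Because $T^{-1}$ is $\otimes$-invertible in $D(X)$, tensoring with it is an autoequivalence, so $\pi_!M=0$ if and only if $\pi_\sharp M=0$. This proves $\ker(\pi_!)=\ker(\pi_\sharp)$ and completes the chain of equalities.

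There is no real obstacle here; the only thing to be careful about is that the identification of $\Omega_\pi$ as a pullback (used to move the Thom twist across $\pi_\sharp$ via the projection formula) genuinely relies on $\grp$ being a group scheme, as recorded in \Cref{rmk:exp-recollement-!}. The rest is a direct application of relative purity and the smooth projection formula.
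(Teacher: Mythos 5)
Your proof is correct and follows essentially the same route as the paper: the paper deduces $\ker(\pi_!)=\ker(\pi_\sharp)$ by passing to left adjoints in the identity $\pi^!\simeq\pi^*\circ\thom\bigl((\unit^*\Omega_{\grp/\base})|_X\bigr)$ of \Cref{rmk:exp-recollement-!}, which rests on exactly the ingredients you use (relative purity plus the fact that $\Omega_\pi$ is pulled back from $X$ because $\grp$ is a group scheme). Your variant of spelling this out on the left adjoints directly, via the projection formula $\pi_\sharp(\pi^*T^{-1}\otimes M)\simeq T^{-1}\otimes\pi_\sharp M$, is just a repackaging of the same adjunction step and is perfectly valid.
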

\begin{proof}
The first identity is \Cref{defn:hexp}, the second identity is obtained from~\eqref{eq:pi!=pi*} by adjunction, and the last one is \Cref{sta:exp-recollement-*}.
\end{proof}

\begin{rmk}
\label{rmk:ker-!-justification}%
While the triangulated categories $\ker(\pi_*)$ and $\ker(\pi_!)=\ker(\pi_\sharp)$ are equivalent, they are \emph{not} equal as subcategories of $D(X_\grp)$.
In our construction of exponentiation we have chosen to give preference to the realization of $\ep{D}(X)$ as $\ker(\pi_!)$.
The reason is that coefficient systems encode the ordinary pullback functors (or, $*$-functoriality) and while for $f:Y\to X$, the induced functor
\[
f_\grp^*:D(X_\grp)\to D(Y_\grp)
\]
restricts to the respective kernels of $\pi_!$ (\Cref{sta:hexp-subfunctor}), the same is not true for the kernels of~$\pi_*$.
This makes some formulas and arguments easier.

We note that this contrasts with~\cite{ks-coha}, where Kontsevich and Soibelman define exponential mixed Hodge structures as the kernel of~$\pi_{*}$ instead (with $X=\base=\Spec(\CC)$).
\end{rmk}

\begin{lem}
\label{sta:Pi-coreflector}%
The endofunctor $Q_\lambda\circ Q$ on $D(X_{\grp})$ factors through $\ep{D}(X)$ and induces a right adjoint
\[
\Pi:D(X_\grp)\to \ep{D}(X)
\]
to the inclusion.
Moreover, there is a distinguished triangle of endofunctors on $D(X_{\grp})$:
\begin{equation}
\label{eq:triangle-Pi}
\Pi\to \id\to\pi^*\pi_\sharp
\end{equation}
\end{lem}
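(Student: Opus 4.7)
The plan is to derive both claims formally from the recollement of \Cref{sta:exp-recollement-*} combined with the identification $\ep{D}(X) = \ker(\pi_\sharp)$ of \Cref{sta:ker!=kersharp}. The essential work has already been done in those results; what remains is bookkeeping with adjunctions.

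First I would establish the factorization and the adjunction. By \Cref{sta:exp-recollement-*}, $Q_\lambda$ is fully faithful with essential image $\ker(\pi_\sharp) = \ep{D}(X)$. Hence the composite $Q_\lambda \circ Q : D(X_\grp) \to D(X_\grp)$ takes values in $\ep{D}(X)$, defining the desired $\Pi$. To obtain the adjunction between the inclusion $\iota : \ep{D}(X) \hookrightarrow D(X_\grp)$ and $\Pi$, I would transport the adjunction $Q_\lambda \dashv Q$ across the equivalence $Q_\lambda : D(X_\grp)/\pi^*D(X) \isoto \ep{D}(X)$: for $M \in \ep{D}(X)$ with preimage $\bar{M}$ under $Q_\lambda$, and for $N \in D(X_\grp)$,
\[
\Hom(\iota M, N) = \Hom(Q_\lambda \bar{M}, N) \simeq \Hom(\bar{M}, Q N) \simeq \Hom(M, \Pi N),
\]
where the middle isomorphism uses $Q_\lambda \dashv Q$ and the last uses the full faithfulness of $Q_\lambda$.

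For the triangle~\eqref{eq:triangle-Pi}, I would complete the unit $\eta : \id \to \pi^*\pi_\sharp$ of $\pi_\sharp \dashv \pi^*$ to a fiber sequence $F \to \id \to \pi^*\pi_\sharp$ of endofunctors of $D(X_\grp)$. Applying $\pi_\sharp$ and using that $\pi_\sharp \pi^* \isoto \id$ (from the full faithfulness of $\pi^*$) together with the triangle identity $\epsilon\pi_\sharp \circ \pi_\sharp\eta = \id$ shows that $\pi_\sharp \eta$ is an equivalence; hence $\pi_\sharp F = 0$, so $F$ factors through $\ep{D}(X)$ via some $F' : D(X_\grp) \to \ep{D}(X)$. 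The map $F = \iota F' \to \id$ corresponds under $\iota \dashv \Pi$ to a natural transformation $F' \to \Pi$, and applying $\Pi$ to the fiber sequence, using $\Pi\iota \simeq \id$ and the vanishing $\Pi \pi^* = 0$ (which follows from $\Hom(\iota M, \pi^* N) = \Hom(\pi_\sharp \iota M, N) = 0$ for $M \in \ep{D}(X)$), identifies this transformation as an equivalence.

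The only mildly delicate point, and thus the principal obstacle, is the final identification step: one must verify that the natural map $F' \to \Pi$ produced by the adjunction truly agrees with the one produced by applying $\Pi$ to the triangle, so that the inclusion $F \to \id$ is canonically identified with the counit of $\iota \dashv \Pi$. This is a standard unit-counit diagram chase, and at no point is any machinery beyond \Cref{sta:exp-recollement-*} and \Cref{sta:ker!=kersharp} required; the triangle~\eqref{eq:triangle-Pi} is in effect the standard open/closed cofiber sequence attached to any recollement.
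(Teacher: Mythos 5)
Your overall route is the same as the paper's: both reduce the lemma to standard (co)localization theory for triangulated categories, the paper by citing it directly, you by unpacking the recollement of \Cref{sta:exp-recollement-*} together with the identification $\ep{D}(X)=\ker(\pi_\sharp)$ from \Cref{sta:ker!=kersharp}. The first half of your argument is correct as written: $Q_\lambda Q$ lands in $\ker(\pi_\sharp)=\ep{D}(X)$, and the chain $\Hom(\iota M,N)\simeq\Hom(\bar M,QN)\simeq\Hom(M,\Pi N)$, using $Q_\lambda\dashv Q$ and full faithfulness of $Q_\lambda$, gives the adjunction $\iota\dashv\Pi$.

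The step that is not available as stated is the very first move in your proof of~\eqref{eq:triangle-Pi}: in a bare triangulated category one cannot simply ``complete the unit $\eta:\id\to\pi^*\pi_\sharp$ to a fiber sequence of endofunctors'', since cones are not functorial, and in this section $D$ is only a triangulated coefficient system, with no stable enhancement assumed. Supplying exactly this functorial triangle is what the paper's appeal to localization theory is for. The gap is repairable with material you already have: form the triangle $F_M\to M\to\pi^*\pi_\sharp M$ objectwise; your computation with the triangle identity shows $\pi_\sharp F_M=0$, i.e.\ $F_M\in{}^\bot\left(\pi^*D(X)\right)$, which is closed under shifts, so $\Hom(F_M,\pi^*\pi_\sharp N[n])=0$ for all $n$ and every morphism $M\to N$ admits a \emph{unique} fill-in between the chosen triangles. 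Uniqueness yields functoriality of $F$ and canonicity of the identification $F\simeq\iota\Pi$, after which the rest of your argument goes through. (Equivalently, once $\Pi$ is in hand, complete the counit $\Pi M\to M$ objectwise and identify its cone with $\pi^*\pi_\sharp M$ by the same orthogonality.) By contrast, the point you flag as the ``principal obstacle''---that the transformation $F'\to\Pi$ obtained by adjunction agrees with the one obtained by applying $\Pi$ to the triangle---is immediate from the formula for the adjoint transpose, namely $F'\to\Pi\iota F'\to\Pi$, and is not where the real delicacy lies.
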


\begin{rmk}
Here, as often in the sequel, we will abuse notation and identify the \emph{coreflector}~$\Pi$ with the colocalization $Q_\lambda\circ Q$.
As we will see below (\Cref{sta:coprojector-as-convolution}), this functor may also be expressed explicitly as the convolution product with a certain object in $\ep{D}(X)$.
\end{rmk}

\begin{proof}[Proof of \Cref{sta:Pi-coreflector}]
This is a formal consequence of the composite~$\pi^*\pi_\sharp$ being a localization functor, see~\cite[\S\,4.11]{krause:localization-theory-tricat}.
\end{proof}

\begin{rmk}
\label{rmk:ker-!-verdier-duality}%
Suppose $D$ affords a Verdier duality, that is, for each $X\in\sch[\base]$, there is an anti-involution
\[
\mathbb{D}_X:D(X)\isoto D(X)\op
\]
which exchanges the $*$-functoriality and the $!$-functoriality:
\[
\mathbb{D}_Yf^*\simeq f^!\mathbb{D}_X
\]
for every separated $f:Y\to X$.
Then $\mathbb{D}_X$ sends $\ker(\pi_!)$ to $\ker(\pi_*)$ and restricts to an antiequivalence
\[
\ker(\pi_!)\isoto\ker(\pi_*)\op.
\]
\end{rmk}

\subsection{Four-functor formalism}\label{sec:hfour}

In this section, we construct the non-monoidal part of the triangulated coefficient system~$\ep{D}$.
By definition, $\ep{D}(X)\subseteq D_{\grp}(X)$, where we use the shift $D_{\grp}$ of $D$ by $\grp$, cf.\ \Cref{exa:shifted-cosy}.
Among other things we will see that $\ep{D}\subseteq D_\grp$ is in fact a subfunctor, the `left adjoint' functoriality on $D_\grp$ restricts to $\ep{D}$, and the `right adjoint' functoriality descends \textsl{via} the coreflector~$\Pi$.

We will use (often without mentioning) the basic properties of coefficient systems described in \Cref{sec:cs-properties}, which as discussed in \ref{rmk:triangulated-cosy-six-ops} hold for triangulated coefficient systems with minor restrictions. The first easy important observation is the following result.

\begin{lem}\label{Pi-operations}
The endofunctor $\Pi$ commutes with pullbacks, $\sharp$-pushforwards and $!$-pushforwards in~$D_{\grp}$ in the following sense. Let $f:Y\to X$ be a morphism in $\sch[\base]$.
\begin{enumerate}
  \item\label{it:Pi-operations.1} The natural transformation $(f_{\grp})^{*}\Pi\to \Pi (f_{\grp})^{*}$ is an isomorphism.
  \item\label{it:Pi-operations.2} Assume that $f$ is smooth. The natural transformation $(f_{\grp})_{\sharp}\Pi\to \Pi (f_{\grp})_{\sharp}$ is an isomorphism.
    \item\label{it:Pi-operations.3} Assume that $f$ is separated. The natural transformation $(f_{\grp})_!\Pi\to \Pi (f_{\grp})_!$ is an isomorphism.
  \end{enumerate}
\end{lem}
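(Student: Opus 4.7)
The plan is to apply each of the three functors $F$ (respectively $(f_\grp)^*$, $(f_\grp)_\sharp$, $(f_\grp)_!$) to the defining triangle $\Pi(-)\to(-)\to\pi^*\pi_\sharp(-)$ from \Cref{sta:Pi-coreflector}, and invoke the uniqueness of the coreflection triangle in the recollement of \Cref{sta:exp-recollement-*} to conclude. Throughout I would work with the Cartesian square
\[
\begin{tikzcd}
Y_\grp \ar[r, "f_\grp"] \ar[d, "\pi_Y"] & X_\grp \ar[d, "\pi_X"] \\
Y \ar[r, "f"] & X
\end{tikzcd}
\]
in which the vertical maps are separated and smooth.

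First I would verify that $F$ preserves the relevant subcategory $\ep{D}$. Using the characterization $\ep{D}(-)=\ker\pi_!=\ker\pi_\sharp$ from \Cref{sta:ker!=kersharp}, this reduces to: the base change $(\pi_Y)_!(f_\grp)^*\simeq f^*(\pi_X)_!$ for \ref{it:Pi-operations.1}; the functoriality $(\pi_X)_\sharp(f_\grp)_\sharp\simeq f_\sharp(\pi_Y)_\sharp$ coming from $\pi_Xf_\grp=f\pi_Y$ for \ref{it:Pi-operations.2}; and the analogous functoriality of $_!$ for \ref{it:Pi-operations.3}. Combined with the universal property of $\Pi$ as right adjoint to the inclusion, this preservation produces via the usual adjoint calculus the natural transformation $F\Pi\to\Pi F$ featured in the statement.

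Next, applying $F$ to the triangle $\Pi M\to M\to\pi^*\pi_\sharp M$ yields a triangle $F\Pi M\to FM\to F\pi^*\pi_\sharp M$ whose first term lies in $\ep{D}$ by the previous step. If the third term lies in $\pi^*D$ on the target, then \Cref{sta:exp-recollement-*} identifies this with the coreflection triangle of $FM$, whose first term is $\Pi FM$, yielding the desired iso $F\Pi\simeq \Pi F$. The required identifications of the third term are: $(f_\grp)^*\pi_X^*(\pi_X)_\sharp\simeq\pi_Y^*\bigl(f^*(\pi_X)_\sharp\bigr)$ for \ref{it:Pi-operations.1}, by functoriality of pullback; $(f_\grp)_\sharp\pi_Y^*(\pi_Y)_\sharp\simeq\pi_X^*\bigl(f_\sharp(\pi_Y)_\sharp\bigr)$ for \ref{it:Pi-operations.2}, by smooth base change along the smooth morphism $f$; and $(f_\grp)_!\pi_Y^*(\pi_Y)_\sharp\simeq\pi_X^*\bigl(f_!(\pi_Y)_\sharp\bigr)$ for \ref{it:Pi-operations.3}, by the base change of $_!$ against $^*$ recorded in \Cref{cs-linearity}.

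The three cases are structurally parallel. The only subtlety, rather than a main obstacle, is to confirm that the iso produced by the recollement uniqueness actually agrees with the canonically constructed natural transformation $F\Pi\to\Pi F$; this reduces to a naturality check that the identifications of the third term above are compatible with the adjunction unit $\id\to\pi^*\pi_\sharp$, and should follow formally from the compatibility of base change isomorphisms with units.
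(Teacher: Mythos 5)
Correct, and essentially the paper's own argument: both deduce all three statements from the defining triangle $\Pi\to\id\to\pi^{*}\pi_{\sharp}$ of \Cref{sta:Pi-coreflector} together with the base-change/functoriality compatibilities of $\pi^{*}$, $\pi_{\sharp}$, $\pi_{!}$ with $(f_{\grp})^{*}$, $(f_{\grp})_{\sharp}$, $(f_{\grp})_{!}$ (using \Cref{sta:ker!=kersharp} and smoothness of~$\pi$). The paper phrases this as the commutation of $\pi^{*}$, $\pi_{\sharp}$ and the unit $\id\to\pi^{*}\pi_{\sharp}$ with the three operations, while you package it via uniqueness of the recollement decomposition of \Cref{sta:exp-recollement-*}; your closing compatibility worry is settled at once by adjunction, since a map such as $(f_{\grp})^{*}\Pi M\to\Pi (f_{\grp})^{*}M$ (and likewise in the other two cases) out of an object of $\ep{D}$ is uniquely determined by its composite with the counit into $(f_{\grp})^{*}M$, and the isomorphism produced by the uniqueness of the decomposition is compatible with these counits by construction.
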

\begin{proof}
The functors $\pi^{*}$, $\pi_{\sharp}$ between $D$ and $D_{\grp}$ as well as the unit $\id\to \pi^{*}\pi_{\sharp}$ commute with arbitrary pullbacks, $\sharp$-pushforwards and $!$-pushforwards (using base change and the fact that $\pi$ is smooth in the latter case).
Hence all claims follow from the defining distinguished triangle~\eqref{eq:triangle-Pi}.
\end{proof}

\begin{cor}
\label{sta:hexp-subfunctor}
Let $f:Y\to X$ be a morphism in $\sch[\base]$.
We have $f_{\grp}^{*}\ep{D}(X)\subset \ep{D}(Y)$.
Hence $\ep{D}$ defines a subfunctor of $D_{\grp}:\schop[\base]\to\Tri$.
\end{cor}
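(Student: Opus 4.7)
The plan is to reduce to a direct application of base change. Consider the Cartesian square
\[
\begin{tikzcd}
Y_\grp \ar[r, "f_\grp"] \ar[d, "\pi_Y"'] & X_\grp \ar[d, "\pi_X"] \\
Y \ar[r, "f"'] & X
\end{tikzcd}
\]
associated with $f$. Since $\pi:\grp\to\base$ is separated by \Cref{conv:group-scheme}, so is $\pi_X$, so that $\pi_{X!}$ and $\pi_{Y!}$ are defined (cf.\ \Cref{rmk:triangulated-cosy-six-ops}). The base change equivalence from \Cref{cs-linearity} then gives a natural isomorphism $f^*\pi_{X!}\isoto \pi_{Y!}f_\grp^*$.

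For any $M\in \ep{D}(X)=\ker(\pi_{X!})$, we therefore have $\pi_{Y!}(f_\grp^*M)\simeq f^*(\pi_{X!}M)\simeq 0$, so $f_\grp^*M\in\ker(\pi_{Y!})=\ep{D}(Y)$. This shows that $f_\grp^*$ restricts to a functor $\ep{D}(X)\to\ep{D}(Y)$. Since identities and compositions are automatically respected (they are inherited from the already-established functoriality of $D_\grp$ on the full subcategory $\ep{D}\subseteq D_\grp$), we obtain the desired subfunctor $\ep{D}\subseteq D_\grp$ of $\schop[\base]\to\Tri$.

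I do not anticipate a serious obstacle here: the claim is essentially immediate from base change, which is baked into the axioms of a triangulated coefficient system. As a side remark, one could equivalently deduce the preservation of $\ep{D}(X)$ under $f_\grp^*$ from \Cref{Pi-operations}\ref{it:Pi-operations.1}, using that $M\in\ep{D}(X)$ satisfies $\Pi M\simeq M$ and hence $f_\grp^*M\simeq f_\grp^*\Pi M\simeq \Pi f_\grp^*M\in\ep{D}(Y)$; but this route ultimately rests on the same base-change input via the distinguished triangle~\eqref{eq:triangle-Pi}.
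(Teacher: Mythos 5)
Your argument is correct. The paper's own proof is exactly the route you relegate to your side remark: it simply cites \Cref{Pi-operations}.\ref{it:Pi-operations.1} (commutation of $f_\grp^*$ with the coreflector $\Pi$), which via the triangle~\eqref{eq:triangle-Pi} rests on the same base-change input. Your main argument is marginally more direct: you apply the base change $\pi_{Y!}f_\grp^*\simeq f^*\pi_{X!}$ for the separated smooth morphism $\pi$ (legitimate in the triangulated setting by \Cref{conv:group-scheme} and \Cref{rmk:triangulated-cosy-six-ops}) straight to the kernel description $\ep{D}(X)=\ker(\pi_{X!})$, bypassing $\Pi$ altogether; the subfunctor claim then follows since $\ep{D}(X)\subseteq D_\grp(X)$ is full, as you note. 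Both proofs are sound and essentially interchangeable; the paper's phrasing has the mild advantage of reusing \Cref{Pi-operations}, which is needed anyway for the other operations.
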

\begin{proof}
This follows from \Cref{Pi-operations}.\ref{it:Pi-operations.1}.
\end{proof}

Our next goal is to verify the subset of the axioms of a triangulated coefficient system for~$\ep{D}$ which do not involve the monoidal structure.

\begin{prop}\label{exponentiation-left-axioms}
Let $f:Y\to X$ be a morphism in $\sch[\base]$.
\begin{enumerate}
\item \label{hep:sharp} If $f$ is smooth then the functor $(f_\grp)_\sharp$ restricts to a functor $(f_\grp)_\sharp:\ep{D}(Y)\to \ep{D}(X)$ left adjoint to $f_\grp^{*}:\ep{D}(X)\to \ep{D}(Y)$.
\item \label{hep:proper-push} If $f$ is proper then the functor $(f_\grp)_*$ restricts to a functor $(f_\grp)_*:\ep{D}(Y)\to \ep{D}(X)$ right adjoint to $f_\grp^{*}:\ep{D}(X)\to \ep{D}(Y)$.
\item \label{hep:left-hty} The functor $\ep{D}$ satisfies the (triangulated version of) the axioms
     \ref{ax:smooth-bc} and \ref{ax:hty}.
 \end{enumerate}
 \end{prop}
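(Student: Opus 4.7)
The plan is to reduce each part of the proposition to the corresponding fact for the shifted coefficient system $D_\grp$ (introduced in \Cref{exa:shifted-cosy}), using that $\ep{D}(X)\subseteq D_\grp(X)$ is a full triangulated subcategory. The engine for checking that the various operations restrict to $\ep{D}$ will be \Cref{Pi-operations}, together with the characterization of $\ep{D}(X)$ as the essential image of the coreflector $\Pi$---equivalently, the full subcategory of objects $M$ with $\Pi M\simeq M$.

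For part \ref{hep:sharp}, I would suppose $f$ is smooth, take $M\in\ep{D}(Y)$ so that $\Pi M\simeq M$, and apply \Cref{Pi-operations}.\ref{it:Pi-operations.2} to obtain
\[
(f_\grp)_{\sharp}M\ \simeq\ (f_\grp)_{\sharp}\Pi M\ \simeq\ \Pi(f_\grp)_{\sharp}M,
\]
which shows $(f_\grp)_{\sharp}M\in\ep{D}(X)$. Since $f_\grp^{*}$ already preserves $\ep{D}$ by \Cref{sta:hexp-subfunctor} and $\ep{D}\into D_\grp$ is a full subcategory, the adjunction $(f_\grp)_{\sharp}\dashv f_\grp^{*}$ on $D_\grp$ restricts automatically to the required adjunction between the subcategories $\ep{D}$ (unit and counit lie in $\ep{D}$ by fullness). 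Part \ref{hep:proper-push} is the identical argument applied to \Cref{Pi-operations}.\ref{it:Pi-operations.3}, after noting that $f$ proper (hence separated) implies $f_\grp$ is proper, so $(f_\grp)_{*}\simeq (f_\grp)_{!}$ in $D_\grp$.

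For part \ref{hep:left-hty}, both axioms are formulas internal to $D_\grp$: a cartesian square in $\sch[\base]$ remains cartesian after base change along $\grp\to\base$, and $\affine^1_X\times_\base\grp = \affine^1_{X_\grp}$. The axioms \ref{ax:smooth-bc} and \ref{ax:hty}, which $D_\grp$ inherits from $D$, therefore supply a base-change equivalence and fully faithfulness of $p_\grp^{*}$ already as statements on $D_\grp$. By \Cref{sta:hexp-subfunctor} together with part \ref{hep:sharp}, all four functors appearing in the base-change square preserve $\ep{D}$, so the equivalence restricts; and fully faithfulness of a functor automatically passes to any full subcategory it preserves, yielding \ref{ax:hty} for $\ep{D}$. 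The entire argument rests on \Cref{Pi-operations}; beyond that, the only conceptual point---rather than a real obstacle---is the elementary observation that a full subcategory closed under a pair of adjoint functors automatically inherits the adjunction, and likewise for natural equivalences.
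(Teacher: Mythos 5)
Your proof is correct and follows essentially the same route as the paper: both parts \ref{hep:sharp} and \ref{hep:proper-push} are obtained by showing the functors preserve $\ep{D}$ via \Cref{Pi-operations} (with $(f_\grp)_*\simeq(f_\grp)_!$ in the proper case) and then restricting the adjunctions from $D_\grp$, and \ref{ax:smooth-bc}, \ref{ax:hty} are inherited from $D_\grp$. The only cosmetic difference is in \ref{ax:hty}, where the paper rephrases full faithfulness as invertibility of the counit $(p_\grp)_\sharp(p_\grp)^*\to\id$ while you invoke the equivalent observation that full faithfulness restricts along full subcategories preserved by the functor.
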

 \begin{proof}
 The functor $(f_\grp)_\sharp$ restricts as claimed by \Cref{Pi-operations}.\ref{it:Pi-operations.2}.
 It is a left adjoint to $f_\grp^{*}$ and satisfies the Beck-Chevalley condition for cartesian squares, since those properties are satisfied at the level of $D_{\grp}$. This proves part~\ref{hep:sharp} as well as~\ref{ax:smooth-bc}.
 Part~\ref{hep:proper-push} follows similarly from \Cref{Pi-operations}.\ref{it:Pi-operations.3} since $(f_\grp)_{!}\simeq (f_\grp)_*$.

Finally, for~\ref{ax:hty} we notice that it is equivalent to the counit $(p_\grp)_\sharp (p_\grp)^*\to\id$ being invertible hence this follows from \ref{ax:hty} for $D_{\grp}$.
 \end{proof}

 We turn to the construction of the other functors. Unlike the $*$\nobreakdash-pullbacks and the $\sharp$\nobreakdash-push\-forwards, the functor $(f_\grp)_{*}$ does not restrict directly from $D_{\grp}$ in general and we need to take recourse to the coreflector $\Pi:D_\grp\to \ep{D}$ of \Cref{sta:Pi-coreflector}.

\begin{prop}\label{pro:hep-cs-non-mon}
 The functor $\ep{D}$ satisfies the axioms \ref{ax:pushforwards}, \ref{ax:stable} and condition \ref{it:localization-axiom.2} of \Cref{sta:localization-axiom}.
\end{prop}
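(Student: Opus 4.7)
The plan is to use the coreflector $\Pi:D_{\grp}\to \ep{D}$ of \Cref{sta:Pi-coreflector} together with the results of \Cref{exponentiation-left-axioms} to construct the missing adjoints and verify the remaining axioms. All three claims reduce to the corresponding properties for the shifted coefficient system~$D_{\grp}$, using that $\ep{D}$ is a full subfunctor of~$D_{\grp}$ by \Cref{sta:hexp-subfunctor}.

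For \ref{ax:pushforwards}, given $f:Y\to X$, I would define
\[
  \ul{f}_{*}\ :=\ \Pi\circ (f_{\grp})_{*}|_{\ep{D}(Y)}\ :\ \ep{D}(Y)\longrightarrow \ep{D}(X)
\]
and check that it is right adjoint to $\ul{f}^{*}=(f_{\grp})^{*}|_{\ep{D}(X)}$ by concatenating three adjunctions. Writing $\iota$ for the inclusion of exponential subcategories, for $N\in \ep{D}(X)$ and $M\in \ep{D}(Y)$ one has
\[
  \Hom_{\ep{D}(X)}\!\bigl(N,\Pi(f_{\grp})_{*}M\bigr)\,\simeq\,\Hom_{D(X_{\grp})}\!\bigl(\iota N,(f_{\grp})_{*}\iota M\bigr)\,\simeq\,\Hom_{D(Y_{\grp})}\!\bigl(\iota \ul{f}^{*}N,\iota M\bigr)\,\simeq\,\Hom_{\ep{D}(Y)}\!\bigl(\ul{f}^{*}N,M\bigr),
\]
using, in order, that $\Pi$ is right adjoint to $\iota$ (\Cref{sta:Pi-coreflector}), the $((f_{\grp})^{*},(f_{\grp})_{*})$-adjunction in $D$ combined with the compatibility of $\ul{f}^{*}$ and $(f_{\grp})^{*}$ through $\iota$, and the fullness of $\iota$.

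For \ref{ax:stable}, the zero section $s:X\to\affine^{1}_{X}$ is a closed immersion, hence proper, so by \Cref{exponentiation-left-axioms}.\ref{hep:proper-push} we have $\ul{s}_{*}=(s_{\grp})_{*}|_{\ep{D}(X)}$, and by \Cref{exponentiation-left-axioms}.\ref{hep:sharp} we have $\ul{p}_{\sharp}=(p_{\grp})_{\sharp}|_{\ep{D}(\affine^{1}_{X})}$. Their composite is therefore the restriction of $(p_{\grp})_{\sharp}(s_{\grp})_{*}$, which is the Thom equivalence for the trivial line bundle $\affine^{1}_{X_{\grp}}\to X_{\grp}$ and hence an equivalence on $D(X_{\grp})$ by \ref{ax:stable} applied to $D$ at the scheme $X_{\grp}$. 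The restriction of an equivalence to a full subcategory stable under both functors is again an equivalence.

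For condition \ref{it:localization-axiom.2} of \Cref{sta:localization-axiom}, let $i:Z\into X$ and $j:U\into X$ be complementary closed and open immersions. First, $\ep{D}(\emptyset)$ is a full subcategory of $D(\emptyset_{\grp})=D(\emptyset)\simeq 0$, which handles (1). Next, $i$ is proper, so $\ul{i}_{*}=(i_{\grp})_{*}|_{\ep{D}(Z)}$ is the restriction of the fully faithful functor $(i_{\grp})_{*}$ (fully faithful by \Cref{sta:localization-axiom} applied to~$D$ at the closed immersion $i_{\grp}$), hence fully faithful itself, giving (2). Finally, for (3), if $M\in\ep{D}(X)$ satisfies $\ul{i}^{*}M=0$ and $\ul{j}^{*}M=0$ in $\ep{D}(Z)$ and $\ep{D}(U)$, then $(i_{\grp})^{*}M=0$ and $(j_{\grp})^{*}M=0$ in $D(Z_{\grp})$ and $D(U_{\grp})$ because $\ep{D}\subseteq D_{\grp}$ fully faithfully. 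Joint conservativity of $((i_{\grp})^{*},(j_{\grp})^{*})$ in $D$, which holds by \Cref{sta:localization-axiom} for~$D$ at the complementary pair $(i_{\grp},j_{\grp})$, then forces $M=0$. The main point throughout is that \Cref{Pi-operations} already ensures that $\Pi$ intertwines with the relevant functors, so no non-trivial new calculation is needed; if there is a subtle step, it is merely the adjunction bookkeeping in the construction of $\ul{f}_{*}$.
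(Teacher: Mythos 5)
Your proposal is correct and takes essentially the same route as the paper: you construct $\ul{f}_{*}$ as $\Pi\circ(f_{\grp})_{*}$ and verify the adjunction by the same chain (coreflector, base adjunction, full faithfulness of the inclusion), you deduce Tate stability by restricting the Thom equivalence from $D(X_{\grp})$ using \Cref{exponentiation-left-axioms}, and you inherit the localization conditions from $D_{\grp}$ exactly as in the paper. The only loosely stated point is essential surjectivity in the Tate-stability step --- a full subcategory preserved by an equivalence (or by the two factors of a composite) need not be preserved by its quasi-inverse --- but here the inverse Thom equivalence is tensoring with the constant invertible object $\pi^{*}\one(-1)[-2]$, which preserves $\ker(\pi_{!})$ by the projection formula, so the argument is complete (the paper is equally terse at this point).
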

 \begin{proof}
For any $f$ in $\sch[\base]$, the composite $\Pi (f_\grp)_*$ is a right adjoint to $(f_\grp)^*$. This proves \ref{ax:pushforwards}.

We certainly have $\ep{D}(\emptyset)=0$. Let $i$ be a closed immersion with complementary open immersion $j$. The pair $(i_\grp^{*},j_\grp^{*})$ is conservative at the level of $D_{\grp}$, so it is also conservative for the subfunctor $\ep{D}$.
Moreover, the right adjoint to $i_\grp^*$ at the level of $\ep{D}$ is given by $(i_\grp)_*$, by \Cref{exponentiation-left-axioms}.\ref{hep:proper-push}.
Therefore, \ref{it:localization-axiom.2} of \Cref{sta:localization-axiom} for $\ep{D}$ follows from the corresponding axiom for $D_\grp$.

We now prove \ref{ax:stable}. Let $X\in\sch$.
By \Cref{exponentiation-left-axioms}.\ref{hep:sharp} and the previous paragraph, we need to show that the functor $(q_\grp)_{\sharp}(i_\grp)_{*}:\ep{D}(X)\to \ep{D}(X)$ is an equivalence.
But this follows from \ref{ax:stable} for $D_{\grp}$.
\end{proof}

\begin{rmk}
\label{rmk:Pi-morphism-cosy}%
\label{rmk:hCexp-cosy-nonmonoidal}%
At this point, combining \Cref{exponentiation-left-axioms} and \Cref{pro:hep-cs-non-mon}, we have shown that the functor $\ep{D}$ is a `non-monoidal triangulated coefficient system' (that is, a stable homotopy $2$-functor in the sense of~\cite[Definitions 1.4.1]{ayoub:thesis-1}).
\Cref{sta:hexp-subfunctor} and \Cref{Pi-operations}.\ref{it:Pi-operations.2} together imply that $\Pi:D_{\grp}\to \ep{D}$ is in fact a morphism of `non-monoidal triangulated coefficient systems', that is, it satisfies the Beck-Chevalley condition in \Cref{defn:cs-morphism}.

By~\cite[Scholie 1.4.2]{ayoub:thesis-1}, we have a four-functor formalism for $\ep{D}$. To distinguish notationally from the operations in $D$, we adopt the following.
\end{rmk}

\begin{notn}
Let $f$ be a morphism in $\sch[\base]$. We write:
\begin{enumerate}[(a)]
\item $\ul{f}^{*}=\ep{D}(f)^*$
\item $\ul{f}_*=\ep{D}(f)_*$
\end{enumerate}
Assume $f$ is separated. We write:
\begin{enumerate}[(c)]
\item $\ul{f}_!=\ep{D}(f)_{!}$
\item $\ul{f}^!=\ep{D}(f)^{!}$
\end{enumerate}
Let $p:V\to X$ be a vector bundle with zero section $s:X\to V$. We write $\ul{\thom}(V)=\ul{p}_{\sharp}\ul{s}_{*}$ for the corresponding Thom equivalence.
\end{notn}

Let us identify the resulting operations.

\begin{prop}
\label{sta:identify-four-functors-hexp}
Let $f$ be a morphism in $\sch[\base]$.
  Then:
\begin{enumerate}[(a)]
\item $\ul{f}^{*}=f_\grp^*$
\item $\ul{f}_*=\Pi(f_\grp)_*$
\end{enumerate}
If $f$ is separated, then:
\begin{enumerate}[(a),resume]
\item $\ul{f}_!=(f_\grp)_!$
\item $\ul{f}^!=\Pi(f_\grp)^!$
\end{enumerate}
Moreover,
\begin{enumerate}[(a),resume]
\item if $f$ is smooth then $\ul{f}_\sharp=(f_\grp)_\sharp$,
\item if $f$ is smooth and separated then $\ul{f}^!=f_\grp^!$,
\item if $f$ is proper then $\ul{f}_*=(f_\grp)_*$ and
\item if $V$ is a vector bundle then $\ul{\thom}(V)=\thom(V_\grp)$.
\end{enumerate}
Finally, we always have $\Pi  (f_\grp)_{*} \Pi \simeq \Pi (f_\grp)_{*}$, and $\Pi  (f_\grp)^! \Pi \simeq \Pi  f_\grp^!$ for $f$ separated.
\end{prop}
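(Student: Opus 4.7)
Assertions (a), (e), and (g) are immediate: (a) is the content of \Cref{sta:hexp-subfunctor}, while (e) and (g) are restatements of parts \ref{hep:sharp} and \ref{hep:proper-push} of \Cref{exponentiation-left-axioms}. For (h), decompose $\ul{\thom}(V)=\ul{p}_{\sharp}\ul{s}_{*}$ with $p\colon V\to X$ smooth and $s\colon X\to V$ a closed immersion; (e) and (g) then give $\ul{\thom}(V)=(p_{\grp})_{\sharp}(s_{\grp})_{*}=\thom(V_{\grp})$.

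For (b) I would show directly that $\Pi(f_{\grp})_{*}$ is right adjoint to $\ul{f}^{*}=f_{\grp}^{*}$: for $N\in\ep{D}(X)$ and $M\in\ep{D}(Y)$,
\[
\Hom_{\ep{D}(X)}\bigl(N,\Pi(f_{\grp})_{*}M\bigr)\simeq\Hom_{D(X_{\grp})}\bigl(N,(f_{\grp})_{*}M\bigr)\simeq\Hom_{D(Y_{\grp})}\bigl(f_{\grp}^{*}N,M\bigr)\simeq\Hom_{\ep{D}(Y)}\bigl(\ul{f}^{*}N,M\bigr),
\]
using the $\Pi\dashv\text{inclusion}$ adjunction from \Cref{sta:Pi-coreflector} together with full faithfulness of the inclusion; uniqueness of adjoints yields (b), and the same argument will give (d) once (c) is established. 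For (c), first note that $(f_{\grp})_{!}$ preserves $\ep{D}$: if $M\in\ep{D}(Y)=\ker\pi_{Y!}$, then $\pi_{X!}(f_{\grp})_{!}M\simeq f_{!}\pi_{Y!}M=0$, since $\pi_{X}\circ f_{\grp}=f\circ\pi_{Y}$. Identifying this restriction with $\ul{f}_{!}$ is then formal via Nagata compactification $f=p\circ j$: by (e) and (g), together with the general identities $\ul{j}_{!}=\ul{j}_{\sharp}$ for open immersions and $\ul{p}_{!}=\ul{p}_{*}$ for proper morphisms, one gets $\ul{f}_{!}=\ul{p}_{!}\ul{j}_{!}=(p_{\grp})_{!}(j_{\grp})_{!}=(f_{\grp})_{!}$. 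Assertion (f) then follows by showing that $(f_{\grp})^{!}$ preserves $\ep{D}$ when $f$ is smooth: relative purity gives $(f_{\grp})^{!}\simeq\thom(\Omega_{f_{\grp}})(f_{\grp})^{*}$, and since $\Omega_{f_{\grp}}\simeq\pi_{Y}^{*}\Omega_{f}$ by base change, the projection formula yields $\pi_{Y!}\bigl(\mthom(\pi_{Y}^{*}\Omega_{f})\otimes(f_{\grp})^{*}N\bigr)\simeq\mthom(\Omega_{f})\otimes\pi_{Y!}(f_{\grp})^{*}N=0$ using \Cref{sta:hexp-subfunctor}. Hence $\Pi(f_{\grp})^{!}=(f_{\grp})^{!}$ on $\ep{D}(X)$, and (f) follows from (d).

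For the final pair of equivalences, I would exploit the fact that both $(f_{\grp})_{*}$ and $(f_{\grp})^{!}$ carry ``constant'' objects $\pi^{*}D$ into constant objects. Smooth base change along $\pi$ gives $(f_{\grp})_{*}\pi_{Y}^{*}\simeq\pi_{X}^{*}f_{*}$; for $(f_{\grp})^{!}$, I would combine the base change $(f_{\grp})^{!}\pi_{X}^{!}\simeq\pi_{Y}^{!}f^{!}$ with relative purity $\pi^{!}\simeq\thom(\Omega_{\pi})\pi^{*}$ and the observation from \Cref{rmk:exp-recollement-!} that $\Omega_{\pi}$ is pulled back from the base, to deduce $(f_{\grp})^{!}\pi_{X}^{*}\simeq\pi_{Y}^{*}f^{!}$. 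Applying $\Pi(f_{\grp})_{*}$ (respectively $\Pi(f_{\grp})^{!}$) to the triangle $\Pi M\to M\to\pi^{*}\pi_{\sharp}M$ of \Cref{sta:Pi-coreflector}, and using that $\Pi\pi^{*}=0$ (which follows from the same triangle together with $\pi_{\sharp}\pi^{*}\simeq\id$), yields the required equivalences. The main technical obstacle is arguably the verification that $(f_{\grp})^{!}$ respects the subcategory of constant objects, requiring a careful interplay of base change, relative purity, and Thom-bundle calculations.
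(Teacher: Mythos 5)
Your proposal is correct, and for the main identifications it follows the same route as the paper: (a), (e), (g), (h) by restriction from $D_\grp$ via \Cref{sta:hexp-subfunctor} and \Cref{exponentiation-left-axioms}, (b) and (d) by the $\Pi\dashv$ inclusion adjunction (this is exactly how \ref{ax:pushforwards} is proved in \Cref{pro:hep-cs-non-mon} and how the paper passes to right adjoints), and (c) by Nagata compactification combined with $\ul{j}_!=\ul{j}_\sharp$ and $\ul{p}_!=\ul{p}_*$. You deviate in two places, both validly. For (f) the paper simply applies relative purity inside $\ep{D}$ (available since $\ep{D}$ is already known to be a stable homotopy $2$-functor) together with the Thom identification (h), giving $\ul{f}^!\simeq\ul{\thom}(\Omega_f)\ul{f}^*=\thom(\Omega_{f_\grp})f_\grp^*\simeq f_\grp^!$; your argument instead shows that $f_\grp^!$ preserves $\ker(\pi_!)$ via purity and the projection formula and then invokes (d) — slightly longer, but it makes the kernel-preservation explicit, which is of independent use. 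For the final pair of equivalences the paper's proof is a one-liner by uniqueness of right adjoints (e.g.\ both $\Pi(f_\grp)_*\Pi$ and $\Pi(f_\grp)_*$ are right adjoint to $f_\grp^*$ composed with the inclusion, using that $f_\grp^*$, resp.\ $(f_\grp)_!$, preserves the subcategories), whereas you argue through the triangle~\eqref{eq:triangle-Pi}, the constancy of $(f_\grp)_*\pi^*$ and $(f_\grp)^!\pi^*$, and $\Pi\pi^*\simeq 0$; this is more computational but equally sound, and the intermediate base-change identities $(f_\grp)_*\pi_Y^*\simeq\pi_X^*f_*$ and $(f_\grp)^!\pi_X^*\simeq\pi_Y^*f^!$ you establish are correct (the latter indeed needs that $\Omega_\pi$ is pulled back from the base, as in \Cref{rmk:exp-recollement-!}).
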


\begin{proof}
The identification $\ul{f}^*=(f_\grp)^*$ is true by Lemma~\ref{sta:hexp-subfunctor}.
The claim about $f_\sharp$ if $f$ is  smooth \resp{$f_*$ if $f$ is proper} follows from Lemma~\ref{exponentiation-left-axioms}.
This implies the claim about Thom equivalences.
For $f$ smooth and separated, relative purity then shows $\ul{f}^!=f_\grp^!$.

For general separated $f$ choose a compactification $f=\overline{f}j$ with $\overline{f}$ proper and $j$ an open immersion, which exists by Nagata's theorem. Then $\ul{f}_{!}=\ul{\overline{f}}_{*}\ul{j}_{\sharp}$ which by the previous paragraph equals $(\overline{f}_{\grp})_{*}(j_{\grp})_{\sharp}=(f_{\grp})_{!}$. Passing to right adjoints this also gives $\ul{f}^!=\Pi f_\grp^!$.

The final statement follows by uniqueness of right adjoints.
\end{proof}

\subsection{Convolution product}
\label{sec:exp-htpy-convolution-shifted}

We now come to the monoidal structure. 

\begin{defn}\label{defn:conv}
Let $X\in\sch[\base]$. The \emph{convolution product} on $D_\grp(X)$ is the bifunctor 
\begin{align}
  \label{eq:conv-shifted}
  \conv\colon D_\grp(X)\times D_\grp(X)&\longrightarrow D_\grp(X)\\
\notag  (M,N)&\longmapsto \plus_{!}(M\boxtimes_{X} N).
\end{align}
\end{defn}

\begin{rmk}
\label{rem:conv-shifted-idea}%
Here is one way of thinking about the convolution product.
By a triangulated version of \Cref{lem:ext-prod}, $D$ defines a lax symmetric monoidal functor $\schop[\base]\to\Tri$.
The K\"unneth formula (\Cref{sec:ext-prod}) says precisely that the functor~$D$ with the covariant $!$-functoriality also defines a lax symmetric monoidal functor $D_!:\Sch[\base]{sep}\to\Tri$ where $\Sch[\base]{sep}$ has the same objects as $\sch[\base]$ but whose morphisms are required to be separated.
In particular, this functor takes the commutative separated $\base$-group scheme~$\grp$ to a commutative monoid in~$\Tri$, that is, a tensor triangulated category $D(\grp)$.
The tensor product is given by the convolution product~\eqref{eq:conv-shifted}.
The object $\unit_!\one\in D(\grp)$ is a unit for the convolution product.
And if $\tau:\grp^2\isoto\grp^2$ denotes the swapping of the two factors then the symmetry constraint for the convolution product is given by
\[
\plus_!(M\boxtimes N)\cong \plus_!\tau_!(N\boxtimes M)\cong (\plus\circ\tau)_!(N\boxtimes M)\cong\plus_!(N\boxtimes M).
\]
We deduce the following result which can also be proven by hand.
\end{rmk}

\begin{prop}
\label{sta:conv-shifted}
Let $X\in\sch[\base]$.
The convolution product~\eqref{eq:conv-shifted} underlies a symmetric monoidal structure on $D_\grp(X)$ described in \Cref{rem:conv-shifted-idea}.\qed
\end{prop}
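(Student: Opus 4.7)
The plan is to formalize \Cref{rem:conv-shifted-idea} in the relative setting over $X$. The key point is to promote the covariant $!$-functoriality of $D$ to a lax symmetric monoidal functor and then apply it to the commutative monoid $X_\grp$.

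First, I would upgrade the covariant $!$-functoriality of $D$, restricted to $X$-schemes, to a lax symmetric monoidal functor $D_{!,X} : \Sch[X]{sep} \to \Tri$, where $\Sch[X]{sep}$ carries the Cartesian symmetric monoidal structure given by fiber product over $X$. The lax structure maps are the external products over $X$, that is, $D(Y) \otimes D(Z) \to D(Y \times_X Z)$ sending $M \otimes N$ to $M \boxtimes_X N$, and the unit map sends the monoidal unit of $\Tri$ to $\one_{D(X)}$. The associativity, unit and symmetry coherences are formal consequences of the K\"unneth formula of \Cref{sec:ext-prod}, the projection formula, and associativity of external products, all of which hold in any triangulated coefficient system by \Cref{rmk:triangulated-cosy-six-ops}.

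Second, the base change $(X_\grp, \plus_X, \unit_X)$ of $(\grp, \plus, \unit)$ is a commutative monoid object in $\Sch[X]{sep}$ (using that $\pi_X$ is separated, being the base change of the separated map $\pi$). Applying the lax symmetric monoidal functor $D_{!,X}$ to this commutative monoid yields a commutative algebra in $\Tri$, i.e., a symmetric monoidal triangulated category whose underlying category is $D(X_\grp) = D_\grp(X)$. By construction, the tensor product is $(\plus_X)_!(M \boxtimes_X N) = M \conv N$, the unit is $(\unit_X)_!\one$, and the symmetry constraint agrees with the one described in \Cref{rem:conv-shifted-idea} via the swap of the two factors.

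The main obstacle lies in the first step: checking the pentagon, unit and hexagon coherences for the lax symmetric monoidal structure on $D_{!,X}$ purely at the triangulated level, since no triangulated analogue of \Cref{lem:ext-prod} is at our disposal. These verifications, while tedious, are formal consequences of compatibility of $!$-pushforward with composition, base change, and the projection formula. A cleaner but less elementary alternative is to construct the symmetric monoidal structure at the $\infty$-categorical level (as will be done in \Cref{sec:cs-icats}) and then pass to the homotopy category via \Cref{sta:cosy-vs-triangulated}; this, however, bypasses the elementary spirit of the current section.
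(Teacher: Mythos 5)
Your argument is essentially the paper's own: \Cref{rem:conv-shifted-idea} deduces the proposition exactly by viewing the covariant $!$-functoriality of $D$, via the K\"unneth formula, as a lax symmetric monoidal functor on separated schemes and applying it to the commutative group scheme $\grp$, with the relative case over $X$ handled implicitly by the shifted coefficient system $D_X$ of \Cref{exa:shifted-cosy}. Your closing caveat also matches the paper's stance, which invokes a ``triangulated version'' of \Cref{lem:ext-prod} without proof and notes the statement ``can also be proven by hand.''
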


In the rest of this section, we establish some basic properties of the convolution product that will simplify later computations.
\begin{lem}
\label{sta:conv-0_*}
Let $X\in\sch[\base]$. Let $M\in D_{\grp}(X)$ and $N\in D(X)$.
The K\"unneth formula  induces an isomorphism
  \[
 \unit_{*}N\conv M\isofrom \pi^{*}N\otimes M
\]
which is natural in $M$ and $N$. 
\end{lem}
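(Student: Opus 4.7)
The plan is to reduce the statement to the unit axiom of the group scheme~$\grp$, combined with the K\"unneth formula for $!$-pushforwards.

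First, since $\pi:\grp\to\base$ is separated by the standing \Cref{conv:group-scheme}, the unit section $\unit:\base\to\grp$---and hence its base change $\unit_X:X\to X_\grp$---is a closed immersion, so that $\unit_*\simeq\unit_!$. We may thus replace $\unit_*$ by $\unit_!$ throughout.

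The geometric input is the commutative diagram
\[
\begin{tikzcd}[column sep=large]
X_\grp \ar[r,"\unit_X\times_X\id"] \ar[dr,equal] & X_\grp\times_X X_\grp \ar[d,"\plus_X"] \\
& X_\grp
\end{tikzcd}
\]
expressing the unit axiom $\plus_X\circ(\unit_X\times_X\id)=\id_{X_\grp}$, under the canonical identification $X\times_X X_\grp\simeq X_\grp$. Applying the K\"unneth formula from \cref{sec:ext-prod} to the pair $(\unit_X,\id_{X_\grp})$ over $X$ gives a natural isomorphism
\[
\unit_! N\boxtimes_X M\;\simeq\;(\unit_X\times_X\id)_!(N\boxtimes_X M).
\]
Applying $\plus_{X,!}$ and using functoriality of $(-)_!$ together with $\plus_X\circ(\unit_X\times_X\id)=\id$ collapses the composite:
\[
\unit_* N\conv M\;=\;\plus_{X,!}\bigl(\unit_*N\boxtimes_X M\bigr)\;\simeq\;\bigl(\plus_X\circ(\unit_X\times_X\id)\bigr)_!(N\boxtimes_X M)\;=\;N\boxtimes_X M,
\]
and the last term is precisely $\pi^* N\otimes M$ under $X\times_X X_\grp\simeq X_\grp$. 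Reading this chain backwards yields the claimed isomorphism $\pi^*N\otimes M\isoto\unit_*N\conv M$.

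Naturality in $M$ and $N$ is automatic, as every isomorphism in the chain (K\"unneth, the identification $\unit_*\simeq\unit_!$, and functoriality of $(-)_!$) is natural. I don't foresee a serious obstacle here; the only mild point of care is to invoke the K\"unneth formula relative to $X$ rather than $\base$, which is a routine base change of the absolute version recalled in \cref{sec:ext-prod}.
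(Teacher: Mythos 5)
Your proof is correct and follows essentially the same route as the paper: both rewrite $\unit_*N\boxtimes_X M$ via the K\"unneth formula as $(\unit\times\id)_!(N\boxtimes_X M)$ (using $\unit_*\simeq\unit_!$ for the closed immersion $\unit$) and then collapse $\plus_!\circ(\unit\times\id)_!$ using the unit identity $\plus\circ(\unit\times\id)=\id$, identifying the result with $\pi^*N\otimes M$. Your version merely spells out the properness/closed-immersion point and the relative-over-$X$ form of K\"unneth, which the paper leaves implicit.
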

\begin{proof}
We have
  \[
\unit_{*}N\conv M=\plus_!(\unit_{*}N\boxtimes M)\isofrom \plus_{!}(\unit\times\id)_*(N\boxtimes M)\simeq N\boxtimes M=\pi^{*}N\otimes M
\]
where the wrong-way isomorphism is an instance of the K\"unneth formula, and the third isomorphism uses $\plus\circ(\unit\times\id)=\id$.
\end{proof}

\begin{rmk}
\label{rmk:conv-shifted-natural}
The construction of \Cref{rem:conv-shifted-idea} that associates with the group~$\grp$ the symmetric monoidal \icat $D(\grp)$ is natural in~$\grp$.
In particular, given a morphism $f:\grp\to\grp'$ of $\base$-group schemes the functor $f_!:D(\grp)\to D(\grp')$ is symmetric monoidal with respect to the convolution product structure.
The following is a special instance for $f=\pi:\grp\to\base$.
\end{rmk}

\begin{lem}\label{lem:pi-conv}
  Let $X\in\sch[\base]$ and $M,N\in D_\grp(X)$. The K\"unneth formula yields isomorphisms
  \[
\pi_!(M\conv N)\isofrom \pi_!M\otimes\pi_!N
\]
and
\[
\pi_\sharp(M\conv N)\isofrom \pi_\sharp M\otimes\pi_!N
\]
  in $D(X)$, natural in~$M$ and~$N$. 
\end{lem}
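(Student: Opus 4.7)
The plan is to deduce both identities from the K\"unneth formula of \Cref{sec:ext-prod} applied over $X$, using relative purity to bootstrap the second from the first.

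For the first identity, the starting point is the observation that $\plus:X_\grp\times_X X_\grp\to X_\grp$ is a morphism of $X$-schemes, so that $\pi\circ\plus$ coincides with the structure morphism $q:X_\grp\times_X X_\grp\to X$ (which equals $\pi\circ p_i$ for $i=1,2$). This rewrites
\[
\pi_!(M\conv N)=\pi_!\plus_!(M\boxtimes_X N)=q_!(M\boxtimes_X N),
\]
and the K\"unneth formula, applied to $q=\pi\times_X\pi$, then supplies the canonical isomorphism with $\pi_!M\otimes\pi_!N$. (One may alternatively view this as the symmetric monoidality of $\pi_!$ with respect to convolution, a special case of the naturality of convolution in the group scheme, cf.\ \Cref{rmk:conv-shifted-natural}.)

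For the second identity, I would invoke relative purity to write $\pi_\sharp\simeq\pi_!\circ\thom(\Omega_\pi)$, and use~\eqref{eq:pi!=pi*} to record that $\Omega_\pi=\pi^*L$ is pulled back from $X$ for $L=(\unit^*\Omega_{\grp/\base})|_X$. The next step is to move the Thom twist inside the convolution: since $\plus^*\pi^*=p_1^*\pi^*$, the projection formula for $\plus$ together with the definition of $\boxtimes_X$ yields
\[
\thom(\Omega_\pi)(M\conv N)\simeq \plus_!\bigl(\plus^*\pi^*\mthom{L}\otimes(M\boxtimes_X N)\bigr)\simeq \thom(\Omega_\pi)(M)\conv N.
\]
Applying $\pi_!$ and invoking the first identity then produces the asserted formula. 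I do not anticipate a serious obstacle here; the task is essentially bookkeeping, the only technical points being to match the direction of the K\"unneth map with the $\isofrom$ of the statement (the natural map goes from $\pi_!M\otimes\pi_!N$ to $\pi_!(M\conv N)$, inverse to the forward K\"unneth isomorphism) and to record naturality in $M$ and $N$, both of which are automatic from the constructions.
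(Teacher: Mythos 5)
Your proof is correct and follows essentially the same route as the paper: the first identity via $\pi\circ\plus=\pi\times_X\pi$ and the K\"unneth formula, the second via relative purity for the smooth morphism~$\pi$. Your explicit verification that the Thom twist by $\Omega_\pi=\pi^*(\unit^*\Omega_{\grp/\base})|_X$ slides onto one convolution factor simply spells out what the paper's one-line appeal to relative purity leaves implicit.
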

\begin{proof}
We have
\[
\pi_!(M\conv N)= \pi_!\plus_!(M\boxtimes_X N)\simeq (\pi\times \pi)_!(M\boxtimes_X N) \simeq \pi_!M\otimes \pi_!N
\]
by the observation that $\pi\plus=\pi\times\pi$ and the K\"unneth formula~(\Cref{sec:ext-prod}). The second isomorphism then follows by relative purity for the smooth morphism $\pi$.
\end{proof}

\begin{rmk}
The construction of \Cref{rem:conv-shifted-idea} is also natural in~$D$.
That is, given a morphism of triangulated coefficient systems $\phi:D\to D'$ then $\phi_{\grp}:D(\grp)\to D'(\grp)$ is a symmetric monoidal functor with respect to the convolution products on both sides.
The following is a special instance for $\phi=f^*:D_{X}\to D_{Y}$.
\end{rmk}

\begin{lem}
\label{sta:f^*-conv}
Let $f:Y\to X$ be a morphism of $\base$-schemes.
The functor $f_{\grp}^*:D_{\grp}(X)\to D_{\grp}(Y)$ is symmetric monoidal with respect to the convolution product.
\end{lem}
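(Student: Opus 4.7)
The plan is to invoke the naturality in $D$ of the convolution construction described in \Cref{rem:conv-shifted-idea}, applied to the morphism $f^{*}:D_{X}\to D_{Y}$ of shifted triangulated coefficient systems from \Cref{exa:shifted-cosy}. Since we only care about the statement at the (symmetric monoidal) triangulated level, this reduces to checking compatibility with the multiplication and the unit of the convolution.

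For the multiplication, consider the Cartesian square
\[
\begin{tikzcd}
\grp\times_{Y}\grp \ar[r,"\plus_{Y}"] \ar[d,"g"'] & \grp_{Y} \ar[d,"f_{\grp}"] \\
\grp\times_{X}\grp \ar[r,"\plus_{X}"] & \grp_{X}
\end{tikzcd}
\]
obtained by base-changing the $X$-addition map $\plus_{X}$ along $f_{\grp}$. Applying base change (\Cref{cs-linearity}) yields $f_{\grp}^{*}(\plus_{X})_{!}\simeq(\plus_{Y})_{!}g^{*}$. Next, using that $g^{*}$ is symmetric monoidal and that it commutes with the pullbacks $p_{1}^{*},p_{2}^{*}$ defining the exterior product over $X$ (via the evident commutations $g\circ q_{i}=p_{i}\circ g$, where $q_{i}:\grp\times_{Y}\grp\to\grp_{Y}$ are the projections), one gets $g^{*}(M\boxtimes_{X}N)\simeq f_{\grp}^{*}M\boxtimes_{Y}f_{\grp}^{*}N$. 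Combining these yields the required isomorphism
\[
f_{\grp}^{*}(M\conv_{X}N)\;\simeq\;f_{\grp}^{*}M\conv_{Y}f_{\grp}^{*}N
\]
natural in $M$ and $N$.

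For the unit, the unit of convolution on $D_{\grp}(X)$ is $(\unit_{X})_{!}\one_{X}$, where $\unit_{X}:X\to \grp_{X}$ is the zero section pulled back to $X$. Since $\unit_{X}$ is a closed immersion and $f^{*}$ is symmetric monoidal, base change applied to the Cartesian square with sides $f,f_{\grp},\unit_{X},\unit_{Y}$ provides an isomorphism $f_{\grp}^{*}(\unit_{X})_{!}\one_{X}\simeq(\unit_{Y})_{!}\one_{Y}$.

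The main obstacle, such as it is, would be to verify coherence: that these structural isomorphisms intertwine the associativity and symmetry constraints of $\conv_{X}$ and $\conv_{Y}$. But all constraints in \Cref{rem:conv-shifted-idea} arise from equalities or canonical isomorphisms between composites of additions and swaps on the group scheme, so compatibility follows formally from the pseudofunctoriality of the base change isomorphisms (in the $2$-functor $D_{!}$ of \Cref{sec:ext-prod}) and from $f^{*}$ being a morphism of triangulated coefficient systems in the sense of \Cref{defn:triang-cs}. At the triangulated level this is a routine diagram chase; the more delicate $\infty$-categorical enhancement will be addressed in \Cref{sec:cs-icats}.
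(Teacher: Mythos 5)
Your proposal is correct and follows essentially the same route as the paper: the paper's proof likewise invokes the naturality (in $D$, via the shifted coefficient system $f^{*}:D_{X}\to D_{Y}$ of \Cref{exa:shifted-cosy}) of the construction in \Cref{rem:conv-shifted-idea}, and reduces to the observation that $f_{\grp}^{*}$ commutes with every operation entering the convolution product and the unit $\unit_{!}\one$ — exactly the base-change and $\boxtimes$-compatibility checks you spell out in more detail.
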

\begin{proof}
This amounts to showing that $(-)^*$ commutes with all the operations that enter into the definition of the convolution product~\eqref{eq:conv-shifted} and the unit $\unit_!\one$.
But this is clear.
\end{proof}

In the study of the convolution product, it is useful to introduce some additional morphisms related to the group~$\grp$.

\begin{defn}\label{defn:shear}
  The two \emph{shear isomorphisms} are the two morphisms $\delta_1,\delta_{2}:\grp\times_{\base}\grp\simeq\grp \times_{\base}\grp$ of $\base$-schemes uniquely characterised by
  \[
\plus \delta_{1}=p_{1},\ p_{2}\delta_{1}=p_{2}\qquad\text{  and  }\qquad \plus \delta_{2}=p_{2},\ p_{1}\delta_{2}=p_{1},
  \]
  i.e. given informally by the formulas $\delta_{1}(x,y)=(x-y,y)$ and $\delta_{2}(x,y)=(x,y-x)$. We also write $d=p_{1}\delta_{1}$, i.e $d(x,y)=x-y$.
  \end{defn}

  \begin{lem}\label{lem:constant-conv}
Let $M\in D(X)$ and $N\in D_\grp(X)$. We have a canonical and functorial isomorphism
  \[
\pi^*M\conv N\simeq \pi^*(M\otimes \pi_{!}N).
\]
In particular, convolution with a constant object always produces a constant object.
\end{lem}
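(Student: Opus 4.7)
The plan is to unpack the definition of the convolution product and then reduce everything to two standard six-functor identities: the projection formula for $\plus_{!}$ and base change for a certain Cartesian square expressing $\plus$ as a pullback of $\pi$ against itself.

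First, unravel the definition:
\[
\pi^{*}M\conv N\ =\ \plus_{!}\bigl((\pi^{*}M)\boxtimes_{X}N\bigr)\ =\ \plus_{!}\bigl(p_{1}^{*}\pi^{*}M\otimes p_{2}^{*}N\bigr),
\]
where $p_{1},p_{2}:\grp_{X}\times_{X}\grp_{X}\to \grp_{X}$ are the two projections. Since $\pi\circ p_{1}=\pi\circ\plus$ (both are the structure map $\grp_{X}\times_{X}\grp_{X}\to X$), we may rewrite $p_{1}^{*}\pi^{*}M=\plus^{*}\pi^{*}M$. The projection formula for $\plus_{!}$ (\Cref{cs-linearity}) then yields a canonical isomorphism
\[
\plus_{!}\bigl(\plus^{*}\pi^{*}M\otimes p_{2}^{*}N\bigr)\ \simeq\ \pi^{*}M\otimes \plus_{!}p_{2}^{*}N,
\]
so it remains to identify $\plus_{!}p_{2}^{*}N\simeq \pi^{*}\pi_{!}N$.

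Next, I would verify that the commutative square
\[
\begin{tikzcd}
\grp_{X}\times_{X}\grp_{X}
\ar[r, "\plus"]
\ar[d, "p_{2}" left]
&
\grp_{X}
\ar[d, "\pi"]
\\
\grp_{X}
\ar[r, "\pi"]
&
X
\end{tikzcd}
\]
is Cartesian: informally the map $(g,h)\mapsto (g,h-g)$ furnishes the inverse to the comparison map $(p_{2},\plus)$, using the group structure on $\grp$ over $X$. The morphism $\plus$ is separated (as a morphism between separated $\base$-schemes), so base change (\Cref{cs-basechange}) applies and gives
\[
\plus_{!}p_{2}^{*}N\ \simeq\ \pi^{*}\pi_{!}N.
\]
Combining this with the previous computation and the symmetric monoidality of $\pi^{*}$ produces the desired canonical and functorial isomorphism
\[
\pi^{*}M\conv N\ \simeq\ \pi^{*}M\otimes \pi^{*}\pi_{!}N\ \simeq\ \pi^{*}\bigl(M\otimes\pi_{!}N\bigr).
\]

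No step is a serious obstacle; the only delicate point is checking that the square above is genuinely Cartesian, which is the place where the group structure on $\grp$ really enters (via the existence of inverses). Once that is in hand, the proof is just projection formula plus base change.
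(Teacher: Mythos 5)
Your proof is correct and essentially the paper's own argument: the paper likewise rewrites $p_1^*\pi^*=\plus^*\pi^*$, applies the projection formula for $\plus_!$, and then identifies $\plus_!p_2^*N\simeq\pi^*\pi_!N$ — there by inserting the shear isomorphism $\delta_1$ to replace $\plus_!p_2^*$ by $(p_1)_!p_2^*$ and using base change for the tautologically Cartesian square, which is exactly the content of your observation that the square with $\plus$ and $p_2$ is Cartesian (the shear being the witnessing isomorphism). One cosmetic slip: the inverse of the comparison map $(p_2,\plus)$ is $(g,h)\mapsto(h-g,g)$ rather than $(g,h)\mapsto(g,h-g)$, but this bookkeeping detail does not affect the argument.
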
  
\begin{proof}
The isomorphism is obtained as follows:
  \begin{align*}
    \pi^* M\conv N &=  \plus_!(p_1^*\pi^*M\otimes p_2^* N)\\
                   &\simeq  \plus_!(\plus^*\pi^*M\otimes p_2^* N)\\
                   &\simeq \pi^*M\otimes \plus_!p_2^*N\\
                   &\simeq \pi^*M\otimes \plus_!(\delta_1)_!\delta_1^*p_2^*N\\
                   &\simeq \pi^*M\otimes (p_1)_!p_2^*N\\
                   &\simeq \pi^*M\otimes \pi^*\pi_!N\\
    &\simeq \pi^*(M\otimes \pi_!N)
  \end{align*}
where we have used linearity (\Cref{cs-linearity}), the identities of \Cref{defn:shear} and the fact that, for any isomorphism~$g$, we have $\id\simeq g_!g^*$.
\end{proof}  

\begin{lem}\label{lem:Pi-const}
  Let $M\in D(X)$ and $N\in D_{\grp}(X)$. We have a canonical isomorphism
  \[
\pi^{*}M\otimes \Pi(N)\simeq \Pi(\pi^{*}M\otimes N)
  \]
\end{lem}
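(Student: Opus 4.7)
The plan is to exploit the defining triangle \eqref{eq:triangle-Pi} for $\Pi$, namely $\Pi \to \id \to \pi^{*}\pi_{\sharp}$, applied to $N$, and then tensor with $\pi^{*}M$. First, I would note that the functor $\pi^{*}M \otimes (-)$ is exact, so we obtain a distinguished triangle
\[
\pi^{*}M \otimes \Pi(N) \to \pi^{*}M \otimes N \to \pi^{*}M \otimes \pi^{*}\pi_{\sharp}N
\]
in $D_{\grp}(X)$, whose rightmost term is isomorphic to $\pi^{*}(M \otimes \pi_{\sharp}N)$ by the monoidality of $\pi^{*}$.

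Next, I would apply the exact functor $\Pi$ to this triangle. The rightmost term becomes zero: it lies in the image of $\pi^{*}$, and by the triangle \eqref{eq:triangle-Pi} applied to any $\pi^{*}L$, together with the fact that $\pi_{\sharp}\pi^{*}\simeq\id$ (since $\pi^{*}$ is fully faithful by assumption), we have $\Pi(\pi^{*}L) = 0$. On the other hand, I claim that $\pi^{*}M \otimes \Pi(N)$ already lies in $\ep{D}(X) = \ker(\pi_{\sharp})$, so that $\Pi$ restricts to the identity on it. Indeed, by the smooth projection formula \ref{ax:projection} (available in $D_{\grp}$ via $\pi$),
\[
\pi_{\sharp}\bigl(\pi^{*}M \otimes \Pi(N)\bigr) \simeq M \otimes \pi_{\sharp}\Pi(N) = 0,
\]
since $\Pi(N)\in \ker(\pi_{\sharp})$ by \Cref{sta:ker!=kersharp}.

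Combining these observations, the triangle obtained by applying $\Pi$ reads
\[
\pi^{*}M \otimes \Pi(N) \to \Pi(\pi^{*}M \otimes N) \to 0,
\]
which produces the desired canonical isomorphism $\pi^{*}M \otimes \Pi(N) \isoto \Pi(\pi^{*}M \otimes N)$. There is no real obstacle here: the only subtle point is the verification that $\pi^{*}M \otimes \Pi(N)$ lies in $\ep{D}(X)$, which is an immediate application of the projection formula for $\pi$, and the identification of the coefficient system $D_{\grp}$ in which we are working.
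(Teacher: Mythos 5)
Your proof is correct and is essentially the paper's argument spelled out: the paper's one-line proof ("the definition of $\Pi$ and the projection formula") amounts to exactly your combination of the defining triangle $\Pi\to\id\to\pi^{*}\pi_{\sharp}$, the vanishing $\Pi\circ\pi^{*}\simeq 0$, and the smooth projection formula showing $\pi^{*}M\otimes\Pi(N)\in\ker(\pi_{\sharp})=\ep{D}(X)$. No gaps; the isomorphism you produce is the canonical one induced by the counit $\Pi(N)\to N$.
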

\begin{proof}
This follows from the definition of $\Pi$ and the projection formula.
\end{proof}  

\begin{lem}\label{lem:Pi-conv}
Let $M,N\in D_\grp(X)$. The natural transformation $\Pi\to\id$ induces isomorphisms
  \[
\Pi M\conv N\simeq \Pi(M\conv N)\simeq \Pi M\conv \Pi N.
\]
\end{lem}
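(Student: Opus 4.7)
My plan is to deduce both isomorphisms formally from the defining distinguished triangle of $\Pi$ (equation~\eqref{eq:triangle-Pi}) together with the compatibility between convolution and constant objects (\Cref{lem:constant-conv}) and between convolution and $\pi_!$ (\Cref{lem:pi-conv}).

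First I will establish the left-hand isomorphism $\Pi M\conv N\simeq \Pi(M\conv N)$. The strategy is to recognize $\Pi M\conv N\to M\conv N$ as the canonical coreflector map at $M\conv N$, by checking the two defining properties:
\begin{enumerate}
\item $\Pi M\conv N$ lies in $\ep{D}(X)$. Indeed, by \Cref{lem:pi-conv} we have $\pi_!(\Pi M\conv N)\simeq \pi_!(\Pi M)\otimes \pi_!N$, and $\pi_!(\Pi M)=0$ since $\Pi M\in\ep{D}(X)=\ker(\pi_!)$ by \Cref{sta:ker!=kersharp}.
\item The cofiber of $\Pi M\conv N\to M\conv N$ lies in $\pi^*D(X)$. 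This is obtained by convolving the distinguished triangle
\[
\Pi M\to M\to \pi^*\pi_\sharp M
\]
of \Cref{sta:Pi-coreflector} with $N$ (using that $\conv$ is exact in each variable, by construction), which yields
\[
\Pi M\conv N\to M\conv N\to \pi^*\pi_\sharp M\conv N,
\]
and then applying \Cref{lem:constant-conv} to identify the third term with $\pi^*(\pi_\sharp M\otimes \pi_!N)\in \pi^*D(X)$.
\end{enumerate}
Since $\ep{D}(X)={}^\bot(\pi^*D(X))$ by \Cref{sta:ker!=kersharp} and the colocalization triangle~\eqref{eq:triangle-Pi} characterizes $\Pi(M\conv N)\to M\conv N$ by exactly these two properties, the uniqueness of recollement triangles gives the desired isomorphism.

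For the second isomorphism $\Pi(M\conv N)\simeq \Pi M\conv \Pi N$, I will apply the first isomorphism twice and use the symmetry of $\conv$ from \Cref{sta:conv-shifted}. Specifically, by the symmetric version of the first step (swapping the roles of $M$ and $N$ via the symmetry constraint) we have $M\conv \Pi N\simeq \Pi(M\conv N)$. Applying $\Pi$ to both sides and using $\Pi\circ \Pi\simeq \Pi$ (since $\Pi$ is a coreflector) gives $\Pi(M\conv \Pi N)\simeq \Pi(M\conv N)$. On the other hand, the first isomorphism applied to the pair $(M,\Pi N)$ gives $\Pi M\conv \Pi N\simeq \Pi(M\conv \Pi N)$. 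Chaining these identifications yields $\Pi M\conv \Pi N\simeq \Pi(M\conv N)$.

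I do not expect serious obstacles here: the argument is purely formal once we know (i) the coreflector triangle~\eqref{eq:triangle-Pi}, (ii) that $\pi_!$ turns convolution into tensor (\Cref{lem:pi-conv}), and (iii) that convolving with a constant object stays constant (\Cref{lem:constant-conv}). The only subtle point is the naturality claim, namely that the isomorphisms are induced by the transformation $\Pi\to\id$; but this follows immediately from the construction, since in each case the isomorphism is exhibited through the canonical map $\Pi(-)\conv(-)\to (-)\conv(-)$ coming from $\Pi\to \id$, or its symmetric analogue.
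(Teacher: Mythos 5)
Your proof is correct and follows essentially the same route as the paper's: both rest on \Cref{lem:pi-conv} (to see $\Pi M\conv N\in\ep{D}(X)$), \Cref{lem:constant-conv} together with the triangle~\eqref{eq:triangle-Pi} (to see the cofiber of $\Pi M\conv N\to M\conv N$ is constant), and then conclude formally, the only cosmetic difference being that you invoke uniqueness of the colocalization triangle where the paper applies $\Pi$ to the map and uses that $\Pi$ kills constant objects and is the identity on $\ep{D}(X)$. The deduction of the second isomorphism by a symmetric second application of the first matches the paper as well.
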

\begin{proof}
Let us write $\theta:\Pi\to\id$ for the counit of the coreflector.
By \Cref{lem:constant-conv}, the cofiber of $\theta_M\conv \id:\Pi M\conv N\to M\conv N$ is constant so that $\Pi(\theta_M\conv \id)$ is an isomorphism.
On the other hand, by \Cref{lem:pi-conv}, the domain $\Pi M\conv N\in \ep{D}(X)$ so that $\theta:\Pi(\Pi M\conv N)\isoto\Pi M\conv N$.
Combining the two isomorphisms gives the first isomorphism in the statement.
The second follows by two applications of the first.
\end{proof}

Finally, we show that the convolution symmetric monoidal structure on $D_{\grp}$ is closed and describe its interaction with~$\Pi$.

\begin{lem}
\label{sta:conv-hom-shifted}
The convolution symmetric monoidal structure on~$D_{\grp}(X)$ is closed.
The internal Hom bifunctor is given by
\[
\Homint_{\conv}(M,N)\simeq (p_1)_*\Homint_{D_{\grp^2}(X)}(p_2^*M,\plus^!N).
\]
\end{lem}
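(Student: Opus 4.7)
The plan is to obtain the formula for $\Homint_{\conv}(M,N)$ as a straightforward chain of adjunctions applied to the definition of convolution, and then conclude that the convolution product is closed by the uniqueness of right adjoints.

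Fix $M,N\in D_{\grp}(X)$ and consider an arbitrary $L\in D_{\grp}(X)$. By \Cref{defn:conv} we have
\[
L\conv M \simeq \plus_!(p_1^*L\otimes p_2^*M),
\]
where $p_1,p_2\colon X_{\grp}\times_X X_{\grp}\to X_{\grp}$ are the projections and $\plus\colon X_{\grp}\times_X X_{\grp}\to X_{\grp}$ is the addition. First I would invoke the adjunction $\plus_!\dashv \plus^!$, which is available since $\grp\to\base$ (and hence $\plus$) is separated by \Cref{conv:group-scheme}, to rewrite
\[
\Hom(L\conv M,N)\ \simeq\ \Hom\bigl(p_1^*L\otimes p_2^*M,\ \plus^!N\bigr).
\]
Next, I would use that $D(X\times_\base \grp^2)$ is closed symmetric monoidal (\ref{ax:closed}) to absorb $p_2^*M$ into an internal Hom, producing
\[
\Hom\bigl(p_1^*L,\ \Homint_{D_{\grp^2}(X)}(p_2^*M,\plus^!N)\bigr).
\]
Finally, the adjunction $p_1^*\dashv (p_1)_*$ (existence of $*$-pushforwards is \ref{ax:pushforwards}) yields
\[
\Hom\bigl(L,\ (p_1)_*\Homint_{D_{\grp^2}(X)}(p_2^*M,\plus^!N)\bigr).
\]

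All three isomorphisms are natural in $L$, $M$, and $N$, so composing them exhibits the functor $(p_1)_*\Homint_{D_{\grp^2}(X)}(p_2^*M,\plus^!(-))$ as right adjoint to $(-)\conv M$. By uniqueness of right adjoints this both verifies that the convolution product is closed and gives the claimed formula for its internal Hom.

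I do not expect a substantive obstacle: the argument is purely formal once one has the adjunctions $\plus_!\dashv \plus^!$, $p_1^*\dashv (p_1)_*$, and the closedness of the $\otimes$-structure on $D(X\times_\base \grp^2)$. The only thing to double-check is that $\plus$ and $p_1$ are separated so that the exceptional adjunctions exist in the triangulated coefficient system setting (\Cref{rmk:triangulated-cosy-six-ops}); this follows from separatedness of $\pi\colon\grp\to\base$ via base change.
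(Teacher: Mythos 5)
Your proposal is correct and is exactly the argument the paper has in mind: its proof consists of the single line ``This is a straightforward game of adjunctions,'' and the adjunctions you spell out ($\plus_!\dashv\plus^!$, the internal Hom adjunction in $D(X_{\grp^2})$ from \ref{ax:closed}, and $p_1^*\dashv(p_1)_*$) are precisely that game, with the separatedness of $\plus$ and $p_1$ correctly noted so the exceptional adjunction exists in the triangulated setting.
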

\begin{proof}
This is a straightforward game of adjunctions.
\end{proof}

\begin{lem}\label{lem:Pi-Homint}
Let $M,N\in D_\grp(X)$. The natural transformation $\Pi\to\id$ induces a commutative square of isomorphisms
\[
\begin{tikzcd}
\Pi \Homint_{\conv}(M,\Pi N)
\ar[r, "\sim"]
\ar[d, "\sim"]
&
\Pi\Homint_{\conv}(M,N)
\ar[d, "\sim"]
\\
\Pi \Homint_{\conv}(\Pi M,\Pi N)
\ar[r, "\sim"]
&
\Pi \Homint_{\conv}(\Pi M,N)
\end{tikzcd}
\]
\end{lem}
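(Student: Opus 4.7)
The plan is to show that each of the four arrows in the square is an isomorphism; commutativity of the square then follows from bifunctoriality of $\Pi\Homint_{\conv}(-,-)$, since both compositions realise the morphism induced by simultaneously applying the counit $\theta\colon\Pi\to\id$ to $M$ and to $N$.

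Each arrow arises by applying the functor $\Pi\Homint_{\conv}$, which is exact in each variable since $\Homint_{\conv}(-,-)$ is a right adjoint in each slot, to $\theta$ in one of its arguments. Using the triangle $\Pi\to\id\to\pi^{*}\pi_{\sharp}$ of~\eqref{eq:triangle-Pi} in the appropriate variable, each arrow extends to a distinguished triangle whose third term has the form $\Pi\Homint_{\conv}(L,\pi^{*}K)$ or $\Pi\Homint_{\conv}(\pi^{*}K,L')$ for some $K\in D(X)$ and $L,L'\in D_{\grp}(X)$. It thus suffices to prove the two vanishings
\[
\Pi\Homint_{\conv}(L,\pi^{*}K)=0 \qquad\text{and}\qquad \Pi\Homint_{\conv}(\pi^{*}K,L')=0.
\]

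By~\eqref{eq:triangle-Pi}, an object $Y\in D_{\grp}(X)$ satisfies $\Pi Y=0$ if and only if $Y$ lies in the essential image of $\pi^{*}$, so each vanishing reduces to a Yoneda-type computation. For the first, the $(\conv,\Homint_{\conv})$-adjunction together with \Cref{lem:pi-conv} gives, naturally in $T\in D_{\grp}(X)$,
\[
\Map(T,\Homint_{\conv}(L,\pi^{*}K))\simeq\Map(\pi_{\sharp}T\otimes\pi_{!}L,K)\simeq\Map(T,\pi^{*}\Homint_{D(X)}(\pi_{!}L,K)),
\]
so $\Homint_{\conv}(L,\pi^{*}K)\simeq\pi^{*}\Homint_{D(X)}(\pi_{!}L,K)\in\pi^{*}D(X)$. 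For the second, symmetry of $\conv$ together with \Cref{lem:constant-conv} yields
\[
\Map(T,\Homint_{\conv}(\pi^{*}K,L'))\simeq\Map(\pi^{*}(K\otimes\pi_{!}T),L')\simeq\Map(T,\pi^{!}\Homint_{D(X)}(K,\pi_{*}L')),
\]
so $\Homint_{\conv}(\pi^{*}K,L')\simeq\pi^{!}\Homint_{D(X)}(K,\pi_{*}L')$; this lies in $\pi^{*}D(X)$ via the identification $\pi^{!}\simeq\pi^{*}\circ\thom(\unit^{*}\Omega_{\grp/\base}|_{X})$ of~\eqref{eq:pi!=pi*}.

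The only mildly delicate point is keeping track of variances and verifying naturality in $T$ in both Yoneda computations; once those are in place the argument is purely formal, relying on the adjunction calculus together with \Cref{lem:pi-conv}, \Cref{lem:constant-conv} and~\eqref{eq:triangle-Pi}.
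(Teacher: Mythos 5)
Your proof is correct, but it takes a genuinely different route from the paper's. The paper's argument is a one-line Yoneda computation inside $\ep{D}(X)$: all four corners lie in the full subcategory $\ep{D}(X)$, and for a test object $T\in\ep{D}(X)$ the coreflection adjunction plus the $\conv/\Homint_{\conv}$-adjunction identify $\Hom(T,\Pi\Homint_{\conv}(M',N'))$ with $\Hom(T\conv M',N')$ for each corner; \Cref{lem:Pi-conv} then gives $T\conv\Pi M\simeq T\conv M$ and shows $T\conv M\in\ep{D}(X)$, so mapping into $N$ or $\Pi N$ agree, and all four corners become canonically isomorphic. You instead work one level lower: you resolve each counit by the localization triangle~\eqref{eq:triangle-Pi} and show that $\Pi$ annihilates the error terms, by proving that an internal hom with a constant object in either slot is constant, namely $\Homint_{\conv}(L,\pi^*K)\simeq\pi^*\Homint_{D(X)}(\pi_!L,K)$ and $\Homint_{\conv}(\pi^*K,L')\simeq\pi^!\Homint_{D(X)}(K,\pi_*L')$, using \Cref{lem:pi-conv} and \Cref{lem:constant-conv} (the same ingredients that underlie \Cref{lem:Pi-conv}), together with $\pi^!\simeq\pi^*\circ\thom(\unit^*\Omega_{\grp/\base}|_X)$ and the fact that $\ker\Pi$ is the essential image of $\pi^*$. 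What your route buys is these two explicit identifications, which are Hom-analogues of \Cref{lem:constant-conv} and of independent interest; what it costs is extra bookkeeping, including one glossed point: your justification that $\Homint_{\conv}$ is exact in each variable ``because it is a right adjoint in each slot'' only handles the covariant variable in a bare triangulated setting, but exactness in the contravariant variable does hold here (for instance via the explicit formula of \Cref{sta:conv-hom-shifted}), so this is a cosmetic rather than a genuine gap. Your commutativity-by-bifunctoriality remark matches what the paper leaves implicit.
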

\begin{proof}
This is a formal consequence of the tensor/internal Hom adjunction and of \Cref{lem:Pi-conv}.
\end{proof}  

\subsection{Exponentiated triangulated coefficient system}
\label{sec:exp-trcosy}
In this subsection we finally show that the convolution product lifts the functor $\ep{D}:\schop[\base]\to\Tri$ to a triangulated coefficient system.
The starting point is the observation (\Cref{lem:pi-conv}) that $\ep{D}(X)\subseteq D_{\grp}(X)$ is an ideal with respect to the convolution product.
It is then clear how to turn it into a symmetric monoidal category itself:
\begin{prop}
\label{sta:conv-on-hCexp}%
Let $X\in\sch[\base]$.
The convolution product of~\eqref{eq:conv-shifted} restricts to a bifunctor
\[
\conv:\ep{D}(X)\times \ep{D}(X)\to \ep{D}(X).
\]
It underlies a symmetric monoidal structure on $\ep{D}(X)$ with unit $\Pi\unit_!\one$.
Moreover, $\Pi:D_{\grp}(X)\to \ep{D}(X)$ is a symmetric monoidal functor.
\end{prop}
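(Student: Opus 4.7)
My plan is to realize the desired symmetric monoidal structure on $\ep{D}(X)$ as one inherited from the convolution on $D_\grp(X)$ via Verdier quotient. The first step is the closure statement: by \Cref{lem:pi-conv}, for $M\in\ep{D}(X)=\ker(\pi_!)$ and any $N\in D_\grp(X)$, we have $\pi_!(M\conv N)\simeq \pi_!M\otimes\pi_!N=0$, so $M\conv N\in\ep{D}(X)$. In particular, the convolution restricts to a bifunctor $\conv\colon \ep{D}(X)\times\ep{D}(X)\to\ep{D}(X)$, which is the first assertion. (This also shows that $\ep{D}(X)$ is a $\otimes$-ideal in $D_\grp(X)$.)

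Next, I would apply standard tensor-triangular formalism to the dual side. By \Cref{lem:constant-conv}, the thick subcategory $\pi^*D(X)\subseteq D_\grp(X)$ (thick because $\pi^*$ is fully faithful by our standing assumption on $\grp$) is also a $\otimes$-ideal for convolution. Hence the Verdier quotient $D_\grp(X)/\pi^*D(X)$ inherits a unique symmetric monoidal structure making $Q$ symmetric monoidal, with unit $Q(\unit_!\one)$. I would then transport this structure along the equivalence $Q_\lambda\colon D_\grp(X)/\pi^*D(X)\xrightarrow{\sim}\ep{D}(X)$ from \Cref{sta:exp-recollement-*}. Under the transport, the tensor on $\ep{D}(X)$ becomes $(M,N)\mapsto \Pi(M\conv N)$, which by the first step agrees with $M\conv N$ (since $M\conv N$ already lies in $\ep{D}(X)$, so $\Pi$ acts as the identity); and the unit becomes $Q_\lambda Q(\unit_!\one)=\Pi\unit_!\one$, as required.

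Finally, $\Pi$ is symmetric monoidal essentially by construction: $Q$ is symmetric monoidal by the universal property used to define the quotient structure, and $Q_\lambda$ is the symmetric monoidal equivalence produced by the transport, so $\Pi=Q_\lambda\circ Q$ inherits symmetric monoidality. As a sanity check, the crucial monoidality isomorphism $\Pi M\conv\Pi N\simeq \Pi(M\conv N)$ is exactly \Cref{lem:Pi-conv}, and the unit axiom on $\ep{D}(X)$ is the computation $\Pi\unit_!\one\conv M\simeq \Pi(\unit_!\one\conv M)\simeq \Pi M\simeq M$ for $M\in\ep{D}(X)$, again by \Cref{lem:Pi-conv}. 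The only mildly delicate point is verifying that the Verdier quotient carries a compatible symmetric monoidal structure at the triangulated level; this is standard once one knows that $\pi^*D(X)$ is a thick $\otimes$-ideal, so there should be no real obstacle.
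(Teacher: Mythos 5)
Your proof is correct, and it packages the argument differently from the paper. The paper's proof stays entirely inside $D_{\grp}(X)$: it invokes the general fact that a $\otimes$-ideal $D'\subseteq D$ whose inclusion admits a retraction (here the coreflector $\Pi$) inherits a symmetric monoidal structure with unit $\Pi(\one)$ and with $\Pi$ monoidal, the ideal property being \cref{lem:pi-conv}. You instead work on the complementary side: you observe via \cref{lem:constant-conv} that $\pi^*D(X)$ is a $\otimes$-ideal for $\conv$, descend the monoidal structure to the Verdier quotient (standard tensor-triangular localization, since morphisms with cone in a $\otimes$-ideal are stable under $-\conv N$), and then transport along the equivalence $Q_\lambda$ of \cref{sta:exp-recollement-*}, identifying the transported product with the restricted convolution via the counit and the unit with $\Pi\unit_!\one$ via $\Pi\simeq Q_\lambda Q$ (\cref{sta:Pi-coreflector}). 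What your route buys is that it matches the ``Verdier quotient'' description of $\ep{D}(X)$ from the introduction and gets monoidality of $Q$ (hence of $\Pi=Q_\lambda Q$) and uniqueness of the quotient structure for free from standard tt-theory; what the paper's route buys is that it never has to descend coherence data to the quotient and works directly with the ideal-plus-coreflector, which is also the form that later transfers to the \icategorical setting. Two small points: thickness of $\pi^*D(X)$ does not follow from full faithfulness of $\pi^*$ alone, but from the adjunctions (e.g.\ $\pi^*D(X)=\ker\Pi$ by the triangle \eqref{eq:triangle-Pi}), and in any case thickness is not needed to form the Verdier quotient; and note that your argument is legitimately triangulated-level here, since this section only asserts a symmetric monoidal structure on the homotopy category --- the \icategorical lift is handled separately in \cref{sec:cs-icats}.
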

\begin{proof}
This is an instance of a general fact, for $D$ a symmetric monoidal category and $D'\subseteq D$ an ideal whose inclusion admits a retraction $G:D\to D'$.
We apply this to $\Pi:D_{\grp}(X)\to \ep{D}(X)$ using \Cref{lem:pi-conv}.
\end{proof}

\begin{lem}
\label{sta:C_G-tricosy}
The functor $D_{\grp}:\schop[\base]\to\Tri$ underlies a triangulated coefficient system $D_{\grp}^{\conv}:\schop[\base]\to\mTri$ where the symmetric monoidal structure is given by the convolution product.
\end{lem}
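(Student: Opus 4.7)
The strategy is to check the axioms of \Cref{defn:triang-cs} for $D_\grp^\conv$ using the results already collected in \S\ref{sec:exp-htpy-convolution-shifted}. The key observation is that the underlying $\Tri$-valued functor $D_\grp$ is the shift of $D$ by $\grp$ in the sense of \Cref{exa:shifted-cosy}, hence already underlies a triangulated coefficient system with the pointwise tensor product. Every axiom that does not involve the monoidal structure is therefore inherited immediately. The symmetric monoidality of $(f_\grp)^{*}$ with respect to convolution (\Cref{sta:f^*-conv}) then ensures that $D_\grp^\conv$ really lands in $\mTri$, and \Cref{sta:conv-hom-shifted} gives the closedness required by \ref{ax:closed}. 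The only axiom left to verify is the smooth projection formula \ref{ax:projection} for convolution.

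Concretely, for $f:Y\to X$ smooth, $M\in D_\grp(X)$ and $N\in D_\grp(Y)$, we must establish
\[
(f_\grp)_\sharp\bigl((f_\grp)^{*}M\conv N\bigr)\ \isoto\ M\conv (f_\grp)_\sharp N.
\]
Writing $p:=f_\grp$ and $p^{\times 2}:=(f_\grp)_{\times 2}:Y_\grp\times_Y Y_\grp\to X_\grp\times_X X_\grp$, the plan is to unfold the left-hand side using the definition of convolution together with the identity $(p_{1}^{Y})^{*}p^{*}=(p^{\times 2})^{*}(p_{1}^{X})^{*}$, and then to apply three successive moves: first a Beck--Chevalley isomorphism $p_\sharp\plus_{Y,!}\isoto \plus_{X,!}\,p^{\times 2}_\sharp$ for the cartesian square whose vertical arrows are the additions $\plus_Y,\plus_X$; second the smooth projection formula for the smooth morphism $p^{\times 2}$, which pulls $(p_{1}^{X})^{*}M$ outside of $p^{\times 2}_\sharp$; and finally the smooth base change $p^{\times 2}_\sharp(p_{2}^{Y})^{*}\isoto (p_{2}^{X})^{*}p_\sharp$ for the cartesian square whose vertical arrows are the second projections.

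The main obstacle is the first of these three moves, since the cartesian square it refers to involves $!$-pushforward along $\plus$, which is neither an open immersion nor proper. The key point is that $\plus$ is nevertheless \emph{smooth}: indeed $\plus\circ\delta_{1}=p_{1}$ by \Cref{defn:shear}, so $\plus$ factors as a shear isomorphism followed by a smooth projection, and the same then holds for $\plus_X$ and $\plus_Y$. Relative purity therefore rewrites both $!$-pushforwards as $\sharp$-pushforwards twisted by $\thom^{-1}(\Omega_\plus)$; since relative differentials pull back along cartesian squares one has $\Omega_{\plus_Y}\simeq (p^{\times 2})^{*}\Omega_{\plus_X}$, and the smooth projection formula for $p^{\times 2}_\sharp$ absorbs the Thom twist. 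The desired Beck--Chevalley identity then reduces to the obvious commutativity $p\circ\plus_Y=\plus_X\circ p^{\times 2}$.
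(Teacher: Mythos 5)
Your proposal is correct and follows essentially the same route as the paper: inherit the non-monoidal axioms from the shifted system $D_\grp$, invoke \Cref{sta:f^*-conv} for monoidality of $f_\grp^*$ and \Cref{sta:conv-hom-shifted} for \ref{ax:closed}, and reduce \ref{ax:projection} to the exchange isomorphism $(f_\grp)_\sharp(\plus_Y)_!\isoto(\plus_X)_!(f_{\grp\times\grp})_\sharp$, which is then handled by relative purity and moving a pulled-back Thom twist through $\sharp$-pushforward. The only cosmetic difference is that you justify the twist-moving step via $\Omega_{\plus_Y}\simeq (f_{\grp\times\grp})^*\Omega_{\plus_X}$ from the cartesian square, whereas the paper uses that $\Omega_\plus$ is pulled back from the base because $\grp$ is a group scheme; both arguments are valid.
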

\begin{proof}
Note that the functor in the statement is a `non-monoidal' triangulated coefficient system since it is shifted from~$D$ (\Cref{exa:shifted-cosy}).
Let $f:Y\to X$ be a morphism of $\base$-schemes.
By \Cref{sta:f^*-conv}, the functor $f_{\grp}^*:D_{\grp}(X)\to D_{\grp}(Y)$ is symmetric monoidal.
It follows that we can lift the functor in the statement to a functor $\schop[\base]\to\mTri$ with respect to the convolution product.
It remains to verify \ref{ax:projection} and \ref{ax:closed}.
The latter was established in \Cref{sta:conv-hom-shifted} so we turn to \ref{ax:projection}.
Let $f:Y\to X$ be a smooth $\base$-scheme.
We want to show that the canonical morphism
\[
(f_{\grp})_\sharp(f_{\grp}^*M\conv N)\to  M\conv (f_{\grp})_\sharp N
\]
is invertible in $D(X)$, for any $M\in D_{\grp}(X)$ and $N\in D_{\grp}(Y)$.
Using the axioms~\ref{ax:left} for $D_{\grp}$ one easily reduces to showing that the canonical morphism
\[
(f_\grp)_\sharp(\plus_Y)_!\to (\plus_X)_!(f_{\grp\times\grp})_\sharp
\]
is invertible.
But $(\plus_Y)_!\simeq\thom(\left(\unit^*\Omega_{\grp/\base}\right)|_{Y_{\grp}}\circ(\plus_Y)_\sharp$ and similarly for~$X$.
The claim now follows immediately.
\end{proof}

\begin{thrm}
\label{sta:hCexp-cs}
Let $D$ be a triangulated coefficient sytem.
Then $\ep{D}$ with the convolution product of \Cref{sta:conv-on-hCexp} is a triangulated coefficient system.
Moreover, the natural transformation $\Pi:D_{\grp}^{\conv}\to \ep{D}$ is a morphism of triangulated coefficient systems.
\end{thrm}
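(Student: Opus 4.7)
The plan is to leverage the preparatory lemmas so that each remaining verification reduces to an already-established statement. By \Cref{rmk:hCexp-cosy-nonmonoidal}, the functor $\ep{D}:\schop[\base]\to \Tri$ already satisfies every axiom of a triangulated coefficient system that does not involve the monoidal structure, and the convolution product on $\ep{D}(X)$ from \Cref{sta:conv-on-hCexp} provides the required symmetric monoidal structures componentwise. What remains is (i) to promote $\ep{D}$ to a functor $\schop[\base]\to \mTri$, i.e.\ to check that $\ul{f}^*$ is symmetric monoidal for every morphism $f$, and (ii) to verify the monoidal axioms \ref{ax:projection} and \ref{ax:closed}. The assertion about $\Pi$ will then follow easily.

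For (i), I would use that \Cref{sta:identify-four-functors-hexp} gives $\ul{f}^* = f_\grp^*$ on $\ep{D}$, and \Cref{sta:f^*-conv} asserts that $f_\grp^*$ is symmetric monoidal on $D_\grp^\conv$. Since $\ep{D}(X)$ is a convolution-ideal in $D_\grp^\conv(X)$ by \Cref{lem:pi-conv} and inherits the convolution product, monoidality of $\ul{f}^*$ on binary products is automatic. For unit preservation, combining \Cref{Pi-operations}.\ref{it:Pi-operations.1} with the fact that $f_\grp^*$ preserves the convolution unit yields
\[
\ul{f}^*\Pi(\unit_!\one) = f_\grp^*\Pi(\unit_!\one) \cong \Pi f_\grp^*(\unit_!\one) \cong \Pi(\unit_!\one),
\]
as required.

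For (ii), the smooth projection formula reduces directly to that for $D_\grp^\conv$: if $f$ is smooth, then $\ul{f}_\sharp = (f_\grp)_\sharp$ by \Cref{sta:identify-four-functors-hexp}, so the projection morphism in $\ep{D}$ is literally the projection morphism in $D_\grp^\conv$, which is invertible by \Cref{sta:C_G-tricosy}. For closedness, the natural candidate for the internal Hom is $\Homint^\conv_{\ep{D}}(M,N) := \Pi\Homint^\conv_{D_\grp}(M,N)$, existing by \Cref{sta:conv-hom-shifted}; the required adjunction is verified by a direct computation using that $\ep{D}(X)\subseteq D_\grp(X)$ is fully faithful with right adjoint $\Pi$:
\[
\Hom_{\ep{D}(X)}(L\conv M, N) = \Hom_{D_\grp(X)}(L\conv M, N) \cong \Hom_{D_\grp(X)}(L, \Homint^\conv_{D_\grp}(M,N)) \cong \Hom_{\ep{D}(X)}(L, \Pi\Homint^\conv_{D_\grp}(M,N)).
\]

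Finally, that $\Pi:D_\grp^\conv\to \ep{D}$ is a morphism of triangulated coefficient systems amounts to componentwise symmetric monoidality, which is \Cref{sta:conv-on-hCexp}, together with the Beck--Chevalley condition for smooth $f$, namely $(f_\grp)_\sharp\Pi \to \Pi(f_\grp)_\sharp$ being invertible, which is \Cref{Pi-operations}.\ref{it:Pi-operations.2}. No step presents a serious obstacle: the main care needed is to remember that the unit of the convolution on $\ep{D}(X)$ is $\Pi(\unit_!\one)$ rather than $\unit_!\one$, and to consistently view the convolution on $\ep{D}$ as the ideal-restriction of that on $D_\grp^\conv$.
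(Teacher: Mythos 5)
Your proposal is correct and follows essentially the same route as the paper: the non-monoidal axioms are quoted from \Cref{rmk:hCexp-cosy-nonmonoidal}, monoidality and unitality of $\ul{f}^*=f_\grp^*$ come from \Cref{sta:f^*-conv}, \Cref{sta:conv-on-hCexp} and \Cref{Pi-operations}, the axioms \ref{ax:projection} and \ref{ax:closed} are reduced to $D_\grp^\conv$ via \Cref{sta:identify-four-functors-hexp} and the formula $\Homint_{\ep{D}}=\Pi\Homint_{\conv}$, and the statement about $\Pi$ follows from \Cref{sta:conv-on-hCexp} together with the Beck--Chevalley isomorphism of \Cref{Pi-operations}. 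This matches the paper's proof in both structure and the lemmas invoked.
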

\begin{proof}
Let $f:Y\to X$ be a morphism of $\base$-schemes.
By \Cref{sta:f^*-conv}, the functor $f_{\grp}^*:D_{\grp}(X)\to D_{\grp}(Y)$ is symmetric monoidal.
By \Cref{sta:hexp-subfunctor}, this functor restricts to $\ul{f}^*=f_{\grp}^*:\ep{D}(X)\to\ep{D}(Y)$.
It follows from the first statement in \Cref{sta:conv-on-hCexp} that this restricted functor is (at least) non-unital symmetric monoidal.
\Cref{sta:conv-on-hCexp} also describes the unit as $\Pi\unit_!\one$ so that unitality of $\ul{f}^*$ follows from \Cref{Pi-operations}.\ref{it:Pi-operations.1}.
As a consequence we have a functor $\ep{D}:\schop[\base]\to\mTri$ and it remains to check this is a triangulated coefficient system.

The `non-monoidal' axioms were established in \Cref{sec:hfour}, see \Cref{rmk:hCexp-cosy-nonmonoidal}.
It remains to verify \ref{ax:projection} and \ref{ax:closed}.
The latter follows from the corresponding axiom for $D_{\grp}$ established in \Cref{sta:conv-hom-shifted}, namely $\Homint_{\ep{D}(X)}(M,N)=\Pi\Homint_{\conv}(M,N)$.
The former also follows from the corresponding axiom for $D_{\grp}$, since $\ul{f}^*=f_{\grp}^*$ and $\ul{f}_\sharp=(f_{\grp})_\sharp$ by \Cref{sta:identify-four-functors-hexp}.

We now turn to the last statement.
By \Cref{sta:conv-on-hCexp}, $\Pi:D_{\grp}^{\conv}\to \ep{D}$ defines a natural transformation of functors $\schop[\base]\to\mTri$.
We already noticed in \Cref{rmk:hCexp-cosy-nonmonoidal} that it satisfies the Beck-Chevalley condition.
This finishes the proof.
\end{proof}

In the course of the proof we in effect gave a formula for the internal Hom in $\ep{D}$ which we now make explicit for later use.

\begin{notn}
  Let $X\in\sch[\base]$ and $N,P\in\ep{D}(X)$. We denote the internal hom of $N$ and $P$ in $\ep{D}(X)$ by
$\ep{\Homint}(N,P)$.
\end{notn}  

\begin{prop}\label{sta:int-homs}
Let $X\in\sch$ and $N,P\in\ep{D}(X)$. We have a canonical isomorphism
\[
\ep{\Homint}(N,P)\simeq \Pi (p_1)_*\Homint_{D_{\grp^{2}}(X)}(p_2^*N,\plus^!P)
\]
in $D(X_\grp)$.
\end{prop}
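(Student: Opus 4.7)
The plan is to identify $\ep{\Homint}(N,P)$ by matching the functor it corepresents on $\ep{D}(X)$ with the functor corepresented by $T:=\Pi (p_1)_*\Homint_{D_{\grp^{2}}(X)}(p_2^*N,\plus^!P)$. The ingredients already at hand are: the closed structure formula on $D_\grp(X)$ from \Cref{sta:conv-hom-shifted}, the fact that $\Pi$ is right adjoint to the fully faithful inclusion $\ep{D}(X)\hookrightarrow D_\grp(X)$ from \Cref{sta:Pi-coreflector}, and the ``ideal'' property of $\ep{D}(X)$ inside $(D_\grp(X),\conv)$ from \Cref{lem:pi-conv} (i.e.\ $M\conv N\in\ep{D}(X)$ whenever $M\in\ep{D}(X)$).

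First, I would note that $T$ lies in $\ep{D}(X)$ by construction. Then, for an arbitrary test object $M\in\ep{D}(X)$, I would chain adjunctions:
\[
\Map_{\ep{D}(X)}(M,T)\simeq \Map_{D_\grp(X)}(M,\Homint_{\conv}(N,P))\simeq \Map_{D_\grp(X)}(M\conv N,P),
\]
where the first isomorphism uses that $\Pi$ is right adjoint to the inclusion together with \Cref{sta:conv-hom-shifted}, and the second uses the closed structure on $(D_\grp(X),\conv)$ from the same lemma. Since $M\in\ep{D}(X)$, \Cref{lem:pi-conv} gives $M\conv N\in\ep{D}(X)$, and as $\ep{D}(X)\subseteq D_\grp(X)$ is full, the last term equals $\Map_{\ep{D}(X)}(M\conv N,P)$, which is in turn identified with $\Map_{\ep{D}(X)}(M,\ep{\Homint}(N,P))$ via the closedness of $(\ep{D}(X),\conv)$ established in \Cref{sta:hCexp-cs}.

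Thus $T$ and $\ep{\Homint}(N,P)$ corepresent the same functor $\ep{D}(X)^{\mathrm{op}}\to\Tri$ through a chain of natural isomorphisms. Equivalently, both objects are the value at $P$ of a right adjoint to $(-)\conv N:\ep{D}(X)\to\ep{D}(X)$, and uniqueness of right adjoints yields the canonical isomorphism claimed, which we may then transport along the inclusion $\ep{D}(X)\hookrightarrow D(X_\grp)$.

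I do not expect a serious obstacle: the proof is a pure adjunction bookkeeping exercise, and the only non-formal input is \Cref{lem:pi-conv}, which ensures that the convolution $M\conv N$ of two exponential objects is exponential and hence that the passage between $\Map_{D_\grp(X)}$ and $\Map_{\ep{D}(X)}$ in the middle of the chain is legitimate. The mildest care is needed in turning the ``both sides corepresent the same functor'' statement into a canonical isomorphism, but this follows from uniqueness of right adjoints rather than any delicate Yoneda argument at the triangulated level.
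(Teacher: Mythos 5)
Your proposal is correct and follows essentially the same route as the paper: the paper's one-line proof (apply $\Pi$ to the formula of \Cref{sta:conv-hom-shifted}) is exactly the adjunction chain you spell out, with the justification that $\Pi\Homint_{\conv}(N,P)$ is the internal Hom of $\ep{D}(X)$ already built into the proof of \Cref{sta:hCexp-cs} via the ideal property of \Cref{lem:pi-conv}. Your more explicit corepresentability bookkeeping is just an unpacking of that argument, not a different method.
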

\begin{proof}
This follows from \Cref{sta:conv-hom-shifted} by applying~$\Pi$.
\end{proof}

Similarly we have an explicit formula for Thom spaces.

\begin{notn}
  Let $X\in\sch[\base]$ and $v\in K_{0}(X)$. We denote by $\thomexp(v):C(X)\stackrel{\sim}{\to}C(X)$ (resp. $\mthomexp(v)\in \Pic(C(X))$) the corresponding Thom equivalence (resp. Thom space) for $\ep{C}(-)$.
\end{notn}

\begin{lem}\label{lem:thom-exp}
Let $X\in\sch[\base]$ and $v\in K_{0}(X)$ a K-theory class. Then we have
\[
\mthomexp(v)\simeq\Pi \unit_{*}\thom(v).
\]
\end{lem}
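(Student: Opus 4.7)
The plan is to first reduce to the case of a genuine vector bundle, then unfold both sides using the identifications of \Cref{sta:identify-four-functors-hexp} and \Cref{Pi-operations}, and finally compare via a base change and projection formula computation in~$D$. Both sides define group homomorphisms $K_{0}(X)\to \Pic(\ep{D}(X))$: the left hand side because $\ep{D}$ is itself a triangulated coefficient system by \Cref{sta:hCexp-cs}, and the right hand side because $\Pi\unit_{*}$ is symmetric monoidal, being the composite of the symmetric monoidal functor $(\unit_{X})_{!}\colon(D(X),\otimes)\to(D_{\grp}(X),\conv)$ (see \Cref{rmk:conv-shifted-natural}) with the symmetric monoidal coreflector $\Pi$ of \Cref{sta:conv-on-hCexp}. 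Hence it suffices to treat $v=[V]$ for an actual vector bundle $p\colon V\to X$ with zero section $s\colon X\to V$.

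In that case, \Cref{sta:identify-four-functors-hexp} identifies $\ul{p}_{\sharp}=(p_{\grp})_{\sharp}$ and $\ul{s}_{*}=(s_{\grp})_{*}$, so
\[
\mthomexp(V)\;=\;\ul{p}_{\sharp}\,\ul{s}_{*}\bigl(\Pi\unit_{*}\one_{X}\bigr)\;=\;(p_{\grp})_{\sharp}\,(s_{\grp})_{*}\,\Pi\,\unit_{*}\one_{X}.
\]
I would then invoke \Cref{Pi-operations}.\ref{it:Pi-operations.2} and \Cref{Pi-operations}.\ref{it:Pi-operations.3} (the latter via $(s_{\grp})_{!}=(s_{\grp})_{*}$, valid since $s$ is proper) to pull $\Pi$ past both pushforwards, obtaining
\[
\mthomexp(V)\;\simeq\;\Pi\,(p_{\grp})_{\sharp}\,(s_{\grp})_{*}\,\unit_{*}\one_{X}\;=\;\Pi\bigl(\thom(V_{\grp})(\unit_{*}\one_{X})\bigr).
\]

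Finally I would identify $\thom(V_{\grp})(\unit_{*}\one_{X})$ with $\unit_{*}\mthom(V)$. Since $V_{\grp}=V\times_{X}X_{\grp}=\pi_{X}^{*}V$, compatibility of Thom spaces with base change (\Cref{cs-relative-purity}) yields $\mthom(V_{\grp})\simeq \pi_{X}^{*}\mthom(V)$. Applying the projection formula for the proper closed immersion $\unit=\unit_{X}$ (\Cref{cs-linearity}) together with $\pi_{X}\circ\unit=\id_{X}$,
\[
\pi_{X}^{*}\mthom(V)\otimes\unit_{*}\one_{X}\;\simeq\;\unit_{*}\bigl(\unit^{*}\pi_{X}^{*}\mthom(V)\otimes\one_{X}\bigr)\;\simeq\;\unit_{*}\mthom(V),
\]
which delivers the desired isomorphism $\mthomexp(V)\simeq\Pi\unit_{*}\mthom(V)$.

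The computations are essentially routine bookkeeping; the only mild subtlety will be the interchange of $\Pi$ with $(s_{\grp})_{*}$, which relies on properness of $s$ to rewrite $(s_{\grp})_{*}=(s_{\grp})_{!}$ so that \Cref{Pi-operations}.\ref{it:Pi-operations.3} becomes applicable.
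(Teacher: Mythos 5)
Your argument is correct and follows essentially the same route as the paper's proof: reduce to an actual vector bundle, identify $\ul{p}_{\sharp}\ul{s}_{*}$ with $(p_{\grp})_{\sharp}(s_{\grp})_{*}$ via \Cref{sta:identify-four-functors-hexp}, commute $\Pi$ past these pushforwards using \Cref{Pi-operations}, and identify the remaining object with $\unit_{*}\thom(v)$. You merely spell out two points the paper leaves implicit (why the reduction to vector bundles is legitimate, and the final base-change/projection-formula identification), and both justifications are consistent with the paper's framework.
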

\begin{proof}
  It suffices to prove this for a vector bundle $p:V\to X$ with zero-section $s:X\to V$. Then \cref{Pi-operations} implies that
 \begin{eqnarray*}
   \thomexp(V)& \simeq & \ul{p}_{\sharp}\ul{s}_{!}\E{0}\\
              & \simeq & (p_{\grp})_{\sharp}(s_{\grp})_{*}\Pi \unit_{*}\one \\
              & \simeq & \Pi (p_{\grp})_{\sharp}(s_{\grp})_{*} \unit_{*}\one \\
& \simeq & \Pi  \unit_{*}\thom(v).
\end{eqnarray*}
\end{proof}

\begin{prop}
\label{sta:D-Dexp}%
The composite $\Pi\circ \unit_*:D\to \ep{D}$ underlies a morphism of triangulated coefficient systems.
Moreover, for $X\in\sch[\base]$, the functor $\Pi\circ \unit_*:D(X)\to\ep{D}(X)$ is split-faithful as soon as $\Hom_{\base}(X,\grp\backslash\{\unit\})\neq\emptyset$.
(For example, this is always satisfied if $\grp=\Ga{}$.)
\end{prop}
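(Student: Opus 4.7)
The plan is to verify in two steps that $\Pi\circ\unit_*$ is a morphism of triangulated coefficient systems (naturality in $f^*$, symmetric monoidality, Beck-Chevalley for smooth pushforwards) and then to construct an explicit retraction under the given hypothesis. For naturality, I would use that $\unit_X$ is a closed immersion (since $\grp$ is separated over $\base$) and apply proper base change along the cartesian square relating $\unit_Y$ and $\unit_X$ to obtain $f_\grp^*\unit_{X*} \simeq \unit_{Y*}f^*$, combining this with $f_\grp^*\Pi \simeq \Pi f_\grp^*$ from \Cref{Pi-operations}.\ref{it:Pi-operations.1}. For symmetric monoidality, I would first show that $\unit_*$ itself is symmetric monoidal as a functor $(D(X),\otimes)\to(D_\grp(X),\conv)$: the unit $\unit_*\one$ is the convolution unit by \Cref{rem:conv-shifted-idea}, and \Cref{sta:conv-0_*} together with the projection formula yields
\[
\unit_*M \conv \unit_*N \simeq \pi^*M \otimes \unit_*N \simeq \unit_*(\unit^*\pi^*M \otimes N) \simeq \unit_*(M \otimes N),
\]
using $\pi\unit = \id$ at the end. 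Composing with the symmetric monoidal functor $\Pi$ from \Cref{sta:conv-on-hCexp} transports monoidality to $\Pi\unit_*$.

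For the Beck-Chevalley condition with a smooth morphism $f\colon Y\to X$, I would use $\ul{f}_\sharp = (f_\grp)_\sharp$ (\Cref{sta:identify-four-functors-hexp}) and $(f_\grp)_\sharp\Pi \simeq \Pi(f_\grp)_\sharp$ (\Cref{Pi-operations}.\ref{it:Pi-operations.2}) to reduce the problem to the identity $(f_\grp)_\sharp\unit_{Y*} \simeq \unit_{X*}f_\sharp$. The plan for this identity is to rewrite both $\sharp$-pushforwards as $!$-pushforwards composed with Thom equivalences via relative purity (\Cref{cs-relative-purity}), observe that $\Omega_{f_\grp} \simeq \pi_Y^*\Omega_f$ since $f_\grp$ is the base change of $f$, pass the Thom twist through $\unit_{Y*}$ by the projection formula (so that $\unit_Y^*\Omega_{f_\grp} \simeq \Omega_f$), and then use the functoriality $(f_\grp)_!\unit_{Y!} \simeq (f_\grp\unit_Y)_! = (\unit_X f)_! \simeq \unit_{X!}f_!$ coming from $f_\grp\unit_Y = \unit_X f$ to close the computation.

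For the split-faithfulness statement, given $s\colon X \to \grp\setminus\{\unit\}$ I would set $\sigma := (\id_X,s)\colon X \to X_\grp$: it is a section of $\pi_X$ factoring through the open complement $j\colon U \hookrightarrow X_\grp$ of $\unit_X(X)$, which is a closed subscheme because $\grp$ is separated. Applying $\sigma^*$ to the defining triangle \eqref{eq:triangle-Pi} of the coreflector, the middle term $\sigma^*\unit_*M$ vanishes since $j^*\unit_{X*} = 0$ by localization, while $\sigma^*\pi^*\pi_\sharp\unit_*M \simeq \pi_\sharp\unit_*M$ as $\pi\sigma = \id$; a short computation using $\pi_\sharp = \pi_!\thom(\Omega_\pi)$, the projection formula, and $\pi_!\unit_! = \id$ identifies this last functor with $\thom(\unit^*\Omega_\pi)$. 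The resulting triangle then yields $\sigma^*\Pi\unit_*M \simeq \thom(\unit^*\Omega_\pi)(M)[-1]$, so
\[
R := \thom^{-1}(\unit^*\Omega_\pi)[1] \circ \sigma^*|_{\ep{D}(X)}\colon \ep{D}(X) \to D(X)
\]
provides a left inverse to $\Pi\unit_*$ up to natural isomorphism. For $\grp = \Ga{}$ the hypothesis is always satisfied through the constant morphism $X \to \base \xrightarrow{1} \Gm{} \subset \Ga{}$. I expect the Thom-twist bookkeeping in the Beck-Chevalley step to be the main technical obstacle; the rest follows directly from the recollement of \Cref{sta:exp-recollement-*} and standard six-functor manipulations.
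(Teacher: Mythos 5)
Your proposal is correct and proves both halves of the statement, but it packages the argument differently from the paper. For the first claim the paper does not re-verify naturality, monoidality and the Beck--Chevalley condition by hand: it notes that $\unit_*:D\to D_{\grp}^{\conv}$ is itself a morphism of triangulated coefficient systems (naturality from proper base change, symmetric monoidality from the naturality of the convolution construction in the group, \Cref{rmk:conv-shifted-natural}) and then simply composes with $\Pi:D_{\grp}^{\conv}\to\ep{D}$, which is a morphism of triangulated coefficient systems by \Cref{sta:hCexp-cs}. Your direct computation $\unit_*M\conv\unit_*N\simeq\pi^*M\otimes\unit_*N\simeq\unit_*(M\otimes N)$ and your purity/projection-formula proof of the exchange $(f_\grp)_\sharp\unit_{Y*}\simeq\unit_{X*}f_\sharp$ are valid, but they re-establish facts the paper obtains for free from earlier results; the trade-off is that your route is more self-contained and makes the exchange isomorphism explicit. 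For split-faithfulness both arguments apply the fundamental triangle~\eqref{eq:triangle-Pi} after pulling back along a section avoiding the unit: the paper uses the exceptional pullback $g^!$, so that $g^!\unit_*=0$, $g^!\pi^!=\id$ and $\pi_!\unit_*=\id$ immediately exhibit $g^![1]$ as a retraction with no twist to track, whereas your $\sigma^*$ forces the extra conjugation by $\thom^{-1}(\unit^*\Omega_{\grp/\base})[1]$ --- an equivalent but slightly heavier bookkeeping, and your identification $\sigma^*\Pi\unit_*M\simeq\thom(\unit^*\Omega_{\grp/\base})M[-1]$ is correct. (The paper's reduction ``replace $\base$ by $X$'' is the same move as your passage to the section $\sigma=(\id_X,s)$ of $\pi_X$.)
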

\begin{proof}
By proper base change, $\unit_*$ defines a morphism of `non-monoidal triangulated coefficient systems' $D\to D_{\grp}$.
By \Cref{rmk:conv-shifted-natural}, $\unit_*:D(X)\to D_{\grp}(X)$ is also symmetric monoidal for the convolution product.
In other words, $\unit_*:D\to D_{\grp}^{\conv}$ is a morphism of triangulated coefficient systems.
The first statement now follows from the second statement in \Cref{sta:hCexp-cs}.

For the second statement we may replace $\base$ by~$X$ and therefore assume a point $g:\base\to\grp\backslash\{\unit\}$ over~$\base$.
The fundamental distinguished triangle~\eqref{eq:triangle-Pi} gives
\[
g^!\Pi\unit_*\to g^!\unit_*\to g^!\pi^!\pi_!\unit_*,
\]
in which the middle term vanishes and which therefore induces an isomorphism $\id\isoto g^!\Pi\unit_*[1]$.
In other words, the functor $g^![1]$ is a retract to~$\Pi\circ\unit_*$.
\end{proof}

\section{Exponentiation as a coefficient system}
\label{sec:cs-icats}
In this section we embark on the construction of the functor $\ep{(-)}$ defined on coefficient systems. Throughout we fix a coefficient system $C\in\CoSy{\base}$ and we adopt the same \cref{conv:group-scheme} on the group scheme $\grp$ as in the previous section.
To ease the already heavy notation in this section we leave the base scheme~$\base$ mostly implicit.
In particular, we will often write $\sch$ for $\sch[\base]$ etc.

The construction on underlying \icats is straightforward.

\begin{defn}
\label{defn:Cexp-cosy}
Fix $X\in\sch$ and denote by $\ep{C}(X)\subseteq C_\grp(X)$ the full \subicat spanned by the kernel of $\pi_!:C(X_\grp)\to C(X)$.
\end{defn}

Since $\pi_!$ is an exact functor, $\ep{C}(X)\subseteq C_{\grp}(X)$ is a stable \subicat.
The whole issue is to equip the \icat $\ep{C}(X)$ with a symmetric monoidal structure lifting the convolution product on $\hep{C}(X)$ given by
\[
M\conv N:=\plus_{!}(M\boxtimes_{X} N),
\]
and to lift the pullback functors $f^{*}$ to symmetric monoidal functors.

\subsection{Convolution product via correspondences}\label{sec:conv-corr}

We first explain a construction based on the approach to six-functor formalisms in \cref{sec:corr-six}.
As in \cref{sec:corr-six}, this is an informal discussion, and the formal construction which we will actually use in the definition of $\ep{C}$ starts in \cref{sec:exp-cs-convolution}.

Let $X\in\sch[\base]$. The commutative $X$-group scheme $\grp_{X}$, considered as a commutative monoid scheme, induces $\grp_{X}\in \calg{(\sch[X])^{\times}}$. Recall the $(\infty,1)$-category $\corr(\sch[X])$ of correspondences of $X$-schemes discussed at the end of \cref{sec:corr-six}. There is a symmetric monoidal functor
\[
\iota_{!}:(\sch[X])^{\times}\to\corr(\sch[X])
\]
which informally is the identity on objects and sends $f:Z\to Y$ to $(Z\stackrel{\id_{Z}}{\leftarrow}Z\stackrel{f}{\to}Y)$. Hence we get $\iota_{!}\grp\in \calg{\corr(\sch[\base])}$. The commutative algebra structure on $\iota_{!}\grp$ is given by the correspondence
\[
\iota_{!}\grp\times \iota_{!}\grp \stackrel{\sim}{\longleftarrow }\iota_{!}\grp^{2} \stackrel{\plus}{\longrightarrow} \iota_{!}\grp
\]
As discussed in \cref{sec:corr-six}, there should be a symmetric lax-monoidal $(\infty,1)$-functor extending $C^{\boxtimes}$:
  \[
C^{\boxtimes,*}_{!}:\corr(\sch[X])\to \Cat{st}
\]
Since symmetric lax-monoidal functors send commutative algebra objects to commutative algebra objects, this yields $C(\grp)\in \calg{\Cat{st}}$, i.e.\ a symmetric monoidal structure on the stable \icat $C(\grp_{X})=C_{\grp}(X)$. The functor $C^{\boxtimes,*}_{!}$ then sends the correspondence above to
\[
C(\grp_{X})\times C(\grp_{X})\stackrel{\boxtimes_{X}}{\longrightarrow} C(\grp^{2}_{X})\stackrel{\plus_{!}}{\longrightarrow} C(\grp_{X}),
\]
which is precisely the convolution product.
This produces thus an \icategorical lift of \cref{defn:conv}.

This construction is for a fixed $X$.
To obtain compatible symmetric monoidal structures on $C_{\grp}(-)$, with symmetric monoidal lifts of the pullbacks $f^{*}_{\grp}$, it would then be necessary to work with correspondences indexed by $X\in\sch[\base]$. The type of arguments necessary for this will actually also occur in the context of the second construction in \cref{sec:exp-icats-shifted} below.

It still remains to check that this convolution product on $C_{\grp}(-)$ restricts to $\ep{C}(-)$ and induces a symmetric monoidal structure there as well; the only complication there is that the unit objects are not the same. We will see the necessary arguments for this in the context of the second construction in \cref{sec:exp-icats-exp} below.

\subsection{Exterior convolution product}
\label{sec:exp-cs-convolution}

We now start our second, rigourous, approach to the convolution product. The first step is to construct the \icategorical counterpart of the exterior product $M\boxtimes_{X}N$, functorially in $X$.

\begin{notn}
We denote by
\[
\begin{tikzcd}
\int_{\schop}\schop
\ar[d, "t"]\\
\schop
\end{tikzcd}
\]
the coCartesian fibration associated to the functor $\schop\to\iCat$ sending a scheme $X$ to $\schop[X]$ and a morphism $f:X'\to X$ to the pullback $-\times_{X}X'$ along~$f$.
Alternatively, $\int_{\schop}\schop=\fun{\Delta^1}{\sch}\op$ is the opposite of the arrow category of~$\sch$ and the structure map of the coCartesian fibration is evaluation at~$1$.
\end{notn}

\begin{cns}
\label{cns:exp-unstraightening}%
Consider the composite
\[
\int_{\schop}\schop\xto{s}\schop\xto{\tilde{C}^{\otimes}}\calg{\iCat}
\]
of the `source' functor and the opposite of the given coefficient system: $\tilde{C}(X)=C(X)\op$ with the opposite symmetric monoidal structure~\cite[Example~2.7]{drew-gallauer:usf}.
By symmetric monoidal (un)straightening~\cite[Appendix~A]{drew-gallauer:usf}, this composite classifies a coCartesian fibration:
\[
\begin{tikzcd}
\tilde{\cC}^{\boxtimes}
\ar[d, "p"]
\\
\left(
\int_{\schop}\schop
\right)^{\amalg}
\end{tikzcd}
\]
\end{cns}

\begin{rmk}
Informally, the \icat $\tilde{\cC}$ may be described as follows.
Objects are pairs $(a:Y\to X, M)$ of morphisms~$a$ in $\sch$ and an object $M\in C(Y)$ on the source.
A morphism from $(a:Y\to X, M)$ to $(a':Y'\to X', M')$ is a morphism of arrows
\[
\begin{tikzcd}
Y
\ar[r, "a"]
&
X
\\
Y'
\ar[u, "f"]
\ar[r, "a'"]
&
X'
\ar[u]
\end{tikzcd}
\]
in $\sch$ together with a morphism $M'\to f^*M$ in~$C(Y')$.
The tensor product is given by the external product:
\[
(a,M)\boxtimes(a',M')=(a\times a':Y\times Y'\to X\times X',M\boxtimes M')
\]
\end{rmk}

\begin{notn}
Recall the commutative nonunital \ioperad $\Commnu\subseteq\Comm$~\cite[\S\,5.4.4]{HA}.
For any \ioperad $\cO^\otimes\to\Comm$ we will systematically use the notation $\cO^\otimes_\tnu$ to denote the fiber product $\cO^\otimes\times_{\Comm}\Commnu$.
\end{notn}

\begin{notn}
Fix a scheme $X$ and consider the non-full subcategory of $\sch[X\times \grp]$ spanned by \emph{smooth} $a:Y\to X\times \grp=X_\grp$ and whose morphisms are \emph{smooth}.
We denote it by $\sm{X}$.

Composition with $\pi:X\times \grp\to X$ defines a functor
\[
\sm{X}\to\sch[X]
\]
which we integrate to a morphism of coCartesian fibrations $\pi_*:\int_{\schop}(\sm{})\op\to\int_{\schop}\schop$ over $\schop$.
The domain admits a nonunital symmetric monoidal structure with \emph{convolution product}:
\[
Y\conv Y'=\left( Y\times Y'\xto{a\times a'}(X\times \grp)\times (X\times \grp)\xto{\simeq}(X\times X)\times(\grp\times \grp)\xto{\plus}(X\times X)\times \grp\right)
\]
and we see that the functor $\pi_*$ underlies a nonunital symmetric monoidal structure:
\begin{equation}
\label{eq:pi_*}
\begin{tikzcd}
\left(\int_{\schop}\smop{}\right)^{\circledast}
\ar[rr, "\pi_*"]
\ar[rd]
&&
\left(\int_{\schop}\schop\right)^{\amalg}_{\tnu}
\ar[ld, "t"]
\\
&(\schop)^{\amalg}_{\tnu}
\end{tikzcd}
\end{equation}
We let $\tilde{\cC}_{/\grp}^{\convbox}$ denote the fiber product
\[
\tilde{\cC}^{\boxtimes}\times_{\left(\int_{\schop}\schop\right)^{\amalg}}{\left(\int_{\schop}\smop{}\right)^{\circledast}
}.
\]
\end{notn}

\begin{rmk}
\label{rmk:tildeC-convbox-informal}%
Informally, the \icat $\tilde{\cC}_{/\grp}$ may be described as follows.
The objects are pairs $(a, M)$ where $a:Y\to X\times\grp$ is smooth and $M\in C(Y)$.
A morphism $(a,M)\to (a',M')$ is a commutative square
\[
\begin{tikzcd}
Y
\ar[r, "a"]
&
X\times\grp
\\
Y'
\ar[u, "f"]
\ar[r, "a'"]
&
X'\times\grp
\ar[u, "g\times\grp"]
\end{tikzcd}
\]
where $f$ is smooth, together with a morphism $M'\to f^*M$ in $C(Y')$.
The tensor product is given by
\[
(Y,M)\convbox(Y',M')=(Y\circledast Y',M\boxtimes M')
\]
where we suppressed the structure morphisms $a,a'$ of $Y,Y'$, respectively.
\end{rmk}

\begin{cns}
The coCartesian fibration  $\tilde{\cC}_{/\grp}^{\convbox}\to(\schop)^{\amalg}_{\tnu}$ is classified by a functor
\[
\schop\to\calgnu{\iCat}
\]
which we may compose with the symmetric monoidal equivalence which takes an \icat to its opposite.
The resulting functor classifies another coCartesian fibration, called the \emph{dual coCartesian fibration}, which we denote by $\cC_{/\grp}^{\convbox}\to(\schop)^{\amalg}_{\tnu}$.
\end{cns}

\begin{rmk}
\label{rmk:C-convbox-informal}%
According to~\cite{barwick-glasman-nardin:dual-fibrations}, the \icat $\cC_{/\grp}^{\convbox}$ may informally be described as follows.
Objects are pairs $(a,M)$ as in \Cref{rmk:tildeC-convbox-informal} and the tensor product also remains unchanged.
However, a morphism from $(a,M)$ to $(a',M')$ is a triple $(g,f,\alpha)$ where
$g:X'\to X$ is a morphism in $\sch$, $f:Y\times_X{X'}\to Y'$ is a smooth morphism over $X'_\grp$, and $\alpha:(g')^*M\to f^*M'$ is a morphism in $C(Y\times_XX')$, where we denote by $g':Y\times_XX'\to Y$ the base change of~$g$.
\end{rmk}

\subsection{Shifting and \texorpdfstring{$\sharp$}{\#}-convolution}
\label{sec:exp-icats-shifted}

The second step consists in constructing an \icategorical version of the variant of the convolution product given by the formula
\[
(M,N)\mapsto \plus_{\sharp}(M\boxtimes_{X}N).
\]

\begin{cns}
The coCartesian fibration $t\circ\pi_*:\int_{\schop}\smop{}\to\schop$ admits a section
\[
\Delta\times\grp:\schop\to\int_{\schop}\smop{}
\]
that sends a scheme~$X$ to the identity morphism $X_\grp\to X_\grp$.
The pullback of $\cC_{/\grp}$ along $\Delta_\grp$ is a coCartesian fibration $\cC_{\grp}\to\schop$ that is classified by the shifted `coefficient system' (ignoring the symmetric monoidal structure)
\[
C_{\grp}=C((-)_\grp):\schop\to\iCat,
\]
cf.\ \Cref{exa:shifted-cosy}.
\end{cns}

\begin{lem}
\label{sta:exp-leftadjoint}%
\begin{enumerate}[(a)]
\item
Let $X\in\sch$.
The inclusion $C(X_\grp)\into (\cC_{/\grp})_X$ admits a left adjoint.
We denote by $L_X:(\cC_{/\grp})_X\to(\cC_{/\grp})_X$ the corresponding localization functor.
\item
The collection of localization functors $(L_X)_{X\in\schop}$ is compatible with the $(\schop)^{\amalg}_{\tnu}$-monoidal structure in the sense of~\cite[Definition~2.2.1.6]{HA}.
\end{enumerate}
\end{lem}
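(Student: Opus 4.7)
The plan is to exhibit an explicit formula for $L_X$ via smooth $\sharp$-pushforward, and then to verify monoidal compatibility via the K\"unneth formula for $\sharp$-pushforwards.

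For part (a), I will use the informal description of $(\cC_{/\grp})_X$ from \Cref{rmk:C-convbox-informal}: objects are pairs $(a:Y\to X_\grp, M)$ with $a$ smooth and $M \in C(Y)$. A morphism $(a, M) \to (\id_{X_\grp}, N)$ at fixed base $X$ unpacks to a smooth morphism $f: Y \to X_\grp$ satisfying $\id_{X_\grp} \circ f = a$, hence $f = a$, together with a morphism $\alpha: M \to a^*N$ in $C(Y)$. Applying the adjunction $a_\sharp \dashv a^*$ from axiom \ref{ax:left} (available since $a$ is smooth) yields a natural equivalence
\[
\Map_{(\cC_{/\grp})_X}((a, M), (\id_{X_\grp}, N)) \simeq \Map_{C(Y)}(M, a^*N) \simeq \Map_{C(X_\grp)}(a_\sharp M, N),
\]
identifying the left adjoint as $L_X(a, M) := (\id_{X_\grp}, a_\sharp M)$, with unit given over $f = a$ by the adjunction unit $M \to a^* a_\sharp M$.

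For part (b), by the $(\schop)^{\amalg}_{\tnu}$-operadic analogue of \cite[Proposition~2.2.1.9]{HA}, compatibility amounts to showing that the class $W$ of $L$-equivalences is stable under $\convbox$-tensoring in each variable. By (a), a morphism $\phi: (a, M) \to (a', M')$ in $(\cC_{/\grp})_X$ lies in $W$ iff the induced morphism $a_\sharp M \to a'_\sharp M'$ is an equivalence in $C(X_\grp)$. Given $(b, N) \in (\cC_{/\grp})_{X'}$, unwinding definitions gives
\[
(a, M) \convbox (b, N) \simeq \bigl(a \conv b : Y \times_\base Z \to (X \times X')_\grp,\ M \boxtimes N\bigr),
\]
with $a \conv b = (\id_{X \times X'} \times \plus) \circ (a \times b)$. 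The K\"unneth formula for $\sharp$-pushforwards (a formal consequence of \ref{ax:projection} and \ref{ax:smooth-bc}, applied first to $a \times b$ and then to $\id \times \plus$) identifies
\[
L_{X \times X'}\bigl((a, M) \convbox (b, N)\bigr) \simeq \bigl(\id,\ (\id \times \plus)_\sharp (a_\sharp M \boxtimes b_\sharp N)\bigr).
\]
Since $\boxtimes$ and $(\id \times \plus)_\sharp$ preserve equivalences separately in each variable, $W$ is stable under $\convbox$-tensoring in each variable, and an analogous argument handles the $n$-ary convolutions.

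The main obstacle is upgrading these object-level computations to a coherent \icategorical statement about the whole coCartesian fibration $\cC_{/\grp}^{\convbox}$. Specifically, the K\"unneth identification of $L$ on a convolution must be shown to be natural with respect to the dual coCartesian fibration formalism, which mixes smooth $\sharp$-pushforwards (from the convolution-side fibration of \eqref{eq:pi_*}) with $\ast$-pullbacks (from the opposite fibration encoding $C$). Carrying this out requires carefully unwinding \cref{cns:exp-unstraightening} together with the dual-fibration construction and applying the nonunital-operadic version of the localization criterion of \cite[\S\,2.2.1]{HA}.
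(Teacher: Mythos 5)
Your write-up reproduces the heuristic that the paper itself records immediately before its proof (a morphism $(a,M)\to(\id_{X_\grp},N)$ in the fiber over $X$ "is" a map $M\to a^*N$, hence by $a_\sharp\dashv a^*$ the reflection is $(\id_{X_\grp},a_\sharp M)$), but the actual mathematical content of the lemma is exactly the step you defer in your closing paragraph, and you never carry it out. The identification of mapping spaces you use is read off from \cref{rmk:C-convbox-informal}, which is an \emph{informal} description of the dual coCartesian fibration of Barwick--Glasman--Nardin; to invoke the objectwise criterion for the existence of a left adjoint you would need a natural equivalence $\Map_{(\cC_{/\grp})_X}((a,M),(\id_{X_\grp},N))\simeq\Map_{C(X_\grp)}(a_\sharp M,N)$ established from the definition of that dual fibration, and this is precisely what is not done. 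The paper's proof avoids the dual fibration altogether for part (a): it builds a relative adjunction $\Pi^*\dashv\Pi_\sharp$ over $\int_{\schop}\smop{}$ fiberwise from $a^*\dashv a_\sharp$ via \cite[Corollary~7.3.2.7]{HA}, then uses that $\id_{X_\grp}$ is a \emph{final} object of $\sm{X}$ together with \cite[Proposition~7.3.2.5]{HA} to produce the fiberwise adjoint $\Pi_{X,\sharp}$, i.e.\ it works in $\tilde{\cC}_{/\grp}$ and dualizes. Asserting that "carrying this out requires carefully unwinding the constructions" is a statement of the problem, not a proof.

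For part (b) there is a second, more specific gap. Compatibility in the sense of \cite[Definition~2.2.1.6]{HA} for the $(\schop)^{\amalg}_{\tnu}$-monoidal structure is not only stability of $L$-equivalences under $\convbox$ in each variable over a fixed base: one must treat operations $\otimes_f\{g_i\}$ lying over an arbitrary morphism $f:X\to\prod_iX_i$ of $\schop$. The paper first shows, using \ref{ax:smooth-bc}, that $\Pi_\sharp$ preserves $\schop$-coCartesian edges, which is what licenses the reduction to $f=\id$, and only then reduces to two factors with one morphism the identity and identifies the relevant map before applying $\plus_\sharp$ with $\Pi_{X_1,\sharp}g_1\boxtimes\id$ using \ref{ax:smooth-bc} and \ref{ax:projection}. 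Your argument silently fixes the base (and your remark that "$\boxtimes$ and $(\id\times\plus)_\sharp$ preserve equivalences" misses the actual point, which is that the comparison map of $\sharp$-pushforwards is an equivalence by base change and the projection formula, not merely that functors preserve equivalences). So while your formulas for $L_X$ and for $L_{X\times X'}$ on a convolution agree with what the paper proves, the proposal as it stands does not prove the lemma.
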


\begin{rmk}
For the first part, let $M'\in C(X_\grp)$ and $(a:Y\to X_\grp,M)\in(\cC_{/\grp})_X$.
As follows from \Cref{rmk:C-convbox-informal}, a morphism $(a,M)\to(\Delta_\grp)(M')$ corresponds to a morphism $\alpha:M\to a^*M'$ in $C(Y)$.
As $a$ is smooth, this is equivalent to giving a morphism $a_{\sharp}M\to M'$ in $C(X_\grp)$.
To make this argument rigorous we will proceed as in~\cite[Lemma~3.18.(b)]{drew-gallauer:usf}.

Similarly, the second part essentially follows from \ref{ax:smooth-bc} and \ref{ax:projection}, as in~\cite[Lemma~3.18.(c)]{drew-gallauer:usf}.
\end{rmk}
\begin{proof}[Proof of \Cref{sta:exp-leftadjoint}]
Consider the composite
\[
\Delta^1\times\int_{\schop}\smop{}\xto{\textup{ev}}\schop\xto{\tilde{C}}\iCat
\]
and the corresponding morphism of coCartesian fibrations
\[
\begin{tikzcd}
\tilde{\cC}_{/\grp}
\ar[rd, "p"]
&&
\tilde{\cD}_{/\grp}
\ar[ll, "\Pi^*" above]
\ar[ld, "q"]
\\
&\int_{\schop}\smop{}
\end{tikzcd}
\]
which, over a fixed $a:Y\to X_\grp\in\sm{X}$ is classified by the symmetric monoidal functor $a^*:C(X_\grp)\op\to C(Y)\op$.
Since $a$ is smooth, the functor $a^*$ admits a right adjoint, and it follows~\cite[Corollary~7.3.2.7]{HA} that $\Pi^*$ admits a relative right adjoint.
We denote the relative right adjoint by $\Pi_\sharp$.

Let
\[
\begin{tikzcd}
\tilde{\cC}_{/\grp}
\ar[rd, "p_1"]
&&
\tilde{\cD}_{/\grp}
\ar[ll, "\Pi^*" above]
\ar[ld, "q_1"]
\\
&\schop
\end{tikzcd}
\]
be the diagram obtained by composing $p$ and $q$ with the coCartesian fibration $t\circ\pi_*$.
The fiber over $X\in\sch$ is easily seen to be the functor
\[
(\tilde{\cC}_{/\grp})_X\xfrom{\Pi^*} (\sm{X})\op\times C(X_\grp)\op
\]
that sends $(a:Y\to X_\grp,M)$ to $(a,a^*M)$.
Now, $\id_{X_\grp}$ is a final object of $\sm{X}$, that is, the inclusion $\id_{X_\grp}:\Delta^0\into(\sm{X})\op$ is a left adjoint.
It follows that the composite
\[
\Pi^*\circ\id_{X_\grp}:C(X_\grp)\op\to(\tilde{\cC}_{/\grp})_X
\]
is a left adjoint, with right adjoint~(using~\cite[Proposition~7.3.2.5]{HA})
\[
\Pi_{X,\sharp}:(\tilde{\cC}_{/\grp})_X\xto{\Pi_\sharp}(\sm{X})\op\times C(X_\grp)\op\to C(X_\grp)\op
\]
where the last functor is the canonical projection onto the second factor.
This proves the first part.

We now turn to the second part.
Let $f:X\to \prod_iX_i$ be a morphism in $\sch$ and let $g_i:(a_i,M_i)\to(a_i',M_i')$ be $\Pi_{X_i,\sharp}$-equivalences in $(\tilde{\cC}_{/\grp})_{X_i}$.
By definition, we need to show that $\otimes_f\{g_i\}$ is a $\Pi_{X,\sharp}$-equivalence.
It follows from \ref{ax:smooth-bc} that the functor $\Pi_{\sharp}$ preserves $\schop$-coCartesian edges thus we may assume $f=\id_{\prod_iX_i}$.
Moreover, by induction and symmetry we reduce to the case of two factors $X_1\times X_2$ and $g_2=\id_{(a_2,M_2)}$.
Identify $g_1$ with a smooth morphism $g:Y_1'\to Y_1$ over $(X_1)_\grp$ together with a morphism $\alpha:M_1'\to g^*M_1$ in $C(Y_1')$.
We need to show that the induced morphism
\[
\plus_{\sharp}(a_1'\times a_2)_{\sharp}(M_1'\boxtimes M_2)\to\plus_\sharp(a_1\times a_2)_{\sharp}(M_1\boxtimes M_2)
\]
is an equivalence.
This is true before applying $\plus_{\sharp}$ since the morphism identifies, using \ref{ax:smooth-bc} and \ref{ax:projection}, with
\[
\Pi_{X_1,\sharp}g_1\boxtimes\id:(a'_1)_{\sharp}M_1'\boxtimes (a_2)_{\sharp}M_2\to (a_1)_{\sharp}M_1\boxtimes (a_2)_{\sharp}M_2
\]
which is an equivalence by assumption.
\end{proof}
From~\cite[Proposition~2.2.1.9]{HA} we deduce the following statement.\footnote{Note there is a typo in the statement of said result: The target of the functor in part~(2) should be $\cO^\otimes$ instead of $\mathrm{N}(\fin)$.}
\begin{cor}
\label{sta:sharp-convolution}%
The shifted `coefficient system' underlies a functor $C_\grp^{\convsharp}:\schop\to\calgnu{\iCat}$ whose associated coCartesian fibration fits into a $(\schop)^{\amalg}_\tnu$-monoidal functor
\[
\begin{tikzcd}
\cC_{/\grp}^{\convbox}
\ar[rr]
\ar[rd]
&&
\cC_{\grp}^{\convboxsharp}
\ar[ld]
\\
&(\schop)_{\tnu}^{\amalg}
\end{tikzcd}
\]
which is left adjoint to the inclusion.\qed
\end{cor}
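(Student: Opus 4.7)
The plan is to invoke~\cite[Proposition~2.2.1.9]{HA}, which transports a compatible family of fiberwise localizations on a coCartesian fibration of $\infty$-operads $q:\cE^\otimes\to\cO^\otimes$ to a new coCartesian fibration of $\infty$-operads $\cE_0^\otimes\to\cO^\otimes$ together with an $\cO^\otimes$-monoidal left adjoint $L:\cE^\otimes\to\cE_0^\otimes$ to the inclusion of local objects, whose restriction to the fiber over any color $X\in\cO$ is the given $L_X$.

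I will apply this to $q:\cC_{/\grp}^{\convbox}\to(\schop)^{\amalg}_{\tnu}$ together with the family $\{L_X\}_X$ provided by~\Cref{sta:exp-leftadjoint}(a), whose monoidal compatibility is exactly the content of part~(b). This immediately produces the coCartesian fibration $\cC_{\grp}^{\convboxsharp}\to(\schop)^{\amalg}_{\tnu}$ together with the $(\schop)^{\amalg}_{\tnu}$-monoidal left adjoint displayed in the statement. Symmetric monoidal straightening then yields the claimed functor $C_\grp^{\convsharp}:\schop\to\calgnu{\iCat}$.

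It remains to verify that the underlying \icat functor recovers the shifted coefficient system $C_\grp$, justifying the notation. By the proof of~\Cref{sta:exp-leftadjoint}, the $L_X$-local objects of $(\cC_{/\grp})_X$ are precisely the pairs $(\id_{X_\grp},M)$ with $M\in C(X_\grp)$, and $L_X$ sends $(a:Y\to X_\grp,M)$ to $(\id_{X_\grp},a_\sharp M)$. Hence the fiber of $\cC_{\grp}^{\convboxsharp}$ over $X\in\schop$ is canonically equivalent to $C(X_\grp)$, and unwinding the action of $L$ on an external product $(\id_{X_\grp},M)\convbox(\id_{X_\grp},N)$---whose underlying pair is the smooth morphism $X_\grp\times X_\grp\to (X\times X)_\grp$ given by $+$ on the group factors, decorated by $M\boxtimes N$---recovers the convolution-type formula $\plus_\sharp(M\boxtimes N)$, justifying the decoration $\convsharp$.

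No serious obstacle arises: the substantive $\infty$-categorical content---the existence of the fiberwise sharp-pushforward adjoints and the verification that they are compatible with the external product, which ultimately rests on~\ref{ax:smooth-bc} and~\ref{ax:projection}---has already been discharged in the preceding lemma. The present corollary is then a formal invocation of~\cite[Proposition~2.2.1.9]{HA}.
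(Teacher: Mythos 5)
Your proposal is correct and matches the paper's argument: the paper likewise deduces the corollary by a direct application of \cite[Proposition~2.2.1.9]{HA} to the compatible family of localization functors established in \Cref{sta:exp-leftadjoint}, which is where all the substantive work (resting on \ref{ax:smooth-bc} and \ref{ax:projection}) lives. Your additional unwinding of the fibers and of the $\convsharp$ formula is consistent with the informal description the paper gives in the remark following the corollary and with \Cref{sta:convolution-description}.
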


\begin{rmk}
Informally, the \icat $\cC_\grp$ may be described as follows.
An object is a pair $(X,M)$ where $X\in\sch$ and $M\in C(X_\grp)$.
A morphism $(X,M)\to (X',M')$ consists of a morphism $f:X'\to X$ in $\sch$ together with a morphism $\alpha:f^*M\to M'$ in $C(X')$.
The $\sharp$-convolution product is given by
\[
(X,M)\convboxsharp(X',M')=(X\times X',\plus_{\sharp}(M\boxtimes M')).
\]
\end{rmk}

\subsection{Twisting the \texorpdfstring{$\sharp$}{\#}-convolution}

The morphism $\plus$ is smooth, hence by relative purity (\cref{cs-relative-purity}) we have
\[
\plus_{!}=\plus_{\sharp}\thom^{-1}(\Omega_{\plus}).
\]
Moreover $\grp$ is a group scheme, which implies that $\Omega_{\plus}$ is pulled back from $\base$, so that we have
\[
\plus_{!}=\thom^{-1}(\Omega_\pi)\plus_{\sharp}.
\]
This suggests to define the convolution from the $\sharp$-convolution product of the previous section by twisting compatibly with inverse Thom equivalences. 

\begin{cns}
Consider the following diagram of solid arrows in the category of simplicial sets:
\[
\begin{tikzcd}
\Delta^0
\ar[r, "C_\grp^{\convsharp}"]
\ar[d, "0" left]
&
\fun{\schop}{\calgnu{\iCat}}
\ar[d]
\\
J
\ar[r, "\thom(\Omega_\pi)" below]
\ar[ru, dotted, "\phi"]
&
\fun{\schop}{\iCat}
\end{tikzcd}
\]
where:
\begin{itemize}
\item $J$ is the nerve of the category with two objects $0,1$ and a unique isomorphism between them,
\item the bottom horizontal arrow sends both objects to $C_\grp$, the edge $0\to 1$ to the Thom equivalence $\thom(\Omega_\pi)$ (see \Cref{sta:thom-functorial} below), as in \Cref{cs-relative-purity}, and the edge $1\to 0$ to the inverse Thom equivalence,
\item the top horizontal arrow is provided by \Cref{sta:sharp-convolution},
\item the right vertical arrow is a Joyal fibration representing the forgetful functor.
\end{itemize}
Since the left vertical arrow is a trivial cofibration for the Joyal model structure there exists a lift $\phi$ as indicated by the dotted arrow in the diagram.
Evaluating $\phi$ at $1$ yields a new functor $C_\grp^{\conv}:\schop\to\calgnu{\iCat}$.
\end{cns}

\begin{lem}
\label{sta:thom-functorial}%
Let $p:V\to \base$ be a smooth morphism with a section $s:\base\to V$, denote by $p_X$ and $s_X$ the pullbacks to $X\to \base$ for each~$X$.
The family of Thom equivalences $\thom(p_X,s_X):=(p_X)_\sharp(s_X)_*:C(X)\isoto C(X)$ underlies an autoequivalence of the functor $C:\schop\to\iCat$.
\end{lem}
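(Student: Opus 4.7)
The plan is to build the Thom equivalence as a composition of two natural transformations between functors $\schop \to \iCat$, one coming from the left adjoint of a smooth morphism and one from the right adjoint of a proper morphism, and then invoke that being an autoequivalence is pointwise.

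First I would introduce the shifted coefficient system $C_V$ of \Cref{exa:shifted-cosy}, i.e.\ the functor $\schop \to \iCat$ defined by $Y \mapsto C(V \times_\base Y)$, so that $(p_Y)_\sharp$ and $(s_Y)_*$ are the pointwise values at $Y$ of candidate natural transformations $C_V \to C$ and $C \to C_V$, respectively. The morphism $p : V \to \base$ is smooth by hypothesis, so \Cref{exa:shifted-cosy} (together with \ref{ax:smooth-bc}) immediately upgrades the family $\{(p_Y)_\sharp\}_Y$ to a natural transformation $p_\sharp : C_V \to C$. For the other direction, I note that $s$ is a section of the smooth (and, in the settings in which this lemma is applied, separated) morphism $p$, hence $s$ is a closed immersion and in particular proper; applying \Cref{exa:shifted-cosy} again (now via proper base change, cf.\ \Cref{cs-basechange}) upgrades $\{(s_Y)_*\}_Y$ to a natural transformation $s_* : C \to C_V$.

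The composition
\[
\thom(p, s) \ :=\ p_\sharp \circ s_* \ :\ C \longrightarrow C_V \longrightarrow C
\]
is then a natural transformation of functors $\schop \to \iCat$ whose value at each $X$ is, by construction, $(p_X)_\sharp (s_X)_* = \thom(p_X, s_X)$. To finish, I would appeal to the general fact that a natural transformation between functors to $\iCat$ is an equivalence precisely when it is so pointwise; each $\thom(p_X, s_X)$ is an equivalence by \Cref{cs-relative-purity} (and ultimately by the axioms \ref{ax:hty} and \ref{ax:stable} together with the smooth base change arguments that identify general Thom functors with iterated Tate twists along trivialisations of the vector bundle locally), so $\thom(p, s)$ is an autoequivalence of $C$.

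The only real subtlety is justifying that $s$ is proper so that \Cref{exa:shifted-cosy} applies to it; this is where separatedness of $p$ enters (a section of a separated morphism is a closed immersion). In the principal application of the lemma inside this paper, this is guaranteed by \Cref{conv:group-scheme}, which assumes $\pi : \grp \to \base$ separated, so the section $\unit : \base \to \grp$ is automatically a closed immersion. Everything else is a routine combination of base change, adjunction, and the formal behaviour of pointwise equivalences, with no further obstruction.
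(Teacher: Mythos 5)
Your proof is correct and takes essentially the same route as the paper's: the paper likewise assembles $p_\sharp\colon C_V\to C$ and $s_*\colon C\to C_V$ into natural transformations of functors $\schop\to\iCat$ via smooth (resp.\ proper) base change and the adjoint-family machinery already recorded in \Cref{exa:shifted-cosy}, composes them, and concludes by checking the result is a pointwise equivalence. Your explicit remark that $s_*$ requires $s$ to be proper --- a closed immersion thanks to separatedness of $p$, as guaranteed in the application by \Cref{conv:group-scheme} --- is a point the paper's proof leaves implicit.
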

\begin{proof}
Compare \Cref{exa:shifted-cosy}.
Let us view the morphism $p$ as a functor $\Delta^{1,\opname}\to\schop$ and consider then the composite
\[
\Delta^{1,\opname}\times\schop\xto{p\times\id}\schop\times\schop\xto{\times}\schop\xto{C}\iCat.
\]
By adjunction, it corresponds to a functor $\schop\to\fun{\Delta^{1,\opname}}{\iCat}$ which, we claim, factors through $\fun[\textup{LAd}]{\Delta^{1,\opname}}{\iCat}$ in the sense of~\cite[Definition~4.7.4.16]{HA}.
Indeed, that amounts to \ref{ax:smooth-bc} for the smooth morphisms $p_X:V_X\to X$.
By~\cite[Corollary~4.7.4.18.(3)]{HA}, passing to the left adjoints $(p_X)_\sharp$ results in another functor $\schop\to\fun[\textup{RAd}]{\Delta^1}{\iCat}$, thus a morphism $p_\sharp:C_V\to C$ in $\fun{\schop}{\iCat}$.

The argument for the existence of $s_*:C\to C_V$ is the same, using proper base change instead of \ref{ax:smooth-bc}.
Combining the two, we obtain a morphism $p_\sharp s_*:C\to C$ in $\fun{\schop}{\iCat}$ which is pointwise the Thom equivalence $\thom(p_X,s_X)$, thus an equivalence.
\end{proof}

By construction, the underlying functor $C_\grp$ of $C_\grp^{\conv}$ remains unchanged but the nonunital symmetric monoidal structure can informally be described as follows.
\begin{lem}
\label{sta:convolution-description}%
For $X\in\sch$ and $M,N\in C(X_\grp)$, we have
\[
M\conv N\simeq \plus_!(M\boxtimes N).
\]
\end{lem}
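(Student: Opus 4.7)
The plan is to unpack the twisting in the construction of $C_\grp^{\conv}$ and convert the $\sharp$-convolution to the $!$-convolution via relative purity for $\plus$. The lift $\phi\colon J\to \fun{\schop}{\calgnu{\iCat}}$ has $\phi(0)=C_\grp^{\convsharp}$ and $\phi(1)=C_\grp^{\conv}$, and encodes, on the unique non-identity edge of $J$, a symmetric monoidal equivalence $C_\grp^{\convsharp}\xrightarrow{\sim} C_\grp^{\conv}$ in $\fun{\schop}{\calgnu{\iCat}}$ whose underlying functor on $C_\grp$ is $\thom(\Omega_\pi)$. Unwinding this equivalence on binary tensor products gives, for any $X\in\sch$ and $M,N\in C(X_\grp)$, a natural equivalence
\[
\thom(\Omega_\pi)(M)\conv \thom(\Omega_\pi)(N)\;\simeq\;\thom(\Omega_\pi)\bigl(\plus_\sharp(M\boxtimes N)\bigr).
\]
Since $\thom(\Omega_\pi)$ is an autoequivalence, substituting $\thom^{-1}(\Omega_\pi)M,\thom^{-1}(\Omega_\pi)N$ yields
\[
M\conv N\;\simeq\; \thom(\Omega_\pi)\,\plus_\sharp\!\bigl(\thom^{-1}(\Omega_\pi)M \,\boxtimes\, \thom^{-1}(\Omega_\pi)N\bigr).
\]

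The remainder of the argument is a computation with Thom spaces. Writing $\pi_2\colon X_{\grp^2}\to X$, the two factorizations $\pi_2=\pi\circ p_i$ (for $i=1,2$) and $\pi_2=\pi\circ \plus$, combined with the fact that $\Omega_\pi$ is pulled back from $\base$ (since $\grp$ is a smooth group scheme), give in $K_0(X_{\grp^2})$ the identities
\[
p_1^*\Omega_\pi + p_2^*\Omega_\pi\;=\;\Omega_{\pi_2}\;=\;\plus^*\Omega_\pi + \Omega_\plus.
\]
Using $\thom(-)(M)=\mthom(-)\otimes M$ and the multiplicativity of $\mthom$ under direct sums, the exterior product unwinds to
\[
\thom^{-1}(\Omega_\pi)M\boxtimes\thom^{-1}(\Omega_\pi)N\;\simeq\;\plus^{*}\mthom^{-1}(\Omega_\pi)\otimes\thom^{-1}(\Omega_\plus)(M\boxtimes N).
\]
Applying $\plus_\sharp$, the projection formula extracts $\mthom^{-1}(\Omega_\pi)$ from the pullback, and the remaining $\plus_\sharp\thom^{-1}(\Omega_\plus)$ equals $\plus_!$ by relative purity. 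Hence the right-hand side above becomes $\thom^{-1}(\Omega_\pi)\plus_!(M\boxtimes N)$, and composing with the outer $\thom(\Omega_\pi)$ gives exactly $\plus_!(M\boxtimes N)$.

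The main obstacle is the first step: concretely extracting from the construction of $\phi$ the statement that $\thom(\Omega_\pi)$ underlies a symmetric monoidal equivalence $C_\grp^{\convsharp}\simeq C_\grp^{\conv}$ functorially in $X\in\schop$, rather than merely an equivalence of underlying \icats. This is essentially built into the construction by the choice of the Joyal lift $\phi$, but some care is required to verify that an edge of $J$ in $\fun{\schop}{\calgnu{\iCat}}$ does translate into a natural symmetric monoidal transformation and therefore induces coherent equivalences on the binary tensor products. Once this is in place, the Thom-space manipulations in the second half are standard applications of the projection formula, the K-theoretic computation of $\Omega_{\pi_2}$, and smooth purity for $\plus$.
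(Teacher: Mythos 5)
Your proof is correct and takes essentially the same route as the paper: unwind the Joyal-lift construction to see that $\conv$ is the $\sharp$-convolution conjugated by $\thom(\Omega_\pi)$, then cancel the Thom twists using compatibility with pullback, the projection formula, and relative purity for $\plus$ (the paper does the same bookkeeping by noting that all the relevant twists are pulled back from $\unit^*\Omega_{\grp/\base}$, rather than via your $K_0$ identity, and it likewise treats the symmetric monoidality of the transport along $\thom(\Omega_\pi)$ as built into the construction). The only step you elide is the identification of the internal $\sharp$-convolution on $C^{\convsharp}_\grp(X)$ as $\plus_\sharp(p_1^*M\otimes p_2^*N)$, which the paper obtains by restricting the exterior product along the diagonal and using smooth base change; this is a minor omission.
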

\begin{proof}
To see why that is true let us first describe the tensor product on $C_\grp^{\convsharp}(X)$.
Consider the following diagram with (hopefully) obvious notation
\[
\begin{tikzcd}
X_\grp
\ar[r, "\Delta"]
&
X^2_{\grp}
\\
X_{\grp^2}
\ar[r, "\Delta"]
\ar[u, "\plus"]
\ar[d, "p_1" left]
\ar[rd, "p_2" near start]
&
X^2_{\grp^2}
\ar[u, "\plus" right]
\ar[dl, "q_1" near end, crossing over]
\ar[d, "q_2"]
\\
X_\grp
&
X_\grp
\end{tikzcd}
\]
By construction, we then have in $C^{\convsharp}_\grp(X)$ (with $\otimes$ denoting the tensor product in $C$):
\begin{align*}
  M_1\convsharp M_2&
                     \simeq \Delta^*\plus_\sharp(q_1^*M_1\otimes q_2^*M_2)\\
                   &\simeq \plus_\sharp\Delta^*(q_1^*M_1\otimes q_2^*M_2)\\
                   &\simeq \plus_\sharp(p_1^*M_1\otimes p_2^*M_2)
\end{align*}
It follows that in $C^{\conv}_{\grp}(X)$ we have
\begin{align*}
  M_1\conv M_2 &
                 \simeq \thom(\Omega_\pi)\plus_\sharp\left(p_1^*\thom^{-1}(\Omega_\pi)M_1\otimes p_2^*\thom^{-1}(\Omega_\pi)M_2\right)\\
               & \simeq \thom(\Omega_\pi)\plus_\sharp\left(\thom^{-1}(p_1^*\Omega_\pi)p_1^*M_1\otimes \thom^{-1}(p_2^*\Omega_\pi)p_2^*M_2\right)\\
               & \simeq \thom(\Omega_\pi)\plus_\sharp\thom^{-2}(\plus^*\Omega_\pi)\left(p_1^*M_1\otimes p_2^*M_2\right)\\
               & \simeq \thom^{-1}(\Omega_\pi)\plus_\sharp\left(p_1^*M_1\otimes p_2^*M_2\right)\\
               & \simeq \plus_!\left(p_1^*M_1\otimes p_2^*M_2\right)
\end{align*}
as claimed. 
\end{proof}

\subsection{Exponentiated coefficient system}
\label{sec:exp-icats-exp}

Recall (\Cref{defn:Cexp-cosy}) that we denote by $\ep{C}(X)\subseteq C_\grp(X)$ the full \subicat spanned by the kernel of $\pi_!:C(X_\grp)\to C(X)$.

\begin{lem}
The association $X\mapsto \ep{C}(X)$ underlies a functor $\ep{C}:\schop\to\calgnu{\iCat}$ so that the inclusion $\ep{C}\subseteq C_\grp^{\conv}$ is nonunital symmetric monoidal.
\end{lem}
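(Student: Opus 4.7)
The plan is to produce $\ep{C}$ by cutting out a full sub-coCartesian fibration of $\cC^\conv_\grp \to (\schop)^\amalg_\tnu$, so that both the desired nonunital symmetric monoidal functor and its structural inclusion appear by (un)straightening. Concretely, let $\cE^\conv \subseteq \cC^\conv_\grp$ be the full $\infty$-subcategory spanned by pairs $(X, M)$ with $M \in \ep{C}(X) = \ker(\pi_{X!})$. It suffices to verify that $\cE^\conv$ is closed under coCartesian transport with respect to the structure map to $(\schop)^\amalg_\tnu$: once that is established, the general formalism of coCartesian fibrations of $\infty$-operads produces a functor $\ep{C}:\schop \to \calgnu{\iCat}$ together with a pointwise fully faithful nonunital symmetric monoidal inclusion $\ep{C} \hookrightarrow C^\conv_\grp$.

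The stability under coCartesian transport reduces to two checks. First, for an inert edge associated to $f:Y\to X$ in $\sch$, the coCartesian lift with source $(X, M)$ is $(Y, f_\grp^* M)$. Since $\pi_X$ is smooth by \cref{conv:group-scheme}, smooth base change gives $\pi_{Y!} f_\grp^* M \simeq f^* \pi_{X!} M$, which vanishes whenever $M \in \ep{C}(X)$. Second, for an active multi-edge with sources $(X_i, M_i)$, the coCartesian lift produces the external convolution $(X_1\times X_2, M_1\conv M_2)$, and from the identity $\pi\circ\plus = \pi\times\pi$ combined with the Künneth formula of \cref{sec:ext-prod}, one obtains
\[
\pi_!(M_1\conv M_2)\simeq \pi_! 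M_1\boxtimes \pi_! M_2,
\]
which vanishes if any $M_i$ lies in $\ep{C}(X_i)$; this is the external-product analogue of \cref{lem:pi-conv}. The $n$-ary case is the same computation with more factors.

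I expect no serious obstacle. All the substantive input—smooth base change and the Künneth formula—is already packaged into the axioms of a coefficient system and holds verbatim at the $\infty$-categorical level. The remaining content is the formal statement that a full $\infty$-subcategory of the total space of a coCartesian fibration of $\infty$-operads, closed under coCartesian edges, inherits the structure of a sub-coCartesian fibration of $\infty$-operads, with the inclusion a nonunital symmetric monoidal functor between them. The only mild care required is to confirm, via \cref{sta:convolution-description}, that the tensor product on $\cC^\conv_\grp$ is pointwise given by $\plus_!(M\boxtimes N)$, so that the Künneth computation above applies without modification.
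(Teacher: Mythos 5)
Your proposal is correct and follows essentially the same route as the paper: the paper likewise carves out the full subcategory of kernels and invokes Lurie's criterion for full suboperads (\cite[Proposition~2.2.1.1]{HA}), after checking exactly your two conditions, namely stability of $\ker(\pi_!)$ under pullback and under convolution. The only cosmetic difference is that the paper verifies these conditions at the level of homotopy categories by citing \cref{sta:hexp-subfunctor} and \cref{lem:pi-conv}, whereas you re-derive them directly from base change and the K\"unneth formula, which is the same underlying computation.
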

\begin{proof}
This follows from~\cite[Proposition~2.2.1.1]{HA} once we verify the following two conditions:
\begin{enumerate}
\item The \subicat $\ep{C}(X)\subseteq C_\grp(X)$ is stable under convolution product.
\item For $f:X'\to X$, the functor $f^*:C_\grp(X)\to C_\grp(X')$ takes $\ep{C}(X)$ to $\ep{C}(X')$.
\end{enumerate}
Both of these can be tested at the level of homotopy categories.
These are \Cref{sta:hexp-subfunctor,lem:pi-conv}.
\end{proof}

\begin{lem}
\label{sta:coreflector}%
The inclusion $\ep{C}\subseteq C_\grp$ admits a global right adjoint $\Pi$ which is nonunital symmetric monoidal.
\end{lem}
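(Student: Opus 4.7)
The plan is to establish the lemma in three stages: pointwise existence of the right adjoint $\Pi_X$, assembly over $\schop$ into a morphism of coCartesian fibrations, and promotion to a nonunital symmetric monoidal morphism.

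For Stage~1, fix $X \in \sch$. Since $\pi^* : C(X) \to C_\grp(X)$ is fully faithful by our standing assumption on $\grp$, the composite $\pi^*\pi_\sharp$ is an idempotent localization endofunctor of $C_\grp(X)$. Its fiber sequence $\Pi_X \to \id \to \pi^*\pi_\sharp$, which is the \icategorical enhancement of \Cref{sta:Pi-coreflector}, exhibits the pointwise right adjoint $\Pi_X : C_\grp(X) \to \ker(\pi_\sharp) = \ker(\pi_!) = \ep{C}(X)$, using \Cref{sta:ker!=kersharp}. For Stage~2, the pointwise adjunctions are assembled via \cite[Proposition~7.3.2.6]{HA} applied to the morphism of coCartesian fibrations $\ep{C} \hookrightarrow C_\grp$ over $\schop$: it suffices to verify the Beck--Chevalley condition that for every $f:Y \to X$ the mate $f^*\Pi_X \to \Pi_Y f^*$ is an equivalence. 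This may be tested on homotopy categories, where it is precisely \Cref{Pi-operations}\ref{it:Pi-operations.1}.

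For Stage~3, we upgrade $\Pi$ to a nonunital symmetric monoidal morphism. The preceding lemma already makes $\ep{C} \subseteq C_\grp^\conv$ a nonunital symmetric monoidal subfunctor, and the K\"unneth formula (\Cref{lem:pi-conv}) shows that $\ep{C}(X)$ is moreover a $\conv$-ideal in $C_\grp^\conv(X)$. The projection formula $\Pi(M \conv N) \simeq \Pi M \conv N$ of \Cref{lem:Pi-conv} then supplies precisely the compatibility required to apply the colocalization, nonunital counterpart of \cite[Proposition~2.2.1.9]{HA}, yielding a pointwise nonunital symmetric monoidal structure on each $\Pi_X$. To globalize across $\schop$, I would mirror the construction in \Cref{sta:exp-leftadjoint}: realize $\Pi$ as a relative right adjoint to the inclusion at the level of coCartesian fibrations over $(\schop)^\amalg_\tnu$, with the compatibility of the colocalization functors with the monoidal structure in the sense of \cite[Definition~2.2.1.6]{HA} again reducing pointwise to \Cref{lem:Pi-conv}.

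The main obstacle is precisely the bookkeeping in Stage~3: carefully running the colocalization in the nonunital monoidal and $\schop$-parameterized setting, rather than fiberwise. Parallelism with \Cref{sta:exp-leftadjoint} should keep this tractable, since every required ingredient (stability of $\ep{C}$ under convolution, the projection formula for $\Pi$, and smooth base change) has already been verified at the triangulated level in Section~\ref{sec:exp-htpy}.
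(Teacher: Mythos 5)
Your proposal is correct in substance and shares the paper's overall skeleton (fiberwise coreflector, then reduction of all monoidal compatibilities to the triangulated statements of Section~\ref{sec:exp-htpy}), but it packages the monoidal upgrade differently. The paper's proof is purely fiberwise and shorter: the inclusion $\ep{C}(X)\subseteq C_\grp^{\conv}(X)$ being a full nonunital symmetric monoidal subcategory, its right adjoint is automatically nonunital \emph{lax} symmetric monoidal by \cite[Proposition~2.2.1.1]{HA}, and strong monoidality then amounts to the single check that $\Pi M\conv\Pi N\to\Pi(M\conv N)$ is invertible, which is \Cref{sta:conv-on-hCexp} (equivalently \Cref{lem:Pi-conv}). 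Your Stage~3 instead invokes a ``colocalization, nonunital counterpart'' of \cite[Proposition~2.2.1.9]{HA}; no such statement appears verbatim in \cite{HA}, so you must either obtain it by passing to opposite (nonunital) symmetric monoidal \icats --- legitimate, and in the spirit of the dual-fibration manipulations already used in \cref{sec:exp-cs-convolution} --- or simply replace it by the lax-right-adjoint argument above; note also that the relevant compatibility condition in the sense of \cite[Definition~2.2.1.6]{HA} is really supplied by \Cref{lem:constant-conv} (convolution with a constant object is constant, so $\Pi$-equivalences form a $\conv$-ideal), of which \Cref{lem:Pi-conv} is the consequence. On the other hand, your Stages~1--2 are more explicit than the paper about the word ``global'': assembling the fiberwise coreflectors into a natural transformation $\Pi:C_\grp^{\conv}\to\ep{C}$ over $\schop$ does require the Beck--Chevalley equivalences $f^*\Pi_X\isoto\Pi_Y f^*$ of \Cref{Pi-operations}\ref{it:Pi-operations.1} together with the relative-adjunction (or $\Fun^{\textup{RAd}}$) machinery of \cite{HA}, a point the paper leaves implicit; making it explicit, as you do, is a genuine improvement rather than a detour, and your pointwise Stage~1 (the fiber sequence $\Pi\to\id\to\pi^*\pi_\sharp$ together with \Cref{sta:ker!=kersharp}) is exactly the \icategorical form of \Cref{sta:Pi-coreflector} that the paper tacitly relies on.
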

\begin{proof}
Fix $X\in\sch$. The inclusion $\ep{C}(X)\into C_\grp(X)$ admits a right adjoint by \Cref{sta:coreflector}.
By~\cite[Proposition~2.2.1.1]{HA}, it is nonunital lax symmetric monoidal.
It now suffices to show that the canonical morphism $\Pi(M)\conv\Pi(N)\to\Pi(M\conv N)$ is an equivalence for each $M,N\in C_{\grp}(X)$.
This was proven in \Cref{sta:conv-on-hCexp}.
\end{proof}

\begin{lem}
\label{sta:Cexp-unital}%
The association $X\mapsto \ep{C}(X)$ with the convolution product underlies a functor $\ep{C}:\schop\to\calg{\Cat{st}}$.
\end{lem}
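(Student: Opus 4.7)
My plan is to upgrade the functor $\ep{C}\colon \schop\to\calgnu{\Cat{st}}$ produced in \cref{sta:coreflector} to one with values in $\calg{\Cat{st}}$ by invoking Lurie's theorem on quasi-unital algebras. Concretely, in a sufficiently cocomplete symmetric monoidal \icat such as $\Cat{st}$, the forgetful functor $\calg{\Cat{st}}\to\calgnu{\Cat{st}}$ is fully faithful, with essential image the quasi-unital nonunital commutative algebras (the commutative variant of \cite[Theorem~5.4.4.5]{HA}, cf.\ the discussion in \cite[\S\,5.4.4]{HA}). Post-composing with $\fun{\schop}{-}$, it then suffices to verify that $\ep{C}$ is pointwise quasi-unital and that the transition functors $\ul{f}^*=f_\grp^*$ preserve the chosen quasi-units.

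I would propose $e_X := \Pi\unit_!\one \in \ep{C}(X)$ as the quasi-unit at each $X\in\sch[\base]$, and check the required equivalence $e_X\conv M\simeq M$ for every $M\in\ep{C}(X)$ by the chain
\begin{equation*}
(\Pi\unit_!\one)\conv M\;\simeq\;\Pi(\unit_!\one\conv M)\;\simeq\;\Pi(\pi^*\one\otimes M)\;\simeq\;\Pi M\;\simeq\;M,
\end{equation*}
where the first equivalence is \cref{lem:Pi-conv}; the second combines \cref{sta:conv-0_*} (with $\unit_!\simeq\unit_*$, since $\unit\colon\base\into\grp$ is a closed immersion) together with $\pi^*\one\simeq\one$; and the last uses the defining property of $\Pi$ as a coreflector onto $\ker(\pi_!)$.

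Preservation of the quasi-unit along a morphism $f\colon Y\to X$ in $\sch[\base]$ I would then deduce from
\begin{equation*}
f_\grp^*\Pi\unit_!\one\;\simeq\;\Pi\, f_\grp^*\unit_!\one\;\simeq\;\Pi\,\unit_! f^*\one\;\simeq\;\Pi\,\unit_!\one,
\end{equation*}
combining \cref{Pi-operations}.\ref{it:Pi-operations.1}, proper base change along the Cartesian square arising from the zero section $\unit$, and the symmetric monoidality of $f^*$. Together these show that $\ep{C}$ factors through the pertinent subcategory of quasi-unital commutative algebras and quasi-unit-preserving morphisms, completing the upgrade.

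The main obstacle here is conceptual rather than computational: one must locate a precise version of the quasi-unital/unital equivalence applicable to commutative (not merely associative) algebras in $\Cat{st}$, and check that the mild hypotheses on the ambient symmetric monoidal \icat (existence of geometric realizations, preserved by tensor) are standardly satisfied by $\Cat{st}$. The pointwise and functorial computations themselves are essentially \icategorical repackagings of the triangulated arguments already carried out in the proof of \cref{sta:hCexp-cs}.
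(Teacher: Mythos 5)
Your argument is essentially the paper's: it invokes the same quasi-unital-to-unital rectification from \cite[\S\,5.4.4]{HA} (the paper cites Corollary~5.4.4.7 there) and verifies the same two facts---that $\Pi\unit_{!}\one$ is a unit for $\conv$ on $\ep{C}(X)$ and that $f_{\grp}^{*}$ preserves it---which the paper simply imports from \cref{sta:conv-on-hCexp} and \cref{sta:hCexp-cs} rather than recomputing. The one point you pass over is that at this stage only $\ep{C}\colon\schop\to\calgnu{\iCat}$ has been constructed (not $\calgnu{\Cat{st}}$, as you assert via \cref{sta:coreflector}), so to land in $\calg{\Cat{st}}$ one must still record, as the paper does, that each $\ep{C}(X)$ is a stable subcategory of $C_{\grp}(X)$, that the pullback functors are exact (\cref{sta:hexp-subfunctor}), and that the convolution product is biexact (clear from \cref{sta:convolution-description}).
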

\begin{proof}
We first show that it underlies a functor $\ep{C}:\schop\to\calg{\iCat}$.
By~\cite[Corollary~5.4.4.7]{HA}, it suffices to prove that $\ep{C}$ factors through $\calgqu{\iCat}$.
In other words, it suffices to verify the following two statements:
\begin{enumerate}
\item for $X\in\sch$, the homotopy category of $\ep{C}(X)$ with the convolution product has a unit,
\item for $f:X'\to X$, the functor $f^*:\ep{C}(X)\to\ep{C}(X')$ at the level of homotopy categories takes units to units.
\end{enumerate}
Both of these were established in \Cref{sec:exp-trcosy}, specifically the first in \Cref{sta:conv-on-hCexp} and the second in \Cref{sta:hCexp-cs}.

It remains to show that this functor actually factors through $\calg{\Cat{st}}$.
This amounts to the following two conditions:
\begin{enumerate}[resume]
\item the underlying functor $\ep{C}:\schop\to\iCat$ factors through $\Cat{st}$,
\item the convolution product on $\ep{C}(X)$ is biexact.
\end{enumerate}
By definition, $\ep{C}(X)$ is a stable \subicat of $C_{\grp}(X)$ and we already remarked in \Cref{sta:hexp-subfunctor} that the functor $\ep{C}(f):\ep{C}(X)\to\ep{C}(X')$ is the restriction of the exact functor $f_{\grp}:C_{\grp}(X)\to C_{\grp}(X')$ hence is exact itself.
The second condition is clear by the description of the convolution product in \Cref{sta:convolution-description}.
\end{proof}

\begin{thrm}
\label{sta:Cexp-cosy}%
Let $C$ be a (resp.\ presentable, compactly generated) coefficient system.
The functor $\ep{C}:\schop\to\calg{\Cat{st}}$ is a (resp.\ presentable, compactly generated) coefficient system.
Moreover, $\Pi:C_{\grp}^{\conv}\to\ep{C}$ is a morphism of (resp.\ presentable, compactly generated) coefficient systems.
\end{thrm}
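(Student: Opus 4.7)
The plan is to reduce the \icategorical statement to the triangulated version already established in \Cref{sec:exp-htpy} via \Cref{sta:cosy-vs-triangulated}, and to handle the additional presentability and compact generation assertions using the recollement structure together with properties of the coreflector~$\Pi$.

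First, I would verify that the functor $\ep{C}:\schop\to\calg{\Cat{st}}$ constructed in \Cref{sta:Cexp-unital} has the property that its homotopy category $\h{\ep{C}}:\schop\to\mTri$ is naturally identified with the triangulated exponentiation of $\h{C}$ from \Cref{sec:exp-htpy}. The underlying \icats match pointwise, since $\ep{C}(X)$ was defined as the kernel of $\pi_!$, and this kernel is the same full \subicat whether computed in $C(X_\grp)$ or in its homotopy category. The convolution symmetric monoidal structure passes to the homotopy category and is identified with the convolution of \Cref{sec:exp-htpy} by \Cref{sta:convolution-description}, with unit $\Pi\unit_!\one$ identified via \Cref{sta:conv-on-hCexp}. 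The pullback symmetric monoidal functors pass tautologically. Then part~\ref{it:cosy-vs-triangulated.2} of \Cref{sta:cosy-vs-triangulated} combined with \Cref{sta:hCexp-cs} yields that $\ep{C}$ is a coefficient system. The ``moreover'' assertion follows in the same spirit: $\Pi$ is a symmetric monoidal natural transformation by combining \Cref{sta:coreflector} with the unitality encoded in \Cref{sta:Cexp-unital}, and part~\ref{it:cosy-vs-triangulated.3} of \Cref{sta:cosy-vs-triangulated} applied to the last statement of \Cref{sta:hCexp-cs} shows it is a morphism of coefficient systems.

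Second, I would address presentability and compact generation, which are not visible on homotopy categories. If $C$ is presentable, then each $\ep{C}(X)\subseteq C(X_\grp)$ is stable presentable, via the \icategorical analog of \Cref{sta:exp-recollement-*} which exhibits it as the Verdier quotient of $C(X_\grp)$ by the presentable stable subcategory $\pi^*C(X)$. The functor $\ul{f}^* = f_\grp^*|_{\ep{C}(X)}$ is cocontinuous because $f_\grp^*$ is and the inclusion $\ep{C}(X)\hookrightarrow C(X_\grp)$ preserves colimits, being a left adjoint to $\Pi$. Moreover, $\Pi$ itself is cocontinuous, factoring through the Bousfield localization $Q$ of \Cref{sta:exp-recollement-*} followed by a left adjoint. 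If $C$ is compactly generated, then $\Pi$ sends compact generators of $C(X_\grp)$ to compact generators of $\ep{C}(X)$: compactness of $\Pi G$ follows from the adjunction together with the fact that filtered colimits in $\ep{C}(X)$ are computed in $C(X_\grp)$, and generation follows from full faithfulness of the inclusion. Compact preservation by $\ul{f}^*$ reduces to that of $f_\grp^*$ via the identity $\ul{f}^*\Pi G\simeq \Pi f_\grp^* G$, the \icategorical analog of part~\ref{it:Pi-operations.1} of \Cref{Pi-operations}.

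The step I expect to be the main obstacle is the careful matching of symmetric monoidal structures at the level of homotopy categories. Specifically, the \icategorical convolution on $\ep{C}(X)$ is built through the intricate sequence of constructions in \Cref{sec:cs-icats} --- involving dual coCartesian fibrations, $\sharp$\nobreakdash-convolution, and Thom twisting --- and one must confirm that it descends to the formula $M\conv N=\plus_!(M\boxtimes N)$ with unit $\Pi\unit_!\one$ at the triangulated level. This is largely the content of \Cref{sta:convolution-description,sta:coreflector}; the remaining work is to confirm that the unital completion performed in \Cref{sta:Cexp-unital} is compatible with these identifications, which is forced by uniqueness of the unital structure extending a given nonunital one.
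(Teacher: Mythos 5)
Your overall route is the paper's: the non-parenthesized statement and the ``moreover'' are obtained by reducing to the triangulated results (\Cref{sta:hCexp-cs}) through \Cref{sta:cosy-vs-triangulated}, using \Cref{sta:Cexp-unital}, \Cref{sta:conv-on-hCexp} and \Cref{sta:coreflector} to know that the \icategorical convolution and unit descend to the triangulated ones, and the presentable case is handled through the recollement of \Cref{sta:exp-recollement-*}. Two points in the presentable/compactly generated part need repair, though. First, you never verify that the convolution product on $\ep{C}(X)$ is cocontinuous (resp.\ compact-preserving) in each variable, which is part of factoring through $\calg{\Prst}$ (resp.\ $\calg{\Prstomega}$): being ``presentably symmetric monoidal'' is a condition on the tensor structure, not just on the underlying \icat and the pullbacks. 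The paper disposes of this by observing that every functor entering the formula of \Cref{sta:convolution-description} preserves colimits (and compacts) at the level of $C_{\grp}$.

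Second, your justifications in the compactly generated case do not work as stated. ``Compactness of $\Pi G$ follows from the adjunction'' is backwards: the adjunction $\iota\dashv\Pi$ computes maps \emph{into} $\Pi G$, not out of it, so it says nothing about $\mapsp_{\ep{C}(X)}(\Pi G,\colim_i M_i)$. The correct argument uses the (co)fiber sequence~\eqref{eq:triangle-Pi}: $G$ and $\pi^*\pi_\sharp G$ are compact in $C_{\grp}(X)$ (as $\pi^*$ and $\pi_\sharp$ preserve compacts), hence so is $\Pi G$, and since $\ep{C}(X)\subseteq C_{\grp}(X)$ is closed under filtered colimits, $\Pi G$ is compact in $\ep{C}(X)$. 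Likewise ``generation follows from full faithfulness of the inclusion'' is not the right reason: full faithfulness gives nothing here, and note that $M\in\ep{C}(X)$ lies in the \emph{left} orthogonal of $\pi^*C(X)$, so maps from constant objects into $M$ need not vanish. Generation holds because $\Pi$ preserves colimits and restricts to the identity on $\ep{C}(X)$ (so the localizing subcategory generated by the $\Pi G_i$ exhausts $\ep{C}(X)$), or equivalently by Neeman--Thomason applied to the quotient by $\pi^*C(X)$, which is generated by compacts. With these corrections your argument matches the paper's proof.
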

\begin{proof}
The functor $\ep{C}$ was given in \Cref{sta:Cexp-unital}.
We noted in \Cref{sta:cosy-vs-triangulated} that the axioms may be verified at the level of (triangulated) homotopy categories.
The first statement (without parentheses) then follows from \Cref{sta:hCexp-cs}.
If $C$ is presentable and $X\in\sch[\base]$ then $C_{\grp}(X)$ is a presentable \icat.
By the recollement of \Cref{sta:exp-recollement-*}---which also exists at the level of \icats---we see that $\ep{C}(X)\simeq\ker(\pi_*)$ and the latter is an accessible localization of $C_{\grp}(X)$ hence itself presentable.
By \Cref{sta:hexp-subfunctor}, it is clear that for $f:Y\to X$, the functor $f^*:\ep{C}(X)\to\ep{C}(Y)$ preserves colimits since the same is true for $f^*:C_{\grp}(X)\to C_{\grp}(Y)$.
Finally, every functor that makes up the convolution product in \Cref{sta:convolution-description} preserves colimits at the level of $C_{\grp}$.
Therefore the convolution product preserves colimits in both variables separately.
A similar argument establishes the compactly generated case.

For the second statement, we have a natural transformation $\Pi:C_{\grp}^{\conv}\to\ep{C}$ of functors $\schop\to\calgnu{\Cat{st}}$ (\Cref{sta:coreflector}) and by~\cite[Corollary~5.4.4.7]{HA} and \Cref{sta:conv-on-hCexp}, it lifts to a natural transformation of functors $\schop\to\calg{\Cat{st}}$.
Finally, it defines a morphism of coefficient systems, by \Cref{sta:hCexp-cs} and \Cref{sta:cosy-vs-triangulated}.
The parenthesized statements follow from the (co)fiber sequence~\eqref{eq:triangle-Pi}.
\end{proof}

\begin{rmk}\label{sta:C-Cexp}
We expect that the functor $\Pi \unit_{*}:C(-)\to \ep{C}(-)$ underlies a morphism of coefficient systems, as in \cref{sta:D-Dexp} in the triangulated case. We will come back to this expectation in future work.
\end{rmk}

\begin{rmk}
\label{rmk:exp-functoriality}%
The constructions performed in this section are functorial in the following sense.
If $\phi:C\to C'$ is a morphism of coefficient systems then there is an induced morphism $\ep{\phi}:\ep{C}\to\ep{C'}$.
This can be proved by carrying the data of $\phi$ along in each step of the construction, using the theory of \emph{generalized} \ioperads~\cite[\S\,2.3.2]{HA}.
We will leave the construction of a functor
\[
\ep{(-)}:\CoSy{\base}\to\CoSy{\base}
\]
to the interested reader.
\end{rmk}

\section{Motives of varieties with potential}

Recall that a \emph{$\base$-scheme with potential} $(X,\pot)$ is simply an object of the category $\sch[\grp]$ seen as a pair $X\to \base$ and a $\base$-morphism $\pot:X\to \grp$. We often identify $\sch[\base]$ as a full subcategory of $\sch[\grp]$ via the functor $X\mapsto (X,\unit)$.

Unlike the exponential cohomology theories described in the introduction, the definition of $\ep{C}$ does not seem directly related to schemes with potential. In this section, we explain how to construct ``exponential motives'' in $\ep{C}$ associated to $\sch[\base]$-schemes with potential.

\subsection{Exponential twists}
\label{sec:exponential-twists}

The basic ingredients going into the construction of motives associated with schemes with potential are the `exponential twists' that we introduce in this subsection. They are of independent interest as well, as the analogues, in general exponentiated coefficient systems, of the exponential $\cD$-modules $\cE^{\pot}$ attached to a function $\pot:X\to \Ga{}$.
We refer to \Cref{sec:intro} for the motivation.

\begin{defn}\label{def:exp-twists}%
Let $(X,\pot)\in\sch[\grp]$. Write $z(\pot):X\to X_{\grp}=X\times_{\base}\grp$ for the closed immersion of the graph of $\pot$, and $u(\pot)$ for the open immersion of the complement. The \emph{exponential twist functor} associated to $(X,\pot)$ is given by
\begin{align*}
  \E{\pot}(-):C(X)&\to \ep{C}(X)\\
  M&\mapsto\Pi z(\pot)_{*}M
\end{align*}
We also write $\E{\pot}:=\E{\pot}(\one)$. 
\end{defn}

\begin{exa}
For $\pot=0$, this is the functor considered in \Cref{sta:D-Dexp}.
\end{exa}

\begin{lem}
\label{sta:conv-E_0()}
Let $X\in\sch[\base]$, let $M\in C_\grp(X)$, and $N\in C(X)$.
The K\"unneth formula and the counit $\Pi\to\id$ induce an equivalence in $\ep{C}(X)$:
\[
M\conv\E{\unit}(N)\simeq \Pi(M\otimes \pi^{*}N)
\]
\end{lem}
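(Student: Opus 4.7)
The plan is to chain together three results already established in the paper. First, I would unfold the definition of the exponential twist at the zero potential: since the graph $z(\unit):X\to X_\grp$ of the zero function is precisely the unit section $\unit_X$, we have $\E{\unit}(N) = \Pi\,\unit_*N$ by \Cref{def:exp-twists}. So the left-hand side becomes $M\conv \Pi\,\unit_*N$.

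Next I would apply \Cref{lem:Pi-conv}, which shows that $\Pi$ interacts well with convolution: the natural transformation $\Pi\to\id$ induces an equivalence $M\conv \Pi\,\unit_*N\simeq \Pi(M\conv \unit_*N)$. Strictly speaking \Cref{lem:Pi-conv} was stated at the level of triangulated coefficient systems, but that is exactly the setting in which we verify equivalences, so this is legitimate.

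It remains to identify $M\conv\unit_*N$, and this is precisely the content of \Cref{sta:conv-0_*}, which furnishes, via the K\"unneth formula, a natural equivalence
\[
\unit_*N\conv M\isoto \pi^*N\otimes M.
\]
By symmetry of the convolution product (recall $\grp$ is commutative, so $\conv$ is symmetric monoidal by \Cref{sta:conv-shifted}), this also gives $M\conv \unit_*N\simeq M\otimes\pi^*N$. Applying $\Pi$ and combining with the previous step yields
\[
M\conv \E{\unit}(N)\;\simeq\; \Pi(M\conv\unit_*N)\;\simeq\; \Pi(M\otimes\pi^*N),
\]
as required. There is no serious obstacle here: the only thing to double-check is that the composed equivalence is natural in $M$ and $N$, which is immediate since each of the three equivalences used is, and that the symmetry constraint on $\conv$ can be invoked (which is fine, as we are working with a commutative group scheme throughout).
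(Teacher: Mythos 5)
Your proof is correct and follows essentially the same route as the paper: unfold $\E{\unit}(N)=\Pi\unit_*N$, pull $\Pi$ out of the convolution via \Cref{lem:Pi-conv}, and identify $M\conv\unit_*N\simeq M\otimes\pi^*N$ via \Cref{sta:conv-0_*} (the paper invokes the symmetry of $\conv$ only implicitly, which you make explicit). No gaps.
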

\begin{proof}
This is the composite:
\begin{align*}
M\conv\Pi\unit_*N
  &\simeq \Pi(M\conv \unit_*N)&&\text{\Cref{lem:Pi-conv}}\\
  &\simeq \Pi(M\otimes\pi^*N)&&\text{\Cref{sta:conv-0_*}}
\end{align*}
\end{proof}

Note that this recovers the fact (already known from \Cref{sta:conv-on-hCexp}) that $\E{\unit}=\Pi\unit_*\one\in\ep{C}(X)$ is a unit for the convolution product.

\begin{lem}
\label{sta:pullback-graph}
Let $f:(Y,\pot f)\to (X,\pot)$ be a morphism of $\grp$-schemes.
Then the following diagram in $\sch[\base]$ is Cartesian.
\[
\begin{tikzcd}
Y
\ar[r, "z(\pot f)"]
\ar[d, "f" left]
&
Y_{\grp}
\ar[d, "{f_{\grp}}"]
\\
X
\ar[r, "z(\pot)"]
&
X_{\grp}
\end{tikzcd}
\]
\end{lem}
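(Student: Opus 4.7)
The plan is to verify the square is Cartesian by a direct check of its universal property, reducing everything to the explicit product decompositions $X_{\grp}=X\times_{\base}\grp$ and $Y_{\grp}=Y\times_{\base}\grp$. Commutativity is immediate: unwinding the definitions, both composites $f_{\grp}\circ z(\pot f)$ and $z(\pot)\circ f$ send $y\in Y$ to $(f(y),\pot f(y))\in X_{\grp}$, so the square commutes functorially in $T$-points.

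For universality, fix $T\in\sch[\base]$ together with $T$-points $a\colon T\to X$ and $b\colon T\to Y_{\grp}$ satisfying $f_{\grp}\circ b=z(\pot)\circ a$. Using the decomposition $Y_{\grp}=Y\times_{\base}\grp$, write $b=(b_{0},b_{1})$ with $b_{0}\colon T\to Y$ and $b_{1}\colon T\to\grp$. Then $f_{\grp}\circ b=(f\circ b_{0},b_{1})$ and $z(\pot)\circ a=(a,\pot\circ a)$, so the compatibility forces
\[
a=f\circ b_{0}\qquad\text{and}\qquad b_{1}=\pot\circ a=\pot f\circ b_{0}.
\]
This exhibits $(a,b)=(f\circ b_{0},\,z(\pot f)\circ b_{0})$ as uniquely determined by $b_{0}\colon T\to Y$, and conversely any $c\colon T\to Y$ yields a compatible pair by these formulas. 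This identifies $Y$ with the fiber product, as desired.

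There is no real obstacle here, the argument being a straightforward bookkeeping exercise once the graph $z(\pot)=(\id_{X},\pot)$ and $f_{\grp}=f\times_{\base}\id_{\grp}$ are spelled out. A slightly more conceptual alternative (which I would mention but not use) is to recognize $z(\pot)$ as the pullback of the diagonal $\Delta_{\grp}\colon\grp\to\grp\times_{\base}\grp$ along $\pot\times_{\base}\grp\colon X_{\grp}\to\grp\times_{\base}\grp$, so that the square in the statement is obtained by further base change along $f_{\grp}$ and is Cartesian by the pasting lemma; this reformulation would also make the closed immersion property of $z(\pot f)$ manifest from the separatedness of $\grp\to\base$ assumed in \Cref{conv:group-scheme}, though it is not needed for the Cartesian claim itself.
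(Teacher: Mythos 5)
Your proof is correct: the direct verification on $T$-points, using $z(\pot)=(\id_X,\pot)$ and $f_{\grp}=f\times_{\base}\id_{\grp}$, is exactly the routine check the paper has in mind, since its own proof consists only of the remark that the claim is straightforward. Nothing further is needed.
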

\begin{proof}

This is straightforward.
\end{proof}

\begin{lem}\label{sta:exp-twists}
\begin{enumerate}[(a)]
\item\label{exp-twists-pullback} Let $f:(Y,\pot f)\to (X,\pot)$ be a morphism in $\sch[\grp]$. There is an equivalence of functors $C(X)\to \ep{C}(Y)$,
  \[
\ul{f}^{*}\circ\E{\pot}(-)\simeq \E{\pot f}(-)\circ f^{*}.
\]
\item\label{exp-twists-explicit} There is an equivalence of functors $C(X)\to\ep{C}(X)$,
  \[
  \E{\pot}(-)\simeq u(\pot)_{*}u(\pot)^{*}\pi^{!}(-)[-1].
  \]
  In particular, writing $u:=u(\unit)$, we have
  \[
\E{0}\simeq u_{*}u^{*}\pi^{!}\one[-1].
\]
\end{enumerate}  
\end{lem}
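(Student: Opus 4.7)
For part (a), the plan is short: by \Cref{sta:identify-four-functors-hexp} we have $\ul{f}^{*} = f_{\grp}^{*}$; by \Cref{Pi-operations}.\ref{it:Pi-operations.1} this functor commutes with $\Pi$; and by proper base change applied to the Cartesian square of \Cref{sta:pullback-graph} (valid since the closed immersion $z(\pot)$ is proper) one obtains $f_{\grp}^{*}z(\pot)_{*} \simeq z(\pot f)_{*}f^{*}$. Concatenating these three equivalences yields
\[
\ul{f}^{*}\E{\pot}(M) \simeq \Pi f_{\grp}^{*}z(\pot)_{*}M \simeq \Pi z(\pot f)_{*}f^{*}M = \E{\pot f}(f^{*}M),
\]
as required.

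For part (b), I plan to feed $N := \pi^{!}M$ into the localization cofiber sequence of \Cref{rmk:localization}:
\[
z(\pot)_{*}z(\pot)^{!}\pi^{!}M \to \pi^{!}M \to u(\pot)_{*}u(\pot)^{*}\pi^{!}M.
\]
Since $\pi\circ z(\pot) = \id_{X}$, the composite $z(\pot)^{!}\pi^{!}$ is canonically equivalent to the identity, so the first term simplifies to $z(\pot)_{*}M$. The key observation is that $\pi^{!}M$ lies in $\pi^{*}C(X)$ by \Cref{rmk:exp-recollement-!}, and is therefore annihilated by the coreflector $\Pi$. Applying $\Pi$ to the rotated cofiber sequence then produces a natural equivalence
\[
\E{\pot}(M) = \Pi z(\pot)_{*}M \simeq \Pi u(\pot)_{*}u(\pot)^{*}\pi^{!}M[-1].
\]

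The main obstacle is to remove the remaining $\Pi$ on the right-hand side, i.e.\ to show that $u(\pot)_{*}u(\pot)^{*}\pi^{!}M$ already belongs to $\ep{C}(X) = \ker(\pi_{!})$. I plan to verify this by applying $\pi_{!}$ to the same cofiber sequence. Its left term becomes $\pi_{!}z(\pot)_{*}M = (\pi z(\pot))_{!}M = M$; for the middle, since $\pi^{!} \simeq \thom(\Omega_{\pi})\pi^{*}$ is fully faithful (by \Cref{conv:group-scheme} together with the invertibility of $\thom(\Omega_{\pi})$), the counit $\pi_{!}\pi^{!}M \to M$ is an equivalence. The resulting induced map $\alpha\colon M \to M$ is $\pi_{!}$ applied to the counit of $z(\pot)_{!}\dashv z(\pot)^{!}$ evaluated at $\pi^{!}M$; by functoriality of counits under composition, $\alpha$ followed by the counit $\pi_{!}\pi^{!}M \isoto M$ equals the counit of $(\pi z(\pot))_{!}\dashv (\pi z(\pot))^{!}$ at $M$, which is the identity since $\pi z(\pot) = \id_{X}$. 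Therefore $\alpha$ is an equivalence, the cofiber $\pi_{!}u(\pot)_{*}u(\pot)^{*}\pi^{!}M$ vanishes, and the argument is complete.
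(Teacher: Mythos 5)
Your proof is correct, and part~(a) is exactly the paper's argument (base change along the Cartesian square of \Cref{sta:pullback-graph} plus \Cref{Pi-operations}.\ref{it:Pi-operations.1}). For part~(b) you use the same ingredients as the paper but organize them in the opposite order: the paper starts from the description of $\Pi z(\pot)_*$ as the fiber of the unit $z(\pot)_*\to\pi^!\pi_!z(\pot)_*$ (via \Cref{sta:Pi-coreflector} and $\pi^!\pi_!\simeq\pi^*\pi_\sharp$), identifies this map with the counit $z(\pot)_*z(\pot)^!\pi^!\to\pi^!$ using $\pi z(\pot)=\id$, and then reads off the fiber as $u(\pot)_*u(\pot)^*\pi^!(-)[-1]$ from \ref{ax:loc}; in that order the fact that $u(\pot)_*u(\pot)^*\pi^!M$ lies in $\ep{C}(X)$ comes for free, since it is a shift of $\Pi z(\pot)_*M$. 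You instead apply $\Pi$ to the localization triangle at $\pi^!M$ and use $\Pi\pi^!\simeq 0$, which leaves you with the extra task of showing $u(\pot)_*u(\pot)^*\pi^!M\in\ker(\pi_!)$ in order to drop the outer $\Pi$; your verification of this — applying $\pi_!$ to the triangle and using the counit of the composite adjunction $(\pi z(\pot))_!\dashv(\pi z(\pot))^!$ together with full faithfulness of $\pi^!$ to see that the induced endomorphism of $M$ is invertible — is sound, and is essentially the same unit/counit compatibility that the paper uses when it identifies the unit map with the localization counit. So the two arguments have comparable content; the paper's ordering is marginally more economical, while yours makes explicit the (true, and perhaps independently useful) fact that $u(\pot)_*u(\pot)^*\pi^!M$ already lies in $\ep{C}(X)$ before applying $\Pi$.
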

\begin{proof}
Claim \ref{exp-twists-pullback} follows directly from the definition together with \Cref{Pi-operations} and \Cref{sta:pullback-graph}.

By definition and \Cref{sta:Pi-coreflector}, we know that $\Pi z(\pot)_{*}$ is the fiber of $z(\pot)_{*}\to \pi^{!}\pi_{!}z(\pot)_{*}$ of the unit of adjunction.
It therefore suffices to prove that $u(\pot)_{*}u(\pot)^{*}\pi^{!}(-)$ is a cofiber of the same map.
Under the identifications $\pi_{!}z(\pot)_{*}\simeq (\pi z(\pot))_{!}=(\id)_{!}\simeq \id$ and $z(\pot)^{!}\pi^{!}\simeq \id$, this map is given by
\[
z(\pot)_{*}z(\pot)^{!}\pi^{!}\to \pi^{!},
\]
whose cofiber is as claimed, by \ref{ax:loc} for the coefficient system $C$.
This proves claim~\ref{exp-twists-explicit}.
\end{proof}

\begin{cor}
\label{sta:coprojector-as-convolution}
We have a natural equivalence of functors $C_\grp(X)\to\ep{C}(X)$:
\[
\Pi\simeq u_{*}u^{*}\pi^{!}\one[-1]\conv(-)
\]
\end{cor}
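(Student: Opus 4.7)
The statement asserts an equivalence $\Pi \simeq \E{\unit} \conv (-)$ of functors $C_\grp(X) \to \ep{C}(X)$, after rewriting $u_{*}u^{*}\pi^{!}\one[-1] \simeq \E{\unit}$ by \Cref{sta:exp-twists}\ref{exp-twists-explicit} (using that $\unit = 0$ in our conventions). So the plan is essentially to reduce to a direct computation already encoded in the preceding lemmas.

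The key input is \Cref{sta:conv-E_0()} applied with $N = \one_{C(X)}$. That lemma produces, for any $M \in C_\grp(X)$, a natural equivalence
\[
M \conv \E{\unit}(\one) \;\simeq\; \Pi\bigl(M \otimes \pi^{*}\one\bigr)
\]
in $\ep{C}(X)$. The unit of $C(X)$ is sent by $\pi^{*}$ to the unit of $C_\grp(X)$, so $M \otimes \pi^{*}\one \simeq M$ canonically, and by definition $\E{\unit}(\one) = \E{\unit} = \Pi\unit_{*}\one$. Hence the right-hand side is $\Pi M$, yielding a natural equivalence $M \conv \E{\unit} \simeq \Pi M$ functorial in $M$. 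Combining this with the explicit formula $\E{\unit} \simeq u_{*}u^{*}\pi^{!}\one[-1]$ of \Cref{sta:exp-twists}\ref{exp-twists-explicit} gives the claim.

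There is no real obstacle: all naturality has been established in the cited lemmas. The only point to remark is that the convolution with $\E{\unit}$ \emph{a priori} lands in $C_\grp(X)$, but since $\E{\unit} \in \ep{C}(X)$ and $\ep{C}(X) \subseteq C_\grp(X)$ is a convolution ideal (\Cref{sta:conv-on-hCexp}), the target is automatically $\ep{C}(X)$, in accordance with the target of $\Pi$. Thus the proof is a two-line chain of equivalences with the substantive work already done in \Cref{sta:conv-E_0()} and \Cref{sta:exp-twists}.
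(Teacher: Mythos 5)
Your argument is correct and is exactly the paper's proof: the paper also deduces the statement from \Cref{sta:conv-E_0()} applied with $N=\one$ together with the identification $\E{\unit}\simeq u_{*}u^{*}\pi^{!}\one[-1]$ from \Cref{sta:exp-twists}\ref{exp-twists-explicit}. Your additional remarks on unitality of $\pi^{*}\one$ and on $\ep{C}(X)$ being a convolution ideal are fine but not needed beyond what the cited lemmas already provide.
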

\begin{proof}
This follows from \cref{sta:conv-E_0()} applied with $N=\one$ and \cref{sta:exp-twists}\ref{exp-twists-explicit}.
\end{proof}

\begin{cor}
\label{sta:pullback-exp-kernel}
Let $\pot:X\to\grp$ be a potential.
Then we have in $\ep{C}(X)$:
\[
\E{\pot}\simeq\ul{\pot}^*\E{\id}.
\]
\end{cor}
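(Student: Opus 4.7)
The plan is to derive this as a direct consequence of the pullback formula for exponential twists, namely \Cref{sta:exp-twists}\ref{exp-twists-pullback}, applied to a suitably chosen morphism in $\sch[\grp]$. The key observation is that any potential $\pot:X\to\grp$ should be viewed as a morphism of $\base$-schemes with potential from $(X,\pot)$ to the ``universal'' object $(\grp,\id_{\grp})$, since the compatibility condition $\id_{\grp}\circ\pot = \pot$ is tautologically satisfied.

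Concretely, I would proceed as follows. First, observe that $\pot$ defines a morphism $\pot:(X,\pot)\to(\grp,\id_{\grp})$ in $\sch[\grp]$. Then \Cref{sta:exp-twists}\ref{exp-twists-pullback}, applied to this morphism $f=\pot$ with target potential $\id_{\grp}$, yields an equivalence of functors $C(\grp)\to\ep{C}(X)$:
\[
\ul{\pot}^{*}\circ\E{\id}(-)\ \simeq\ \E{\id\circ\pot}(-)\circ\pot^{*}\ =\ \E{\pot}(-)\circ\pot^{*}.
\]
Evaluating this equivalence on the unit $\one\in C(\grp)$ and using that $\pot^{*}$ is symmetric monoidal (so $\pot^{*}\one\simeq\one$) gives $\ul{\pot}^{*}\E{\id}\simeq\E{\pot}(\one)=\E{\pot}$, as desired.

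I do not expect any substantial obstacle here: the entire argument reduces to observing that the statement is the specialization of an already-established naturality property to the universal situation. The only minor subtlety is notational, namely to distinguish clearly between the functor $\E{\pot}(-)$ (with argument) and the object $\E{\pot}=\E{\pot}(\one)$, and to ensure that on pulling back the unit in $C(\grp)$ we indeed land on the unit in $C(X)$, which is immediate from the monoidal nature of $\pot^{*}$ guaranteed by the coefficient system structure.
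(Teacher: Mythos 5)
Your argument is correct and is exactly the intended one: the paper states \cref{sta:pullback-exp-kernel} without proof precisely because it is the specialization of \cref{sta:exp-twists}\ref{exp-twists-pullback} to the morphism $\pot:(X,\pot)\to(\grp,\id_{\grp})$ in $\sch[\grp]$, evaluated at the unit $\one\in C(\grp)$ and using $\pot^{*}\one\simeq\one$. Nothing is missing.
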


\begin{rmk}
The object $\E{\id}\in\ep{C}(\grp)$ thus plays a special role in the theory; it generates all exponential twists by pullback, and as we will see in \cref{sec:motives} these generate motives of varieties with potential via the six-functor formalism of $\ep{C}$. We call $\E{\id}$ the \emph{exponential kernel} of $\ep{C}$. In the case where $\grp=\Ga{}$, it turns out to be a \emph{character sheaf} in $\ep{C}(\Ga)$ and to lead directly to a version of the Fourier transform, as discussed in the introduction.
\end{rmk}

\begin{notn}
\label{notn:addition-potentials}
Let $(X,\pot),(X',\pot')\in\sch[\grp]$.
We define the exterior sum of the two potentials as a new potential on $X\times_{\base}X'$:
\[
\pot\boxplus \pot':X\times_{\base}X'\xto{\pot\times\pot'}\grp\times_\base\grp\xto{\plus}\grp
\]
When $X=X'$ as $\base$-schemes we also define the sum:
\[
\pot + \pot':X\xto{\Delta}X\times_{\base}X'\xto{\pot\times\pot'}\grp\times_\base\grp\xto{\plus}\grp
\]
This endows the set $\Hom_{\base}(X,\grp)$ with the structure of an abelian group.
\end{notn}

\begin{lem}
\label{sta:Epot-product}
Let $(X,\pot),(X',\pot')\in\sch[\grp]$.
Then there is a canonical equivalence of functors $C(X)\times C(X')\to \ep{C}(X\times_{\base} X')$:
\[
\E{\pot}(-)\boxtimes\E{\pot'}(-)\simeq \E{\pot\boxplus\pot'}(-\boxtimes -)
\]
\end{lem}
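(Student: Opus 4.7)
The plan is to reduce the statement to a single-scheme identity and then prove it by a direct Künneth computation at the level of the $\Pi$-coreflectors.

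\textbf{Step 1: Reduce to the diagonal case.} Write $Y:=X\times_{\base}X'$ with projections $p_1,p_2$. By the definition of exterior product in $\ep{C}$, the left-hand side is the convolution $\ul{p_1}^{*}\E{\pot}(M)\conv\ul{p_2}^{*}\E{\pot'}(N)$ in $\ep{C}(Y)$. The projection $p_1:Y\to X$ is a morphism of $\grp$-schemes from $(Y,\pot p_1)$ to $(X,\pot)$, and similarly for $p_2$, so \Cref{sta:exp-twists}\ref{exp-twists-pullback} gives
\[
\ul{p_1}^{*}\E{\pot}(M)\simeq \E{\pot p_1}(p_1^{*}M),\qquad \ul{p_2}^{*}\E{\pot'}(N)\simeq \E{\pot' p_2}(p_2^{*}N).
\]
Since $\pot p_1+\pot' p_2=\pot\boxplus\pot'$ (\Cref{notn:addition-potentials}) and $p_1^{*}M\otimes p_2^{*}N=M\boxtimes N$, it suffices to prove the following ``same-scheme'' identity: for $\alpha,\beta:Y\to\grp$ and $A,B\in C(Y)$,
\[
\E{\alpha}(A)\conv\E{\beta}(B)\ \simeq\ \E{\alpha+\beta}(A\otimes B).
\]

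\textbf{Step 2: Compute the convolution of graph pushforwards.} Unfolding $\E{\alpha}(A)=\Pi z(\alpha)_{*}A$ and using \Cref{lem:Pi-conv} to pull $\Pi$ out of the convolution,
\[
\Pi z(\alpha)_{*}A\conv \Pi z(\beta)_{*}B\ \simeq\ \Pi\bigl(z(\alpha)_{*}A\conv z(\beta)_{*}B\bigr).
\]
The inner convolution is $\plus_{!}\bigl(z(\alpha)_{*}A\boxtimes_{Y}z(\beta)_{*}B\bigr)$, where $\boxtimes_{Y}$ is the exterior product over $Y$ valued in $C(Y\times_{\base}\grp^{2})$. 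Since the graphs $z(\alpha),z(\beta):Y\to Y_\grp$ are closed immersions, hence proper, $z(-)_{*}=z(-)_{!}$, and the Künneth formula (\Cref{sec:ext-prod}) applied to $z(\alpha)\times_{Y}z(\beta):Y\to Y\times_{\base}\grp^{2}$ (note $Y\times_{Y}Y=Y$) gives
\[
z(\alpha)_{*}A\boxtimes_{Y}z(\beta)_{*}B\ \simeq\ \bigl(z(\alpha),z(\beta)\bigr)_{!}(A\otimes B),
\]
where $(z(\alpha),z(\beta)):Y\to Y\times_{\base}\grp^{2}$ is $y\mapsto(y,\alpha(y),\beta(y))$.

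\textbf{Step 3: Identify the composite with the graph of the sum.} Composing with $\plus:Y\times_{\base}\grp^{2}\to Y_{\grp}$ yields $\plus\circ(z(\alpha),z(\beta))=z(\alpha+\beta):Y\to Y_{\grp}$. Hence
\[
z(\alpha)_{*}A\conv z(\beta)_{*}B\ \simeq\ z(\alpha+\beta)_{!}(A\otimes B)\ \simeq\ z(\alpha+\beta)_{*}(A\otimes B),
\]
and applying $\Pi$ produces $\E{\alpha+\beta}(A\otimes B)$, completing the reduced identity and hence the lemma. All equivalences are natural in $M$ and $N$ since every step is obtained by applying natural transformations (base change, Künneth, the $\Pi$-counit) to natural inputs.

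\textbf{Anticipated obstacle.} The substantive content is the Künneth/proper-base-change manipulation of Step 2; once this is in place, Step 3 is essentially a definitional check. The most delicate point is that the Künneth isomorphism here must be compatible with the symmetric structure of the convolution used in \Cref{lem:Pi-conv}, so one should be careful that the ``swap'' of the two factors that occurs implicitly (as in \Cref{rem:conv-shifted-idea}) is accounted for. Since we are only claiming an equivalence of functors and the coefficient-system axioms guarantee a coherent Künneth isomorphism, this is a matter of diagram chasing rather than new input.
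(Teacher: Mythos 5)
Your proof is correct. Step~1 is exactly the paper's own reduction: the proof of \Cref{sta:Epot-product} also writes the exterior product as $\ul{p}_1^*\E{\pot}(M)\conv\ul{p}_2^*\E{\pot'}(N)$, applies \Cref{sta:exp-twists}\ref{exp-twists-pullback}, and thereby reduces to the same-scheme identity, which the paper isolates as \Cref{sta:Epot-product-internal}. Where you genuinely diverge is in the proof of that internal identity. The paper keeps one $\Pi$ inside the convolution and runs a chain of projection-formula (linearity) manipulations through the map $z(\pot)\times\grp$, commuting $\Pi$ past constants and past $!$-pushforward via \Cref{lem:Pi-const} and \Cref{Pi-operations}; you instead pull both coreflectors out at once using the second isomorphism of \Cref{lem:Pi-conv} and then apply the K\"unneth formula to the pair of graph immersions $z(\alpha),z(\beta)$ over $Y$, identifying $\plus\circ(z(\alpha),z(\beta))=z(\alpha+\beta)$ and using properness of closed immersions (valid since $\grp$ is separated, per \Cref{conv:group-scheme}) to convert $!$ into $*$. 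Your route is shorter and more transparent; its only extra input is the K\"unneth isomorphism taken \emph{relative to the base $Y$} rather than $\base$ as literally stated in \Cref{sec:ext-prod} --- this is harmless (it follows from the same base-change-plus-projection-formula argument, and the paper itself uses it this way, e.g.\ in \Cref{lem:pi-conv}), but it is worth stating explicitly. The paper's longer chain buys the same conclusion using only the already-catalogued $\Pi$-commutation lemmata, which is why it avoids invoking K\"unneth over $Y$. Your closing worry about the symmetry constraint is moot: no swap of factors occurs anywhere in your computation, so no compatibility with the symmetric structure needs to be checked beyond what \Cref{lem:Pi-conv} already provides.
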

\begin{proof}
Let us write $\pi_1:X\times_{\base}X'\to X$ and $\pi_2:X\times_{\base}X'\to X'$. Let $M\in C(X)$ and $M'\in C(X')$. By \Cref{sta:exp-twists}, we have an equivalence
\[
\E{\pot}(M)\boxtimes\E{\pot'}(M')=
\ul{\pi}_1^*\E{\pot}(M)\conv\ul{\pi}_2^*\E{\pot'}(M')\simeq
\E{\pot\pi_1}(\pi^*_1M)\conv\E{\pot'\pi_2}(\pi_2^*M').
\]
So the claim follows from the next \Cref{sta:Epot-product-internal}.
\end{proof}

\begin{lem}
\label{sta:Epot-product-internal}
Let $\pot,\pot':X\to \grp$ be two potentials.
Then there is a canonical equivalence of functors $C(X)\times C(X)\to \ep{C}(X)$:
\[
\E{\pot}(-)\conv\E{\pot'}(-)\simeq \E{\pot+\pot'}(-\otimes-)
\]
\end{lem}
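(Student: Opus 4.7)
The plan is to reduce the computation to an identity inside $C_\grp(X)$ by first extracting the coreflector $\Pi$, and then identifying the convolution of two graph pushforwards with the graph pushforward of the sum via base change and the projection formula.

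First, since $\E{\pot}(M)=\Pi z(\pot)_*M$ and similarly for $\pot'$, \Cref{lem:Pi-conv} applied twice gives
\[
\E{\pot}(M)\conv\E{\pot'}(N)\;\simeq\;\Pi\bigl(z(\pot)_*M\conv z(\pot')_*N\bigr).
\]
So it suffices to produce a natural equivalence $z(\pot)_*M\conv z(\pot')_*N\simeq z(\pot+\pot')_*(M\otimes N)$ in $C_\grp(X)$ and then apply $\Pi$.

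Second, unwinding the convolution gives $z(\pot)_*M\conv z(\pot')_*N=\plus_!\bigl(z(\pot)_*M\boxtimes_X z(\pot')_*N\bigr)$, where $\boxtimes_X$ uses the projections $p_1,p_2:X_{\grp^2}\to X_\grp$. For $i=1,2$, the square
\[
\begin{tikzcd}
X_\grp \ar[r, "\tilde z(\pot_i)"] \ar[d, "p"'] & X_{\grp^2} \ar[d, "p_i"] \\
X \ar[r, "z(\pot_i)"'] & X_\grp
\end{tikzcd}
\]
is Cartesian (with $\tilde z(\pot)(x,g)=(x,\pot(x),g)$ and similarly for $\pot'$), and $z(\pot_i)$ is a closed immersion, so proper base change yields $p_i^*z(\pot_i)_*\simeq \tilde z(\pot_i)_*p^*$. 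Combining these with the projection formula for the closed immersion $\tilde z(\pot)$ and a further base-change computation collapsing $\tilde z(\pot)^*\tilde z(\pot')_*$ to $z(\pot')_*z(\pot)^*$ (using that $p\circ z(\pot_i)=\id_X$) identifies
\[
z(\pot)_*M\boxtimes_X z(\pot')_*N\;\simeq\;z(\pot,\pot')_*(M\otimes N),
\]
where $z(\pot,\pot'):X\to X_{\grp^2}$ is the graph of the pair. This is the main technical step.

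Third, one checks directly from \Cref{notn:addition-potentials} that $\plus\circ z(\pot,\pot')=z(\pot+\pot'):X\to X_\grp$. Since $z(\pot,\pot')$ is a closed immersion, $z(\pot,\pot')_*=z(\pot,\pot')_!$, and so
\[
\plus_!\,z(\pot,\pot')_*(M\otimes N)\;\simeq\;z(\pot+\pot')_!(M\otimes N)\;=\;z(\pot+\pot')_*(M\otimes N).
\]
Applying $\Pi$ and assembling the three steps yields the required equivalence. The only delicate point is verifying the relative Künneth-type identification in the second step, but this is a formal consequence of the properties listed in \Cref{sec:cs-properties}; naturality in $M$ and $N$ is transparent because every map used is either a base-change, projection-formula, or adjunction unit/counit natural in its arguments.
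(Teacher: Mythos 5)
Your proof is correct, and it organizes the argument differently from the paper's, in a way worth noting. You strip both coreflectors at the outset via \Cref{lem:Pi-conv}, reducing the statement to a $\Pi$-free identity in $C_\grp(X)$, namely $z(\pot)_*M\conv z(\pot')_*N\simeq z(\pot+\pot')_*(M\otimes N)$, which you then establish by a relative K\"unneth-type computation: proper base change along the graphs (\Cref{cs-basechange}), the projection formula (\Cref{cs-linearity}), and $z_*=z_!$ for closed immersions; a single application of $\Pi$ at the end finishes the proof. The paper instead argues asymmetrically: it drops only the first $\Pi$ (using that $\E{\pot'}(N)$ already lies in the ideal $\ep{C}(X)$), keeps $\E{\pot'}(N)=\Pi z(\pot')_*N$ intact through the computation, and commutes $\Pi$ past tensoring with constant objects and past $!$-pushforward via \Cref{lem:Pi-const} and \Cref{Pi-operations}, using linearity twice to collapse the composite of maps to $z(\pot+\pot')$. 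Your route buys a cleaner separation of concerns — the $\Pi$-bookkeeping is confined to one citation of \Cref{lem:Pi-conv}, and the K\"unneth-style identity for graph pushforwards is a reusable statement in $C_\grp(X)$ — at the cost of two extra Cartesian squares and base-change verifications; the paper's route avoids those squares by leaning on its already-proved $\Pi$-compatibility lemmas. Both arguments are formal consequences of the six-functor properties of \Cref{sec:cs-properties}, and your treatment of naturality is at the same level of rigor as the paper's string of canonical equivalences.
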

\begin{proof}
Let $M,N\in C(X)$. We provide the following string of canonical equivalences:
\begin{align*}
    \E{\pot}\conv\E{\pot'}&=z(\pot)_*M\conv \E{\pot'}(N)\\
  &\simeq\plus_!
    \left(
    p_1^*z(\pot)_*M\otimes p_2^*\E{\pot'}(N)
    \right)\\
  &\simeq\plus_!
    \left(
    (z(\pot)\times\grp)_*\pi^{*}M\otimes p_2^*\E{\pot'}(N)
    \right)&&\text{Linearity (\Cref{cs-linearity})}\\
  &\simeq\plus_!(z(\pot)\times\grp)_*(\pi^{*}M\otimes (z(\pot)\times\grp)^* p_2^*\E{\pot'}(N))&&\text{Linearity (\Cref{cs-linearity})}\\
  &\simeq(\plus \circ (z(\pot)\times\grp))_!(\pi^{*}M\otimes \Pi z(\pot')_*N)&&p_2\circ (z(\pot)\times\grp)=\id\\
  &\simeq(\plus \circ (z(\pot)\times\grp))_!\Pi(\pi^{*}M\otimes z(\pot')_*N)&&\text{\Cref{lem:Pi-const}}\\
  &\simeq\Pi(\plus \circ (z(\pot)\times\grp))_! (\pi^{*}M\otimes z(\pot')_*N))&&\text{\Cref{Pi-operations}}\\
  &\simeq\Pi(\plus \circ (z(\pot)\times\grp\circ z(\pot'))_!(M\otimes N)&& \text{Linearity (\Cref{cs-linearity})}\\
  &\simeq\Pi z(\pot+\pot')_*(M\otimes N)\\
  &=\E{\pot+\pot'}(M\otimes N)
\end{align*}
\end{proof}

\begin{cor}
\label{sta:E-to-Pic}%
For each $\pot:X\to\grp$, the object $\E{\pot}$ is $\otimes$-invertible, and the map $\E{-}$ defines a group homomorphism
\[
\E{-}:\Hom_\base(X,\grp)\to\Pic(\hep{C}(X)).
\]
\end{cor}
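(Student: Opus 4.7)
The proof is essentially a direct application of \Cref{sta:Epot-product-internal}, plus the identification of $\E{\unit}$ as the unit of the convolution product.

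First, I would specialize \Cref{sta:Epot-product-internal} to the case $M=N=\one$. Using that $\one\otimes\one\simeq\one$ in $C(X)$, this yields a natural equivalence in $\ep{C}(X)$,
\[
\E{\pot}\conv\E{\pot'}\simeq\E{\pot+\pot'}.
\]
Next, recall that $\E{\unit}=\Pi\unit_*\one$ is the unit of the convolution symmetric monoidal structure on $\ep{C}(X)$ by \Cref{sta:conv-on-hCexp}; equivalently, this follows from the displayed equivalence with $\pot'=\unit$ (the neutral element of $\Hom_\base(X,\grp)$).

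To establish $\otimes$-invertibility of $\E{\pot}$, I would apply the displayed equivalence with $\pot':=-\pot$, where $-\pot$ denotes the inverse of $\pot$ in the abelian group $\Hom_\base(X,\grp)$ of \Cref{notn:addition-potentials}. Since $\pot+(-\pot)=\unit$, this gives
\[
\E{\pot}\conv\E{-\pot}\simeq\E{\unit},
\]
exhibiting $\E{-\pot}$ as a tensor inverse of $\E{\pot}$ in $\hep{C}(X)$. Hence $\E{\pot}$ defines a class in $\Pic(\hep{C}(X))$.

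Finally, passing to isomorphism classes, the displayed equivalence shows that $\E{-}$ sends the addition $\pot+\pot'$ on $\Hom_\base(X,\grp)$ to the tensor product $\conv$ in $\Pic(\hep{C}(X))$ and sends $\unit$ to the unit class, so it is a group homomorphism as claimed. There is no real obstacle here: all the nontrivial content is already in \Cref{sta:Epot-product-internal}, and this corollary merely rephrases the special case $M=N=\one$ in terms of the Picard group.
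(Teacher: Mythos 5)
Your proposal is correct and in substance the same as the paper's: the whole content is the identity $\E{\pot}\conv\E{\pot'}\simeq\E{\pot+\pot'}$ together with $\E{\unit}$ being the unit, from which invertibility via $\pot+(-\pot)=\unit$ and the homomorphism property follow at once. The only (cosmetic) difference is that you specialize \Cref{sta:Epot-product-internal} at $M=N=\one$ directly, whereas the paper obtains the same identity by pulling back the external equivalence of \Cref{sta:Epot-product} along the diagonal and using the pullback compatibility of exponential twists --- and since \Cref{sta:Epot-product} is itself deduced from \Cref{sta:Epot-product-internal}, your route is the same argument taken one step earlier.
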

\begin{proof}
We already remarked that the unit $\unit:X\to \grp$ is taken to the unit $\E{\unit}$ for the convolution product.
For $\pot,\pot':X\to \grp$ we have by \Cref{sta:Epot-product,sta:pullback-exp-kernel}:
\[
\E{\pot}\conv\E{\pot'}\simeq\ul{\Delta}^*(\E{\pot}\boxtimes\E{\pot'})\simeq\ul{\Delta}^*\E{\pot\boxplus\pot'}\simeq\E{(\pot\boxplus\pot')\circ\Delta}=\E{\pot+\pot'}
\]
Since $\pot+(-\pot)=\unit$, this shows both claims.
\end{proof}

\begin{rmk}
The maps $\E{-}$ of \Cref{sta:E-to-Pic} are part of a natural transformation
\[
\E{-}:\Hom_\base(-,\grp)\to\Pic(\hep{C}(-))
\]
of presheaves on $\base$-schemes.
This follows from \Cref{sta:exp-twists}.
\end{rmk}

\subsection{Motives}
\label{sec:motives}
Let $f:X\to Y$ be a morphism of $\base$-schemes.
Recall that that in any coefficient system~$C$, the ``(homological) motive'' associated with~$X$ can be defined in terms of the exceptional functors, like so: $\M(X):=f_!f^!\one\in C(Y)$.
The motive associated with a variety with potential $(X,\pot)$ will follow this idea, with a(n exponential) twist:

\begin{defn}\label{defn:motive}
The \emph{motive} of the variety with potential $(f:X\to Y,\pot:X\to \grp)$ is defined to be 
\[
\M(X,\pot):=\M(X,f,\pot):=\ul{f}_!(\E{-\pot}\otimes \ul{f}^!\one)\in C(Y).
\]
More generally, if $g:Z\into X$ is a subscheme then the \emph{relative motive} of $(X,Z,\pot)$ is defined to be
\[
\M(X,Z,\pot):=\cofib\left(\M(Z,\pot\circ g)\to \M(X,\pot)\right)
\]
where the map is given by the composition
\begin{align}
  \label{eq:relative-motive-map}
  \ul{f}_!\ul{g}_!(\E{-\pot g}\otimes \ul{g}^!\ul{f}^!\one)&\simeq \ul{f}_!\ul{g}_!(\ul{g}^*\E{-\pot}\otimes \ul{g}^!\ul{f}^!\one)&&\text{\Cref{sta:exp-twists}}\\
  \notag{}&\isoto \ul{f}_!(\E{-\pot}\otimes \ul{g}_!\ul{g}^!\ul{f}^!\one)&&\text{projection formula}\\
  \notag{}&\to \ul{f}_!(\E{-\pot}\otimes \ul{f}^!\one)&&\text{counit of adjunction}
\end{align}
\end{defn}

\begin{rmk}\label{rmk:bivariant}
The motive of $(X,\pot)$ in \cref{defn:motive} is more precisely the \emph{homological} motive of the variety with potential. In fact, this is one of four motives attached to $(X,\pot)$, as in the classical case:
\begin{center}
\begin{tabular}{ccrcl}
  homological motive && $M(X,a):=f_!(\E{-\pot}\otimes f^!\one)$\\
  cohomological motive && $M^{\coh}(X,\pot):=f_*(\E{\pot}\otimes f^*\one)$\\
  Borel-Moore homological motive && $M^{\BM}(X,a):=f_*(\E{-\pot}\otimes f^!\one)$ \\
  cohomological motive with compact support && $M^{\coh}_{c}(X,\pot):=f_!(\E{\pot}\otimes f^*\one)$
\end{tabular}
\end{center}
These exhibit a similar behaviour as in the classical setting, with the appropriate functoriality with respect to proper, \'etale, \ldots{} maps. We will not develop this here. The signs in $\E{\pm \pot}$ are not arbitrary; they are compatible with the general definitions of bivariant theories in motivic homotopy theory of~\cite[Definition 2.2.1]{deglise-jin-khan:fundamental-classes}, and are also fixed by the requirement that exponential realization functors send $M(X,\pot)$ (resp. $M^{\coh}(X,\pot)$,\ldots{}) to the corresponding ``exponential (co)homology groups'' (e.g rapid decay homology, resp. cohomology, \ldots{} in the Betti case). 
\end{rmk}

\begin{lem}
\label{sta:motive-functorial}
Let $g:(Z,\pot g)\to (X,\pot)$ be a morphism (not necessarily a closed immersion) of varieties with potential.
Then the morphism
\[
\M(g):\M(Z,\pot g)\to\M(X,\pot)
\]
of~\eqref{eq:relative-motive-map} is functorial in $g$.
\end{lem}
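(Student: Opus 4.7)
The plan is to first extend the construction of $\M(g)$ from equation~\eqref{eq:relative-motive-map} to any morphism $g\colon(Z,\pot g)\to(X,\pot)$ in $\sch[\grp]$, not just a subscheme inclusion. Each of the three building blocks of that composition—the pullback identification $\ul{g}^{*}\E{-\pot}\simeq \E{-\pot g}$ from \Cref{sta:exp-twists}\ref{exp-twists-pullback}, the projection formula for $\ul{g}_{!}$, and the counit of the $(\ul{g}_{!},\ul{g}^{!})$ adjunction—is available for any (separated) morphism $g$. Writing $N:=\ul{f}^{!}\one$, where $f\colon X\to Y$ is the structure morphism, one obtains a morphism
\[
T_{g}\colon\ \ul{g}_{!}\bigl(\E{-\pot g}\otimes\ul{g}^{!}N\bigr)\ \longrightarrow\ \E{-\pot}\otimes N
\]
in $\ep{C}(X)$, and $\M(g)=\ul{f}_{!}T_{g}$ after using the identification $\ul{(fg)}_{!}\simeq \ul{f}_{!}\ul{g}_{!}$.

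Next, given a second composable morphism $h\colon(W,\pot gh)\to(Z,\pot g)$, the asserted identity $\M(gh)=\M(g)\circ\M(h)$ reduces, using the standard identifications $\ul{(gh)}_{!}\simeq \ul{g}_{!}\ul{h}_{!}$ and $\ul{(gh)}^{!}\simeq \ul{h}^{!}\ul{g}^{!}$, to establishing the identity
\[
T_{gh}\ =\ T_{g}\circ \ul{g}_{!}T_{h}
\]
in $\ep{C}(X)$. The strategy is then to decompose both sides into their three constituent steps and match them via three classical compatibilities of the six-functor formalism on $\ep{C}$ (available by \Cref{sta:Cexp-cosy}): (i) the pullback identification of exponential twists composes, $\ul{(gh)}^{*}\E{-\pot}\simeq \ul{h}^{*}\ul{g}^{*}\E{-\pot}\simeq \ul{h}^{*}\E{-\pot g}\simeq \E{-\pot gh}$, which is naturality of \Cref{sta:exp-twists}\ref{exp-twists-pullback}; (ii) the projection formula for $\ul{(gh)}_{!}$ factors as the projection formulas for $\ul{h}_{!}$ and $\ul{g}_{!}$; and (iii) the counit for the composite adjunction satisfies $\epsilon_{gh}=\epsilon_{g}\circ(\ul{g}_{!}\epsilon_{h}\ul{g}^{!})$.

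The main obstacle I anticipate is coherence: in the stable \icat $\ep{C}(X)$, each of (i)–(iii) must commute up to coherent higher homotopy rather than on the nose, so a fully \icategorical verification would require carefully pasting several homotopies. However, since the lemma only asserts the equality of two morphisms (rather than the construction of higher-dimensional data), it suffices by \Cref{sta:cosy-vs-triangulated} to verify $T_{gh}=T_{g}\circ\ul{g}_{!}T_{h}$ in the triangulated coefficient system $\hep{C}$, where (i)–(iii) are entirely formal consequences of the adjunctions and the symmetric monoidal structure. Applying $\ul{f}_{!}$ and specialising to $N=\ul{f}^{!}\one$ then yields the required functoriality $\M(gh)=\M(g)\circ\M(h)$.
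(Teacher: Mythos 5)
Your proposal is correct and follows essentially the same route as the paper: the paper's proof simply observes that each of the three constituent morphisms in~\eqref{eq:relative-motive-map} (the exponential-twist pullback identification of \Cref{sta:exp-twists}, the projection formula, and the counit) is itself compatible with composition in $g$, which is exactly the content of your decomposition $T_{gh}=T_g\circ\ul{g}_!T_h$. Your additional remark that the equality of morphisms may be checked in the homotopy category is consistent with the paper's general practice (\Cref{sta:cosy-vs-triangulated}) and only makes explicit what the paper leaves implicit.
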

\begin{proof}
This is true for each of the morphisms in the composition~\eqref{eq:relative-motive-map}.
\end{proof}

\begin{lem}\label{sta:motive-exp-smooth}
Let $(X,f,\pot)$ be a variety with potential and assume $f$ is smooth.
Then
\[
\M(X,\pot)\simeq \ul{f}_\sharp\E{-\pot}.
\]
\end{lem}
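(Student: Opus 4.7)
The plan is to deduce this from relative purity applied inside the exponential coefficient system $\ep{C}$, which is available since $\ep{C}$ is a coefficient system by \cref{sta:Cexp-cosy}. The key observation is that two Thom twists will appear, one coming from $\ul{f}^!$ and the other from $\ul{f}_!$, and they will cancel, leaving exactly $\ul{f}_\sharp \E{-\pot}$.

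Concretely, since $f$ is smooth, relative purity in $\ep{C}$ (applied via \cref{cs-relative-purity} in the coefficient system $\ep{C}$) provides equivalences
\[
\ul{f}^! \simeq \ul{\thom}(\Omega_f) \ul{f}^* \qquad \text{and} \qquad \ul{f}_! \simeq \ul{f}_\sharp \ul{\thom}^{-1}(\Omega_f).
\]
The unit of $\ep{C}(Y)$ is $\E{\unit}$, and by \cref{sta:exp-twists}\ref{exp-twists-pullback} we have $\ul{f}^*\E{\unit} \simeq \E{\unit}$, so $\ul{f}^!\one \simeq \ul{\thom}(\Omega_f)\E{\unit}$. Using that in any closed symmetric monoidal category the Thom equivalence is tensoring with the invertible Thom space object (and that $\E{\unit}$ is the unit for $\conv$), we get
\[
\E{-\pot} \conv \ul{f}^!\one \simeq \E{-\pot} \conv \ul{\thom}(\Omega_f)\E{\unit} \simeq \ul{\thom}(\Omega_f)\E{-\pot}.
\]

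Combining these and using the projection formula / $\otimes$-linearity of $\ul{\thom}^{-1}(\Omega_f)$ on one side, we obtain
\[
\M(X,\pot) = \ul{f}_!(\E{-\pot} \conv \ul{f}^!\one) \simeq \ul{f}_\sharp\, \ul{\thom}^{-1}(\Omega_f)\, \ul{\thom}(\Omega_f)\, \E{-\pot} \simeq \ul{f}_\sharp \E{-\pot},
\]
as desired.

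There is no real obstacle here: the argument is a direct transcription into $\ep{C}$ of the classical computation $f_!(f^!\one) \simeq f_\sharp\one$ for smooth $f$ in any coefficient system, with $\one$ replaced by $\E{-\pot}$. The only subtlety worth double-checking is that the Thom equivalence in $\ep{C}$ is indeed tensoring with the corresponding invertible object, which is recorded in \cref{sta:identify-four-functors-hexp}(h) and \cref{lem:thom-exp}; once this is in hand, the cancellation is formal.
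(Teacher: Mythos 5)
Your proof is correct and follows essentially the same route as the paper: the paper's own argument is the one-line chain $\ul{f}_\sharp\E{-\pot}\simeq \ul{f}_\sharp(\E{-\pot}\otimes \ul{f}^*\one)\simeq \ul{f}_!(\E{-\pot}\otimes \ul{f}^!\one)$ by relative purity in the coefficient system $\ep{C}$, which is exactly the Thom-twist cancellation you spell out in more detail. Your extra care about the Thom equivalence being tensoring with an invertible object is already covered by \cref{cs-relative-purity} applied to $\ep{C}$, so nothing is missing.
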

\begin{proof}
Indeed,
\[
\ul{f}_\sharp\E{-\pot}\simeq \ul{f}_\sharp(\E{-\pot}\otimes \ul{f}^*\one)\simeq \ul{f}_!(\E{-\pot}\otimes \ul{f}^!\one)
\]
by relative purity.
\end{proof}

For the next result we need some preparations.
\begin{lem}\label{*!-smooth}
Let $fg=hk$ be a Cartesian square with one of $f$ or $h$ smooth. Then the exchange morphism
  \begin{equation*}
    g^*f^!\to k^!h^*
  \end{equation*}
is invertible in any coefficient system.
\end{lem}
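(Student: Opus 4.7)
The plan is to reduce the statement to a compatibility between the Thom equivalences attached to the smooth morphism and its base change, which holds because the square is Cartesian. I would treat the two cases separately, although they are entirely symmetric at the level of ideas.

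First, suppose $f$ is smooth. Then $k$ is smooth as well, with $\Omega_k \simeq g^{*}\Omega_f$ since the square is Cartesian. By relative purity (\Cref{cs-relative-purity}) we have natural equivalences $f^{!}\simeq \thom(\Omega_f)f^{*}$ and $k^{!}\simeq \thom(\Omega_k)k^{*}$. Using that Thom equivalences commute with pullbacks (since Thom equivalences are natural with respect to base change in the underlying scheme, cf.\ \Cref{cs-relative-purity}), we compute
\[
g^{*}f^{!}\simeq g^{*}\thom(\Omega_f)f^{*}\simeq \thom(g^{*}\Omega_f)g^{*}f^{*}\simeq \thom(\Omega_k)k^{*}h^{*}\simeq k^{!}h^{*},
\]
where the third equivalence uses both $\Omega_k\simeq g^{*}\Omega_f$ and the identity $fg=hk$.

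Now suppose $h$ is smooth, so $g$ is smooth as well with $\Omega_g\simeq k^{*}\Omega_h$. Applying relative purity in the inverse direction, we have $h^{*}\simeq \thom^{-1}(\Omega_h)h^{!}$ and $g^{*}\simeq \thom^{-1}(\Omega_g)g^{!}$. Arguing as in the previous case, and using that Thom equivalences commute with $k^{!}$ as well (again, by the naturality recalled in \Cref{cs-relative-purity}):
\[
g^{*}f^{!}\simeq \thom^{-1}(\Omega_g)g^{!}f^{!}\simeq \thom^{-1}(\Omega_g)k^{!}h^{!}\simeq k^{!}\thom^{-1}(\Omega_h)h^{!}\simeq k^{!}h^{*}.
\]

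The only nontrivial point, and the step I would write out most carefully, is verifying that these equivalences are actually the exchange morphism $g^{*}f^{!}\to k^{!}h^{*}$ rather than some other isomorphism between the same functors. This is a standard mate-diagram calculation: the exchange morphism is built from the $(k_!,k^{!})$-unit and the base change equivalence $k_{!}g^{*}\isoto h^{*}f_{!}$, and both of these are compatible with the purity equivalences (the base change equivalence for the smooth side is the one coming from \ref{ax:smooth-bc} after conjugation with the Thom equivalences, which is precisely how $f_{!}\simeq f_{\sharp}\thom^{-1}(\Omega_f)$ is set up in \cref{cs-relative-purity}). Once this bookkeeping is done, the chain of equivalences above identifies with the exchange, and we conclude.
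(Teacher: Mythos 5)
Your proof is correct and follows essentially the same route as the paper: rewrite $f^!$ (resp.\ $h^*,g^*$) via relative purity and use the compatibility of Thom equivalences with base change, so that $g^*f^!\simeq \thom(\Omega_k)k^*h^*\simeq k^!h^*$. The only point you sketch (that this chain of purity equivalences really is the exchange morphism) is exactly the point the paper also does not verify by hand but settles by citing Ayoub's thesis, Proposition~1.5.19.
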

\begin{proof}
  Say $f$ is smooth so that $f^!\simeq \thom(\Omega_f)f^*$. Then
  \begin{align*}
    g^*f^!\simeq g^*\thom(\Omega_f)f^*\simeq \thom(g^*\Omega_f)g^*f^*\simeq\thom(\Omega_k)k^*h^*\simeq k^!h^*
  \end{align*}
  which is the exchange morphism~\cite[Proposition~1.5.19]{ayoub:thesis-1}.
\end{proof}

\begin{lem}\label{dualizing-boxtimes}
Let $f:X\to \base$ and $f':X'\to \base$ be two smooth morphisms.
There is a canonical isomorphism $f^!\one\boxtimes (f')^!\one\simeq (f\times f')^!\one$ in any coefficient system.
\end{lem}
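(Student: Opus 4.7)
The plan is to reduce everything to the multiplicativity of Thom spaces. By relative purity (\cref{cs-relative-purity}), the smoothness of $f$ gives $f^!\one\simeq\mthom(\Omega_f)$, and similarly $(f')^!\one\simeq\mthom(\Omega_{f'})$. The product map $f\times f'$ is also smooth, so $(f\times f')^!\one\simeq\mthom(\Omega_{f\times f'})$. Denoting by $p:X\times_\base X'\to X$ and $p':X\times_\base X'\to X'$ the two projections, the standard identification of the relative cotangent bundle of a product yields
\[
\Omega_{f\times f'}\simeq p^*\Omega_f\oplus(p')^*\Omega_{f'}
\]
as locally free sheaves on $X\times_\base X'$.

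I would then apply the two properties of Thom spaces recalled in \cref{cs-relative-purity}: compatibility with direct sums, $\mthom(V\oplus W)\simeq\mthom(V)\otimes\mthom(W)$, and compatibility with base change, $\mthom(p^*V)\simeq p^*\mthom(V)$. Together these give
\[
\mthom(\Omega_{f\times f'})\simeq\mthom(p^*\Omega_f)\otimes\mthom((p')^*\Omega_{f'})\simeq p^*\mthom(\Omega_f)\otimes(p')^*\mthom(\Omega_{f'}),
\]
and the right-hand side is, by definition of the external product (\cref{sec:ext-prod}), precisely $\mthom(\Omega_f)\boxtimes\mthom(\Omega_{f'})\simeq f^!\one\boxtimes(f')^!\one$.

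There is no real obstacle here since each step is already quoted as a property of coefficient systems; the only thing to check is that the chain of identifications is canonical, which reduces to the functoriality of the group homomorphism $\mthom:K_0(-)\to\Pic(C(-))$ under pullback. Alternatively, one can bypass Thom spaces altogether by writing $f^!\one\boxtimes(f')^!\one=p^*f^!\one\otimes(p')^*(f')^!\one$ and applying \cref{*!-smooth} to the two evident Cartesian squares to rewrite $p^*f^!\simeq(p')^!(f')^*$, after which the K\"unneth formula and the equality $(f')^*\one\simeq\one$ yield the claim directly; this second route makes the canonicity of the comparison manifest.
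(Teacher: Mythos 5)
Your main argument is correct but follows a genuinely different route from the paper. You reduce everything to Thom spaces: by relative purity $f^!\one\simeq\mthom(\Omega_f)$, $(f')^!\one\simeq\mthom(\Omega_{f'})$, $(f\times f')^!\one\simeq\mthom(\Omega_{f\times f'})$, and then the decomposition $\Omega_{f\times f'}\simeq p^*\Omega_f\oplus(p')^*\Omega_{f'}$ together with the compatibility of $\thom$ with direct sums and base change (both quoted in \cref{cs-relative-purity}) gives the claim; this is a clean, self-contained computation in $K_0$-indexed Thom spaces whose canonicity is indeed just the naturality of $\mthom$. The paper instead never decomposes the cotangent sheaf: it writes $f^!\one\boxtimes(f')^!\one$ as $p^*f^!\one\otimes(p')^*(f')^!\one$, applies the exchange isomorphism of \cref{*!-smooth} to one of the two Cartesian squares to turn one factor into a $!$-pullback, and then collapses the product using the smooth-case identity $g^!(M\otimes N)\simeq g^*(M)\otimes g^!(N)$, which is itself a consequence of relative purity. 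Your alternative sketch at the end is essentially this second argument (with the roles of the factors swapped), except for one imprecision: the step that finishes it is not the K\"unneth formula of \cref{sec:ext-prod} (which concerns $!$-pushforwards of external products) but precisely the relative-purity identity $(p')^!\one\otimes(p')^*M\simeq(p')^!M$ for the smooth projection $p'$, i.e.\ the same twisted projection formula the paper invokes. Both routes buy the same thing at the same cost, since \cref{*!-smooth} is itself proved via Thom equivalences; yours is slightly more direct, the paper's has the advantage of reusing the already-stated exchange lemma without mentioning cotangent bundles of products.
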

\begin{proof}
We have 
\begin{align*}
  f^!\one\boxtimes (f')^!\one & \simeq      (\id_X\times f')^*f^!\one\otimes (f\times\id_{X'})^*(f')^!\one\\
    &\simeq (\id_X\times f')^*f^!\one\otimes (\id_X\times f')^!f^*\one && \text{\cref{*!-smooth}}\\
    &\simeq (\id_X\times f')^!(f^!\one\otimes f^*\one) && \text{\cref{*!-smooth}}\\
    &\simeq (\id_X\times f')^!f^!\one\\
    &\simeq (f\times f')^!\one.
\end{align*}
where we have used that, for a smooth morphism $g$, we have $g^{!}(M\otimes N)\simeq g^{*}(M)\otimes g^{!}(N)$ as consequence of relative purity.
\end{proof}

\begin{prop}
\label{sta:Kuenneth-homology}
Let $(X,\pot),(X',\pot')\in\sch[Y\times\grp]$.
Then in $\ep{C}(Y)$ we have a morphism
\[
\M(X,\pot)\otimes\M(X',\pot')\to\M(X\times_{Y} X',\pot\conv\pot')
\]
which is invertible if $X$ and $X'$ are smooth $Y$-schemes.
\end{prop}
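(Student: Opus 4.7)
The plan is to construct the Künneth morphism by combining the projection formula and base change in $\ep{C}$ (supplied by \Cref{sta:Cexp-cosy,sta:identify-four-functors-hexp}) with the compatibility of exponential twists under addition of potentials. Denote by $f\colon X\to Y$ and $f'\colon X'\to Y$ the structure morphisms and form the Cartesian square
\[
\begin{tikzcd}
X\times_Y X'\ar[r,"p_2"]\ar[d,"p_1"']&X'\ar[d,"f'"]\\
X\ar[r,"f"']&Y
\end{tikzcd}
\]
with $q:=fp_1=f'p_2$. The potential $\pot\boxplus\pot':=\pot p_1+\pot' p_2\colon X\times_Y X'\to\grp$ is the evident relative version of \Cref{notn:addition-potentials}.

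To build the map, I would first use the Künneth formula in $\ep{C}$ (a formal consequence of \ref{ax:projection} and base change) to rewrite
\[
\M(X,\pot)\otimes\M(X',\pot')\simeq\ul{q}_!\bigl(\ul{p_1}^*(\E{-\pot}\otimes\ul{f}^!\one)\otimes\ul{p_2}^*(\E{-\pot'}\otimes\ul{f'}^!\one)\bigr).
\]
Distributing the pullbacks and applying \Cref{sta:exp-twists}\ref{exp-twists-pullback} followed by \Cref{sta:Epot-product-internal} collapses the exponential-twist factor into $\E{-(\pot\boxplus\pot')}$. The remaining piece is a natural Künneth map for dualizing objects
\[
\kappa\colon\ul{p_1}^*\ul{f}^!\one\otimes\ul{p_2}^*\ul{f'}^!\one\longrightarrow\ul{q}^!\one,
\]
which I would produce by composing the Beck-Chevalley exchange $\ul{p_1}^*\ul{f}^!\to\ul{p_2}^!(\ul{f'})^*$ with the standard map $\ul{p_2}^!(-)\otimes\ul{p_2}^*(-)\to\ul{p_2}^!(-\otimes-)$ adjoint to the projection formula for $\ul{p_2}_!$. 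Applying $\ul{q}_!\bigl(\E{-(\pot\boxplus\pot')}\otimes\kappa\bigr)$ yields the sought morphism.

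For the second assertion, I would take the most direct route: by \Cref{sta:motive-exp-smooth}, smoothness of $f$ and $f'$ gives $\M(X,\pot)\simeq\ul{f}_\sharp\E{-\pot}$ and $\M(X',\pot')\simeq\ul{f'}_\sharp\E{-\pot'}$. Two applications of the smooth projection formula and smooth base change for the $\sharp$-functoriality then produce
\[
\ul{f}_\sharp\E{-\pot}\otimes\ul{f'}_\sharp\E{-\pot'}\simeq\ul{q}_\sharp\bigl(\ul{p_1}^*\E{-\pot}\otimes\ul{p_2}^*\E{-\pot'}\bigr)\simeq\ul{q}_\sharp\E{-(\pot\boxplus\pot')}\simeq\M(X\times_Y X',\pot\boxplus\pot').
\]

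The principal obstacle is verifying that this direct equivalence in the smooth case agrees up to homotopy with the morphism constructed via $\kappa$, and in particular that $\kappa$ itself is an equivalence when $f,f'$ are smooth. Relative purity (\Cref{cs-relative-purity}) inside $\ep{C}$ identifies $\ul{f}^!\one,\ul{f'}^!\one,\ul{q}^!\one$ with the Thom invertibles $\mthomexp(\Omega_f),\mthomexp(\Omega_{f'}),\mthomexp(\Omega_q)$ of \Cref{lem:thom-exp}; combined with the short exact sequence $p_1^*\Omega_f\to\Omega_q\to p_2^*\Omega_{f'}$ and the fact that $\mthomexp$ is a group homomorphism $K_0\to\Pic(\ep{C})$, this yields the required isomorphism. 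Matching it with $\kappa$ is then a routine diagram chase using uniqueness of adjoints, and in practice one can bypass it entirely by \emph{defining} the map in the smooth case via the direct computation above.
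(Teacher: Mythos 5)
Your construction of the morphism is essentially the paper's own, just written over the fibre product: the paper passes through $\ul{f}_!(\E{-\pot}\otimes\ul{f}^!\one)\otimes\ul{f'}_!(\E{-\pot'}\otimes\ul{(f')}^!\one)\simeq\ul{(f\times f')}_!\bigl((\E{-\pot}\otimes\ul{f}^!\one)\boxtimes(\E{-\pot'}\otimes\ul{(f')}^!\one)\bigr)$ by linearity, collapses the exponential factors via \Cref{sta:Epot-product} (whose proof is exactly your combination of \Cref{sta:exp-twists}\ref{exp-twists-pullback} and \Cref{sta:Epot-product-internal}), and then applies a canonical map $\ul{f}^!\one\boxtimes\ul{(f')}^!\one\to\ul{(f\times f')}^!\one$. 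Your $\kappa$, the exchange $\ul{p_1}^*\ul{f}^!\to\ul{p_2}^!(\ul{f'})^*$ followed by $\ul{p_2}^!(-)\otimes\ul{p_2}^*(-)\to\ul{p_2}^!(-\otimes-)$, is a perfectly good explicit model for that last map, so the first half of your argument is fine.

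The gap is in the invertibility step. Identifying $\ul{f}^!\one$, $\ul{(f')}^!\one$, $\ul{q}^!\one$ with $\mthomexp(\Omega_f)$, $\mthomexp(\Omega_{f'})$, $\mthomexp(\Omega_q)$ and invoking the conormal sequence together with the homomorphism $K_0\to\Pic$ only shows that the source and target of $\kappa$ are \emph{abstractly} isomorphic; it says nothing about whether the particular map $\kappa$ is an equivalence (a map between isomorphic $\otimes$-invertible objects need not be invertible -- think of multiplication by an integer on a Tate twist). Likewise, the proposed escape route of ``defining the map in the smooth case via the direct $\sharp$-computation'' is not available: the proposition asserts that the morphism already constructed for arbitrary $X,X'$ becomes invertible when $f,f'$ are smooth, so you must prove that about the given map, not substitute another one; the comparison you defer as a ``routine diagram chase'' is precisely the missing content. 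The repair is immediate from your own description of $\kappa$ and is what the paper does via \Cref{dualizing-boxtimes}: when $f$ and $f'$ are smooth, the exchange $\ul{p_1}^*\ul{f}^!\to\ul{p_2}^!(\ul{f'})^*$ is invertible by \Cref{*!-smooth} (valid in the coefficient system $\ep{C}$ by \Cref{sta:Cexp-cosy}), and $\ul{p_2}^!(-)\otimes\ul{p_2}^*(-)\to\ul{p_2}^!(-\otimes-)$ is invertible by relative purity (\Cref{cs-relative-purity}) because $p_2$ is smooth; hence each constituent of $\kappa$, and so $\kappa$ itself, is an equivalence. The computation via \Cref{sta:motive-exp-smooth} then serves only as a consistency check and is not needed for the proof.
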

\begin{proof}
This is the following composite:
\begin{align*}
  \ul{f}_!(\E{\pot}\otimes \ul{f}^!\one)\otimes \ul{f}'_!(\E{\pot'}\otimes (\ul{f}')^!\one)&\simeq \ul{f\times f'}_!\left((\E{\pot}\otimes \ul{f}^!\one)\boxtimes (\E{\pot'}\otimes (\ul{f}')^!\one)\right)\\
  &\simeq \ul{(f\times f')}_!\left(\E{\pot\conv \pot'}\otimes (\ul{f}^!\one\boxtimes \ul{(f')}^!\one)\right)\\
&\to  \ul{(f\times f')}_!\left(\E{\pot\conv\pot'}\otimes \ul{(f\times f')}^!\one\right)
\end{align*}
Here, the first equivalence is linearity, the second is \Cref{sta:Epot-product}, and the third is the canonical isomorphism of \Cref{dualizing-boxtimes}.
\end{proof}

\subsection{Generators}
\label{sec:generators}

\begin{defn}
Let $\cE$ be a collection $\cE_X\subset \Pic(C(X))$ of invertible objects, for each $X\in\sch[\base]$, satisfying $g^*\cE_Y\subseteq \cE_X$ for each $g:X\to Y$ and $\cE_{X}(n)=\cE_{X}$ for each $n\in \Z$.
We say that $C$ is \emph{Ind-$\cE$-constructible} if for each $Y\in\sch[\base]$, the stable \icat $C(Y)$ is the smallest localizing subcategory containing the motives $f_\sharp E$ for all $f:X\to Y$ smooth and $E\in \cE_X$.
\end{defn}

\begin{exa}
For example, if $\cE_X=\one(\Z):=\{\one(n)\mid n\in\Z\}$ for all $X$, this is known as of \emph{geometric origin}, see~\cite[\S\,2.4.3]{gallauer:six-functor-survey} for more details on this notion.
An important example to have in mind is $\SH$, stable motivic homotopy theory.

More generally, if $\Lambda\subset\Pic(C(\base))$ is some class containing all Tate twists and we set $\cE_X=f^*\Lambda$ for each $f:X\to\base$ then the resulting notion is closely related to \emph{$\Lambda$-constructibility} as defined in~\cite[\S\,2.3.10]{ayoub:thesis-1}.
(There, $\Lambda$ is allowed to consist of non-invertible objects as well.)
\end{exa}

\begin{prop}
\label{sta:smooth-generators}
If $C$ is Ind-$\cE$-constructible then $\ep{C}$ is Ind-$\cE_{\exp{}}$-constructible, where
\[
\cE_{\exp{},Y}=\{\E{\pot}(E)\mid E\in\cE_Y, a:X\to\grp\}.\footnote{As seen in the proof, one may restrict oneself to potentials $a$ that factor as a smooth morphism $b:X\to Y\times\grp$ followed by the canonical projection. Thus if $Y$ is smooth $a$ may be assumed to be smooth too.}
\]
In particular, if $C$ is of geometric origin then $\ep{C}(Y)$ has generators
\[
\M(X,\pot)(n),\qquad \pot:X\to \grp, n\in\Z.
\]
\end{prop}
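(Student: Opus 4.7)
My plan is to transport generators of $C(Y_\grp)$ through the coreflector $\Pi:C(Y_\grp)\to\ep{C}(Y)$, identifying the resulting images as exponential-twisted motives.  I would first observe that $\Pi$ preserves all (small) colimits: it is the fiber of the transformation $\id\to\pi^*\pi_\sharp$ between cocontinuous exact functors (cf.~\eqref{eq:triangle-Pi}), and every $M\in\ep{C}(Y)$ satisfies $M\simeq\Pi M$.  Combined with Ind-$\cE$-constructibility of $C$ applied to the scheme $Y_\grp$, this shows that $\ep{C}(Y)$ is the smallest localizing subcategory generated by $\Pi(g_\sharp E)$ for smooth $g:X\to Y_\grp$ and $E\in\cE_X$.

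The heart of the argument is to identify each such $\Pi(g_\sharp E)$ with an object of the form $\ul{f}_\sharp\E{\pot}(E')$.  Given smooth $g:X\to Y_\grp$, I would write $g=f_\grp\circ z(\pot)$ with $f:X\to Y$ and $\pot:X\to\grp$ the compositions of $g$ with the two projections; note that $f$ is smooth since $\pi_Y:Y_\grp\to Y$ is smooth as a base change of $\pi:\grp\to\base$.  Since $z(\pot)$ is a closed immersion, $z(\pot)_!\simeq z(\pot)_*$, so $g_!E\simeq(f_\grp)_!z(\pot)_*E$.  Applying $\Pi$, commuting with $!$-pushforward along the separated $f_\grp$ (\Cref{Pi-operations}), and invoking the identifications $\ul{f}_!=(f_\grp)_!$ on $\ep{C}$ (\Cref{sta:identify-four-functors-hexp}) and $\Pi z(\pot)_*E=\E{\pot}(E)$ (\Cref{def:exp-twists}) yields $\Pi(g_!E)\simeq\ul{f}_!\E{\pot}(E)$.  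Converting $g_!$ to $g_\sharp$ and $\ul{f}_!$ to $\ul{f}_\sharp$ via relative purity, and using the identity $\thomexp(V)\E{\pot}(-)\simeq\E{\pot}(\thom(V)(-))$---a direct consequence of $\plus\circ(\unit\times z(\pot))=z(\pot)$ together with K\"unneth and the projection formula, as in the proof of \Cref{lem:thom-exp}---I would deduce
\[
\Pi(g_\sharp E)\simeq\ul{f}_\sharp\E{\pot}\bigl(\thom(\Omega_g-\Omega_f)(E)\bigr).
\]

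It remains to verify that $\thom(\Omega_g-\Omega_f)(E)\in\cE_X$.  The cotangent exact sequence for $f=\pi_Yg$ gives $\Omega_g-\Omega_f=-g^*\Omega_{\pi_Y}$ in $K_0(X)$; since $\grp$ is a commutative group scheme, $\Omega_{\grp/\base}$ is globally trivial, and hence so is $g^*\Omega_{\pi_Y}=\pot^*\Omega_{\grp/\base}$, a trivial bundle on $X$ of rank $\dim\grp$.  Its Thom class is therefore an iterated Tate twist (with shift), so the closure of $\cE$ under Tate twists (and under shifts, automatic in the stable setting) ensures $\thom(\Omega_g-\Omega_f)(E)\in\cE_X$, establishing Ind-$\cE_{\exp}$-constructibility.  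The \emph{in particular} clause follows at once from \Cref{sta:motive-exp-smooth}, which rewrites $\ul{f}_\sharp\E{-\pot}\simeq\M(X,\pot)$ for smooth $f$: after absorbing the Tate twist, the generators assume the form $\M(X,\pot)(n)$ for smooth $X\to Y$, $\pot:X\to\grp$, $n\in\Z$, matching the footnote's reduction to potentials factoring through a smooth $b=g:X\to Y_\grp$.  The main technical obstacle is the Thom-twist bookkeeping, namely the identification $\Omega_g-\Omega_f=-g^*\Omega_{\pi_Y}$ via the cotangent sequence and its recognition as a trivial bundle using the group-scheme structure; everything else is formal from the structural results already available.
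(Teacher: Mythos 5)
Your overall strategy coincides with the paper's: transport the generators $b_\sharp E$ of $C_\grp(Y)$ through the colocalization $\Pi$, factor $b=f_\grp\circ z(\pot)$, and use relative purity to identify $\Pi(b_\sharp E)\simeq \ul{f}_\sharp\E{\pot}(\thom(\Omega_b-\Omega_f)E)$; up to that point your computation is correct (modulo the cosmetic point that you route through $(f_\grp)_!$, for which \Cref{Pi-operations} and \Cref{sta:identify-four-functors-hexp} assume $f$ separated --- since $f$ is smooth you can simply use the $\sharp$-statements instead). The genuine gap is in the last step. You assert that $\Omega_{\grp/\base}$ is globally trivial because $\grp$ is a commutative group scheme, hence that $\thom(\Omega_b-\Omega_f)\simeq\thom(-\pot^*\Omega_{\grp/\base})$ is a Tate twist and shift. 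Translation invariance only gives $\Omega_{\grp/\base}\simeq\pi^*\cC_{\base/\grp}$ with $\cC_{\base/\grp}=\unit^*\Omega_{\grp/\base}$ a locally free sheaf on $\base$ which need not be free: for instance the vector group attached to a non-trivial line bundle $L$ on $\base$ satisfies \Cref{conv:group-scheme} (homotopy invariance for vector bundles gives full faithfulness of $\pi^*$) but has $\cC_{\base/\grp}\cong L$ non-trivial; the paper itself carries $\cC_{\base/\grp}$ around as a possibly non-trivial $K$-theory class, e.g.\ in \Cref{prop:exp-mot-coh-comp}. Since $\cE$ is only assumed stable under pullback and Tate twists, $\thom(\Omega_b-\Omega_f)E$ need not lie in $\cE_X$, so your argument as written only works when $\cC_{\base/\grp}$ is free (e.g.\ $\grp=\Ga{}^n$, or $\base$ the spectrum of a field).

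The paper closes exactly this gap with a Mayer--Vietoris argument: one passes to a Zariski open cover of $X$ on which $\Omega_f$ and $\Omega_b$ (equivalently, their difference) are trivial, so that there the Thom twist is a Tate twist and shift and $E'=\thom(\Omega_b)\thom(\Omega_f)^{-1}E$ lies in $\cE$ up to shift; the restrictions of $b$ to the members of the cover are still smooth and $\cE$ is stable under pullback, so gluing keeps one inside the localizing subcategory generated by the $\cE_{\exp}$-motives. Inserting this localization step (or restricting the statement to groups with free co-Lie module) repairs your proof; the remainder, including the deduction of the ``in particular'' clause from \Cref{sta:motive-exp-smooth}, is fine.
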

\begin{proof}
By assumption, $C_\grp(Y)$ is generated by motives of the form $b_{\sharp}E$ where $b:X\to Y\times\grp$ is smooth and $E\in\cE_X$.
As $\ep{C}(Y)$ is a Verdier colocalization of $C_\grp(Y)$ with colocalization functor $\Pi$ we deduce that the former is generated by objects of the form $\Pi(b_\sharp E)$.
By definition of~$\Pi$ (~\cref{sta:Pi-coreflector}), we need to identify the fiber of the morphism $b_\sharp E\to \pi^*\pi_\sharp b_\sharp E$.
Let $f=\pi\circ b:X\to Y$ and $a=\pi_Y\circ b:X\to\grp$ be the two components of~$b$.
By a Mayer-Vietoris argument we may pass to a small enough Zariski open cover of $X$ so that the vector bundles $\Omega_f$ and $\Omega_b$ are trivial hence their Thom ``spaces'' $\thom(\Omega_f)$ and $\thom(\Omega_b)$ are Tate twists (and shifts).
It then follows that up to shifts, the object $E'=\thom(\Omega_b)\thom(\Omega_f)^{-1}E$ also belongs to $\cE_X$.
We need to identify the fiber of the morphism
\begin{equation}
\label{eq:unit-exp-motive1}
b_!(\thom(\Omega_f)E')\to\pi^*\pi_\sharp b_!(\thom(\Omega_f)E')
\end{equation}
in $C_\grp(Y)$.

Now, $\E{\pot}(E')$ may be written as the fiber of the unit morphism
\[
z(\pot)_*E' \to \pi^*\pi_\sharp z(\pot)_*E'
\]
from which one easily deduces that the object $\ul{f}_\sharp\E{\pot}(E')$ in $\ep{C}(Y)$ is the fiber of the unit morphism
\begin{equation}
\label{eq:unit-exp-motive2}
(f_\grp)_\sharp z(\pot)_*E' \to \pi^*\pi_\sharp(f_\grp)_\sharp z(\pot)_*E'
\end{equation}
in $C_\grp(Y)$.
Using purity again in the form of $(f_\grp)_\sharp=(f_\grp)_!\thom(\Omega_{f_\grp})=(f_\grp)_!\thom(\pi^*\Omega_f)$
we identify~\eqref{eq:unit-exp-motive2} with
\[
(f_\grp)_!z(\pot)_*\thom(\Omega_f)E'\to\pi^*\pi_\sharp(f_\grp)_!z(\pot)_*\thom(\Omega_f)E'.
\]
Finally, since $f_\grp\circ z(\pot)=b$ we get precisely~\eqref{eq:unit-exp-motive1}.
\end{proof}

\subsection{Exponential motivic cohomology}
\label{sec:motivic-cohomology}

We also have an associated cohomology for varieties with potential. For the case $C=\DM(-)$ which is one of our main interests, we want to think of these as exponential motivic cohomology; the name is not really appropriate in other cases, but we adopt it in this section.

\begin{defn}
  Let $(X,\pot)\in\sch[\base\times\grp]$ and $v\in K_{0}(X)$. We define the corresponding \emph{exponential motivic cohomology spectrum} of $(X,\pot)$ twisted by $v$ as the mapping spectrum
  \[
\Hm_{\exp}(X,\pot;v):=\mapsp_{\ep{C}(X)}(\E{0},\thomexp(v)( \E{\pot})).
  \]
We define for $p,q\in\Z$ the \emph{exponential motivic cohomology groups} of $(X,f)$ as
  \[
\Hm^{p}_{\exp}(X,\pot;\Z(q)):=\Hom_{\ep{C}(X)}(\E{0},\E{\pot}(q)[p]).
\]
We use correspondingly the notations
\[
\Hm_{\mot}(X,v):=\mapsp_{C(X)}(\one,\thom(v)(\one))
\]
and
\[
\Hm^{p}_{\mot}(X,\Z(q)):=\Hom_{C(X)}(\one,\one(q)[p])
\]
for ``ordinary'' cohomology for $C$. If necessary, we decorate these notations by $\ep{C}$ or $C$ respectively to indicate which coefficient system this refers to.
\end{defn}

\begin{lem}\label{lem:mot-coh-smooth}
Let $(X,\pot)\in\sch[\base\times\grp]$ and assume that $f:X\to \base$ is smooth. Let $w\in K_{0}(\base)$. We then have
\[
\Hm_{\exp}(X,\pot;f^{*}w)\simeq \mapsp_{\ep{C}(\base)}(M(X,\pot),\thomexp(w)).
\]
\end{lem}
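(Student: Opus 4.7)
The plan is to combine the formula for $M(X,\pot)$ in the smooth case with the smooth adjunction $\ul f_\sharp\dashv\ul f^*$, and then exploit the convolution-invertibility of $\E{\pot}$ together with the pullback-compatibility of the exponential Thom spaces.

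First I would invoke \cref{sta:motive-exp-smooth}: since $f$ is smooth, $M(X,\pot)\simeq \ul f_\sharp\E{-\pot}$. By \cref{exponentiation-left-axioms}\ref{hep:sharp}, $\ul f_\sharp$ is left adjoint to $\ul f^*$, so
\[
\mapsp_{\ep C(\base)}(M(X,\pot),\mthomexp(w))\ \simeq\ \mapsp_{\ep C(X)}(\E{-\pot},\ul f^{*}\mthomexp(w)).
\]

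Next I would establish the compatibility $\ul f^{*}\mthomexp(w)\simeq\mthomexp(f^{*}w)$. Using the explicit formula $\mthomexp(w)\simeq\Pi\unit_{*}\mthom(w)$ from \cref{lem:thom-exp}, together with $f_\grp^{*}\Pi\simeq\Pi f_\grp^{*}$ (\cref{Pi-operations}\ref{it:Pi-operations.1}), proper base change along the Cartesian square defining the graph of $\unit$, and the standard pullback-compatibility $f^{*}\mthom(w)\simeq\mthom(f^{*}w)$ of classical Thom spaces in $C$, one computes
\[
\ul f^{*}\mthomexp(w)\ =\ f_\grp^{*}\Pi\unit_{*}\mthom(w)\ \simeq\ \Pi\unit_{*}f^{*}\mthom(w)\ \simeq\ \Pi\unit_{*}\mthom(f^{*}w)\ \simeq\ \mthomexp(f^{*}w).
\]

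Finally I would use that $\E{\pot}$ is $\conv$-invertible with inverse $\E{-\pot}$ (\cref{sta:E-to-Pic}), so that convolution with $\E{\pot}$ is an autoequivalence of $\ep C(X)$. Composing this autoequivalence with the previous step yields
\[
\mapsp_{\ep C(X)}(\E{-\pot},\mthomexp(f^{*}w))\ \simeq\ \mapsp_{\ep C(X)}(\E{-\pot}\conv\E{\pot},\,\mthomexp(f^{*}w)\conv\E{\pot})\ \simeq\ \mapsp_{\ep C(X)}(\E{0},\thomexp(f^{*}w)(\E{\pot})),
\]
which is by definition $\Hm_{\exp}(X,\pot;f^{*}w)$. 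Chaining the displayed equivalences gives the result.

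I do not expect any serious obstacle here: once smoothness has been used to rewrite $M(X,\pot)$ as $\ul f_\sharp\E{-\pot}$, the argument is essentially a smooth-projection-formula computation dressed up with the coreflector $\Pi$. The mildly technical step is the pullback-compatibility of $\mthomexp$, and even that reduces in a line or two to \cref{Pi-operations}, proper base change, and the classical compatibility in $C$.
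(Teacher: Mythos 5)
Your proposal is correct and follows the paper's intended argument: the paper's own proof is exactly ``\Cref{sta:motive-exp-smooth} and adjunction'', i.e.\ rewrite $M(X,\pot)\simeq\ul f_\sharp\E{-\pot}$ and use $\ul f_\sharp\dashv\ul f^*$, with the remaining steps (the compatibility $\ul f^*\mthomexp(w)\simeq\mthomexp(f^*w)$ via \Cref{lem:thom-exp}, \Cref{Pi-operations} and proper base change, and the untwisting by the $\conv$-invertible object $\E{\pot}$ from \Cref{sta:E-to-Pic}) left implicit there. You have simply made those routine steps explicit, and they are carried out correctly.
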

\begin{proof}
This follows from \ref{sta:motive-exp-smooth} and adjunction.  
\end{proof}

We now express the exponential motivic cohomology groups of varieties with potentials in terms of ``ordinary'' motivic cohomology/morphism groups in~$C$.

\begin{prop}\label{prop:exp-mot-coh-comp}
Let $(X,\pot)\in\sch[\base\times\grp]$ and $v\in K_{0}(X)$. Let $V(\pot):=\pot^{-1}(\unit)\stackrel{i(\pot)}{\hookrightarrow}X$ be the vanishing locus of $\pot$ and $\cC_{\base/\grp}=\unit^*\Omega_{\grp/\base}$ the conormal sheaf (pulled back to~$X$).
We have a fiber sequence of spectra
  \[
 \Hm_{\mot}(X,v-\cC_{\base/\grp})\to \Hm_{\mot,V(\pot)}(X,v)\to \Hm_{\exp}(X,\pot;v)
 \]
where $\Hm_{\mot,V(\pot)}(X,v)=\mapsp_{C(V(\pot))}(\one,i(\pot)^{!}\thom(v))$ is motivic cohomology with supports. In particular, for $\grp=\Ga{}$ we have a long exact sequence
\[\cdots\to \Hm^{p}_{\mot,V(\pot)}(X,\Z(q)) \to \Hm^{p}_{\exp}(X,\pot;\Z(q)) \to \Hm^{p-1}_{\mot}(X,\Z(q-1))\to\Hm^{p+1}_{\mot,V(\pot)}(X,\Z(q))\to\cdots\]
\end{prop}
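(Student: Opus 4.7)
The plan is to rewrite $\Hm_{\exp}(X,\pot;v)$ as a mapping spectrum in $C_{\grp}(X)$, apply it to the defining cofiber sequence of the coreflector $\Pi$ at $\unit_{*}\one$, and identify the two flanking terms. Combining \cref{lem:thom-exp} with \cref{sta:Epot-product-internal} yields
\[
\thomexp(v)(\E{\pot}) \simeq \E{0}(\thom(v)\one) \conv \E{\pot}(\one) \simeq \E{\pot}(\thom(v)\one) = \Pi z(\pot)_{*}\thom(v)\one.
\]
Since $\E{0}\in \ep{C}(X)$ and $\Pi$ is right adjoint to the inclusion (\cref{sta:coreflector}),
\[
\Hm_{\exp}(X,\pot;v) \simeq \mapsp_{C_{\grp}(X)}(\E{0}, z(\pot)_{*}\thom(v)\one).
\]

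Next, the triangle of \cref{sta:Pi-coreflector} applied to $\unit_{*}\one$ furnishes a cofiber sequence $\E{0}\to \unit_{*}\one \to \pi^{*}\pi_{\sharp}\unit_{*}\one$ in $C_{\grp}(X)$. Since $\grp$ is a smooth group scheme, $\Omega_{\pi}\simeq \pi^{*}\cC_{\base/\grp}$ (\cref{rmk:exp-recollement-!}), so relative purity and the projection formula give
\[
\pi_{\sharp}\unit_{*}\one \simeq \thom(\cC_{\base/\grp})\pi_{!}\unit_{*}\one \simeq \mthom(\cC_{\base/\grp}).
\]
Applying $\mapsp_{C_{\grp}(X)}(-, z(\pot)_{*}\thom(v)\one)$ to the resulting cofiber sequence produces a three-term fiber sequence whose rightmost term is $\Hm_{\exp}(X,\pot;v)$ by the previous paragraph.

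It remains to identify the two outer terms. For the leftmost one, the adjunction $\pi^{*}\dashv \pi_{*}$ together with $\pi \circ z(\pot)=\id_{X}$ collapses it to $\mapsp_{C(X)}(\mthom(\cC_{\base/\grp}), \thom(v)\one) = \Hm_{\mot}(X, v-\cC_{\base/\grp})$. For the middle term, the square
\[
\begin{tikzcd}
V(\pot) \ar[r, "i(\pot)"] \ar[d, "i(\pot)"'] & X \ar[d, "z(\pot)"] \\
X \ar[r, "\unit"'] & X_{\grp}
\end{tikzcd}
\]
is Cartesian (the condition $\unit(x)=z(\pot)(x)$ is exactly $\pot(x)=\unit$), with all maps closed immersions. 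Proper base change then gives $\unit^{!}z(\pot)_{*}\simeq i(\pot)_{*}i(\pot)^{!}$, and combining this with $\unit_{!}=\unit_{*}$ and $i(\pot)^{*}\one=\one$ rewrites the middle term as $\mapsp_{C(V(\pot))}(\one, i(\pot)^{!}\thom(v)\one)=\Hm_{\mot,V(\pot)}(X,v)$, as desired. For $\grp=\Ga{}$, the conormal sheaf $\cC_{\base/\Ga{}}$ is the trivial line bundle, so $\thom(\cC_{\base/\Ga{}})\simeq (1)[2]$ and $\Hm^{p}_{\mot}(X,\Z(q)-\cC_{\base/\Ga{}})\simeq \Hm^{p-2}_{\mot}(X,\Z(q-1))$; the long exact sequence of homotopy groups of the fiber sequence then takes the stated form.

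The only inputs that are not formal adjunction arguments are the identification $\pi_{\sharp}\unit_{*}\one\simeq \mthom(\cC_{\base/\grp})$ via relative purity for the smooth morphism $\pi$, and the base change formula $\unit^{!}z(\pot)_{*}\simeq i(\pot)_{*}i(\pot)^{!}$. Both are standard consequences of the six-functor formalism recalled in \cref{sec:cs-properties}, so I do not anticipate a genuine obstacle.
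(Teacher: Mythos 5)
Your proof is correct and follows essentially the same route as the paper's: rewrite $\Hm_{\exp}(X,\pot;v)$ as $\mapsp(\E{\unit},z(\pot)_{*}\thom(v))$ via \cref{lem:thom-exp}, \cref{sta:Epot-product-internal} and the colocalization property of $\Pi$, then map the coreflector fiber sequence $\E{\unit}\to\unit_{*}\one\to\pi^{*}\mthom(\cC_{\base/\grp})$ into $z(\pot)_{*}\thom(v)$ and identify the outer terms by adjunction and base change along the Cartesian square $V(\pot)=X\times_{X_\grp}X$. The only differences are cosmetic: you derive the third term of the triangle from the $\pi^{*}\pi_{\sharp}$ form plus relative purity where the paper writes $\pi^{!}\pi_{!}$ directly, and you spell out the ``adjunctions and base change'' step that the paper leaves implicit.
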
  
\begin{proof}
We have
\begin{eqnarray*}
  \Hm_{\exp}(X,\pot;v) &\stackrel{\mathrm{def}}{=}& \mapsp(\E{\unit},\thomexp(v)( \E{\pot}))\\
                       &\simeq & \mapsp(\E{\unit},\E{\pot}(\one)\conv \E{\unit}(\thom(v)))\\
                       & \simeq & \mapsp(\E{\unit},\E{\pot}(\thom(v)))\\
  & \simeq & \mapsp(\E{\unit},z(\pot)_{*}\thom(v))
\end{eqnarray*}
where the second isomorphism follows from the definition of $\E{\pot}$ and \cref{lem:thom-exp}, the third from \cref{sta:Epot-product-internal}, and the fourth from the fact that $\Pi$ is a colocalisation.

By \cref{sta:Pi-coreflector}, we have a fiber sequence
\[
\E{\unit}\to \unit_{*}\one \to \pi^{!}\pi_{!}\unit_{*}\one\simeq \pi^{!}\one\simeq \pi^*\thom{\cC_{\base/\grp}}
\]
so we get an induced fiber sequence
\[
\mapsp(\pi^*\thom(\cC_{\base/\grp}),z(\pot)_{*}\thom(v))\to \mapsp(\unit_{*}\one,z(\pot)_{*}\thom(v))\to \Hm_{\exp}(X,\pot;v).
\]
By adjunctions and base change, this is equivalent to
\[
\mapsp(\one,\thom(v-\cC_{\base/\grp}))\to\mapsp(\one,i(\pot)^{!}\thom(v))\to \Hm_{\exp}(X,\pot;v)
\]
which finishes the proof.
\end{proof}

In \Cref{sta:D-Dexp} we showed that the functor $C(X)\to\ep{C}(X)$ admits a retract in many cases.
However, the functor is not fully faithful in general:
\begin{cor}\label{cor:mot-coh-unit}
Let $X\in\sch[\base]$ and $v\in K_{0}(X)$. Then we have a fiber sequence
  \[
\Hm(X,v-\cC_{\base/\grp})\to \Hm(X,v)\to \Hm_{\exp}(X,\unit;v).
\]
The functor $\Pi \unit_*:C(X)\to \ep{C}(X)$ is not fully faithful in general. 
\end{cor}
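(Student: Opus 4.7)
The plan for the fiber sequence is immediate: I would specialize \Cref{prop:exp-mot-coh-comp} to the unit potential $\pot = \unit$. In this case the vanishing locus is $V(\unit) = X$ with inclusion $i(\unit) = \id_{X}$, so that $\Hm_{\mot,V(\unit)}(X,v) = \Hm_{\mot}(X,v)$ and the general fiber sequence of \Cref{prop:exp-mot-coh-comp} reduces to the one in the statement.

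For the failure of full faithfulness of $\Pi\unit_{*}$, I would first identify the third term of this fiber sequence as a mapping spectrum between two objects in the essential image of $\Pi\unit_{*}$. Combining \Cref{lem:thom-exp} with \Cref{sta:Epot-product-internal} applied to $\pot = \pot' = \unit$ yields
\[
\thomexp(v)(\E{\unit}) \simeq \E{\unit}(\one) \conv \E{\unit}(\thom(v)(\one)) \simeq \E{\unit}(\thom(v)(\one)) = \Pi\unit_{*}\thom(v)(\one),
\]
so that $\Hm_{\exp}(X,\unit;v) \simeq \mapsp_{\ep{C}(X)}(\Pi\unit_{*}\one,\,\Pi\unit_{*}\thom(v)(\one))$. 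A short unravelling of the proof of \Cref{prop:exp-mot-coh-comp} then shows that the map $\Hm_{\mot}(X,v) \to \Hm_{\exp}(X,\unit;v)$ in the fiber sequence is precisely the map on mapping spectra induced by the functor $\Pi\unit_{*}$ applied to the pair $(\one,\thom(v)(\one))$: this map factors through the counit $\E{\unit} \to \unit_{*}\one$ of the inclusion--$\Pi$ adjunction, which by the triangle identities for adjunctions corresponds under the adjunction isomorphism to the functor-induced map.

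Given this identification, full faithfulness of $\Pi\unit_{*}$ would force the fiber $\Hm(X,v-\cC_{\base/\grp})$ to vanish for every choice of $X$ and $v \in K_{0}(X)$. Specializing to $X = \base$ and $v = \cC_{\base/\grp}$ collapses this to $\Hm(\base,0) \simeq \mapsp_{C(\base)}(\one,\one) = 0$, which fails in any nontrivial coefficient system since the identity endomorphism of $\one$ is nonzero. The main subtlety in the plan is verifying that the connecting map in the fiber sequence agrees with the functor-induced map; once this identification is in place, the contradiction and hence the failure of full faithfulness are immediate.
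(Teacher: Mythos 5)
Your proof is correct and follows essentially the same route as the paper: specialize \cref{prop:exp-mot-coh-comp} to $\pot=\unit$, identify the second map of the resulting fiber sequence with the map on mapping spectra induced by $\Pi\unit_*$, and conclude from non-vanishing of the fiber term. The only difference is cosmetic: you work directly with the pair $(\one,\thom(v)(\one))$ in $C(X)$ and make the non-vanishing explicit by choosing $X=\base$ and $v=\cC_{\base/\grp}$, whereas the paper detours through $\Pi\unit_*M(X)\simeq M(X,\unit)$ for smooth $X$ (via \cref{lem:mot-coh-smooth} and \cref{lem:thom-exp}) and simply remarks that $\Hm(X,v-\cC_{\base/\grp})$ is non-trivial in general, deferring a concrete instance to \cref{exa:Euler-Mascheroni}.
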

\begin{proof}
The first claim is just the special case of \cref{prop:exp-mot-coh-comp} where $\pot=\unit$. Let $X\in \Sm{\base}$ and $v\in K_{0}(X)$. Then it is easy to see using \Cref{Pi-operations} that we have $\Pi\unit_{*}M(X)\simeq M(X,\unit)$ in $\ep{C}(\base)$. Hence by \cref{lem:mot-coh-smooth} and \cref{lem:thom-exp}, we have an equivalence
  \[
\mapsp(\Pi\unit_{*}M(X),\Pi\unit_{*}\mthom{v})\simeq \Hm_{\exp}(X,\unit;v)
\]
in $\Sp$. It is then easy to see from the proof of \cref{prop:exp-mot-coh-comp} that the morphism on mapping spectra induced by $\Pi \unit_{*}$ is precisely the second morphism in the fiber sequence
\[
\Hm(X,v-\cC_{\base/\grp})\to \Hm(X,v)\to \Hm_{\exp}(X,\unit;v).
\]
We conclude since the spectrum $\Hm(X,v-\cC_{\base/\grp})$ is non-trivial in general.
\end{proof}  

\begin{exa}
\label{exa:Euler-Mascheroni}%
Let $X=\base=\Spec(k)$ be a field and $C=\DM(-,\Q)$.
We then have an exact sequence
\[
0=\Hm^{-1}(k,\Q)\to \Hm^1(X,\Q(1))\to\Hm_{\exp}^1(X,0;\Q(1))\to\Hm^0(k,\Q)\to\Hm^2(k,\Q(1))=0
\]
and $\Hm(k,\Q)=\Q$ so that this gives an explicit example in which the functor $\Pi\unit_*$ is not full.
``This'' new exponential motive $M(\gamma)$ that sits in a fiber sequence
\[
\Q(1)\to M(\gamma)\to \Q
\]
is called in~\cite{fresan-jossen:expmot} the \emph{Euler-Mascheroni motive} as it has the Euler-Mascheroni constant~$\gamma$ as a period.
\end{exa}

\begin{defn}\label{defn:bs-vanishing}
Let $X\in\sch[\base]$. We say that $X$ satisfies the \emph{Beilinson-Soulé vanishing condition} $(\textbf{BS}_{C}(X))$ for $C$ if for all $p<0$ and $q\in\Z$, we have
  \[
\Hm^{p}_{\mot}(X,\Z(q))=0.
\]
\end{defn}

\begin{cor}\label{cor:exp-BS}
Let $\grp=\Ga{}$ and let $X\in \sch[\base]$. Then $(\textbf{BS}_{C}(X))$ implies $(\textbf{BS}_{\ep{C}}(X))$.
\end{cor}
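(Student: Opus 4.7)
The proof should be essentially a direct application of \Cref{prop:exp-mot-coh-comp} specialised to the zero potential. First I would unwind the definition: $(\textbf{BS}_{\ep{C}}(X))$ concerns the groups $\Hom_{\ep{C}(X)}(\one_{\ep{C}(X)}, \one_{\ep{C}(X)}(q)[p])$, and since the unit of $\ep{C}(X)$ is $\E{0}$ (by \Cref{sta:conv-on-hCexp}), these groups are precisely $\Hm^{p}_{\exp}(X,0;\Z(q))$. So the task reduces to showing that $\Hm^{p}_{\exp}(X,0;\Z(q)) = 0$ for all $p < 0$ and $q \in \Z$ whenever $(\textbf{BS}_{C}(X))$ holds.

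Next I would specialise the long exact sequence at the end of \Cref{prop:exp-mot-coh-comp} to the case $\pot = 0$. In this case $V(\pot) = 0^{-1}(\unit) = X$ (the vanishing locus of the zero potential is the whole scheme), so $i(\pot) = \id_{X}$, and the cohomology with supports $\Hm^{p}_{\mot,V(\pot)}(X,\Z(q))$ reduces to ordinary motivic cohomology $\Hm^{p}_{\mot}(X,\Z(q))$. The long exact sequence therefore becomes
\[
\cdots \to \Hm^{p}_{\mot}(X,\Z(q)) \to \Hm^{p}_{\exp}(X,0;\Z(q)) \to \Hm^{p-1}_{\mot}(X,\Z(q-1)) \to \Hm^{p+1}_{\mot}(X,\Z(q)) \to \cdots
\]

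Now fix $p < 0$ and $q \in \Z$. By hypothesis $(\textbf{BS}_{C}(X))$, the left-hand neighbour $\Hm^{p}_{\mot}(X,\Z(q))$ vanishes, and the right-hand neighbour $\Hm^{p-1}_{\mot}(X,\Z(q-1))$ also vanishes since $p-1 < 0$. Exactness then forces $\Hm^{p}_{\exp}(X,0;\Z(q)) = 0$, which is exactly $(\textbf{BS}_{\ep{C}}(X))$.

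I do not see any genuine obstacle here; the only subtlety is to recognise that the unit of $\ep{C}(X)$ is $\E{0}$ rather than $\unit_{*}\one$, so that the Beilinson--Soulé condition in $\ep{C}$ translates to exponential motivic cohomology at the zero potential, to which the long exact sequence of \Cref{prop:exp-mot-coh-comp} directly applies. The hypothesis $\grp = \Ga{}$ is used only through the fact that the long exact sequence (rather than the fiber sequence of spectra) is available in this case, as stated in the proposition.
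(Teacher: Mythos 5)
Your proof is correct and follows essentially the same route as the paper: the paper invokes the fiber sequence of \Cref{cor:mot-coh-unit} (itself the $\pot=\unit$ case of \Cref{prop:exp-mot-coh-comp}) and reads off the vanishing from the resulting long exact sequence, exactly as you do by specialising the sequence to the zero potential and using $p<0$ and $p-1<0$. Your preliminary identification of the unit of $\ep{C}(X)$ with $\E{0}$, so that $(\textbf{BS}_{\ep{C}}(X))$ concerns $\Hm^{p}_{\exp}(X,0;\Z(q))$, is the correct (implicit) reduction.
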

\begin{proof}
The fiber sequence in \cref{cor:mot-coh-unit} gives a long exact sequence
  \[
\cdots\to \Hm^{p}_{\mot}(X,\Z(q))\to \Hm^{p}_{\exp}(X,\unit;\Z(q))\to \Hm^{p-1}_{\mot}(X,\Z(q-1))\to \Hm^{p+1}_{\mot}(X,\Z(q))\to\cdots
\]
which implies the claim.
\end{proof}  

\begin{rmk}\label{rmk:tate-mot}
Corollary \ref{cor:exp-BS} implies that when $C=\DM(-,\Q)$ and $X$ satisfies the classical Beilinson--Soulé conjecture, then the method of \cite{levine:tate} applies to the subcategory $\ep{\DTM}(X,\Q)$ of \emph{exponential mixed Tate motives} generated in $\ep{\DM}(X,\Q)$ by the Tate motives and produces a motivic t-structure. We plan to study in detail this t-structure in future work, as well as t-structures for stratified exponential mixed Tate motives as in \cite{soergel-wendt:perverse-tate}.
\end{rmk}

By a similar argument, we can compute certain mapping spaces between motives in $\ep{C}$.

\begin{prop}\label{prop:some-map-spaces}
  Let $(X,\pot),(Y,b)\in\sch[\base\times\grp]$ and $w\in K_{0}(\base)$. Assume that $X$ is smooth over $\base$ and that $b$ is proper. Let $V(-\pot\boxplus b):=\{(x,y)|-\pot(x)=b(y)\}\subset X\times_{\base}Y$ and $\cC_{\base/\grp}=\unit^*\Omega_{\grp/\base}$ the conormal sheaf. Then we have a fiber sequence of spectra
  \[
 \Hm(X\times_{\base}Y,w-\cC_{\base/\grp})\to \Hm_{V(-\pot\boxplus b)}(X\times_{B}Y,w)\to \mapsp(M(X,\pot),M^{\coh}_{c}(Y,b)\conv \thomexp(w)).
\]
\end{prop}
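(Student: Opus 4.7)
The plan is to mimic the proof of \Cref{prop:exp-mot-coh-comp} in this relative setting. Write $f:X\to\base$ and $g:Y\to\base$ for the structure maps and consider the Cartesian square
\[
\begin{tikzcd}
X\times_{\base}Y\ar[r,"g'"]\ar[d,"f'"']&X\ar[d,"f"]\\
Y\ar[r,"g"]&\base
\end{tikzcd}
\]
with $h:=f\circ g'=g\circ f'$. First, since $f$ is smooth, \Cref{sta:motive-exp-smooth} yields $M(X,\pot)\simeq\ul{f}_{\sharp}\E{-\pot}$. Interpreting ``$b$ is proper'' as $g$ being proper, we have $M^{\coh}_{c}(Y,b)=\ul{g}_{!}\E{b}\simeq\ul{g}_{*}\E{b}$. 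Successively using the adjunction $\ul{f}_{\sharp}\dashv\ul{f}^{*}$, symmetric monoidality of $\ul{f}^{*}$, proper base change (\Cref{cs-basechange}) along the square, the pullback identity $\ul{(-)}^{*}\E{\pot}\simeq\E{\pot\circ(-)}$ of \Cref{sta:exp-twists}\ref{exp-twists-pullback}, the projection formula for $\ul{g}'_{!}=\ul{g}'_{*}$, and the adjunction $\ul{g}'^{*}\dashv\ul{g}'_{*}$, I would rewrite the right-hand spectrum as
\[
\mapsp_{\ep{C}(X\times_{\base}Y)}\bigl(\E{-\pot\circ g'},\,\E{b\circ f'}\conv\thomexp(h^{*}w)\bigr).
\]
Using $\thomexp(h^{*}w)\simeq\E{\unit}(\thom(h^{*}w))$ and \Cref{sta:Epot-product-internal}, the second argument simplifies to $\E{b\circ f'}(\thom(h^{*}w))=\Pi\,z(b\circ f')_{*}\thom(h^{*}w)$. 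Since $\Pi$ is right adjoint to the inclusion $\ep{C}\subseteq C_{\grp}^{\conv}$ (\Cref{sta:Pi-coreflector}), this further equals
\[
\mapsp_{C_{\grp}(X\times_{\base}Y)}\bigl(\E{-\pot\circ g'},\,z(b\circ f')_{*}\thom(h^{*}w)\bigr).
\]

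Next I would apply the fundamental fiber sequence
\[
\E{-\pot\circ g'}\to z(-\pot\circ g')_{*}\one\to \pi^{!}\one\simeq \pi^{*}\thom(\cC_{\base/\grp})
\]
of \Cref{sta:Pi-coreflector} and map into $z(b\circ f')_{*}\thom(h^{*}w)$. For the term coming from $\pi^{*}\thom(\cC_{\base/\grp})$, the adjunction $\pi^{*}\dashv\pi_{*}$ combined with $\pi\circ z(b\circ f')=\id$ gives $\mapsp\bigl(\thom(\cC_{\base/\grp}),\thom(h^{*}w)\bigr)\simeq\Hm(X\times_{\base}Y,w-\cC_{\base/\grp})$. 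For the middle term, the adjunction $z(-\pot\circ g')_{*}\dashv z(-\pot\circ g')^{!}$ together with the $!$--$*$-base change of \Cref{cs-linearity} along the Cartesian square
\[
\begin{tikzcd}
V\ar[r,"i"]\ar[d,"i"']&X\times_{\base}Y\ar[d,"z(b\circ f')"]\\
X\times_{\base}Y\ar[r,"z(-\pot\circ g')"]&(X\times_{\base}Y)_{\grp}
\end{tikzcd}
\]
identifies the pullback $V$ with $V(-\pot\boxplus b)=\{(x,y)\mid -\pot(x)=b(y)\}$ and produces $\mapsp(\one,i^{!}\thom(h^{*}w))\simeq\Hm_{V(-\pot\boxplus b)}(X\times_{\base}Y,w)$. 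Taking mapping spectra of the fiber sequence above then delivers exactly the fiber sequence asserted in the proposition.

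The main obstacle is the bookkeeping in the first paragraph: threading convolution products through several adjunctions and base-change isomorphisms while tracking how exponential twists transform under pullback and how $\thomexp$ absorbs into $\E{-}$ via \Cref{sta:Epot-product-internal}. Once the reduction to a $\mapsp$ between two exponential twists on $X\times_{\base}Y$ is in place, the fiber-sequence argument in the second paragraph is a straightforward relative generalisation of the computation in \Cref{prop:exp-mot-coh-comp}, and no genuinely new ingredient beyond the results of \Cref{sec:exp-htpy,sec:exponential-twists} is required.
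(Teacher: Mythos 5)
There is a genuine gap, and it is exactly at the point you flagged as an ``interpretation'': the hypothesis of the proposition is that the \emph{potential} $b:Y\to\grp$ is proper as a morphism of schemes, not that the structure morphism $g:Y\to\base$ is proper. These are genuinely different conditions (properness of $g$ implies properness of $b$ since $\grp$ is separated, but not conversely), and the intended examples --- e.g.\ $Y$ affine with a finite potential, or $(\grp,\id)$ itself, which is what makes $M^{\coh}_c(Y,b)$ interact with the Fourier kernel --- have $b$ proper while $g$ is very far from proper. Your first paragraph uses ``$g$ proper'' essentially three times: to write $M^{\coh}_c(Y,b)=\ul{g}_!\E{b}\simeq\ul{g}_*\E{b}$, to apply proper base change $\ul{f}^*\ul{g}_*\simeq\ul{g}'_*\ul{f}'^*$, and, crucially, to end with a $*$-pushforward $\ul{g}'_*(-)$ so that the adjunction $\ul{g}'^*\dashv\ul{g}'_*$ moves everything to $\ep{C}(X\times_\base Y)$. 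Under the actual hypothesis one only has $\ul{f}^*\ul{g}_!\simeq\ul{g}'_!\ul{f}'^*$ (\Cref{cs-linearity}), and there is no adjunction that lets you strip off $\ul{g}'_!$ from the target of a mapping spectrum; so the reduction to $\mapsp(\E{-\pot g'},\E{bf'}\conv\thomexp(h^*w))$ does not go through, and what you prove is a strictly weaker statement than the proposition.

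The missing idea is how the paper trades a $!$-pushforward for a $*$-pushforward using only properness of $b$. After the (correct, and close to your) reduction to $\mapsp_{\ep{C}(X)}\bigl(\E{0},\ul{p_1}{}_!\E{\pot\boxplus b}\bigr)$, i.e.\ to $\mapsp\bigl(\E{0},(p_{1,\grp}\circ z(\pot\boxplus b))_!\one\bigr)$, the paper factors
\[
p_{1,\grp}\circ z(\pot\boxplus b)=(\id_X\times\plus)\circ(z(\pot)\times\id_\grp)\circ(\id_X\times b),
\]
observes that $(\id_X\times\plus)\circ(z(\pot)\times\id_\grp)$ is a (shear) automorphism of $X\times_\base\grp$ and that $z(\pot)\times b$ is proper \emph{because $b$ is}, and thereby rewrites the object as $\bigl((\id\times\plus)\circ(z(\pot)\times b)\bigr)_*\one$. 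Only then does the fiber-sequence computation (your second paragraph, which is correct and is indeed just the relative form of the argument in \Cref{prop:exp-mot-coh-comp}, with the Cartesian square producing $V(-\pot\boxplus b)$) apply. So the endgame of your proposal is sound, but you need to replace the first paragraph by an argument that never invokes properness of $g$ or $g'$; the shear factorization above is the device that does this.
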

\begin{proof}
To simplify the notation, we only treat the case $w=0$. Write $f:X\to \base$ and $g:Y\to \base$ for the structure morphisms. Since $X$ is smooth over $\base$, we have $M(X,\pot)=\ul{f}_{\sharp}\E{-\pot}$ by \cref{sta:motive-exp-smooth}, and we compute:
  \begin{eqnarray*}
    \mapsp(M(X,\pot),M^{\coh}_{c}(Y,b)) & \simeq & \mapsp(\ul{f}_{\sharp}\E{-\pot},\ul{g}_{!}\E{b})\\
    & \simeq & \mapsp(\E{-\pot},\ul{f}^{*}\ul{g}_{!}\E{b}) \\
& \simeq & \mapsp(\E{0},\ul{p_{1}}_{!}\E{b p_{2}}\conv \E{\pot}) \\
    & \simeq & \mapsp(\E{0},\ul{p_{1}}_{!}(\E{\pot\boxplus b})) \\
     & \simeq & \mapsp(\E{0},(p_{1,\grp})_{!}\Pi z(\pot \boxplus b)_{!}\one) \\
    & \simeq & \mapsp(\E{0},(p_{1,\grp}\circ z(\pot \boxplus b))_{!}\one))
  \end{eqnarray*}
where besides adjunction and base change and linearity in $\ep{C}$, we have used \cref{sta:Epot-product-internal}, \cref{sta:exp-twists} and \cref{Pi-operations}. We now notice that we have
  \[
p_{1,\grp}\circ z(\pot \boxplus b)=(\id_{X}\times\,\plus)\circ (z(\pot)\times b)=(\id_{X}\times\,\plus)\circ (z(\pot)\times \id_{\grp})\circ (\id_{X}\times\,b).
\]
Moreover, $(\id_{X}\times\,\plus)\circ (z(\pot)\times \id_{\grp})$ is an automorphism of $X\times_{\base}\grp$, with inverse $(\id_{X}\times\,\plus)\circ (z(-\pot)\times \id_{\grp})$.
Since $z(\pot)\times \id_{\grp}$ is a closed immersion, this implies that
\[
((\id_{X}\times\,\plus)\circ (z(\pot)\times \id_{\grp}))_{!}\simeq ((\id_{X}\times\,\plus)\circ (z(\pot)\times \id_{\grp}))_{*}\simeq (\id_{X}\times\,\plus)_{*}\circ (z(\pot)\times \id_{\grp})_{!}.
\]
We conclude that
\[
\mapsp(\E{0},(p_{1,\grp}\circ z(\pot \boxplus b))_{!}\one))\simeq \mapsp(\E{0},(\id\times\,\plus)_{*}(z(\pot)\times b)_{!}\one).
\]
Since $b$ is proper, so is $(z(\pot)\times b)$, and we thus conclude that
\[
\mapsp(M(X,\pot),M^{\coh}_{c}(Y,b)) \simeq \mapsp(\E{0},((\id\times\,\plus)\circ(z(\pot)\times b))_{*}\one).
\]
and the same computation as in the proof of \Cref{prop:exp-mot-coh-comp} gives the result.
\end{proof}

\phantomsection
\addcontentsline{toc}{section}{References}
\bibliographystyle{alpha}
\bibliography{ref}

\newcommand{\etalchar}[1]{$^{#1}$}
\begin{thebibliography}{CDH{\etalchar{+}}20}

\bibitem[Abe20]{abe:bivariant}
Tomoyuki Abe.
\newblock Enhanced bivariant homology theory attached to six functor formalism.
\newblock {\em arXiv e-prints}, 2020.

\bibitem[AGV22]{AGV}
Joseph Ayoub, Martin Gallauer, and Alberto Vezzani.
\newblock The six-functor formalism for rigid analytic motives.
\newblock {\em Forum Math. Sigma}, 10:Paper No. e61, 182, 2022.

\bibitem[Ayo07]{ayoub:thesis-1}
Joseph Ayoub.
\newblock Les six op\'{e}rations de {G}rothendieck et le formalisme des cycles
  \'{e}vanescents dans le monde motivique (i).
\newblock {\em Ast\'{e}risque}, 314, 2007.

\bibitem[Ayo22]{ayoub:anabel}
Joseph Ayoub.
\newblock Anabelian presentation of the motivic {G}alois group in
  characteristic zero.
\newblock {\em Available from author’s website
  \url{http://user.math.uzh.ch/ayoub/PDF-Files/Anabel.pdf}}, 2022.

\bibitem[Ber17]{beraldo:loop-groups-acts}
Dario Beraldo.
\newblock Loop group actions on categories and {W}hittaker invariants.
\newblock {\em Adv. Math.}, 322:565--636, 2017.

\bibitem[BGN18]{barwick-glasman-nardin:dual-fibrations}
Clark Barwick, Saul Glasman, and Denis Nardin.
\newblock Dualizing cartesian and cocartesian fibrations.
\newblock {\em Theory Appl. Categ.}, 33:Paper No. 4, 67--94, 2018.

\bibitem[CCG{\etalchar{+}}13]{mirror-fano}
Tom Coates, Alessio Corti, Sergey Galkin, Vasily Golyshev, and Alexander
  Kasprzyk.
\newblock Mirror symmetry and {F}ano manifolds.
\newblock In {\em European {C}ongress of {M}athematics}, pages 285--300. Eur.
  Math. Soc., Z\"{u}rich, 2013.

\bibitem[CD19]{cisinski-deglise:trig-cats}
Denis-Charles Cisinski and Fr\'{e}d\'{e}ric D\'{e}glise.
\newblock {\em Triangulated categories of mixed motives}.
\newblock Springer Monographs in Mathematics. Springer, Cham, 2019.

\bibitem[CDH{\etalchar{+}}20]{hermitian1}
Baptiste Calmès, Emanuele Dotto, Yonatan Harpaz, Fabian Hebestreit, Markus
  Land, Kristian Moi, Denis Nardin, Thomas Nikolaus, and Wolfgang Steimle.
\newblock {H}ermitian {K}-theory for stable $\infty$-categories {I}:
  {F}oundations.
\newblock {\em arXiv e-prints}, 2020.

\bibitem[CG17]{choudhury-gallauer:ayoub-nori}
Utsav Choudhury and Martin Gallauer.
\newblock An isomorphism of motivic {G}alois groups.
\newblock {\em Adv. Math.}, 313:470--536, 2017.

\bibitem[Del77]{deligne:sga4.5}
Pierre Deligne.
\newblock {\em Cohomologie \'etale}.
\newblock Lecture Notes in Mathematics, Vol. 569. Springer-Verlag, Berlin-New
  York, 1977.
\newblock S\'eminaire de G\'eom\'etrie Alg\'ebrique du Bois-Marie SGA 4\,$1/2$,
  Avec la collaboration de J. F. Boutot, A. Grothendieck, L. Illusie et J. L.
  Verdier.

\bibitem[Del01]{deligne:cross-functors}
Pierre Deligne.
\newblock Voevodsky’s lectures on cross functors.
\newblock 2001.

\bibitem[DG20]{drew-gallauer:usf}
Brad {Drew} and Martin {Gallauer}.
\newblock {The universal six-functor formalism}.
\newblock {\em Ann. K-theory}, 2020.
\newblock To appear.

\bibitem[DJK21]{deglise-jin-khan:fundamental-classes}
Fr\'{e}d\'{e}ric D\'{e}glise, Fangzhou Jin, and Adeel~A. Khan.
\newblock Fundamental classes in motivic homotopy theory.
\newblock {\em J. Eur. Math. Soc. (JEMS)}, 23(12):3935--3993, 2021.

\bibitem[DK16]{dagnolo-kashiwara:riemann-hilbert}
Andrea D'Agnolo and Masaki Kashiwara.
\newblock Riemann-{H}ilbert correspondence for holonomic {D}-modules.
\newblock {\em Publ. Math. Inst. Hautes \'{E}tudes Sci.}, 123:69--197, 2016.

\bibitem[DK18]{dagnolo-kashiwara:fourier-sato}
Andrea D'Agnolo and Masaki Kashiwara.
\newblock A microlocal approach to the enhanced {F}ourier-{S}ato transform in
  dimension one.
\newblock {\em Adv. Math.}, 339:1--59, 2018.

\bibitem[{Dre}18]{drew:motivic-hodge-modules}
B.~{Drew}.
\newblock {Motivic Hodge modules}.
\newblock {\em ArXiv e-prints arXiv:1801.10129}, January 2018.

\bibitem[ESY17]{esnault-sabbah-yu:irregular-hodge-degeneration}
H\'{e}l\`ene Esnault, Claude Sabbah, and Jeng-Daw Yu.
\newblock {$E_1$}-degeneration of the irregular {H}odge filtration.
\newblock {\em J. Reine Angew. Math.}, 729:171--227, 2017.
\newblock With an appendix by Morihiko Saito.

\bibitem[FJ22]{fresan-jossen:expmot}
J.~Fres\'{a}n and P.~Jossen.
\newblock Exponential motives.
\newblock Available from authors' websites
  \url{http://javier.fresan.perso.math.cnrs.fr/expmot.pdf}, 2022.

\bibitem[FJR13]{fjr:LG-CY}
Huijun Fan, Tyler Jarvis, and Yongbin Ruan.
\newblock The {W}itten equation, mirror symmetry, and quantum singularity
  theory.
\newblock {\em Ann. of Math. (2)}, 178(1):1--106, 2013.

\bibitem[FSY22]{fresan-sabbah-yu:hodge-kloosterman}
Javier Fres\'{a}n, Claude Sabbah, and Jeng-Daw Yu.
\newblock Hodge theory of {K}loosterman connections.
\newblock {\em Duke Math. J.}, 171(8):1649--1747, 2022.

\bibitem[Gal21]{gallauer:six-functor-survey}
Martin Gallauer.
\newblock An introduction to six-functor formalisms.
\newblock {\em arXiv preprint arXiv:2112.10456}, 2021.

\bibitem[GR17a]{GR1}
D.~Gaitsgory and N.~Rozenblyum.
\newblock {\em A Study in Derived Algebraic Geometry}.
\newblock Number v. 1 in A Study in Derived Algebraic Geometry. American
  Mathematical Society, 2017.

\bibitem[GR17b]{GR2}
D.~Gaitsgory and N.~Rozenblyum.
\newblock {\em A Study in Derived Algebraic Geometry}.
\newblock Number v. 2 in A Study in Derived Algebraic Geometry. American
  Mathematical Society, 2017.

\bibitem[GR17c]{gaitsgory-rozenblyum:vol1}
Dennis Gaitsgory and Nick Rozenblyum.
\newblock {\em A study in derived algebraic geometry. {V}ol. {I}.
  {C}orrespondences and duality}, volume 221 of {\em Mathematical Surveys and
  Monographs}.
\newblock American Mathematical Society, Providence, RI, 2017.

\bibitem[Gro22]{grothendieck:rets}
Alexandre Grothendieck.
\newblock {\em Récoltes et Semailles I, II. Réflexions et témoignage sur un
  passé de mathématicien}.
\newblock Gallimard, 2022.

\bibitem[Har16]{harrer:phd}
Daniel Harrer.
\newblock {\em Comparison of the Categories of Motives defined by Voevodsky and
  Nori}.
\newblock PhD thesis, Albert-Ludwigs-Universit\"{a}t Freiburg im Breisgau,
  2016.

\bibitem[Hoy17]{hoyois:equivariant}
Marc Hoyois.
\newblock The six operations in equivariant motivic homotopy theory.
\newblock {\em Adv. Math.}, 305:197--279, 2017.

\bibitem[HRS20]{hemo-richarz-scholbach:constr-sheaves}
Tamir {Hemo}, Timo {Richarz}, and Jakob {Scholbach}.
\newblock {Constructible sheaves on schemes and a categorical K{\"u}nneth
  formula}.
\newblock {\em arXiv e-prints 2012.02853}, 2020.

\bibitem[Hö18]{hormann:six}
Fritz Hörmann.
\newblock Six-functor-formalisms and fibered multiderivators.
\newblock {\em Selecta Math. (N.S.)}, 24(4):2841--2925, 2018.

\bibitem[Kha16]{khan-phd}
Adeel Khan.
\newblock {\em Motivic homotopy theory in derived algebraic geometry}.
\newblock PhD thesis, Universit\"{a}t Duisburg-Essen, 2016.

\bibitem[Kra10]{krause:localization-theory-tricat}
Henning Krause.
\newblock Localization theory for triangulated categories.
\newblock In {\em Triangulated categories}, volume 375 of {\em London Math.
  Soc. Lecture Note Ser.}, pages 161--235. Cambridge Univ. Press, Cambridge,
  2010.

\bibitem[KS11]{ks-coha}
M.~Kontsevich and Y.~Soibelman.
\newblock Cohomological {H}all algebra, exponential {H}odge structures and
  motivic {D}onaldson-{T}homas invariants.
\newblock {\em Commun. Number Theory Phys.}, 5(2):231--352, 2011.

\bibitem[KZ01]{kontsevich-zagier:periods}
Maxim Kontsevich and Don Zagier.
\newblock Periods.
\newblock In {\em Mathematics unlimited---2001 and beyond}, pages 771--808.
  Springer, Berlin, 2001.

\bibitem[Lau87]{Laumon-epsilon}
G.~Laumon.
\newblock Transformation de {F}ourier, constantes d'\'{e}quations
  fonctionnelles et conjecture de {W}eil.
\newblock {\em Inst. Hautes \'{E}tudes Sci. Publ. Math.}, (65):131--210, 1987.

\bibitem[Lev93]{levine:tate}
Marc Levine.
\newblock Tate motives and the vanishing conjectures for algebraic
  {$K$}-theory.
\newblock In {\em Algebraic $K$-theory and algebraic topology (Lake Louise, AB,
  1991)}, volume 407 of {\em NATO Adv. Sci. Inst. Ser. C Math. Phys. Sci.}
  Kluwer Acad. Publ., Dordrecht, 1993.

\bibitem[Lur09]{HTT}
Jacob Lurie.
\newblock {\em Higher topos theory}.
\newblock Princeton University Press, 2009.

\bibitem[Lur17]{HA}
Jacob Lurie.
\newblock Higher algebra.
\newblock {\em Available from author's website}, 2017.

\bibitem[LZ12]{liu-zheng:6ff-artin-stacks}
Y.~{Liu} and W.~{Zheng}.
\newblock {Enhanced six operations and base change theorem for higher Artin
  stacks}.
\newblock {\em ArXiv e-prints arXiv:1211.5948}, 2012.

\bibitem[Man22]{mann:padic}
Lucas Mann.
\newblock A $p$-adic 6-functor formalism in rigid-analytic geometry.
\newblock {\em arXiv e-prints}, 2022.

\bibitem[Moc15]{mochizuki:mtm}
Takuro Mochizuki.
\newblock {\em Mixed twistor {$\cD$}-modules}, volume 2125 of {\em Lecture
  Notes in Mathematics}.
\newblock Springer, Cham, 2015.

\bibitem[Nor]{nori-lectures}
Madhav Nori.
\newblock Lectures at {TIFR}.
\newblock 32 pages.

\bibitem[Rob]{robalo:slides}
Marco Robalo.
\newblock On the categorification problem for motivic donaldson-thomas
  invariants (slides).
\newblock \url{http://marco-robalo.perso.math.cnrs.fr/gluingMF.pdf}.

\bibitem[Rob14]{robalo:thesis}
Marco Robalo.
\newblock {\em Th\'{e}orie homotopique motivique des espaces noncommutatifs}.
\newblock PhD thesis, 2014.

\bibitem[RS21]{richarz-scholbach:motivic-satake}
Timo Richarz and Jakob Scholbach.
\newblock The motivic {S}atake equivalence.
\newblock {\em Math. Ann.}, 380(3-4):1595--1653, 2021.

\bibitem[Sab18]{sabbah:irregular-hodge}
Claude Sabbah.
\newblock Irregular {H}odge theory.
\newblock {\em M\'{e}m. Soc. Math. Fr. (N.S.)}, (156):vi+126, 2018.
\newblock With the collaboration of Jeng-Daw Yu.

\bibitem[Sai90]{MR1047415}
Morihiko Saito.
\newblock Mixed {H}odge modules.
\newblock {\em Publ. Res. Inst. Math. Sci.}, 26(2):221--333, 1990.

\bibitem[Sch19]{clausen-scholze:condensed}
P.~Scholze.
\newblock Lectures on condensed mathematics.
\newblock (all results joint with Dustin Clausen)
  \url{https://www.math.uni-bonn.de/people/scholze/Condensed.pdf}, 2019.

\bibitem[Spi18]{MR3865569}
Markus Spitzweck.
\newblock A commutative {$\Bbb P^1$}-spectrum representing motivic cohomology
  over {D}edekind domains.
\newblock {\em M\'{e}m. Soc. Math. Fr. (N.S.)}, (157):110, 2018.

\bibitem[SW18]{soergel-wendt:perverse-tate}
Wolfgang Soergel and Matthias Wendt.
\newblock Perverse motives and graded derived category {${\mathcal{O}}$}.
\newblock {\em J. Inst. Math. Jussieu}, 2018.

\bibitem[Tam18]{tamarkin:microlocal}
Dmitry Tamarkin.
\newblock Microlocal condition for non-displaceability.
\newblock In {\em Algebraic and analytic microlocal analysis}, volume 269 of
  {\em Springer Proc. Math. Stat.}, pages 99--223. Springer, Cham, 2018.

\bibitem[Voe00]{voevodsky00-mm}
Vladimir Voevodsky.
\newblock Triangulated categories of motives over a field.
\newblock In {\em Cycles, transfers, and motivic homology theories}, Annals of
  Mathematics Studies. Princeton University Press, Princeton, NJ, 2000.

\bibitem[Yu14]{yu:hodge-filtration}
Jeng-Daw Yu.
\newblock Irregular {H}odge filtration on twisted de {R}ham cohomology.
\newblock {\em Manuscripta Math.}, 144(1-2):99--133, 2014.

\end{thebibliography}

\end{document}